\documentclass[a4paper]{amsart}

\usepackage{mathtools}
\usepackage{amssymb}
\usepackage{eucal}
\usepackage{mathrsfs}
\usepackage[T1]{fontenc}
\usepackage{lmodern}
\usepackage{microtype}
\usepackage{xcolor}

\usepackage{booktabs}
\usepackage{thmtools}

\usepackage{graphicx}
\usepackage{tikz}
\usepackage{placeins}
\usetikzlibrary{arrows.meta,shapes.geometric}

\usepackage[
  backend=biber,
  style=alphabetic,
  sorting=nyt,
  isbn=false,
  doi=false,
  url=false,
  eprint=true,
  giveninits=true,
  backref=false,
  maxbibnames=99
]{biblatex}
\renewbibmacro{in:}{}

\usepackage{hyperref}
\hypersetup{
  colorlinks,
  linkcolor={red!80!black},
  citecolor={blue!80!black},
  urlcolor={blue!80!black}
}
\usepackage{cleveref}

\let\oldmarginpar\marginpar
\renewcommand\marginpar[1]{\-\oldmarginpar[\raggedleft\footnotesize #1]%
	        {\raggedright\footnotesize #1}}

\newskip\stdskip                      
\newcommand{\kk}{\mathbf{k}}
\newcommand{\ZZ}{\mathbb{Z}}
\newcommand{\C}{\mathbb{C}}
\newcommand{\R}{\mathbb{R}}
\newcommand{\PP}{\mathbb{P}}

\DeclareMathOperator{\CW}{CW}
\DeclareMathOperator{\CE}{CE}
\DeclareMathOperator{\HW}{HW}
\DeclareMathOperator{\Perf}{Perf}
\DeclareMathOperator{\Res}{Res}
\DeclareMathOperator{\Ree}{Re}
\renewcommand{\Im}{\operatorname{Im}}
\newcommand{\m}{\mathfrak{m}}

\numberwithin{equation}{section}
\declaretheorem[style=plain,sibling=equation]{theorem}
\declaretheorem[style=plain,sibling=equation]{proposition}
\declaretheorem[style=plain,sibling=equation]{lemma}
\declaretheorem[style=plain,sibling=equation]{corollary}
\declaretheorem[style=plain,sibling=equation]{conjecture}
\crefname{conjecture}{conjecture}{conjectures}
\Crefname{conjecture}{Conjecture}{Conjectures}
\declaretheorem[style=definition,sibling=equation]{definition}
\declaretheorem[style=remark,sibling=equation]{remark}

\declaretheorem[style=definition,numbered=no]{conventions}
\declaretheorem[style=definition,numbered=no,name=Acknowledgements]{acknowledgements}
\AddToHook{env/theorem/begin}{\crefalias{equation}{theorem}}
\AddToHook{env/proposition/begin}{\crefalias{equation}{proposition}}
\AddToHook{env/lemma/begin}{\crefalias{equation}{lemma}}
\AddToHook{env/corollary/begin}{\crefalias{equation}{corollary}}
\AddToHook{env/conjecture/begin}{\crefalias{equation}{conjecture}}
\AddToHook{env/definition/begin}{\crefalias{equation}{definition}}
\AddToHook{env/remark/begin}{\crefalias{equation}{remark}}
\AddToHook{env/example/begin}{\crefalias{equation}{example}}
\AddToHook{env/equation/begin}{\crefalias{equation}{equation}}
\AddToHook{env/align/begin}{\crefalias{equation}{equation}}
\AddToHook{env/gather/begin}{\crefalias{equation}{equation}}

\title{Kodaira fibres and wrapped Floer cohomology}
\author{Yank\i\ Lekili}
\date{}

\begin{document}
\begin{abstract}
Let $F$ be a singular fibre of a relatively minimal complex elliptic
fibration with smooth total space, and let $\Omega$ be a nonvanishing
holomorphic two-form near $F$. We show that a small neighbourhood of $F$
is a Weinstein domain for $\Ree\Omega$ whose completion is a Legendrian
surgery, with cocores obtained by completing holomorphic disks
transverse to the components of $F$. For any coefficient field and
every multiplicative bulk class, we compute the wrapped Floer cohomology
of these cocores and prove that it is concentrated in degree zero. The
cocores generate, so the bulk-deformed wrapped Fukaya category is
equivalent to the category of perfect modules over an explicit algebra:
a multiplicative preprojective algebra of affine type for the normal
crossing fibres, and a quiver algebra with relations for types $II$,
$III$ and $IV$. Applications include formality of the affine plumbing
dg algebras; mirrors given by resolved affine surfaces at every
classical bulk class for affine $D,E$ and types $II,III,IV$; and
mirrors given by quotient stacks of algebraic tori at root-of-unity
bulk classes for the four affine stars.
\end{abstract}
\maketitle

\section{Introduction}
\label{sec:introduction}

Let $\pi:X\to\Delta$ be a relatively minimal complex elliptic
fibration with smooth total space, proper over a disk, and with just
one singular fibre, \[ F=\sum_{i=1}^{r} m_iC_i,\] and let $\Omega$ be a
nonvanishing holomorphic two-form near $F$.
A holomorphic disk transverse to $C_i$ at a smooth point of
$F_{\mathrm{red}}$ is Lagrangian for $\Ree\Omega$, and its projection
to the base has degree $m_i$. Our goal is to determine the algebra of bulk-deformed wrapped
morphisms between the exact planes obtained by completing these disks.

Let $W=\{|\pi|\le\epsilon\}$ be a small neighbourhood of $F$. We show
in \Cref{sec:neighbourhoods} that $W$ is a Weinstein domain for
$\Ree\Omega$ whose completion $M=M_F$ is a Legendrian surgery determined by
the Kodaira type of $F$, and that these planes are the cocores $L_i$ of
its $2$-handles (\Cref{thm:neighbourhood-surgery}). For
the normal crossing types the attaching link is the link of the plumbing
of two-spheres according to the dual graph of $F$; for $II$, $III$ and
$IV$ it is the rainbow closure of $\sigma_1^3$, $\sigma_1^4$ and
$(\sigma_1\sigma_2)^3$.

Fix a coefficient field $\kk$. A multiplicative bulk class is a character
\[
 \mathfrak b:H_2(M;\ZZ)\longrightarrow\kk^\times,
\]
equivalently a class in $H^2(M;\kk^\times)$. It modifies Floer operations
by assigning multiplicative weights to holomorphic-curve counts, as
described in \Cref{subsec:bulk-data}. Since $M$ retracts onto $F$, the
component classes $[C_i]$ form a basis of $H_2(M;\ZZ)$, so
$\mathfrak b$ is specified by the independent parameters
\[
 q_i=\mathfrak b([C_i]),\qquad
 \boldsymbol q=(q_i)\in(\kk^\times)^r.
\]
We write $\mathcal W_{\mathfrak b}(M)$ for the split-closed bulk-deformed
wrapped Fukaya category. Its branes carry trivialisations of the bulk
class along their Lagrangians; the contractible planes $L_i$ therefore
define objects for every $\mathfrak b$. When all $q_i=1$, this is the
ordinary wrapped Fukaya category.

Write $\Lambda^{\boldsymbol q}(Q)$ for the multiplicative preprojective
algebra of Crawley-Boevey and Shaw \cite{CBShaw} with vertex parameters
$q_i$, recalled in \Cref{eq:vertex-products}, and let $A_F(\boldsymbol q)$
be the algebra in \Cref{tab:algebras}; the algebras $A_{III}$ and
$A_{IV}$ are the quotients of the path algebras of the quivers in
\Cref{fig:nontransverse-quivers} by the relations \eqref{eq:III-algebra}
and \eqref{eq:IV-algebra}.
Let $e_i\in A_F(\boldsymbol q)$ be the vertex idempotent corresponding
to $C_i$ and its cocore $L_i$. We give the cocores their holomorphic
gradings, which restrict to grading zero on the transverse holomorphic
disks (\Cref{subsec:holomorphic-gradings}).

\begin{table}[ht]
\centering
\begin{tabular}{@{}ll@{}}
\toprule
Fibre & $A_F(\boldsymbol q)$\\
\midrule
$I_n$, $n\ge1$ & $\Lambda^{\boldsymbol q}(\widetilde A_{n-1})$\\
$I_n^*$, $n\ge0$ & $\Lambda^{\boldsymbol q}(\widetilde D_{n+4})$\\
$IV^*,III^*,II^*$ &
 $\Lambda^{\boldsymbol q}(\widetilde E_6),
  \Lambda^{\boldsymbol q}(\widetilde E_7),
  \Lambda^{\boldsymbol q}(\widetilde E_8)$\\
$II$ & $\kk\langle x,y,z\rangle/(q+x+z+zyx,\;1+x+z+xyz)$\\
$III,IV$ & $A_{III}(q_1,q_2),\ A_{IV}(q_1,q_2,q_3)$\\
\bottomrule
\end{tabular}
\par\vspace{6pt}
\caption{The algebras associated to Kodaira fibres; see
\Cref{eq:vertex-products,eq:cusp-algebra,eq:III-algebra,eq:IV-algebra}.}
\label{tab:algebras}
\end{table}

\begin{theorem}\label{thm:main}
Let $F$ be a singular fibre, $M$ the completion of a
neighbourhood of $F$, and $L_1,\ldots,L_r$ the cocores obtained by
completing holomorphic disks transverse to the components
$C_1,\ldots,C_r$ at smooth points of $F_{\mathrm{red}}$, with their
holomorphic gradings. For any field $\kk$ and every
$\mathfrak{b}\in H^2(M;\kk^\times)$, the Lagrangians $L_i$ generate
$\mathcal W_{\mathfrak{b}}(M)$ and
\[
\HW_{\mathfrak{b}}^*(L_i,L_j;\kk)=
\begin{cases}
e_jA_F(\boldsymbol q)e_i,&*=0,\\
0,&*\ne0.
\end{cases}
\]
Their endomorphism $A_\infty$ algebra has minimal model
$A_F(\boldsymbol q)$. Consequently,
\[
 \mathcal W_{\mathfrak{b}}(M)
 \simeq\Perf\, A_F(\boldsymbol q).
\]
\end{theorem}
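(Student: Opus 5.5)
We follow the standard route for Weinstein manifolds presented as Legendrian surgeries. \Cref{thm:neighbourhood-surgery} presents the completion $M$ as the result of attaching $2$-handles to $B^4$ along a Legendrian link $\Lambda=\Lambda_1\cup\dots\cup\Lambda_r$, with the $L_i$ as cocores. Generation then follows from the cocore generation theorem of Chantraine--Dimitroglou Rizell--Ghiggini--Golovko and Ganatra--Pardon--Shende. The bulk class does not interfere, because $\mathfrak b$ is supported away from the cocores and only reweights curve counts. The same argument shows the $L_i$ generate $\mathcal W_{\mathfrak b}(M)$.

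By the surgery formula of Bourgeois--Ekholm--Eliashberg, proved by Ekholm--Lekili, the endomorphism algebra $\bigoplus_{i,j}\CW_{\mathfrak b}(L_i,L_j)$ is quasi-isomorphic to the Chekanov--Eliashberg dg algebra $\CE(\Lambda)$ in $S^3$. The bulk deformation enters as follows: each disk with boundary on $\Lambda\cup$ Reeb chords, capped by the core disks, defines a class in $H_2(M)$. Its weight is a monomial in the $q_i$, determined by the capping paths, i.e.\ by which handle-cores it crosses. So $\CE_{\mathfrak b}(\Lambda)$ is the loop-space dg algebra with the differential twisted by $q_i^{\pm1}$ placed on the base-point (or marked-point) generators $t_i$. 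Concretely, we set $t_i=q_i$-type constants after fixing one base point per component; for the contractible planes this is the correct specialisation.

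Next comes the computation. For the normal crossing types we use the plumbing front: the link of the plumbing of spheres along a tree or cycle. Its front, after a Legendrian isotopy, has the following CE generators in the grading induced by the holomorphic volume form, which agrees with the holomorphic grading of \Cref{subsec:holomorphic-gradings}:
\begin{itemize}
\item degree-$0$ chords $a_e,a_e^*$ for each edge;
\item degree-$1$ chords $z_i$ for each component;
\item possibly higher-degree chords.
\end{itemize}
The differential is $\partial z_i=q_i\cdot(\text{ordered product of }(1+a_ea_e^*)^{\pm1})-1$, i.e.\ the multiplicative preprojective relation \eqref{eq:vertex-products}. Here the invertibility of $1+a_ea_e^*$ is itself produced by additional degree-$1$ chords. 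For types $II$, $III$, $IV$ we use the rainbow closures of $\sigma_1^3$, $\sigma_1^4$ and $(\sigma_1\sigma_2)^3$. There the CE algebra is given by Kálmán's braid formula, with degree-$0$ crossing chords and degree-$1$ rainbow chords. The differentials give exactly \eqref{eq:cusp-algebra}, \eqref{eq:III-algebra} and \eqref{eq:IV-algebra} after inserting the $q_i$ on the base points. One checks that no generators lie in negative degree.

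The main obstacle is vanishing in nonzero degree, i.e.\ formality: $H^*\CE_{\mathfrak b}=H^0$. Since the dg algebra is concentrated in degrees $\ge0$, the problem is equivalent to showing that the degree-$0$ relations form a ``regular'' presentation, so that the CE algebra is a cofibrant resolution of $A_F(\boldsymbol q)$. I would attempt this by a Koszul/Calabi--Yau type argument. For affine quivers, $\Lambda^{\boldsymbol q}$ is known to be $2$-Calabi--Yau, with a length-$2$ bimodule resolution (Kaplan--Schedler), and this resolution matches the CE algebra's generators of degrees $0,1$ together with one degree-$2$ generator. Acyclicity in degree $\geq1$ then follows, uniformly in $\kk$ and $\boldsymbol q$, by a filtration/Gröbner argument: filter by word length in the $a_e$ and reduce to the additive preprojective algebra, or to $q=1$ where the wrapped Floer cohomology is the homology of the based loop space of the plumbing, known to be concentrated in degree zero. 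Semicontinuity in $\boldsymbol q$ alone does not give vanishing, so the filtration must be chosen with leading terms independent of $\boldsymbol q$. For $II$, $III$, $IV$ I would do this directly by computing a Gröbner basis of the relations and checking the Anick resolution has the required length.

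Once cohomology is concentrated in degree $0$, the minimal model is $A_F(\boldsymbol q)$ for degree reasons: all $\mu^k$ with $k\ge3$ vanish. Generation then gives $\mathcal W_{\mathfrak b}(M)\simeq\Perf A_F(\boldsymbol q)$.
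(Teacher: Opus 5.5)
Your set-up agrees with the paper. The surgery presentation has the $L_i$ as cocores, the bulk weights enter through capping as a specialisation of the basepoint variables, generation comes from the cocores, and once cohomology sits in degree zero all $\mu^k$ with $k\ge3$ vanish. The gap is the step you flag as the main obstacle. None of your routes proves that $H^*$ vanishes outside degree zero for every $\boldsymbol q$ and every $\kk$.

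\begin{itemize}
\item \textbf{The Kaplan--Schedler input is not available.} Their exactness and $2$-Calabi--Yau results are proved only for quivers containing a cycle, i.e.\ for $\widetilde A_{n-1}$. For the affine $D$ and $E$ trees this is precisely the open case of their formality conjecture. The paper obtains it as a \emph{consequence} of the theorem (\Cref{cor:affine-formality}, \Cref{lem:star-cy}), not as an input.
\item \textbf{The word-length filtration does not help.} It is not complete on the uncompleted algebra. For $q_i\ne1$ the lowest-order term of the vertex relation is the unit $(1-q_i)e_i$, so the associated graded is acyclic at that vertex although $\Lambda^{\boldsymbol q}(Q)\ne0$; the spectral sequence cannot converge to the right answer. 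At $\boldsymbol q=\boldsymbol1$ such an argument controls at best the completion at the arrow ideal. Since the multiplicative relations are not homogeneous, there is no grading that lets you pass back to the algebra itself.
\item \textbf{The fallback at $\boldsymbol q=\boldsymbol1$ is false.} Abouzaid's theorem concerns cotangent fibres of cotangent bundles, not plumbings. For a tree plumbing the skeleton is a wedge of two-spheres, whose based loop homology is a tensor algebra on classes of degree $-1$ with $H^0=\kk$, not $\Lambda^{\boldsymbol1}(Q)$.
\item \textbf{The types $II$, $III$, $IV$ are not handled.} The Gr\"obner/Anick computation is not carried out, and it is not a no-overlap check: in type $II$ the leading words $xyz$ and $zyx$ overlap. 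It would also have to be uniform in $\boldsymbol q$ and in the characteristic.
\end{itemize}

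The missing idea is a \emph{lower} bound on degrees coming from the Floer side, which makes any algebraic resolution argument unnecessary.
\begin{itemize}
\item In fibrewise holomorphic coordinates with $\Omega=dw\wedge dt$, the completed transverse disks are holomorphic graphs. With their holomorphic gradings, the degree of a nondegenerate chord of a Hamiltonian $H(t)$ is the Morse index of $H(t)+\Ree A(t)$, where $A'$ is a relative period (\Cref{lem:phase}).
\item The paper interpolates a convex radial wrapping Hamiltonian to one of the form $cf+\tfrac{\sigma}{2}f^2$, with $f=\log|t|-\log\epsilon$ harmonic. This makes $H$ subharmonic, so every chord has degree $0$ or $1$ (\Cref{prop:boundary-indices,prop:nonnegative-wrapping}). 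This holds for every bulk class, since weights change neither generators nor degrees.
\item The surgery comparison identifies the wrapped complex with a CE model concentrated in nonpositive degrees. Cohomology is therefore squeezed into degree $0$ for all $\boldsymbol q$ and all fields at once.
\end{itemize}

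The squeeze also needs the grading comparison, which you assert without proof. The Maslov potentials induced by the holomorphic gradings could differ from those of the algebraic model by shifts $s_i$ and, for $I_n$, by $m$ times the winding number. The paper forces $s_i=s_j$ using arrows in both directions that survive in $H^0$ at $\boldsymbol q=\boldsymbol1$. It forces $m=0$ using the classes $x^ne_0$ and $y^ne_0$ (\Cref{prop:bulk-cocores}).
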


We first realise $M$ as a Legendrian surgery whose cocores are the
completions of the transverse holomorphic disks, and compute the
Chekanov-Eliashberg dg algebra of its attaching link with bulk
parameters. Holomorphic graph-index calculations
show that the wrapped Floer complex is supported in degrees zero and
one. On the other hand, the surgery algebra is concentrated in
nonpositive degrees, and the comparison of \Cref{prop:bulk-cocores}
identifies its cohomology with the wrapped endomorphisms. Therefore,
we conclude that cohomology is supported in degree $0$ and that the
canonical projection to degree-zero cohomology is a quasi-isomorphism.
Cocore generation then identifies $\mathcal W_{\mathfrak b}(M)$
with $\Perf A_F(\boldsymbol q)$.

\begin{corollary}\label{cor:affine-formality}
For any affine diagram $Q$ occurring in \Cref{tab:algebras}, any
field $\kk$, and any $\boldsymbol q\in(\kk^\times)^{Q_0}$, the
canonical projection
\[
 \mathcal B_Q^{\boldsymbol q}\longrightarrow
 H^0(\mathcal B_Q^{\boldsymbol q})\cong\Lambda^{\boldsymbol q}(Q)
\]
is a quasi-isomorphism. Here $\mathcal B_Q^{\boldsymbol q}$ is the
plumbing dg algebra defined in \Cref{def:plumbing-dga}.
\end{corollary}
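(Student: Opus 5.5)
The plan is to deduce the corollary directly from \Cref{thm:main}, using the identification of the plumbing dg algebra with the bulk-deformed Chekanov--Eliashberg (surgery) algebra of the attaching link. The affine diagrams in \Cref{tab:algebras} are $\widetilde A_{n-1}$, $\widetilde D_{n+4}$ and $\widetilde E_6,\widetilde E_7,\widetilde E_8$. These are exactly the dual graphs of the normal crossing fibres $I_n$, $I_n^*$, $IV^*$, $III^*$, $II^*$. For each such $Q$ I fix the corresponding Kodaira fibre $F$, the completion $M=M_F$, and the bulk class $\mathfrak b$ with $\mathfrak b([C_i])=q_i$. Since $[C_i]$ is a basis of $H_2(M;\ZZ)$, every $\boldsymbol q\in(\kk^\times)^{Q_0}$ arises this way.

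By \Cref{thm:neighbourhood-surgery}, $M$ is a Legendrian surgery on the link of the plumbing of two-spheres along $Q$, with cocores $L_i$. The first step is to check that $\mathcal B_Q^{\boldsymbol q}$ of \Cref{def:plumbing-dga} is, as a dg algebra over $\bigoplus_i\kk e_i$, the bulk-weighted Chekanov--Eliashberg algebra of that link (or the Ginzburg-type model the paper computes from it). The grading must be the one induced by the holomorphic gradings, so that $\mathcal B_Q^{\boldsymbol q}$ sits in nonpositive degrees. I expect this to be essentially the definition, or the content of the computation preceding the theorem, and I would quote it rather than redo it.

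\Cref{prop:bulk-cocores} then provides a quasi-isomorphism between $\mathcal B_Q^{\boldsymbol q}$ and the wrapped endomorphism $A_\infty$ algebra $\bigoplus_{i,j}\CW_{\mathfrak b}(L_i,L_j)$. Taking cohomology and applying \Cref{thm:main} gives $H^*(\mathcal B_Q^{\boldsymbol q})\cong\HW^*_{\mathfrak b}$, which vanishes outside degree $0$ and equals $\Lambda^{\boldsymbol q}(Q)$ in degree $0$.

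It remains to show that the canonical projection is a quasi-isomorphism. Because $\mathcal B_Q^{\boldsymbol q}$ is concentrated in nonpositive degrees, the truncation $\tau_{\ge0}$ is a dg algebra map $\mathcal B\to\mathcal B^0/d(\mathcal B^{-1})=H^0(\mathcal B)$. This map is multiplicative because $d(\mathcal B^{-1})$ is a two-sided ideal in $\mathcal B^0$ by the Leibniz rule, and because $\mathcal B^0$ consists of cycles. It induces an isomorphism on $H^0$, and both sides have zero cohomology in every other degree, so it is a quasi-isomorphism.

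Finally, I need the identification $H^0(\mathcal B_Q^{\boldsymbol q})\cong\Lambda^{\boldsymbol q}(Q)$ as algebras and not merely as vector spaces. The degree-zero part of $\mathcal B$ is the path algebra of the doubled quiver with invertibility data, and $d$ of the degree $-1$ generators gives the vertex relations \eqref{eq:vertex-products}. This can be read off directly, and it matches the presentation used in \Cref{thm:main}.

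The main obstacle is the compatibility check in the first step: the algebraic $\mathcal B_Q^{\boldsymbol q}$ must agree with the surgery algebra entering \Cref{prop:bulk-cocores}, including signs, orderings of the products in the multiplicative relation, and the placement of the $q_i$. For arbitrary characteristic this needs sign conventions fixed by spin structures. If the definitions differ by a rescaling of arrows or of the $q_i$, I would absorb that difference by an explicit dg algebra automorphism.
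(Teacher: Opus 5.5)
Your argument is correct and is essentially the paper's. The corollary is the final step of the proof of \Cref{prop:bulk-cocores}: the surgery quasi-isomorphism, with the induced grading matched to the algebraic one, combined with the degree bound of \Cref{prop:nonnegative-wrapping}, forces the nonpositively graded $\mathcal B_F^{\boldsymbol q}$ to have cohomology only in degree zero. Citing \Cref{thm:main} packages the same input and is not circular.

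One small correction: in $\mathcal B_Q^{\boldsymbol q}$, localisation variables $z_a^{\pm1}$ occur only for the cycle edge outside the spanning tree $T$, and there are none for the trees. So $H^0(\mathcal B_Q^{\boldsymbol q})\cong\Lambda^{\boldsymbol q}(Q)$ cannot be read off directly. It needs the leaf-induction argument of \Cref{lem:factors-invertible}, which shows that the factors $e_{t(a)}+c_ac_a^*$ and $e_{s(a)}+c_a^*c_a$ are already invertible in $H^0$.
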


\begin{remark}\label{rem:ks-conjectures}
Kaplan and Schedler conjecture that the dg multiplicative preprojective
algebra of every connected non-Dynkin quiver is quasi-isomorphic to its
degree-zero cohomology \cite[Conjecture~1.3]{KS}. They prove this for
quivers containing a cycle \cite[Theorem~3.7 and Proposition~4.4]{KS}.
\Cref{cor:affine-formality} establishes the conjecture for every extended
Dynkin quiver, over any field and at every parameter tuple; the new
cases are the affine $D$ and $E$ diagrams. The comparison with the
plumbing dg algebra is explained in \Cref{subsec:plumbing-dga}.
Together with Kaplan-Schedler's self-duality theorem, this also
proves the $2$-Calabi-Yau assertion of their Conjecture~1.1
\cite{KS} for affine $D$ and $E$ quivers, over any
field and at every nonzero parameter tuple; see \Cref{lem:star-cy}.
\end{remark}

For $F=I_n$ ($n\ge1$), $I_n^*$ ($n\ge0$), $IV^*$, $III^*$ and $II^*$, let
$Q=\widetilde{A}_{n-1}$, $\widetilde D_{n+4}$, $\widetilde E_6$, $\widetilde E_7$ and $\widetilde E_8$,
respectively, and write $M_Q$ for the completion of a neighbourhood of
$F$, which by \Cref{thm:neighbourhood-surgery} is a plumbing of cotangent
bundles of two-spheres according to $Q$.
For any fibre type, we call $\boldsymbol q$ classical if
$\prod_iq_i^{m_i}=\mathfrak b([F])=1$, and put
\[
 S_Q(\boldsymbol q)=\operatorname{Spec}Z\bigl(\Lambda^{\boldsymbol q}(Q)\bigr),
 \qquad S_Q=S_Q(\boldsymbol1).
\]
For affine $D$ and $E$ and classical parameters $\boldsymbol q$,
$S_Q(\boldsymbol q)$ is a normal Gorenstein surface over any field
by \Cref{prop:affine-de-nccr}.

\begin{corollary}\label{cor:unipotent-mirror}
Over any field, let $Q$ be affine of type $D$ or $E$ and let
$\mathfrak b$ have classical parameters $\boldsymbol q$.
Set $A=\Lambda^{\boldsymbol q}(Q)$.
For an extending vertex idempotent $e$, the Satake map
$Z(A)\to eAe$ is an isomorphism, and
$A\cong\operatorname{End}_{eAe}(Ae)$ is a noncommutative crepant
resolution. The surface $S_Q(\boldsymbol q)$ admits
a projective crepant minimal resolution
$\widetilde S_Q(\boldsymbol q)\to S_Q(\boldsymbol q)$ with smooth
source, and
\[
 \mathcal W_{\mathfrak b}(M_Q)\simeq
 \Perf\bigl(\Lambda^{\boldsymbol q}(Q)\bigr)
 \simeq\Perf\bigl(\widetilde S_Q(\boldsymbol q)\bigr).
\]
For $F=II,III,IV$ and classical $\boldsymbol q$, put
$S_F(\boldsymbol q)=\operatorname{Spec}Z(A_F(\boldsymbol q))$.
The algebra $A_F(\boldsymbol q)$ is an NCCR of this normal Gorenstein
surface, whose projective crepant minimal resolution
$\widetilde S_F(\boldsymbol q)$ has smooth source, and
\[
 \mathcal W_{\mathfrak b}(M_F)\simeq\Perf A_F(\boldsymbol q)
 \simeq\Perf\bigl(\widetilde S_F(\boldsymbol q)\bigr).
\]
\end{corollary}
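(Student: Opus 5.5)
The plan is to deduce the corollary from \Cref{thm:main} together with the algebraic input of \Cref{prop:affine-de-nccr}, so that nearly all the work happens on the algebraic side. By \Cref{thm:main} we already have $\mathcal W_{\mathfrak b}(M_Q)\simeq\Perf\Lambda^{\boldsymbol q}(Q)$ for every $\boldsymbol q$. It therefore suffices to show, for classical $\boldsymbol q$, three things. First, $A=\Lambda^{\boldsymbol q}(Q)$ is an NCCR of $R=Z(A)$. Second, $\operatorname{Spec}R$ has a projective crepant resolution $\widetilde S$ with smooth source. Third, $\Perf A\simeq\Perf\widetilde S$. The last point is the Van den Bergh form of the derived McKay correspondence for Gorenstein surface singularities.

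For the Satake statement, I would first use the $2$-Calabi-Yau property of $A$ (\Cref{lem:star-cy}), which holds because $\mathcal B_Q^{\boldsymbol q}\simeq A$ by \Cref{cor:affine-formality}. I would then show that $A$ is a finitely generated module over its centre. A natural route is a filtration by path length whose associated graded is the additive preprojective algebra $\Pi(Q)$, or a deformation of it, which is module-finite over $\kk[x,y]^\Gamma$.

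Next comes the extending vertex. Since $\boldsymbol q$ is classical, the product relation at the extending vertex $e$ becomes commutative enough that $eAe$ is commutative. The key input is that the extending vertex has multiplicity $m_e=1$, so $\prod q_i^{m_i}=1$ is precisely the condition that makes the cyclic relations compatible. Given this, the map $Z(A)\to eAe$, $z\mapsto ze$, is injective because $A$ is prime, a consequence of being $2$-CY, module-finite and of the right GK-dimension. Surjectivity would follow by the Crawley-Boevey--Holland argument: $Ae$ is a generator up to projectivity, since $AeA$ has finite codimension and, for classical parameters, in fact equals $A$ up to a reflexive closure.

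The NCCR statement $A\cong\operatorname{End}_{eAe}(Ae)$ would then follow from reflexivity of $A$ over $R$ together with $\operatorname{gldim}A=2$. The CY property gives that $A$ is maximal Cohen-Macaulay over $R$. By \Cref{prop:affine-de-nccr}, $S_Q(\boldsymbol q)$ is normal Gorenstein.

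For the geometric resolution I would invoke Van den Bergh's theorem: for a normal Gorenstein surface (more generally, in dimension $\le3$) carrying an NCCR, a crepant resolution exists. Concretely, it is realised as a moduli space of $\theta$-stable $A$-modules of dimension vector $\delta=(m_i)$ for generic $\theta$, and this space is smooth and projective over $R$. There is then a derived equivalence $D^b(\widetilde S)\simeq D^b(\operatorname{mod}A)$ given by the tautological tilting bundle. Since both sides are smooth, this restricts to $\Perf$. Minimality follows from crepancy for surfaces.

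For types $II$, $III$ and $IV$ I would argue in exactly the same way with $A_F(\boldsymbol q)$, using the vertex idempotents for components of multiplicity one. The hard point there is to show that $Z(A_F)$ is normal Gorenstein and that $A_F$ is module-finite over it. I would attack this by exhibiting explicit central elements in the presentations \eqref{eq:cusp-algebra}, \eqref{eq:III-algebra} and \eqref{eq:IV-algebra}, and checking rank via the degeneration at $\boldsymbol q=\boldsymbol 1$.

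I expect the main obstacle to be the surjectivity of the Satake map together with the equality $AeA=A$ up to reflexive hull, uniformly over an arbitrary field including small characteristic. I would try first to reduce to $\boldsymbol q=\boldsymbol 1$ by a flat family argument over the classical torus $\prod q_i^{m_i}=1$.
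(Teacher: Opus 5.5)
Your overall architecture matches the paper's. \Cref{thm:main} supplies $\mathcal W_{\mathfrak b}\simeq\Perf A$, and what remains is that $A$ is an NCCR of $Z(A)$ together with $\Perf A\simeq\Perf\widetilde S$. The gap is the algebraic core, which you set out to re-derive but do not establish.

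The depth input behind your phrase ``$AeA=A$ up to reflexive closure'' is, precisely, this: $A/AeA$ is finite-dimensional (Shaw), and $\operatorname{Ext}^{0,1}_A(A/AeA,A)=0$ by \Cref{lem:star-cy}. Buchweitz's results then give $A\cong\operatorname{End}_{eAe}(Ae)$, $Z(A)\cong Z(eAe)$, and finiteness of $Ae$ and $eA$ over $eAe$. That last point, not a filtration, is where module-finiteness comes from. Primeness is not needed, and it would not follow from 2-CY, module-finiteness and GK-dimension anyway: a product of two 2-CY algebras is a counterexample. So the Satake isomorphism is equivalent to commutativity of $eAe$. For this you offer only the assertion that the classical condition makes the extending-vertex relation ``commutative enough''.

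Both mechanisms you suggest fail.
\begin{enumerate}
\item The path-length filtration does not have $\Pi(Q)$ as associated graded. At a vertex of valence at least three, the top-degree part of $A_i-q_iB_i$ has degree at least four; at the centre of a star with inward arrows it is $\prod_jc_{j1}c_{j1}^*$. For $q_i\neq1$ the lowest part is $(1-q_i)e_i$.
\item A flat family over the classical torus cannot carry commutativity outwards from $\boldsymbol q=\boldsymbol1$. Commutativity is a closed condition, so it passes from the generic fibre to special ones, not conversely; compare the flat family $\mathcal A_\eta$. Moreover, flatness must itself be proved, and the case $\boldsymbol q=\boldsymbol1$ is a Kaplan--Schedler conjecture rather than an available input.
\end{enumerate}
For $II$, $III$ and $IV$ you give no argument, and neither \Cref{lem:star-cy} nor degeneration to $\boldsymbol q=\boldsymbol1$ is available there.

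In the paper these inputs are exactly \Cref{prop:affine-de-nccr} and \Cref{prop:nontransverse-centres}. The statement of \Cref{prop:affine-de-nccr} already contains the Satake isomorphism and the NCCR; you cite it only for normality and the Gorenstein property. The proof of the corollary merely assembles these two results. Their proofs use:
\begin{itemize}
\item for type $D$, an explicit reduction system and an embedding into $2\times2$ matrices;
\item for type $E$, flatness over $\ZZ[q_i^{\pm1}]/(\boldsymbol q^\delta-1)$ deduced from \Cref{cor:affine-formality}, commutativity at the generic point from Etingof--Oblomkov--Rains, specialisation to every fibre, and an integral filtered basis giving the domain property in every characteristic;
\item for $III$ and $IV$, two covering localisations: a deformed doubled cycle and a matrix algebra over a smooth surface.
\end{itemize}

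Your geometric step is a genuinely different route. Van den Bergh's moduli of $\theta$-stable modules produces the resolution and a tilting bundle with endomorphism ring $A$ in one go. However, that theorem is available over an algebraically closed field of characteristic zero, and you say nothing about descent to a non-closed $\kk$ or about positive characteristic, which the corollary requires. The paper instead gets rational double points from Dao--Iyama--Takahashi--Vial. It identifies the module blowup $\operatorname{Bl}_{Ae}$ with the minimal resolution (Iyama--Wemyss, Gustavsen--Ile) and descends it to $\kk$ by flat base change. It then matches Van den Bergh's tilting bundle with $A$ via Morita uniqueness of NCCRs in dimension two.
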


Thus every classical bulk parameter gives a commutative mirror
in these cases. At the trivial bulk class, write
$\widetilde S_Q=\widetilde S_Q(\boldsymbol1)$; the equations of $S_Q$ are
\[
\begin{array}{c|l}
 \widetilde A_{n-1}&xyz+xy+z^n=0\\
 \widetilde D_{n+4}&xyz+xy^2-p_{n-1}x^2y-p_nxz-z^2=0\\
 \widetilde E_6&xyz+z^2+x^2z+y^3=0\\
 \widetilde E_7&xyz+z^2+y^3+x^3y=0\\
 \widetilde E_8&xyz+z^2+y^3+x^5=0.
\end{array}
\]
Here $p_{-1}=-1$, $p_0=0$, and $p_{j+1}=x(p_j+p_{j-1})$.
These equations come from Shaw's thesis \cite[Theorem~4.1.1]{ShawThesis}
(see also \cite[Theorem~6.4 and Corollary~6.7]{KS}), with the type $D$
sign corrected, as explained in \Cref{subsec:surface}.
For affine $D$ and $E$, the origin is the only geometric singular
point of $S_Q$, a $\kk$-rational du Val singularity of geometric
type $D_{n+4}$, $E_6$, $E_7$ or $E_8$, respectively.

For $III$ and $IV$, the classical conditions are $q_1q_2=1$ and
$q_1q_2q_3=1$, respectively. Put $q=q_1$ for $III$, and
$\alpha=q_1$, $\beta=q_1q_2=q_3^{-1}$ for $IV$.
Their surfaces $S_F(\boldsymbol q)$, together with the type $II$
surface (whose classical condition forces $q=1$), have equations
\[
\begin{array}{c|l}
 II&xyz+x+z+1=0,\\
 III&xyz+x+y-qz+1+q=0,\\
 IV&xyz+x^2+(1+\alpha+\beta)x+\beta y-\alpha z
       +\alpha\beta+\alpha+\beta=0.
\end{array}
\]
The type $II$ surface is smooth. The type $III$ surface is smooth
unless $q=1$, when it has one split $A_1$ singularity. The type
$IV$ surface is smooth if no $q_i=1$, has one split $A_1$
singularity if exactly one $q_i=1$, and one split $A_2$ singularity
if all $q_i=1$. Write $S_F=S_F(\boldsymbol1)$ and
$\widetilde S_F=\widetilde S_F(\boldsymbol1)$; at this parameter,
the singular point for $III$ and $IV$ is $(-1,-1,1)$.

\begin{remark}\label{rem:painleve}
For types $II$, $III$, $IV$ and $I_0^*$, the categorical mirror
descriptions over $\C$ at the trivial bulk class overlap with
Beimler, Hu, Olsen and Shende \cite[Theorem~1.1]{BHOS}.
These overlaps arise from Hitchin systems. The proofs are
different, via microlocal sheaves and
toric mirror symmetry there, and via the algebras
$\Lambda^{\boldsymbol1}(Q)$ and $A_F(\boldsymbol1)$ here.

At the trivial bulk class, \Cref{cor:unipotent-mirror} should also
be obtainable, for every $Q$,
from the general homological mirror symmetry theorems of
Hacking-Keating \cite{HK} and Varolg\"une\c{s} \cite{Var} for affine
log Calabi-Yau surfaces, once $M_Q$ is identified with the Liouville
completion of such a surface and their mirror with $\widetilde S_Q$;
some additional work is required.
\end{remark}

\begin{remark}\label{rem:cycles}
For the cycles $I_n$, put
\[
 R=\kk[x,y,z^{\pm1}]/(xy-z+1).
\]
Suppose that $n$ is invertible in $\kk$ and that $\kk$ contains a
primitive $n$th root of unity. \Cref{thm:main,prop:cycle-algebra} give
\[
 \mathcal W(M_{I_n})\simeq\Perf(R\rtimes\mu_n)
 \simeq\Perf\bigl([\operatorname{Spec}R/\mu_n]\bigr),
\]
where $\mu_n$ acts on $x,y,z$ with weights $1,-1,0$. The algebraic
properties of the cycle algebras are established in
\cite[Theorem~1.2 and Section~6.2]{KS}. For $n=1$, this mirror
equivalence is due to Pascaleff \cite{PascaleffBinodal}; the general
case follows from a covering argument (cf.\ \cite{ChanUeda}).
\end{remark}

For the four affine stars, let $(E,G)$ be a pair consisting of an
elliptic curve $E$ and a cyclic group $G\cong\mu_N$ of automorphisms
fixing its origin. Let $s$ be the number of cone points of $E/G$.
The group order and cone-point orders are:
\begin{center}
\begin{tabular}{@{}ccc@{}}
\toprule
$Q$ & $N$ & $(d_1,\ldots,d_s)$\\
\midrule
$\widetilde D_4$ & $2$ & $(2,2,2,2)$\\
$\widetilde E_6$ & $3$ & $(3,3,3)$\\
$\widetilde E_7$ & $4$ & $(2,4,4)$\\
$\widetilde E_8$ & $6$ & $(2,3,6)$\\
\bottomrule
\end{tabular}
\end{center}
Label the central vertex by $0$ and the vertices on the $j$th arm
by $(j,p)$, $1\le p<d_j$, starting next to the centre, and orient
the arrows towards the centre. If $\kk$ contains primitive
$d_j$th roots $\zeta_j$, define the bulk class $\mathfrak b_{\mathrm{grp}}$ by
\[
 q_0=1,\qquad q_{(j,p)}=\zeta_j,
\]
and denote this parameter tuple by $\boldsymbol q_{\mathrm{grp}}$.
Let $\mathbb{T}$ be the underlying real torus of $E$, and put
\[
 \Gamma=\pi_1(\mathbb{T})\cong\ZZ^2,\qquad
 \mathbb G_{m,\kk}^2=\operatorname{Spec}\kk[\Gamma],
\]
with the induced $G$-action.

\begin{corollary}\label{cor:orbifold-mirror}
Suppose that $\operatorname{char}\kk\nmid N$ and that the primitive
roots $\zeta_j$ lie in $\kk$. For any such choice,
\[
 \mathcal W_{\mathfrak b_{\mathrm{grp}}}(M_Q)
 \simeq\mathcal W_G(T^*\mathbb{T})
 \simeq\Perf\bigl([\mathbb G_{m,\kk}^2/G]\bigr).
\]
Here $\mathcal W_G$ denotes the equivariant wrapped Fukaya category,
with the grading and relative spin background specified in
\Cref{sec:equivariant}. All three categories have the algebraic model
$\Perf(\kk[\Gamma]\rtimes G)$.
\end{corollary}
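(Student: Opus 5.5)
The plan is to realise all three categories through the algebra $\kk[\Gamma]\rtimes G$. There are three steps.

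\emph{Step one: the quotient Liouville manifold.} For $Q$ an affine star, the fibre $F$ is of type $I_0^*$, $IV^*$, $III^*$ or $II^*$. Each is the central fibre of the minimal resolution of $(E\times\Delta)/G$, where $G=\mu_N$ acts on $E$ fixing the origin and on the disk $\Delta$ by the inverse character. I will use this to identify $M_Q$, up to Liouville deformation, with the minimal resolution of $T^*\mathbb T/G$. Here $T^*\mathbb T\cong E\times\C$ carries the holomorphic symplectic form $dz\wedge dw$. The fixed points of $G$ on the zero section give the cone points of $E/G$, with stabilisers of orders $d_1,\ldots,d_s$. Resolving the $A_{d_j-1}$ singularity at each cone point produces the chains of spheres forming the arms of the star. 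The image of the zero section is the central sphere $C_0$, with multiplicity $N$.

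\emph{Step two: bulk-deformed resolution and equivariant category.} The key claim is that the bulk class $\mathfrak b_{\mathrm{grp}}$, with weights $\zeta_j$ on the exceptional spheres of the $j$th arm, reproduces the orbifold category of the cone point. This is the standard picture: $\mathcal W$ of an $A_{d-1}$ Milnor fibre with bulk parameters given by primitive $d$th roots of unity matches $\mu_d$-equivariant Floer theory of $\C^2$. To prove it in our setting I will not work geometrically. Instead I will compute both sides algebraically, using \Cref{thm:main} on the left. By \Cref{thm:main}, $\mathcal W_{\mathfrak b_{\mathrm{grp}}}(M_Q)\simeq\Perf\Lambda^{\boldsymbol q_{\mathrm{grp}}}(Q)$. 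On the other side, $\mathcal W_G(T^*\mathbb T)$ is generated by the $N$ equivariant lifts of a cotangent fibre $T^*_p\mathbb T$, one for each character of $G$; this uses that $\operatorname{char}\kk\nmid N$, so $\kk[G]$ is semisimple. Its endomorphism algebra is $C_{-*}(\Omega\mathbb T)\rtimes G\simeq\kk[\Gamma]\rtimes G$, formal and concentrated in degree $0$. Here the chosen grading and relative spin background are exactly what makes the $G$-action on $\kk[\Gamma]$ the geometric one, with no sign twist. Finally, $\Perf([\mathbb G_{m,\kk}^2/G])\simeq\Perf(\kk[\Gamma]\rtimes G)$ is standard, since $\mathbb G_m^2$ is affine and $|G|$ is invertible.

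\emph{Step three: the algebra isomorphism.} It remains to show that $\Lambda^{\boldsymbol q_{\mathrm{grp}}}(Q)$ is Morita equivalent to $\kk[\Gamma]\rtimes G$. I expect this to be the main obstacle. I will first try to write down an explicit idempotent-compatible isomorphism.
\begin{itemize}
\item The vertices of $Q$ should correspond to a complete set of primitive idempotents of $\kk[\Gamma]\rtimes G$. These come from idempotents $\frac1{d_j}\sum_k\zeta_j^{-pk}g_j^k$ in the stabiliser subgroups at the cone points, together with the central vertex, which corresponds to the trivial character of $G$ as an extending vertex.
\item Arrows map to monomials in the torus coordinates twisted by these idempotents.
\item The multiplicative preprojective relation at vertex $(j,p)$, namely $(1+a a^*)(1+a^*a)^{-1}=\zeta_j$, should become the identity $g_j^{d_j}=1$ rewritten in the local orbifold chart. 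At the centre the product of the arm contributions should become the commutator relation of $\pi_1$ of the orbifold $E/G$, which is $\prod_j g_j=1$ with $g_j^{d_j}=1$. Via the short exact sequence $1\to\Gamma\to\pi_1^{\mathrm{orb}}(E/G)\to G\to1$, this identifies with $\kk[\Gamma]\rtimes G$.
\end{itemize}
If the explicit isomorphism is hard to verify directly, I would fall back on Kaplan--Schedler's description of $\Lambda^{\boldsymbol q}(Q)$ for star quivers as a quotient of a localised fundamental group algebra of a punctured sphere. At root-of-unity parameters, the idempotents split the local relations $g_j^{d_j}=1$, so Kaplan--Schedler's description yields $\kk[\pi_1^{\mathrm{orb}}]$ up to Morita equivalence.

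Combining the three steps gives the chain of equivalences, with all three categories modelled by $\Perf(\kk[\Gamma]\rtimes G)$.
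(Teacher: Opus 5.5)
Your Steps 1 and 2 match the paper in substance. On one side you use \Cref{thm:main}. On the other you take the fibre at the fixed origin with its $G$-action lifted via the background $\pi^*T\mathbb T$, Abouzaid's fibre equivalence, induction for generation when $|G|\in\kk^\times$, and descent.

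The gap is in Step 3, and your primary plan for it cannot work. The stabiliser idempotents $\frac1{d_j}\sum_k\zeta_j^{-pk}g_j^k$ attached to different cone points lie in finite subgroups with trivial intersection. So a product of two of them has coefficient $1/(d_jd_{j'})\ne0$ on the identity; they are not orthogonal and do not decompose $1$.

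More decisively, $\Lambda^{\boldsymbol q_{\mathrm{grp}}}(Q)$ and $\kk[\Gamma]\rtimes G$ are not isomorphic at all. Generic simple modules of $\kk[\Gamma]\rtimes G$ (induced from free $G$-orbits) have dimension $N$. The corresponding simple $\Lambda^{\boldsymbol q_{\mathrm{grp}}}(Q)$-modules have dimension vector $\delta$, the multiplicities of $F$, and $\sum_i\delta_i>N$ (for $\widetilde D_4$, $6$ against $2$). The correct dictionary is only a Morita equivalence:
\begin{itemize}
\item the central vertex corresponds to the free module of rank one, not to the trivial character, since its corner is the whole group algebra (whence $\delta_0=N$);
\item the vertex $(j,p)$ corresponds to the projective cut out by $d_j-p$ spectral idempotents of $g_j$.
\end{itemize}
Your fallback, that at roots of unity ``the idempotents split the local relations'', does not give an argument for this Morita equivalence.

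The missing ingredients are the two the paper uses. First, the Crawley-Boevey--Shaw presentation \eqref{eq:star-central-corner} of the corner $H=e_0Ae_0$. At $\boldsymbol q_{\mathrm{grp}}$ it becomes $U_j^{d_j}=1$, $U_1\cdots U_s=1$, so $H\cong\kk[\pi_1^{\mathrm{orb}}(E/G)]\cong\kk[\Gamma\rtimes G]$. Your third bullet is this computation, but it lives in the corner, not in $A$.

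Second, fullness of $e_0$ (\Cref{lem:star-fullness}). Modulo $Ae_0A$ the arrows at the centre vanish, so $B_{j1}=e_{(j,1)}$, and \eqref{eq:star-leg-polynomial} gives $\prod_{a=1}^{d_j-1}(1-\zeta_j^{-a})\,e_{(j,1)}=0$. Primitivity of $\zeta_j$ makes the scalar nonzero, so $e_{(j,1)}=0$, and one iterates along the arm. This is exactly where primitivity enters, and it cannot be skipped. At $\boldsymbol q=\boldsymbol 1$ the corner is equally explicit, but $e_0$ is not full and the corner does not recover $\Perf A$.
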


The equivalence between the equivariant cotangent category and the
quotient-stack category requires only
$\operatorname{char}\kk\nmid N$; the roots are needed for the
comparison with the plumbing. Proofs of
\Cref{cor:unipotent-mirror,cor:orbifold-mirror} are given in
\Cref{sec:star-consequences}.

\subsection{Crepant resolutions and Hecke algebras}

For the four affine stars, we choose the complex model
\[
 M_Q=\operatorname{Res}(T^*E/G),
\]
the minimal resolution of $T^*E/G$, identified with the completion
above by an exact symplectomorphism (\Cref{prop:quotients}).
The exceptional curves over a
cone point of order $d_j$ form the $A_{d_j-1}$ arm, while the
strict transform of the zero section is the central component.
In the cotangent coordinate $v$, the function $v^N$ defines a
proper map $f:M_Q\to\C$.

Our choice of roots of unity for the bulk parameters on the
exceptional spheres parallels the crepant resolution conjecture
of Ruan \cite[Section~2]{RuanCRC} and Bryan-Graber
\cite[Conjecture~1.2]{BryanGraber}.
That conjecture relates the genus-zero Gromov-Witten theories
of a quotient orbifold and its crepant resolution: after analytic
continuation and a change of cohomological variables, the
parameters associated with exceptional curves are specialised to
roots of unity, while the remaining parameters stay free.
For the local $A_{d_j-1}$ singularities, this comparison is proved
in equivariant quantum cohomology, with exceptional parameters
$e^{-2\pi i/d_j}$ \cite[Theorem~A.1 and Corollary~A.2]{CCIT};
related ADE computations appear in \cite{BryanGholampour}.
Our wrapped Fukaya category calculation exhibits the same pattern
of parameter specialisation, but does not establish the
Gromov-Witten correspondence for $M_Q$.

Allowing the bulk parameter on the central sphere to vary deforms
the quotient-torus mirror to a quantum torus.
Write $q=q_0$. The parameters $q_{(j,p)}$
belong to the exceptional spheres, and $q$ belongs to the central
sphere. Set $q_{(j,p)}=\zeta_j$ and allow arbitrary $q\in\kk^\times$;
denote the resulting bulk class by $\mathfrak b_{\mathrm{grp},q}$.
Under the field hypotheses of \Cref{cor:orbifold-mirror}, we obtain
\begin{equation}\label{eq:intro-quantum-mirror}
 \mathcal W_{\mathfrak b_{\mathrm{grp},q}}(M_Q)
 \simeq\Perf\bigl(\mathcal A_{q^N}\rtimes G\bigr),\qquad
 \mathcal A_\eta=
 \kk\langle X^{\pm1},Y^{\pm1}\rangle/(YX-\eta XY).
\end{equation}
The monomial action is the one described in \cite[Section~4.1]{EOR}.
At $q=1$ this is \Cref{cor:orbifold-mirror}.

Near the group-algebra point, there is also a comparison over a
formal power series ring in the bulk parameters, with coefficients
in $\C$. In this setting, \cite[Theorem~1.6 and Example~7.4]{HKTY}
identifies the cohomology of the bulk-deformed regular cotangent
fibre for $[E/G]$ with Etingof's orbifold Hecke algebra
\cite{EtingofOrbifold}, with constant terms $-1$. It is concentrated
in degree zero since $E$ is a $K(\pi,1)$. This computation concerns
the fibre algebra; split-generation of that formal orbifold
category is a further question.

The central corner $H=e_0\Lambda^{\boldsymbol q}(Q)e_0$ is the rank-one
generalised double affine Hecke algebra of Etingof, Oblomkov and Rains
\cite{EOR}: with $\alpha_{ja}=\prod_{p=1}^{a}q_{(j,p)}^{-1}$, it is
generated by $U_1^{\pm1},\ldots,U_s^{\pm1}$ subject to
\[
 \prod_{a=0}^{d_j-1}(U_j-\alpha_{ja})=0\quad(1\le j\le s),\qquad
 U_1\cdots U_s=q,
\]
see \Cref{subsec:central-corner}. \Cref{sec:hecke-tensor} gives the
parameter dictionary relating this presentation to Etingof's formally
deformed orbifold Hecke algebra.

\subsection{Hilbert schemes and higher rank}

For $n\ge1$, put
\[
 Y_n=\operatorname{Hilb}^n(M_Q),\qquad
 G_n=G^{\times n}\rtimes S_n,\qquad \mathbb{T}_n=\mathbb{T}^{\times n}.
\]
The Hilbert scheme is smooth of complex dimension $2n$, with a
projective symplectic resolution
\[
 Y_n\longrightarrow\operatorname{Sym}^n(T^*E/G)
       \cong (T^*E)^n/G_n.
\]
The holomorphic symplectic form $\Omega_n$ induced from $M_Q$
has weight one under cotangent scaling. If $V_n$ generates positive
real scaling, then $\lambda_n=\iota_{V_n}\Ree\Omega_n$ is a
finite-type Liouville form. Its skeleton is the underlying set of
the zero fibre of the proper map
\[
 h_n:Y_n\longrightarrow\operatorname{Sym}^n(M_Q)
          \longrightarrow\operatorname{Sym}^n(\C)\cong\C^n.
\]
The first arrow is the Hilbert-Chow map, sending a length-$n$
subscheme to its support counted with multiplicities; the second is
induced by $f$. The skeleton includes nonreduced subschemes with
transverse directions when $n>1$. Groechenig's description
\cite[Theorems~4.1 and~5.1]{Groechenig} realises these complex
spaces as moduli of stable parabolic Higgs bundles of rank $nN$.

The higher-rank generalised double affine Hecke algebra
$H_n(\boldsymbol u,\tau)$ of Etingof-Gan-Oblomkov \cite{EGO}
has an interaction parameter $\tau$. Put
\[
 \Lambda_n^{\boldsymbol q}
   =\bigl(\Lambda^{\boldsymbol q}(Q)\bigr)^{\otimes n}\rtimes S_n.
\]
At $\tau=1$, over any coefficient field,
\begin{equation}\label{eq:higher-noninteracting}
 H_n(\boldsymbol u,1)
   \cong e_0^{\otimes n}\Lambda_n^{\boldsymbol q}e_0^{\otimes n}.
\end{equation}
\Cref{sec:hecke-tensor} gives the presentations, parameter dictionary
$(\boldsymbol u\leftrightarrow\boldsymbol q)$ and corresponding dg
formality statement.

When the roots $\zeta_j$ lie in $\kk$, the corner at
$\boldsymbol q_{\mathrm{grp}}$ is full and isomorphic to
$\kk[\Gamma^n]\rtimes G_n$. \Cref{thm:higher-orbifold} gives the
corresponding equivariant mirror equivalence when $|G_n|$ is
invertible in $\kk$.

\begin{theorem}\label{thm:higher-orbifold}
For every affine star and every $n\ge1$, suppose that
$N^n n!=|G_n|$ is invertible in $\kk$. Then
\[
 \mathcal W_{G_n}(T^*\mathbb{T}_n)
 \simeq\Perf\bigl(\kk[\Gamma^n]\rtimes G_n\bigr)
 \simeq\Perf([\mathbb G_{m,\kk}^{2n}/G_n]).
\]
The brane data are the canonical cotangent grading and the
equivariant relative spin background $\pi^*T\mathbb{T}_n$, where
$\pi:T^*\mathbb{T}_n\to\mathbb{T}_n$ is the cotangent projection.
If $\kk$ contains the roots $\zeta_j$, these categories are also
equivalent to $\Perf(\Lambda_n^{\boldsymbol q_{\mathrm{grp}}})$.
\end{theorem}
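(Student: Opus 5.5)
The plan is to reduce \Cref{thm:higher-orbifold} to the case $n=1$ (\Cref{cor:orbifold-mirror}) by a product and symmetrisation argument on both sides, and then to compare with the plumbing via the corner algebra identification \eqref{eq:higher-noninteracting}.

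\emph{Step 1: a single $G_n$-invariant generator.} On the symplectic side, $T^*\mathbb{T}_n=(T^*\mathbb{T})^{\times n}$ with $G_n=G^{\times n}\rtimes S_n$ acting by coordinatewise $G$-action and permutation of factors. I take the product cotangent fibre $T^*_x\mathbb{T}_n$ at a point $x=(x_1,\dots,x_n)$ with pairwise distinct coordinates, each of which is a generic point of $\mathbb{T}$ (not fixed by any nontrivial element of $G$). Its $G_n$-orbit then consists of $|G_n|$ disjoint fibres permuted freely, so it is a free equivariant object. Its ordinary wrapped endomorphism algebra is $C_{-*}(\Omega\mathbb{T}_n)\simeq\kk[\Gamma^n]$, concentrated in degree zero because $\mathbb{T}_n$ is a $K(\pi,1)$. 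The brane data matter here. The canonical cotangent grading and the background $\pi^*T\mathbb{T}_n$ are $G_n$-equivariant, and the background cancels the relative spin sign twist in the Abouzaid isomorphism, so the $G_n$-action on $\kk[\Gamma^n]$ is the untwisted monomial action on $\Gamma^n$, with $S_n$ permuting factors and no sign character. This follows the same conventions as \Cref{sec:equivariant}. By the usual description of morphisms in an equivariant category with a free orbit, $\operatorname{End}_{\mathcal W_{G_n}}$ of the orbit object is the smash product $\kk[\Gamma^n]\rtimes G_n$, again concentrated in degree zero and hence formal.

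\emph{Step 2: generation.} Abouzaid's theorem shows that a cotangent fibre generates $\mathcal W(T^*\mathbb{T}_n)$. To pass to the equivariant category I use that $|G_n|$ is invertible: every equivariant object $X$ is a summand of $\operatorname{Ind}\operatorname{Res}X$, via the averaging idempotent $|G_n|^{-1}\sum_g g$. Since $\operatorname{Res}X$ lies in the thick closure of the fibre, $X$ lies in the thick closure of the induced fibre, and this induced object is the orbit object from Step 1. This gives $\mathcal W_{G_n}(T^*\mathbb{T}_n)\simeq\Perf(\kk[\Gamma^n]\rtimes G_n)$.

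On the algebraic side, $[\mathbb G_m^{2n}/G_n]$ has $\operatorname{Spec}\kk[\Gamma^n]$ affine with a finite group of invertible order acting on it. Hence $\mathcal O\otimes\kk[G_n]$ is a compact generator with endomorphisms $\kk[\Gamma^n]\rtimes G_n$, which yields the second equivalence.

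\emph{Step 3: comparison with the plumbing.} When the roots $\zeta_j$ lie in $\kk$, \Cref{cor:orbifold-mirror} gives that the corner $e_0\Lambda^{\boldsymbol q_{\mathrm{grp}}}(Q)e_0\cong\kk[\Gamma]\rtimes G$ is full, in the sense that $\Lambda e_0\Lambda=\Lambda$. This gives a Morita equivalence $\Lambda^{\boldsymbol q_{\mathrm{grp}}}(Q)\sim\kk[\Gamma]\rtimes G$ compatible with the tensor structure. Tensoring $n$ copies and taking the smash product with $S_n$ preserves fullness of $e_0^{\otimes n}$. Using \eqref{eq:higher-noninteracting} together with the identification of the corner stated before the theorem, the corner is $(\kk[\Gamma]\rtimes G)^{\otimes n}\rtimes S_n=\kk[\Gamma^n]\rtimes G_n$. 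Therefore $\Perf(\Lambda_n^{\boldsymbol q_{\mathrm{grp}}})\simeq\Perf(\kk[\Gamma^n]\rtimes G_n)$.

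\emph{Main obstacle.} I expect the hardest step to be the sign and twist bookkeeping in Step 1: showing that with the stated grading and relative spin background the induced $S_n$-action on $HW$ of the product fibre is genuinely the untwisted permutation action, with no sign character coming from the transposition of odd or orientation data. My first approach is to use the Künneth isomorphism for wrapped Floer cohomology of products, and to check that the swap of two factors acts trivially on the degree-zero algebra. Since all generators sit in degree zero, Koszul signs cannot arise from the gradings themselves, and the spin background $\pi^*T\mathbb{T}_n$ is equivariantly pulled back from the base, so it should contribute no twist.
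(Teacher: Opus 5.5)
This is essentially the paper's proof. The shared steps are: Abouzaid's fibre computation and generation; the $|G_n|^{-1}$-averaging argument showing that the induced fibre split-generates $\mathcal W_{G_n}(T^*\mathbb{T}_n)$ with formal endomorphism algebra $\kk[\Gamma^n]\rtimes G_n$; descent for the affine quotient stack; and fullness of $e_0^{\otimes n}$ with corner $(e_0\Lambda e_0)^{\otimes n}\rtimes S_n$. For that last step, cite \Cref{lem:star-fullness} and \Cref{eq:star-group-corner} rather than \Cref{cor:orbifold-mirror}, since the paper proves that corollary jointly with this theorem.

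The only real variation is your choice of fibre over a point with trivial stabiliser. Trivial stabiliser needs the $x_i$ to lie in distinct $G$-orbits, not merely to be distinct. With such a fibre, $G_n$ does not act on $\operatorname{End}(F_x)$, so the smash-product form needs the splitting of $1\to\Gamma^n\to\pi_1^{\mathrm{orb}}\to G_n\to1$ given by the fixed origin, as in the paper's circle example. The paper instead uses the fibre at the origin, where $G_n$ acts directly. It makes your sign heuristic precise as follows: with $V=T_0\mathbb{T}_n$, one has $G_n\subset SO(V)$, and the diagonal action on $V^*\oplus V\cong V\oplus V$ is zero on $\pi_1$ mod $2$. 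Hence it lifts to $\operatorname{Spin}(V\oplus V)$ compatibly with the group law.
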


The theorem concerns the equivariant cotangent category. For the
smooth resolution $Y_n$ we formulate a different comparison.
Let $\mathcal V_n$ be the tautological rank-$n$ bundle, whose fibre
at $Z$ is $H^0(Z,\mathcal O_Z)$, and define
\begin{equation}\label{eq:collision-bulk}
 \mathfrak b_{\mathrm{coll}}(\beta)
    =(-1)^{\langle c_1(\det\mathcal V_n),\beta\rangle},
 \qquad \beta\in H_2(Y_n;\ZZ).
\end{equation}
This character is $1$ on classes coming from $H_2(M_Q;\ZZ)$ and $-1$
on a Hilbert-Chow exceptional line. In the conjecture below,
$\mathcal W_{\mathfrak b_{\mathrm{coll}}}(Y_n)$ denotes the wrapped
Fukaya category with relative spin background
$c_1(\det\mathcal V_n)\bmod2$; its brane data are specified in
\Cref{sec:hilbert-schemes}.

The collision weight $-1$ is motivated by the Hilbert-Chow crepant
resolution correspondence: in the local model, the undeformed orbifold
point corresponds to exceptional quantum parameter $-1$
\cite[Theorem~3.11]{BryanGraber}. The character
$\mathfrak b_{\mathrm{coll}}$ gives the corresponding sign on collision
classes.

\begin{conjecture}\label{conj:hilbert-mirror}
Assume that $n!$ is invertible in $\kk$.
Put $Y_n^\vee=\operatorname{Hilb}^n(\widetilde S_Q)$ over $\kk$.
Then
\[
 \mathcal W_{\mathfrak b_{\mathrm{coll}}}(Y_n)
       \simeq\Perf(Y_n^\vee).
\]
\end{conjecture}

The algebraic side of the conjecture is established in
\Cref{prop:hilbert-mirror-model}:
\[
 \Perf(Y_n^\vee)\simeq\Perf(\Lambda_n^{\boldsymbol1}).
\]
Here $\tau=1$ and all vertex parameters equal $1$. The idempotent
$e_0^{\otimes n}$ is not full, so the Hecke corner does not recover
the category of perfect $\Lambda_n^{\boldsymbol1}$-modules. The remaining
step is the symplectic comparison across the Hilbert-Chow
exceptional locus. For $n=1$, the collision class is trivial and
the conjecture reduces to the trivial-bulk specialisation of
\Cref{cor:unipotent-mirror}.

\Cref{sec:hilbert-schemes} also explains the collision weight $-1$
through the local Hecke relation. For general interaction parameter
$\tau$, the tensor-product algebra must itself be deformed;
constructing the corresponding dg model and computing the wrapped
category remain open.

\subsection{Organisation}

\Cref{sec:neighbourhoods} constructs the Weinstein neighbourhoods
and identifies their surgery descriptions. \Cref{sec:algebras}
computes the associated bulk-deformed algebras.
\Cref{sec:foundations} constructs cofinal Floer complexes in degrees
zero and one. \Cref{sec:surgery} compares their gradings with those
of the surgery algebras, deduces formality, and completes the proof
of \Cref{thm:main} using cocore generation.
\Cref{sec:star-consequences} proves the mirror corollaries, the
higher-rank equivariant theorem and the algebraic side of the
Hilbert-scheme conjecture.

\begin{conventions}
The geometric models are defined over $\C$; Floer complexes and algebras
have coefficients in $\kk$. The planes have their unique spin structures,
trivial rank-one local systems, and compatible bulk trivialisations.
Signed curve counts are interpreted in $\kk$, and wrapping is unstopped.
We fix an arbitrary $\mathfrak{b}$ throughout; the ordinary category is
its trivial specialisation. Products are read from right to left:
$vu$ means $u$ followed by $v$, and $e_jA_F(\boldsymbol q)e_i$ is the
space of morphisms from $L_i$ to $L_j$.

For Liouville and Weinstein domains we follow
\cite[Section~11.1 and Definition~11.10]{CE}. Our sign
convention is $\iota_Z\omega=\lambda$, where $d\lambda=\omega$.
The completion of a Liouville domain $W$ is
\[
 \widehat W=W\cup\bigl([0,\infty)\times\partial W\bigr),
 \qquad \lambda=e^\rho\lambda|_{\partial W}
\]
on the cylindrical end. A Lagrangian disk $D\subset W$ with
$\lambda|_D=0$ near its boundary extends along the Liouville flow
to an exact cylindrical Lagrangian plane $\widehat D\subset\widehat W$.
\end{conventions}

\begin{acknowledgements}
The author thanks Umut Varolg\"une\c{s} for many discussions, which
played an important role in the genesis of this paper, and acknowledges
the papers \cite{EOR}, \cite{KS} and \cite{Pascaleff}, which together
inspired the ideas here. The author also thanks Roman Krutowski for his
comments on an earlier version. The text was mostly written by ChatGPT (Astra)
for Academic Researchers and Claude Fable 5.1. The
statements of the main results, the overall strategy of proof and the
key ideas behind each step are due to the author, who provided the
general set-up, an outline of the proof of \Cref{thm:main}, and precise
statements to be proved in each section. The AI systems produced
detailed arguments from these inputs. The author checked the resulting
manuscript, expanded it with further corollaries, remarks, figures and
connections to the literature, and revised the exposition for
readability. The author accepts full responsibility for the contents of
the paper.
\end{acknowledgements}

\section{Neighbourhoods of singular fibres as Legendrian surgeries}
\label{sec:neighbourhoods}

We construct a Weinstein neighbourhood of the singular fibre and
identify its Legendrian surgery description, with transverse
holomorphic disks completing to the cocores.

Let $\pi:X\to\Delta$ be a proper relatively minimal complex elliptic
fibration with smooth total space and a single singular fibre
\[
 F=\pi^{-1}(0)=\sum_{i=1}^k m_iC_i.
\]
Fix a nowhere-vanishing holomorphic two-form $\Omega$ on $X$ and put
$\omega=\Ree\Omega$. The existence of $\Omega$ forces $F$ to be
nonmultiple, by adjunction and the fact that the normal bundle of
the reduced fibre of a multiple fibre $mD$ has exact order $m$
\cite[Section~15, Lemmas~1 and~3 and Corollary~4]{PetersSurfaces}.
Complex curves are Lagrangian for $\omega$, whose symplectic
orientation agrees with the complex orientation.

Write $F_{\mathrm{red}}=\bigcup_iC_i$, let $\Sigma$ be its singular
locus, and let $\nu_i:\widetilde C_i\to C_i$ be the normalisations.
Choose pairwise disjoint holomorphic disks $D_i$ meeting $F$ only at
points $x_i\in C_i\setminus\Sigma$, transversally to $C_i$. Put
\[
 W_\epsilon=\{|\pi|\le\epsilon\},
 \qquad
 \mu=\log|\pi|:X\setminus F\longrightarrow\R.
\]
Properness implies that the $W_\epsilon$ form a neighbourhood basis of
$F$. For sufficiently small $\epsilon>0$, each $W_\epsilon$ is a compact
manifold with boundary $\{|\pi|=\epsilon\}$, and each
$D_i\cap W_\epsilon$ is a disk.

\Cref{fig:schematic-D4} shows the situation schematically for a fibre of
type $I_0^*$.

\begin{figure}[htbp]
\centering
\begin{tikzpicture}[x=1cm,y=1cm,line width=.6pt]
\draw[line width=.5pt] (-4.2,-3.1) -- (4.2,-3.1);
\node[below] at (0,-3.15) {$0$};
\node[below] at (3.9,-3.15) {$\Delta$};
\fill (0,-3.1) circle (1.3pt);
\draw[->,line width=.5pt] (-4.6,-1.4) -- (-4.6,-2.7);
\node[left] at (-4.6,-2.05) {$\pi$};
\foreach \x in {-2.7,2.7} {
  \draw (\x,0) ellipse (0.5cm and 1.9cm);
  \draw (\x,0) ellipse (0.14cm and 0.5cm);
  \fill (\x,-3.1) circle (1pt);
  \draw[densely dotted,line width=.4pt] (\x,-1.9) -- (\x,-3.1);
}
\draw[line width=.8pt] (0,0) ellipse (0.45cm and 1.7cm);
\node at (0,0.05) {\small $2$};
\draw (-0.69,1.3) circle (0.4cm);  \node at (-0.69,1.3) {\small $1$};
\draw (0.826,0.55) circle (0.4cm); \node at (0.826,0.55) {\small $1$};
\draw (-0.826,-0.55) circle (0.4cm); \node at (-0.826,-0.55) {\small $1$};
\draw (0.69,-1.3) circle (0.4cm);  \node at (0.69,-1.3) {\small $1$};
\draw[densely dotted,line width=.4pt] (0,-1.7) -- (0,-3.1);
\node[above] at (0,1.75) {$F=2C_0+C_1+\dots+C_4$};
\draw[red!75!black,line width=.7pt] (-3.6,1.3) -- (3.6,1.3);
\draw[red!75!black,line width=.7pt] (-3.6,0.55) -- (3.6,0.55);
\draw[red!75!black,line width=.7pt] (-3.6,-0.55) -- (3.6,-0.55);
\draw[red!75!black,line width=.7pt] (-3.6,-1.3) -- (3.6,-1.3);
\node[red!75!black,right] at (3.6,1.3) {\small $D_1$};
\node[red!75!black,right] at (3.6,0.55) {\small $D_2$};
\node[red!75!black,right] at (3.6,-0.55) {\small $D_3$};
\node[red!75!black,right] at (3.6,-1.3) {\small $D_4$};
\draw[red!75!black,line width=.7pt] (-3.6,-0.22) .. controls (-1.2,-0.07) and (1.2,0.07) .. (3.6,0.22);
\draw[red!75!black,line width=.7pt] (-3.6,0.22) .. controls (-1.2,0.07) and (1.2,-0.07) .. (3.6,-0.22);
\node[red!75!black,right] at (3.6,0.0) {\small $D_0$};
\fill[red!75!black] (0,0) circle (1.2pt);
\end{tikzpicture}
\caption{Schematic picture of the fibration near a fibre of type
$I_0^*$. The central component $C_0$ has multiplicity $2$ and the four
other components multiplicity $1$; nearby fibres are tori. The
holomorphic disks $D_1,\dots,D_4$ (red) through the multiplicity-one
components are local sections, whereas the disk $D_0$ through $C_0$
projects to $\Delta$ with degree $2$ and meets each nearby fibre
twice.}
\label{fig:schematic-D4}
\end{figure}

For the normal crossing fibres, the dual graph $\Gamma_F$ has one
vertex for each irreducible component and one edge for each node.
Kodaira's classification \cite{Kodaira63}, see also
\cite[Section~V.7]{BHPV}, gives:
\begin{center}
\begin{tabular}{@{}ll@{}}
\toprule
Fibre type & Dual graph $\Gamma_F$ \\
\midrule
$I_n$, $n\ge1$ & $\widetilde A_{n-1}$ \\
$I_n^*$, $n\ge0$ & $\widetilde D_{n+4}$ \\
$IV^*$ & $\widetilde E_6$ \\
$III^*$ & $\widetilde E_7$ \\
$II^*$ & $\widetilde E_8$ \\
\bottomrule
\end{tabular}
\end{center}
Here $I_1$ is a rational curve with one node, represented by a single
vertex with a loop; $I_2$ has two vertices joined by two edges. In the
remaining normal crossing cases, the components are smooth rational
curves meeting transversally.

The remaining types $II$, $III$, $IV$ have all multiplicities equal to
one and a single singular point $p$. They consist, respectively, of a
rational cuspidal curve, two smooth rational curves meeting at $p$ with
intersection multiplicity two, and three smooth rational curves meeting
pairwise transversally at $p$.

\begin{theorem}\label{thm:neighbourhood-surgery}
For all sufficiently small $\epsilon>0$, $W_\epsilon$ carries a
Weinstein structure $(\lambda,\phi)$ with $d\lambda=\omega$ and
skeleton $F_{\mathrm{red}}$. The form $\lambda$ restricts to zero on
$F_{\mathrm{red}}$ and on each $D_i\cap W_\epsilon$.
Up to Weinstein homotopy, this structure has the following surgery
descriptions, with the completed disks $\widehat D_i$ as the cocores
corresponding to the components $C_i$.
\begin{enumerate}
\item For the normal crossing types, $W_\epsilon$ is the positive
plumbing of disk cotangent bundles of two-spheres according to
$\Gamma_F$, with $D_i\cap W_\epsilon$ corresponding to the cotangent
fibre at $x_i$. Its completion is Legendrian surgery on
\[
 L_{\Gamma_F}\subset
 \#^{b_1(\Gamma_F)}(S^1\times S^2,\xi_{\mathrm{std}}),
\]
shown in \Cref{fig:surgery-In,fig:surgery-Dn,fig:surgery-stars}.
Here $b_1(\Gamma_F)=1$ for $I_n$ and $0$ otherwise.
\item For types $II$, $III$, $IV$, $W_\epsilon$ is obtained from a
Darboux ball by attaching one Weinstein $2$-handle for each component
$C_i$. The attaching link $\Lambda_F=\bigsqcup_i\Lambda_i$ is
Legendrian isotopic to the rainbow closure of
\[
 \sigma_1^3,\qquad \sigma_1^4,\qquad (\sigma_1\sigma_2)^3,
\]
respectively, shown in \Cref{fig:rainbow-fronts}. For $III$ and $IV$,
the isotopy can realise any prescribed correspondence between the
labelled components.
\end{enumerate}
\end{theorem}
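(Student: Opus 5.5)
The plan has three stages: build the Weinstein structure directly from the fibration, deform it to a standard local model near the skeleton, and read off the handle decomposition and attaching link from that model.

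\emph{Liouville form.} Since $H^2(W_\epsilon;\R)\cong H^2(F;\R)$ is spanned by the duals of the $[C_i]$, and $\omega=\Ree\Omega$ vanishes on every complex curve, $[\omega]=0$ on $W_\epsilon$. So $\omega=d\lambda_0$ for some $\lambda_0$. Because $F_{\mathrm{red}}\cup\bigcup_iD_i$ is a union of complex (hence $\omega$-Lagrangian) curves that retracts onto $F_{\mathrm{red}}$, we can subtract $df$ for a suitable function $f$ and arrange $\lambda_0|_{F_{\mathrm{red}}}=0$ and $\lambda_0|_{D_i}=0$ near the points where $D_i$ meets $F_{\mathrm{red}}$. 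To make the Liouville field expanding at the boundary, we use the function $\mu=\log|\pi|$. Away from $F$ the fibration is a smooth torus fibration over the punctured disk, and the transverse disks are Lagrangian sections (or multisections). Choosing action--angle-type coordinates, we can write $\omega$ near $\partial W_\epsilon$ so that $\lambda=\lambda_0+d(g\circ\mu)$, for $g$ convex and increasing, becomes Liouville with $\lambda(\text{gradient of }\mu)>0$. Then $\phi=g\circ\mu$, smoothed across $F$, is a Lyapunov function whose critical set is contained in $F_{\mathrm{red}}$, and the skeleton is $F_{\mathrm{red}}$. Since $d\mu$ vanishes on the fibrewise directions of the holomorphic disks only up to a radial term, the condition $\lambda|_{D_i}=0$ persists after a further $C^1$-small correction supported away from $F$, chosen with the disks fixed. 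This makes $D_i\cap W_\epsilon$ Lagrangian disks transverse to the skeleton that are invariant under the Liouville flow.

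\emph{Normal crossing types.} Here $F_{\mathrm{red}}$ is a configuration of Lagrangian spheres (or a nodal sphere for $I_1$) meeting transversally and positively. Since the orientations are complex, all intersection signs are $+1$. By the Weinstein neighbourhood theorem for plumbings, a neighbourhood of $F_{\mathrm{red}}$ is exact symplectomorphic to the plumbing $P_{\Gamma_F}$ of $D^*S^2$ according to $\Gamma_F$. Under this identification $D_i$ is a Lagrangian disk meeting $C_i$ once transversally and disjoint from the other components, so it is Hamiltonian isotopic, rel the skeleton, to a cotangent fibre. Both $W_\epsilon$ and this neighbourhood have skeleton $F_{\mathrm{red}}$, so interpolating Liouville forms (the standard argument that Weinstein structures with the same skeleton and tubular data are homotopic) gives the Weinstein homotopy. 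The surgery picture is then the standard one. Each vertex contributes a $0$-handle plus a Legendrian unknot. A tree of plumbings gives Legendrian surgery on the link of unknots clasped according to the tree, drawn with the Gompf convention. A cycle ($I_n$) adds one $1$-handle, which accounts for the $S^1\times S^2$ summand. Cocores of $2$-handles in a plumbing are the cotangent fibres.

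\emph{Types II, III, IV, and the main obstacle.} The difficulty is that $F_{\mathrm{red}}$ is singular, not a plumbing. I would use the local structure at the unique singular point $p$. In holomorphic coordinates the fibre near $p$ is $y^2=x^3$, $y(y-x^2)=0$, or $xy(x-y)=0$. Near $p$ the skeleton is therefore the cone on the link of the plane curve singularity, which is the $(2,3)$ torus knot, the $(2,4)$ torus link, or the $(3,3)$ torus link, viewed as a Legendrian in $(S^3,\xi_{\mathrm{std}})$. These links are the rainbow closures of $\sigma_1^3$, $\sigma_1^4$ and $(\sigma_1\sigma_2)^3$ respectively. After a Weinstein homotopy, a small ball around $p$ becomes the Darboux $0$-handle, and the rest of each component $C_i$ is a disk attached along the corresponding component of the link, which is exactly a $2$-handle core.

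The hard step is proving that the link of the singularity, with the contact structure induced by $\lambda$, is Legendrian isotopic to the stated rainbow front with the correct Thurston--Bennequin framing. The framing must match the self-intersection $C_i^2=-2$ (or $0$ for the cusp), so the handle framing should be $\mathrm{tb}-1$. I would first try to realise the local Lagrangian cone explicitly: take $\{y^2=x^3\}$ in $(\C^2,\Ree\,dx\wedge dy)$ with the radial Liouville form, and compute its Legendrian link directly as a front that is a braid closure. The component permutation claim for $III$ and $IV$ would then follow from the cyclic symmetry of the braids $\sigma_1^4$ and $(\sigma_1\sigma_2)^3$, which are central up to a full twist. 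Finally, the cocore statement follows because $D_i$ meets the core $C_i$ once transversally and is flow-invariant, so it is isotopic through such disks to the cocore.
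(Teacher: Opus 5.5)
Your outline has the right overall shape: a Weinstein structure with skeleton $F_{\mathrm{red}}$, a plumbing for the normal crossing types, and, for $II$, $III$, $IV$, a ball at $p$ with $2$-handles attached along the link of the singularity. However, the first stage, which builds the Weinstein structure, does not work as written.

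\emph{The correction term.} Adding $d(g\circ\mu)$ to $\lambda_0$ changes the Liouville field by $-g'(\mu)X_\mu$, where $X_\mu$ is the Hamiltonian vector field of $\mu$. We have $Z\mu=\lambda(X_\mu)$ and $d(g\circ\mu)(X_\mu)=0$. So no choice of $g$ affects whether $Z$ points out of $\{|\pi|=\epsilon\}$. The condition $\lambda(\nabla\mu)>0$ you aim for is metric-dependent and is not the outward-pointing condition.

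\emph{The Lyapunov function.} Extended across $F$, $g\circ\mu$ is constant on $F$, while $Z$ is generally nonzero there. So it is neither Morse nor Lyapunov. It gives no handle decomposition, no index-two zero at $x_i$, and no reason for $D_i\cap W_\epsilon$ to be an unstable manifold.

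\emph{Tangency at the singular point.} For the skeleton to be $F_{\mathrm{red}}$, and a cone near $p$, $Z$ must be tangent to the singular curve at $p$. For $II$ and $III$ the standard radial field is not: the cusp $y^2=x^3$ is not invariant under $(x,y)\mapsto(e^{s/2}x,e^{s/2}y)$. Making $\lambda_0$ vanish on a cusp by subtracting an exact form is also not automatic.

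\emph{What the paper does instead.} The missing idea is the paper's local-to-global construction.
\begin{enumerate}
\item Holomorphic Darboux coordinates in which $F_{\mathrm{red}}$ is weighted homogeneous (\Cref{lem:local-normal-forms}).
\item Weighted Euler Liouville fields with weights $(\tfrac25,\tfrac35)$ and $(\tfrac13,\tfrac23)$. These are tangent to every branch, and $Z\mu$ is bounded below near $p$ because each branch has positive weighted degree.
\item Equivariant branch charts that turn these fields into cotangent models.
\item Cotangent models $\lambda_{\beta_iY_i}$ along the normalisations. Their fibrewise Euler part gives $E\mu=m_i+O(|p|)$, and the vector fields $Y_i$ (sources over $\Sigma$, a sink at $x_i$, saddles) supply the Morse function (\Cref{prop:neighbourhood-weinstein}).
\end{enumerate}

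\emph{The normal crossing step.} You appeal to a ``standard argument'' that Weinstein structures with the same skeleton are Weinstein homotopic. No such statement is available in this generality: interpolation gives at most a Liouville homotopy (cf.\ \Cref{rem:neighbourhood-independence}). The paper avoids the issue by constructing the plumbing structure directly.

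\emph{Types $II$, $III$, $IV$.} You correctly isolate the crux but leave it unresolved, and a direct front computation is not how the paper settles it. Two steps are needed.
\begin{enumerate}
\item The link lies in $\partial B$ with the contact structure of the weighted form. The paper deforms $\lambda_{a_t,b_t}$ to the standard radial form through Liouville forms transverse to the round sphere. This gives a Weinstein homotopy and a contactomorphism to $(S^3,\xi_{\mathrm{std}})$ isotopic to the identity.
\item The paper reads off $\operatorname{tb}(\Lambda_i)=C_i^2+1$ and $\operatorname{lk}(\Lambda_i,\Lambda_j)=C_i\cdot C_j$ from the handle framing. Then $F\cdot C_i=0$ gives $\operatorname{tb}=1$ for the trefoil and $\operatorname{tb}=-1$ for each component of $T(2,4)$ and $T(3,3)$. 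The Legendrian classifications of Etnyre--Honda and Dalton--Etnyre--Traynor show that these invariants, together with the smooth oriented link type, determine the Legendrian link. So it is the rainbow closure, with vanishing rotation numbers.
\end{enumerate}
The component correspondence for $III$ and $IV$ comes from the ordered classification. Your cyclic-rotation argument for permuting components is a reasonable substitute, but only once the identification itself is established.
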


The proof of this theorem occupies the rest of this section.

\begin{figure}[htbp]
\centering
\begin{tabular}{@{}c@{}}\begin{tikzpicture}[x=0.3cm,y=0.5cm,line width=.5pt]
\draw[fill=gray!15] (-1.300,0) ellipse (0.090cm and 0.705cm);\draw[fill=gray!15] (11.900,0) ellipse (0.090cm and 0.705cm);
\draw (-1.000,1.060) -- (4.000,1.060) .. controls (5.200,1.060) and (5.200,0.000) .. (6.400,0.000);
\draw (-1.000,-1.060) -- (4.000,-1.060) .. controls (5.200,-1.060) and (5.200,0.000) .. (6.400,0.000);
\draw (4.400,0.000) .. controls (5.600,0.000) and (5.600,1.060) .. (6.800,1.060) -- (11.600,1.060);
\draw (4.400,0.000) .. controls (5.600,0.000) and (5.600,-1.060) .. (6.800,-1.060) -- (11.600,-1.060);
\node[font=\tiny,inner sep=0pt] at (1.500,0.000) {$0$};
\node[font=\tiny,inner sep=0pt] at (9.200,0.000) {$0$};
\end{tikzpicture}\\[2pt]{\small $I_1$}\end{tabular}
\hspace{10mm}
\begin{tabular}{@{}c@{}}\begin{tikzpicture}[x=0.3cm,y=0.5cm,line width=.5pt]
\draw[fill=gray!15] (-1.300,0) ellipse (0.090cm and 0.705cm);\draw[fill=gray!15] (16.300,0) ellipse (0.090cm and 0.705cm);
\draw (-1.000,1.060) -- (4.000,1.060) .. controls (5.200,1.060) and (5.200,0.000) .. (6.400,0.000);
\draw (-1.000,-1.060) -- (4.000,-1.060) .. controls (5.200,-1.060) and (5.200,0.000) .. (6.400,0.000);
\draw (8.800,0.000) .. controls (10.000,0.000) and (10.000,1.060) .. (11.200,1.060) -- (16.000,1.060);
\draw (8.800,0.000) .. controls (10.000,0.000) and (10.000,-1.060) .. (11.200,-1.060) -- (16.000,-1.060);
\draw (4.400,0.000) .. controls (5.600,0.000) and (5.600,1.060) .. (6.800,1.060) -- (8.400,1.060) .. controls (9.600,1.060) and (9.600,0.000) .. (10.800,0.000);
\draw (4.400,0.000) .. controls (5.600,0.000) and (5.600,-1.060) .. (6.800,-1.060) -- (8.400,-1.060) .. controls (9.600,-1.060) and (9.600,0.000) .. (10.800,0.000);
\node[font=\tiny,inner sep=0pt] at (1.500,0.000) {$0$};
\node[font=\tiny,inner sep=0pt] at (7.600,0.000) {$1$};
\node[font=\tiny,inner sep=0pt] at (13.600,0.000) {$0$};
\end{tikzpicture}\\[2pt]{\small $I_2$}\end{tabular}

\vspace{4mm}

\begin{tabular}{@{}c@{}}\begin{tikzpicture}[x=0.3cm,y=0.5cm,line width=.5pt]
\draw[fill=gray!15] (-1.300,0) ellipse (0.090cm and 0.705cm);\draw[fill=gray!15] (25.100,0) ellipse (0.090cm and 0.705cm);
\draw (-1.000,1.060) -- (4.000,1.060) .. controls (5.200,1.060) and (5.200,0.000) .. (6.400,0.000);
\draw (-1.000,-1.060) -- (4.000,-1.060) .. controls (5.200,-1.060) and (5.200,0.000) .. (6.400,0.000);
\draw (17.600,0.000) .. controls (18.800,0.000) and (18.800,1.060) .. (20.000,1.060) -- (24.800,1.060);
\draw (17.600,0.000) .. controls (18.800,0.000) and (18.800,-1.060) .. (20.000,-1.060) -- (24.800,-1.060);
\draw (4.400,0.000) .. controls (5.600,0.000) and (5.600,1.060) .. (6.800,1.060) -- (8.400,1.060) .. controls (9.600,1.060) and (9.600,0.000) .. (10.800,0.000);
\draw (4.400,0.000) .. controls (5.600,0.000) and (5.600,-1.060) .. (6.800,-1.060) -- (8.400,-1.060) .. controls (9.600,-1.060) and (9.600,0.000) .. (10.800,0.000);
\draw (8.800,0.000) .. controls (10.000,0.000) and (10.000,1.060) .. (11.200,1.060) -- (12.800,1.060) .. controls (14.000,1.060) and (14.000,0.000) .. (15.200,0.000);
\draw (8.800,0.000) .. controls (10.000,0.000) and (10.000,-1.060) .. (11.200,-1.060) -- (12.800,-1.060) .. controls (14.000,-1.060) and (14.000,0.000) .. (15.200,0.000);
\draw (13.200,0.000) .. controls (14.400,0.000) and (14.400,1.060) .. (15.600,1.060) -- (17.200,1.060) .. controls (18.400,1.060) and (18.400,0.000) .. (19.600,0.000);
\draw (13.200,0.000) .. controls (14.400,0.000) and (14.400,-1.060) .. (15.600,-1.060) -- (17.200,-1.060) .. controls (18.400,-1.060) and (18.400,0.000) .. (19.600,0.000);
\node[font=\tiny,inner sep=0pt] at (1.500,0.000) {$0$};
\node[font=\tiny,inner sep=0pt] at (7.600,0.000) {$1$};
\node[font=\tiny,inner sep=0pt] at (12.000,0.000) {$2$};
\node[font=\tiny,inner sep=0pt] at (16.400,0.000) {$3$};
\node[font=\tiny,inner sep=0pt] at (22.400,0.000) {$0$};
\end{tikzpicture}\\[2pt]{\small $I_4$}\end{tabular}
\caption{Legendrian surgery diagrams for the cycle plumbings of type $I_n$, drawn for $n=1,2,4$.}
\label{fig:surgery-In}
\end{figure}

\begin{figure}[htbp]
\centering
\begin{tabular}{@{}c@{}}\begin{tikzpicture}[x=0.3cm,y=0.5cm,line width=.5pt]
\draw (0.000,0.000) .. controls (1.200,0.000) and (1.200,1.300) .. (2.400,1.300) -- (4.000,1.300) .. controls (5.200,1.300) and (5.200,0.000) .. (6.400,0.000);
\draw (0.000,0.000) .. controls (1.200,0.000) and (1.200,-1.300) .. (2.400,-1.300) -- (4.000,-1.300) .. controls (5.200,-1.300) and (5.200,0.000) .. (6.400,0.000);
\draw (4.400,0.000) .. controls (5.600,0.000) and (5.600,1.300) .. (6.800,1.300) -- (8.400,1.300) .. controls (9.600,1.300) and (9.600,0.000) .. (10.800,0.000);
\draw (4.400,0.000) .. controls (5.600,0.000) and (5.600,-1.300) .. (6.800,-1.300) -- (8.400,-1.300) .. controls (9.600,-1.300) and (9.600,0.000) .. (10.800,0.000);
\draw (8.800,0.000) .. controls (10.000,0.000) and (10.000,0.500) .. (11.200,0.500) -- (12.800,0.500) .. controls (14.000,0.500) and (14.000,0.000) .. (15.200,0.000);
\draw (8.800,0.000) .. controls (10.000,0.000) and (10.000,-0.500) .. (11.200,-0.500) -- (12.800,-0.500) .. controls (14.000,-0.500) and (14.000,0.000) .. (15.200,0.000);
\draw (8.000,0.000) .. controls (9.200,0.000) and (9.200,0.900) .. (10.400,0.900) -- (16.200,0.900) .. controls (17.400,0.900) and (17.400,0.000) .. (18.600,0.000);
\draw (8.000,0.000) .. controls (9.200,0.000) and (9.200,-0.900) .. (10.400,-0.900) -- (16.200,-0.900) .. controls (17.400,-0.900) and (17.400,0.000) .. (18.600,0.000);
\draw (-7.200,0.000) .. controls (-6.000,0.000) and (-6.000,0.900) .. (-4.800,0.900) -- (0.400,0.900) .. controls (1.600,0.900) and (1.600,0.000) .. (2.800,0.000);
\draw (-7.200,0.000) .. controls (-6.000,0.000) and (-6.000,-0.900) .. (-4.800,-0.900) -- (0.400,-0.900) .. controls (1.600,-0.900) and (1.600,0.000) .. (2.800,0.000);
\draw (-4.400,0.000) .. controls (-3.200,0.000) and (-3.200,0.500) .. (-2.000,0.500) -- (-0.400,0.500) .. controls (0.800,0.500) and (0.800,0.000) .. (2.000,0.000);
\draw (-4.400,0.000) .. controls (-3.200,0.000) and (-3.200,-0.500) .. (-2.000,-0.500) -- (-0.400,-0.500) .. controls (0.800,-0.500) and (0.800,0.000) .. (2.000,0.000);
\node[font=\tiny,inner sep=0pt] at (3.600,0.000) {$c_0$};
\node[font=\tiny,inner sep=0pt] at (7.200,0.000) {$c_1$};
\node[font=\tiny,inner sep=0pt] at (13.000,0.000) {$\ell_4$};
\node[font=\tiny,inner sep=0pt] at (16.900,0.000) {$\ell_3$};
\node[font=\tiny,inner sep=0pt] at (-2.200,0.000) {$\ell_2$};
\node[font=\tiny,inner sep=0pt] at (-5.800,0.000) {$\ell_1$};
\end{tikzpicture}\\[2pt]{\small $I_1^*$}\end{tabular}

\vspace{4mm}

\begin{tabular}{@{}c@{}}\begin{tikzpicture}[x=0.3cm,y=0.5cm,line width=.5pt]
\draw (0.000,0.000) .. controls (1.200,0.000) and (1.200,1.300) .. (2.400,1.300) -- (4.000,1.300) .. controls (5.200,1.300) and (5.200,0.000) .. (6.400,0.000);
\draw (0.000,0.000) .. controls (1.200,0.000) and (1.200,-1.300) .. (2.400,-1.300) -- (4.000,-1.300) .. controls (5.200,-1.300) and (5.200,0.000) .. (6.400,0.000);
\draw (4.400,0.000) .. controls (5.600,0.000) and (5.600,1.300) .. (6.800,1.300) -- (8.400,1.300) .. controls (9.600,1.300) and (9.600,0.000) .. (10.800,0.000);
\draw (4.400,0.000) .. controls (5.600,0.000) and (5.600,-1.300) .. (6.800,-1.300) -- (8.400,-1.300) .. controls (9.600,-1.300) and (9.600,0.000) .. (10.800,0.000);
\draw (8.800,0.000) .. controls (10.000,0.000) and (10.000,1.300) .. (11.200,1.300) -- (12.800,1.300) .. controls (14.000,1.300) and (14.000,0.000) .. (15.200,0.000);
\draw (8.800,0.000) .. controls (10.000,0.000) and (10.000,-1.300) .. (11.200,-1.300) -- (12.800,-1.300) .. controls (14.000,-1.300) and (14.000,0.000) .. (15.200,0.000);
\draw (13.200,0.000) .. controls (14.400,0.000) and (14.400,0.500) .. (15.600,0.500) -- (17.200,0.500) .. controls (18.400,0.500) and (18.400,0.000) .. (19.600,0.000);
\draw (13.200,0.000) .. controls (14.400,0.000) and (14.400,-0.500) .. (15.600,-0.500) -- (17.200,-0.500) .. controls (18.400,-0.500) and (18.400,0.000) .. (19.600,0.000);
\draw (12.400,0.000) .. controls (13.600,0.000) and (13.600,0.900) .. (14.800,0.900) -- (20.600,0.900) .. controls (21.800,0.900) and (21.800,0.000) .. (23.000,0.000);
\draw (12.400,0.000) .. controls (13.600,0.000) and (13.600,-0.900) .. (14.800,-0.900) -- (20.600,-0.900) .. controls (21.800,-0.900) and (21.800,0.000) .. (23.000,0.000);
\draw (-7.200,0.000) .. controls (-6.000,0.000) and (-6.000,0.900) .. (-4.800,0.900) -- (0.400,0.900) .. controls (1.600,0.900) and (1.600,0.000) .. (2.800,0.000);
\draw (-7.200,0.000) .. controls (-6.000,0.000) and (-6.000,-0.900) .. (-4.800,-0.900) -- (0.400,-0.900) .. controls (1.600,-0.900) and (1.600,0.000) .. (2.800,0.000);
\draw (-4.400,0.000) .. controls (-3.200,0.000) and (-3.200,0.500) .. (-2.000,0.500) -- (-0.400,0.500) .. controls (0.800,0.500) and (0.800,0.000) .. (2.000,0.000);
\draw (-4.400,0.000) .. controls (-3.200,0.000) and (-3.200,-0.500) .. (-2.000,-0.500) -- (-0.400,-0.500) .. controls (0.800,-0.500) and (0.800,0.000) .. (2.000,0.000);
\node[font=\tiny,inner sep=0pt] at (3.600,0.000) {$c_0$};
\node[font=\tiny,inner sep=0pt] at (7.600,0.000) {$c_1$};
\node[font=\tiny,inner sep=0pt] at (11.600,0.000) {$c_2$};
\node[font=\tiny,inner sep=0pt] at (17.400,0.000) {$\ell_4$};
\node[font=\tiny,inner sep=0pt] at (21.300,0.000) {$\ell_3$};
\node[font=\tiny,inner sep=0pt] at (-2.200,0.000) {$\ell_2$};
\node[font=\tiny,inner sep=0pt] at (-5.800,0.000) {$\ell_1$};
\end{tikzpicture}\\[2pt]{\small $I_2^*$}\end{tabular}
\caption{Legendrian surgery diagrams for the plumbings of type $I_n^*$ with $n>0$, drawn for $n=1,2$, with the labels of \Cref{fig:affine-d-quivers}.}
\label{fig:surgery-Dn}
\end{figure}

We construct the Liouville field using weighted radial models near the
singular points and cotangent models along the smooth parts of the
components. We then join these models while preserving
$Z\log|\pi|>0$. This ensures that the resulting field points outward
along every sufficiently small boundary $\{|\pi|=\epsilon\}$.

\begin{lemma}\label{lem:local-normal-forms}
Let $p\in\Sigma$. There are holomorphic coordinates $(x,y)$ centred at
$p$ in which $\Omega=dx\wedge dy$ and $F_{\mathrm{red}}$ is the zero set
of
\[
 g_p=
 \begin{cases}
 xy&\text{at a node},\\
 y^2-x^3&\text{for }II,\\
 y(y-x^2)&\text{for }III,\\
 xy(x-y)&\text{for }IV.
 \end{cases}
\]
Similarly, for each $i$ there are holomorphic coordinates $(u,v)$
centred at $x_i$ with $\Omega=du\wedge dv$, $C_i=\{v=0\}$ and
$D_i=\{u=0\}$.
\end{lemma}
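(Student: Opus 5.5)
The plan is a holomorphic Darboux argument adapted to a prescribed curve germ. I start with arbitrary holomorphic coordinates $(x,y)$ at $p$ in which $F_{\mathrm{red}}$ is the zero set of $g_p$; only afterwards do I correct the two-form.

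The first step uses the analytic classification of plane curve germs. At a node, two smooth transverse branches can be straightened to the coordinate axes, giving $xy$. For type $II$, an ordinary cusp is analytically equivalent to $y^2=x^3$; this is a simple singularity ($A_2$), so it is finitely determined and has no moduli. For type $III$, two smooth branches with contact order two form a tacnode ($A_3$). Straightening one branch to $y=0$ and writing the other as $y=cx^2+\cdots$ with $c\neq0$, a change of coordinates brings the germ to $y(y-x^2)$. For type $IV$, three smooth pairwise transverse branches form a $D_4$ germ. Any three distinct tangent lines can be moved to $x=0$, $y=0$, $x=y$ by a linear map, and $D_4$ is finitely determined, so the germ is equivalent to $xy(x-y)$.

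Once $g_p$ is fixed, $\Omega=h\,dx\wedge dy$ with $h(0)\neq0$. I then look for a biholomorphism $\Phi$ preserving the germ $\{g_p=0\}$ with $\Phi^*(h\,dx\wedge dy)=dx\wedge dy$. This is the main obstacle, since an arbitrary Moser flow will not preserve the curve. I would use quasi-homogeneity. Each $g_p$ is weighted homogeneous for weights $(w_x,w_y)$: these are $(1,1)$, $(2,3)$, $(1,2)$ and $(1,1)$ respectively. Let $E=w_xx\partial_x+w_yy\partial_y$ be the Euler field, which preserves $\{g_p=0\}$. Set $\Omega_t=(1-t)h(0)\,dx\wedge dy+t\,\Omega$; this is nondegenerate near $0$ for $t\in[0,1]$. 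Write $\Omega_1-\Omega_0=(h-h(0))\,dx\wedge dy$. I seek a primitive of the form $\iota_{aE}\,dx\wedge dy$ with $a$ holomorphic, which requires $d(a\,\iota_E\,dx\wedge dy)=(h-h(0))\,dx\wedge dy$. This is a radial ODE along the weighted scaling, solvable because $h-h(0)$ vanishes at the origin. Concretely, one expands in weighted monomials: a monomial of weighted degree $d$ gets coefficient divided by $d+w_x+w_y>0$, which converges. The Moser vector field $X_t$ solving $\iota_{X_t}\Omega_t=-a\,\iota_E\,dx\wedge dy$ is then a multiple of $E$, namely $X_t=-(a/h_t)E$. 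It is tangent to the curve germ and vanishes at $0$, so its time-one flow exists near $0$, fixes $p$, and preserves $F_{\mathrm{red}}$.

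Finally, the constant $h(0)$ is absorbed by a linear rescaling. Rescaling $y$ alone preserves each normal form up to a scalar factor in $g_p$, and for $III$ and $IV$ it keeps the zero set of the same shape; for $II$ and $III$ the weighted scaling can be used to restore the coefficients. If necessary, one composes with a weighted scaling $(x,y)\mapsto(\lambda^{w_x}x,\lambda^{w_y}y)$ together with $y\mapsto cy$, with $\lambda$ and $c$ chosen to fix both the determinant and the equation.

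For the second statement I take coordinates $(u,v)$ near $x_i$ with $C_i=\{v=0\}$ and $D_i=\{u=0\}$, which is possible because the two curves are smooth and transverse. Then $\Omega=h\,du\wedge dv$ with $h(0)\ne0$. Replace $v$ by $\tilde v=\int_0^v h(u,s)\,ds$. This is a local coordinate change since $\partial_v\tilde v=h\neq0$, and it gives $du\wedge d\tilde v=h\,du\wedge dv$. It preserves $\{v=0\}=\{\tilde v=0\}$ and leaves $u$ unchanged, so $D_i=\{u=0\}$ is preserved as well.
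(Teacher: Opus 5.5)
Your proof is correct. Its key idea is the paper's: normalise $\Omega$ by a map that moves each point along its own orbit of the weighted Euler field, and this map preserves $\{g_p=0\}$ because $g_p$ is quasi-homogeneous.

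The paper carries this out in closed form rather than by a Moser flow. It writes $\Omega=f\,dX\wedge dY$ (your $f$ is what you call $h$) and normalises the weights to $(a_p,b_p)$ with $a_p+b_p=1$. It then sets $h(X,Y)=\int_0^1 f(t^{a_p}X,t^{b_p}Y)\,dt$ and $x=Xh^{a_p}$, $y=Yh^{b_p}$.
\begin{itemize}
\item The Jacobian of this substitution is $(1+a_pX\partial_X+b_pY\partial_Y)h=f$. This is the same first-order equation you solve for your function $a$, up to normalising the weights and keeping the constant term; indeed the paper's $h$ equals $f(0)$ plus a multiple of your $a$.
\item The identity $g_p(x,y)=h^{d}g_p(X,Y)$ then gives the zero set immediately.
\end{itemize}
So the paper's formula is the integrated form of your Moser flow. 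It needs no time-dependent vector field and no separate rescaling, because $h(0)=f(0)$ absorbs the constant automatically. Your version is the standard Moser template, just somewhat longer.

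At the points $x_i$, your substitution $\tilde v=\int_0^v h(u,s)\,ds$ is simpler than the paper's reuse of the weighted construction with weights $(\tfrac12,\tfrac12)$, and equally valid. You also justify the curve normal forms through the $A_1$, $A_2$, $A_3$, $D_4$ classification, which the paper simply asserts.

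One step should be cleaned up. Your last paragraph says that rescaling $y$ alone preserves the normal forms. This is false for $II$, $III$ and $IV$: for instance, $y(y-x^2)$ becomes $cy(cy-x^2)$, whose second branch is $\{y=x^2/c\}$. You do not need that step. The weighted scaling $(x',y')=(\lambda^{w_x}x,\lambda^{w_y}y)$ with $\lambda^{w_x+w_y}$ equal to your $h(0)$ gives $dx'\wedge dy'=h(0)\,dx\wedge dy=\Omega$ and $g_p(x',y')=\lambda^{d}g_p(x,y)$, which finishes the argument.
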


\begin{proof}
Choose holomorphic coordinates giving the stated curve equations and
write $\Omega=f(X,Y)\,dX\wedge dY$. At a node and in types $II$,
$III$ and $IV$, respectively, put
\begin{equation}\label{eq:local-weights}
 (a_p,b_p)=(\tfrac12,\tfrac12),\ (\tfrac25,\tfrac35),\
 (\tfrac13,\tfrac23),\ (\tfrac12,\tfrac12).
\end{equation}
For $(a,b)=(a_p,b_p)$, set
\[
 h(X,Y)=\int_0^1 f(t^a X,t^b Y)\,dt,\qquad
 x=Xh^a,\quad y=Yh^b.
\]
Here $h(0)=f(0)\ne0$, so the powers are defined using a local
holomorphic logarithm. Since $(1+aX\partial_X+bY\partial_Y)h=f$, we have
$dx\wedge dy=f\,dX\wedge dY=\Omega$, while
$g_p(x,y)=h^d g_p(X,Y)$, where $d$ is the weighted degree of $g_p$.
Thus $(x,y)$ are the required coordinates. At $x_i$, apply the same
construction with $a=b=\tfrac12$ after straightening the transverse
curves $C_i$ and $D_i$ to the coordinate axes; the rescaling preserves
each axis.
\end{proof}

\FloatBarrier

\begin{figure}[htbp]
\centering
\begin{tabular}{@{}c@{}}\begin{tikzpicture}[x=0.3cm,y=0.5cm,line width=.5pt]
\draw (0.000,0.000) .. controls (1.200,0.000) and (1.200,2.100) .. (2.400,2.100) -- (5.200,2.100) .. controls (6.400,2.100) and (6.400,0.000) .. (7.600,0.000);
\draw (0.000,0.000) .. controls (1.200,0.000) and (1.200,-2.100) .. (2.400,-2.100) -- (5.200,-2.100) .. controls (6.400,-2.100) and (6.400,0.000) .. (7.600,0.000);
\draw (5.600,0.000) .. controls (6.800,0.000) and (6.800,0.500) .. (8.000,0.500) -- (9.600,0.500) .. controls (10.800,0.500) and (10.800,0.000) .. (12.000,0.000);
\draw (5.600,0.000) .. controls (6.800,0.000) and (6.800,-0.500) .. (8.000,-0.500) -- (9.600,-0.500) .. controls (10.800,-0.500) and (10.800,0.000) .. (12.000,0.000);
\draw (4.800,0.000) .. controls (6.000,0.000) and (6.000,0.900) .. (7.200,0.900) -- (13.000,0.900) .. controls (14.200,0.900) and (14.200,0.000) .. (15.400,0.000);
\draw (4.800,0.000) .. controls (6.000,0.000) and (6.000,-0.900) .. (7.200,-0.900) -- (13.000,-0.900) .. controls (14.200,-0.900) and (14.200,0.000) .. (15.400,0.000);
\draw (4.000,0.000) .. controls (5.200,0.000) and (5.200,1.300) .. (6.400,1.300) -- (16.400,1.300) .. controls (17.600,1.300) and (17.600,0.000) .. (18.800,0.000);
\draw (4.000,0.000) .. controls (5.200,0.000) and (5.200,-1.300) .. (6.400,-1.300) -- (16.400,-1.300) .. controls (17.600,-1.300) and (17.600,0.000) .. (18.800,0.000);
\draw (3.200,0.000) .. controls (4.400,0.000) and (4.400,1.700) .. (5.600,1.700) -- (19.800,1.700) .. controls (21.000,1.700) and (21.000,0.000) .. (22.200,0.000);
\draw (3.200,0.000) .. controls (4.400,0.000) and (4.400,-1.700) .. (5.600,-1.700) -- (19.800,-1.700) .. controls (21.000,-1.700) and (21.000,0.000) .. (22.200,0.000);
\node[font=\tiny,inner sep=0pt] at (1.600,0.000) {$c_0$};
\node[font=\tiny,inner sep=0pt] at (9.800,0.000) {$\ell_4$};
\node[font=\tiny,inner sep=0pt] at (13.700,0.000) {$\ell_3$};
\node[font=\tiny,inner sep=0pt] at (17.100,0.000) {$\ell_2$};
\node[font=\tiny,inner sep=0pt] at (20.500,0.000) {$\ell_1$};
\end{tikzpicture}\\[2pt]{\small $I_0^*$ ($\widetilde D_4$)}\end{tabular}

\vspace{4mm}

\begin{tabular}{@{}c@{}}\begin{tikzpicture}[x=0.3cm,y=0.5cm,line width=.5pt]
\draw (0.000,0.000) .. controls (1.200,0.000) and (1.200,1.700) .. (2.400,1.700) -- (4.400,1.700) .. controls (5.600,1.700) and (5.600,0.000) .. (6.800,0.000);
\draw (0.000,0.000) .. controls (1.200,0.000) and (1.200,-1.700) .. (2.400,-1.700) -- (4.400,-1.700) .. controls (5.600,-1.700) and (5.600,0.000) .. (6.800,0.000);
\draw (4.800,0.000) .. controls (6.000,0.000) and (6.000,0.500) .. (7.200,0.500) -- (8.800,0.500) .. controls (10.000,0.500) and (10.000,0.000) .. (11.200,0.000);
\draw (4.800,0.000) .. controls (6.000,0.000) and (6.000,-0.500) .. (7.200,-0.500) -- (8.800,-0.500) .. controls (10.000,-0.500) and (10.000,0.000) .. (11.200,0.000);
\draw (9.200,0.000) .. controls (10.400,0.000) and (10.400,0.500) .. (11.600,0.500) -- (13.200,0.500) .. controls (14.400,0.500) and (14.400,0.000) .. (15.600,0.000);
\draw (9.200,0.000) .. controls (10.400,0.000) and (10.400,-0.500) .. (11.600,-0.500) -- (13.200,-0.500) .. controls (14.400,-0.500) and (14.400,0.000) .. (15.600,0.000);
\draw (4.000,0.000) .. controls (5.200,0.000) and (5.200,0.900) .. (6.400,0.900) -- (18.100,0.900) .. controls (19.300,0.900) and (19.300,0.000) .. (20.500,0.000);
\draw (4.000,0.000) .. controls (5.200,0.000) and (5.200,-0.900) .. (6.400,-0.900) -- (18.100,-0.900) .. controls (19.300,-0.900) and (19.300,0.000) .. (20.500,0.000);
\draw (18.500,0.000) .. controls (19.700,0.000) and (19.700,0.900) .. (20.900,0.900) -- (22.500,0.900) .. controls (23.700,0.900) and (23.700,0.000) .. (24.900,0.000);
\draw (18.500,0.000) .. controls (19.700,0.000) and (19.700,-0.900) .. (20.900,-0.900) -- (22.500,-0.900) .. controls (23.700,-0.900) and (23.700,0.000) .. (24.900,0.000);
\draw (3.200,0.000) .. controls (4.400,0.000) and (4.400,1.300) .. (5.600,1.300) -- (27.400,1.300) .. controls (28.600,1.300) and (28.600,0.000) .. (29.800,0.000);
\draw (3.200,0.000) .. controls (4.400,0.000) and (4.400,-1.300) .. (5.600,-1.300) -- (27.400,-1.300) .. controls (28.600,-1.300) and (28.600,0.000) .. (29.800,0.000);
\draw (27.800,0.000) .. controls (29.000,0.000) and (29.000,1.300) .. (30.200,1.300) -- (31.800,1.300) .. controls (33.000,1.300) and (33.000,0.000) .. (34.200,0.000);
\draw (27.800,0.000) .. controls (29.000,0.000) and (29.000,-1.300) .. (30.200,-1.300) -- (31.800,-1.300) .. controls (33.000,-1.300) and (33.000,0.000) .. (34.200,0.000);
\node[font=\tiny,inner sep=0pt] at (1.600,0.000) {$0$};
\node[font=\tiny,inner sep=0pt] at (8.000,0.000) {$(3,1)$};
\node[font=\tiny,inner sep=0pt] at (13.400,0.000) {$(3,2)$};
\node[font=\tiny,inner sep=0pt] at (17.050,0.000) {$(2,1)$};
\node[font=\tiny,inner sep=0pt] at (22.700,0.000) {$(2,2)$};
\node[font=\tiny,inner sep=0pt] at (26.350,0.000) {$(1,1)$};
\node[font=\tiny,inner sep=0pt] at (32.000,0.000) {$(1,2)$};
\end{tikzpicture}\\[2pt]{\small $IV^*$ ($\widetilde E_6$)}\end{tabular}

\vspace{4mm}

\begin{tabular}{@{}c@{}}\begin{tikzpicture}[x=0.3cm,y=0.5cm,line width=.5pt]
\draw (0.000,0.000) .. controls (1.200,0.000) and (1.200,1.700) .. (2.400,1.700) -- (4.400,1.700) .. controls (5.600,1.700) and (5.600,0.000) .. (6.800,0.000);
\draw (0.000,0.000) .. controls (1.200,0.000) and (1.200,-1.700) .. (2.400,-1.700) -- (4.400,-1.700) .. controls (5.600,-1.700) and (5.600,0.000) .. (6.800,0.000);
\draw (4.800,0.000) .. controls (6.000,0.000) and (6.000,0.500) .. (7.200,0.500) -- (8.800,0.500) .. controls (10.000,0.500) and (10.000,0.000) .. (11.200,0.000);
\draw (4.800,0.000) .. controls (6.000,0.000) and (6.000,-0.500) .. (7.200,-0.500) -- (8.800,-0.500) .. controls (10.000,-0.500) and (10.000,0.000) .. (11.200,0.000);
\draw (4.000,0.000) .. controls (5.200,0.000) and (5.200,0.900) .. (6.400,0.900) -- (13.700,0.900) .. controls (14.900,0.900) and (14.900,0.000) .. (16.100,0.000);
\draw (4.000,0.000) .. controls (5.200,0.000) and (5.200,-0.900) .. (6.400,-0.900) -- (13.700,-0.900) .. controls (14.900,-0.900) and (14.900,0.000) .. (16.100,0.000);
\draw (14.100,0.000) .. controls (15.300,0.000) and (15.300,0.900) .. (16.500,0.900) -- (18.100,0.900) .. controls (19.300,0.900) and (19.300,0.000) .. (20.500,0.000);
\draw (14.100,0.000) .. controls (15.300,0.000) and (15.300,-0.900) .. (16.500,-0.900) -- (18.100,-0.900) .. controls (19.300,-0.900) and (19.300,0.000) .. (20.500,0.000);
\draw (18.500,0.000) .. controls (19.700,0.000) and (19.700,0.900) .. (20.900,0.900) -- (22.500,0.900) .. controls (23.700,0.900) and (23.700,0.000) .. (24.900,0.000);
\draw (18.500,0.000) .. controls (19.700,0.000) and (19.700,-0.900) .. (20.900,-0.900) -- (22.500,-0.900) .. controls (23.700,-0.900) and (23.700,0.000) .. (24.900,0.000);
\draw (3.200,0.000) .. controls (4.400,0.000) and (4.400,1.300) .. (5.600,1.300) -- (27.400,1.300) .. controls (28.600,1.300) and (28.600,0.000) .. (29.800,0.000);
\draw (3.200,0.000) .. controls (4.400,0.000) and (4.400,-1.300) .. (5.600,-1.300) -- (27.400,-1.300) .. controls (28.600,-1.300) and (28.600,0.000) .. (29.800,0.000);
\draw (27.800,0.000) .. controls (29.000,0.000) and (29.000,1.300) .. (30.200,1.300) -- (31.800,1.300) .. controls (33.000,1.300) and (33.000,0.000) .. (34.200,0.000);
\draw (27.800,0.000) .. controls (29.000,0.000) and (29.000,-1.300) .. (30.200,-1.300) -- (31.800,-1.300) .. controls (33.000,-1.300) and (33.000,0.000) .. (34.200,0.000);
\draw (32.200,0.000) .. controls (33.400,0.000) and (33.400,1.300) .. (34.600,1.300) -- (36.200,1.300) .. controls (37.400,1.300) and (37.400,0.000) .. (38.600,0.000);
\draw (32.200,0.000) .. controls (33.400,0.000) and (33.400,-1.300) .. (34.600,-1.300) -- (36.200,-1.300) .. controls (37.400,-1.300) and (37.400,0.000) .. (38.600,0.000);
\node[font=\tiny,inner sep=0pt] at (1.600,0.000) {$0$};
\node[font=\tiny,inner sep=0pt] at (9.000,0.000) {$(1,1)$};
\node[font=\tiny,inner sep=0pt] at (12.650,0.000) {$(3,1)$};
\node[font=\tiny,inner sep=0pt] at (17.300,0.000) {$(3,2)$};
\node[font=\tiny,inner sep=0pt] at (22.700,0.000) {$(3,3)$};
\node[font=\tiny,inner sep=0pt] at (26.350,0.000) {$(2,1)$};
\node[font=\tiny,inner sep=0pt] at (31.000,0.000) {$(2,2)$};
\node[font=\tiny,inner sep=0pt] at (36.400,0.000) {$(2,3)$};
\end{tikzpicture}\\[2pt]{\small $III^*$ ($\widetilde E_7$)}\end{tabular}

\vspace{4mm}

\begin{tabular}{@{}c@{}}\begin{tikzpicture}[x=0.3cm,y=0.5cm,line width=.5pt]
\draw (0.000,0.000) .. controls (1.200,0.000) and (1.200,1.700) .. (2.400,1.700) -- (4.400,1.700) .. controls (5.600,1.700) and (5.600,0.000) .. (6.800,0.000);
\draw (0.000,0.000) .. controls (1.200,0.000) and (1.200,-1.700) .. (2.400,-1.700) -- (4.400,-1.700) .. controls (5.600,-1.700) and (5.600,0.000) .. (6.800,0.000);
\draw (4.800,0.000) .. controls (6.000,0.000) and (6.000,0.500) .. (7.200,0.500) -- (8.800,0.500) .. controls (10.000,0.500) and (10.000,0.000) .. (11.200,0.000);
\draw (4.800,0.000) .. controls (6.000,0.000) and (6.000,-0.500) .. (7.200,-0.500) -- (8.800,-0.500) .. controls (10.000,-0.500) and (10.000,0.000) .. (11.200,0.000);
\draw (4.000,0.000) .. controls (5.200,0.000) and (5.200,0.900) .. (6.400,0.900) -- (13.700,0.900) .. controls (14.900,0.900) and (14.900,0.000) .. (16.100,0.000);
\draw (4.000,0.000) .. controls (5.200,0.000) and (5.200,-0.900) .. (6.400,-0.900) -- (13.700,-0.900) .. controls (14.900,-0.900) and (14.900,0.000) .. (16.100,0.000);
\draw (14.100,0.000) .. controls (15.300,0.000) and (15.300,0.900) .. (16.500,0.900) -- (18.100,0.900) .. controls (19.300,0.900) and (19.300,0.000) .. (20.500,0.000);
\draw (14.100,0.000) .. controls (15.300,0.000) and (15.300,-0.900) .. (16.500,-0.900) -- (18.100,-0.900) .. controls (19.300,-0.900) and (19.300,0.000) .. (20.500,0.000);
\draw (3.200,0.000) .. controls (4.400,0.000) and (4.400,1.300) .. (5.600,1.300) -- (23.000,1.300) .. controls (24.200,1.300) and (24.200,0.000) .. (25.400,0.000);
\draw (3.200,0.000) .. controls (4.400,0.000) and (4.400,-1.300) .. (5.600,-1.300) -- (23.000,-1.300) .. controls (24.200,-1.300) and (24.200,0.000) .. (25.400,0.000);
\draw (23.400,0.000) .. controls (24.600,0.000) and (24.600,1.300) .. (25.800,1.300) -- (27.400,1.300) .. controls (28.600,1.300) and (28.600,0.000) .. (29.800,0.000);
\draw (23.400,0.000) .. controls (24.600,0.000) and (24.600,-1.300) .. (25.800,-1.300) -- (27.400,-1.300) .. controls (28.600,-1.300) and (28.600,0.000) .. (29.800,0.000);
\draw (27.800,0.000) .. controls (29.000,0.000) and (29.000,1.300) .. (30.200,1.300) -- (31.800,1.300) .. controls (33.000,1.300) and (33.000,0.000) .. (34.200,0.000);
\draw (27.800,0.000) .. controls (29.000,0.000) and (29.000,-1.300) .. (30.200,-1.300) -- (31.800,-1.300) .. controls (33.000,-1.300) and (33.000,0.000) .. (34.200,0.000);
\draw (32.200,0.000) .. controls (33.400,0.000) and (33.400,1.300) .. (34.600,1.300) -- (36.200,1.300) .. controls (37.400,1.300) and (37.400,0.000) .. (38.600,0.000);
\draw (32.200,0.000) .. controls (33.400,0.000) and (33.400,-1.300) .. (34.600,-1.300) -- (36.200,-1.300) .. controls (37.400,-1.300) and (37.400,0.000) .. (38.600,0.000);
\draw (36.600,0.000) .. controls (37.800,0.000) and (37.800,1.300) .. (39.000,1.300) -- (40.600,1.300) .. controls (41.800,1.300) and (41.800,0.000) .. (43.000,0.000);
\draw (36.600,0.000) .. controls (37.800,0.000) and (37.800,-1.300) .. (39.000,-1.300) -- (40.600,-1.300) .. controls (41.800,-1.300) and (41.800,0.000) .. (43.000,0.000);
\node[font=\tiny,inner sep=0pt] at (1.600,0.000) {$0$};
\node[font=\tiny,inner sep=0pt] at (9.000,0.000) {$(1,1)$};
\node[font=\tiny,inner sep=0pt] at (12.650,0.000) {$(2,1)$};
\node[font=\tiny,inner sep=0pt] at (18.300,0.000) {$(2,2)$};
\node[font=\tiny,inner sep=0pt] at (21.950,0.000) {$(3,1)$};
\node[font=\tiny,inner sep=0pt] at (26.600,0.000) {$(3,2)$};
\node[font=\tiny,inner sep=0pt] at (31.000,0.000) {$(3,3)$};
\node[font=\tiny,inner sep=0pt] at (35.400,0.000) {$(3,4)$};
\node[font=\tiny,inner sep=0pt] at (40.800,0.000) {$(3,5)$};
\end{tikzpicture}\\[2pt]{\small $II^*$ ($\widetilde E_8$)}\end{tabular}
\caption{Legendrian surgery diagrams for the star-shaped plumbings of types $I_0^*$, $IV^*$, $III^*$ and $II^*$.}
\label{fig:surgery-stars}
\end{figure}

\begin{figure}[ht]
\centering
\begin{tikzpicture}[x=0.62cm,y=1cm,line width=.6pt]
\draw[black] (0,0.550) .. controls (0.5,0.550) and (0.5,1.100) .. (1,1.100);
\draw[black] (0,1.100) .. controls (0.5,1.100) and (0.5,0.550) .. (1,0.550);
\draw[black] (1,0.550) .. controls (1.5,0.550) and (1.5,1.100) .. (2,1.100);
\draw[black] (1,1.100) .. controls (1.5,1.100) and (1.5,0.550) .. (2,0.550);
\draw[black] (2,0.550) .. controls (2.5,0.550) and (2.5,1.100) .. (3,1.100);
\draw[black] (2,1.100) .. controls (2.5,1.100) and (2.5,0.550) .. (3,0.550);
\draw[black] (0,1.650) -- (3,1.650);
\draw[black] (0,2.200) -- (3,2.200);
\draw[black] (3,0.550) .. controls (4.265,0.550) and (4.610,1.375) .. (5.300,1.375) .. controls (4.610,1.375) and (4.265,2.200) .. (3,2.200);
\draw[black] (0,0.550) .. controls (-1.265,0.550) and (-1.610,1.375) .. (-2.300,1.375) .. controls (-1.610,1.375) and (-1.265,2.200) .. (0,2.200);
\draw[black] (3,1.100) .. controls (3.770,1.100) and (3.980,1.375) .. (4.400,1.375) .. controls (3.980,1.375) and (3.770,1.650) .. (3,1.650);
\draw[black] (0,1.100) .. controls (-0.770,1.100) and (-0.980,1.375) .. (-1.400,1.375) .. controls (-0.980,1.375) and (-0.770,1.650) .. (0,1.650);
\node[font=\small] at (1.5,2.650) {$II$: $\sigma_1^3$};
\end{tikzpicture}
\hspace{8mm}
\begin{tikzpicture}[x=0.62cm,y=1cm,line width=.6pt]
\draw[red!75!black] (0,0.550) .. controls (0.5,0.550) and (0.5,1.100) .. (1,1.100);
\draw[blue!75!black] (0,1.100) .. controls (0.5,1.100) and (0.5,0.550) .. (1,0.550);
\draw[blue!75!black] (1,0.550) .. controls (1.5,0.550) and (1.5,1.100) .. (2,1.100);
\draw[red!75!black] (1,1.100) .. controls (1.5,1.100) and (1.5,0.550) .. (2,0.550);
\draw[red!75!black] (2,0.550) .. controls (2.5,0.550) and (2.5,1.100) .. (3,1.100);
\draw[blue!75!black] (2,1.100) .. controls (2.5,1.100) and (2.5,0.550) .. (3,0.550);
\draw[blue!75!black] (3,0.550) .. controls (3.5,0.550) and (3.5,1.100) .. (4,1.100);
\draw[red!75!black] (3,1.100) .. controls (3.5,1.100) and (3.5,0.550) .. (4,0.550);
\draw[blue!75!black] (0,1.650) -- (4,1.650);
\draw[red!75!black] (0,2.200) -- (4,2.200);
\draw[red!75!black] (4,0.550) .. controls (5.265,0.550) and (5.610,1.375) .. (6.300,1.375) .. controls (5.610,1.375) and (5.265,2.200) .. (4,2.200);
\draw[red!75!black] (0,0.550) .. controls (-1.265,0.550) and (-1.610,1.375) .. (-2.300,1.375) .. controls (-1.610,1.375) and (-1.265,2.200) .. (0,2.200);
\draw[blue!75!black] (4,1.100) .. controls (4.770,1.100) and (4.980,1.375) .. (5.400,1.375) .. controls (4.980,1.375) and (4.770,1.650) .. (4,1.650);
\draw[blue!75!black] (0,1.100) .. controls (-0.770,1.100) and (-0.980,1.375) .. (-1.400,1.375) .. controls (-0.980,1.375) and (-0.770,1.650) .. (0,1.650);
\node[font=\small] at (2.0,2.650) {$III$: $\sigma_1^4$};
\end{tikzpicture}

\vspace{5mm}

\begin{tikzpicture}[x=0.62cm,y=1cm,line width=.6pt]
\draw[black] (0,0.550) .. controls (0.5,0.550) and (0.5,1.100) .. (1,1.100);
\draw[red!75!black] (0,1.100) .. controls (0.5,1.100) and (0.5,0.550) .. (1,0.550);
\draw[blue!75!black] (0,1.650) -- (1,1.650);
\draw[red!75!black] (1,0.550) -- (2,0.550);
\draw[black] (1,1.100) .. controls (1.5,1.100) and (1.5,1.650) .. (2,1.650);
\draw[blue!75!black] (1,1.650) .. controls (1.5,1.650) and (1.5,1.100) .. (2,1.100);
\draw[red!75!black] (2,0.550) .. controls (2.5,0.550) and (2.5,1.100) .. (3,1.100);
\draw[blue!75!black] (2,1.100) .. controls (2.5,1.100) and (2.5,0.550) .. (3,0.550);
\draw[black] (2,1.650) -- (3,1.650);
\draw[blue!75!black] (3,0.550) -- (4,0.550);
\draw[red!75!black] (3,1.100) .. controls (3.5,1.100) and (3.5,1.650) .. (4,1.650);
\draw[black] (3,1.650) .. controls (3.5,1.650) and (3.5,1.100) .. (4,1.100);
\draw[blue!75!black] (4,0.550) .. controls (4.5,0.550) and (4.5,1.100) .. (5,1.100);
\draw[black] (4,1.100) .. controls (4.5,1.100) and (4.5,0.550) .. (5,0.550);
\draw[red!75!black] (4,1.650) -- (5,1.650);
\draw[black] (5,0.550) -- (6,0.550);
\draw[blue!75!black] (5,1.100) .. controls (5.5,1.100) and (5.5,1.650) .. (6,1.650);
\draw[red!75!black] (5,1.650) .. controls (5.5,1.650) and (5.5,1.100) .. (6,1.100);
\draw[blue!75!black] (0,2.200) -- (6,2.200);
\draw[red!75!black] (0,2.750) -- (6,2.750);
\draw[black] (0,3.300) -- (6,3.300);
\draw[black] (6,0.550) .. controls (7.760,0.550) and (8.240,1.925) .. (9.200,1.925) .. controls (8.240,1.925) and (7.760,3.300) .. (6,3.300);
\draw[black] (0,0.550) .. controls (-1.760,0.550) and (-2.240,1.925) .. (-3.200,1.925) .. controls (-2.240,1.925) and (-1.760,3.300) .. (0,3.300);
\draw[red!75!black] (6,1.100) .. controls (7.265,1.100) and (7.610,1.925) .. (8.300,1.925) .. controls (7.610,1.925) and (7.265,2.750) .. (6,2.750);
\draw[red!75!black] (0,1.100) .. controls (-1.265,1.100) and (-1.610,1.925) .. (-2.300,1.925) .. controls (-1.610,1.925) and (-1.265,2.750) .. (0,2.750);
\draw[blue!75!black] (6,1.650) .. controls (6.770,1.650) and (6.980,1.925) .. (7.400,1.925) .. controls (6.980,1.925) and (6.770,2.200) .. (6,2.200);
\draw[blue!75!black] (0,1.650) .. controls (-0.770,1.650) and (-0.980,1.925) .. (-1.400,1.925) .. controls (-0.980,1.925) and (-0.770,2.200) .. (0,2.200);
\node[font=\small] at (3.0,3.750) {$IV$: $(\sigma_1\sigma_2)^3$};
\end{tikzpicture}
\caption{Front projections of the attaching links $\Lambda_{II}$, $\Lambda_{III}$ and $\Lambda_{IV}$: the rainbow closures of $\sigma_1^3$, $\sigma_1^4$ and $(\sigma_1\sigma_2)^3$.}
\label{fig:rainbow-fronts}
\end{figure}

Each branch $B$ is defined by an irreducible factor $g_B$ of $g_p$,
which is quasi-homogeneous of positive degree $d_B$ for the weights
\eqref{eq:local-weights}.

For real numbers $a,b$ with $a+b=1$ put
\[
 \xi_{a,b}=ax\,\partial_x+by\,\partial_y .
\]
Consider the vector field $Z_{a,b}=\xi_{a,b}+\overline{\xi_{a,b}}$
on $\C^2$ generating the flow $(x,y)\mapsto(e^{as}x,e^{bs}y)$. Since
$\iota_{\bar\xi}\Omega=0$,
\begin{equation}\label{eq:linear-liouville}
 \lambda_{a,b}:=\iota_{Z_{a,b}}\omega=\Ree(ax\,dy-by\,dx),
 \qquad d\lambda_{a,b}=\omega .
\end{equation}
In the real coordinates
\begin{equation}\label{eq:real-cotangent-coordinates}
 q=(\Ree x,\Im x),\qquad p=(-\Ree y,\Im y)
\end{equation}
we have $\omega=dp\wedge dq$, and a direct computation gives
\begin{equation}\label{eq:linear-cotangent-form}
 \lambda_{a,b}=p\,dq-d\bigl(a\langle p,q\rangle\bigr),
 \qquad Z_{a,b}=a\,q\,\partial_q+b\,p\,\partial_p .
\end{equation}

This is a special case of the following cotangent model. Let $S$ be a
surface and $V$ a vector field on $S$. In local cotangent coordinates
$(q,p)$, let $E=p\,\partial_p$ be the fibrewise Euler field and $L_V$
the cotangent lift of $V$. With $\omega=dp\wedge dq$, put
\[
 \lambda_V=p\,dq-d\langle p,V(q)\rangle,\qquad
 Z_V=E+L_V.
\]

Writing $V=V^1(q)\,\partial_{q_1}+V^2(q)\,\partial_{q_2}$, we have
\[
 L_V=V^1(q)\frac{\partial}{\partial q_1}
       +V^2(q)\frac{\partial}{\partial q_2}
 -\left(p_1\frac{\partial V^1}{\partial q_1}
         +p_2\frac{\partial V^2}{\partial q_1}\right)
         \frac{\partial}{\partial p_1}
  -\left(p_1\frac{\partial V^1}{\partial q_2}
         +p_2\frac{\partial V^2}{\partial q_2}\right)
         \frac{\partial}{\partial p_2}.
\]
One checks $\iota_E\omega=p\,dq$ and
$\iota_{L_V}\omega=-d\langle p,V\rangle$, so $Z_V$ is the Liouville
field of $\lambda_V$. Along the zero section $Z_V=V$, and $\lambda_V$
vanishes on the zero section and on the cotangent fibres over zeros of
$V$. By \eqref{eq:linear-cotangent-form},
$\lambda_{a,b}=\lambda_V$ for $V=a\,q\,\partial_q$. For a function
$\beta$ on $S$,
\begin{equation}\label{eq:cutoff-liouville}
 Z_{\beta V}=\beta Z_V+(1-\beta)E-\langle p,V\rangle\,
 \nabla\beta\cdot\partial_p ,
\end{equation}
where $\nabla\beta\cdot\partial_p=\sum_j(\partial_{q_j}\beta)\,
\partial_{p_j}$.

We next straighten each branch at a singular point equivariantly. For
$0<\eta\le\frac12$ and $k>0$ write
\[
 T_k(\eta)=\{(u,v)\in\C^2:0<|u|<1,\ |v|<\eta|u|^k\}.
\]

\begin{lemma}\label{lem:branch-coordinates}
Let $B$ be a branch of $\{g_p=0\}$ at $p$, in the coordinates of
\Cref{lem:local-normal-forms}, and let $\Phi_B$, $c_B$ and $k_B$ be
given by the table below. Then $\Phi_B$ is injective on
$T_{k_B}(\eta)$, $\Phi_B^*(dx\wedge dy)=du\wedge dv$, the curve
$u\mapsto\Phi_B(u,0)$ parametrises $B\setminus\{p\}$ near $p$ and is the
normalisation of $B$, and
\[
 (\Phi_B)^*\xi_{a_p,b_p}=c_Bu\,\partial_u+(1-c_B)\,v\,\partial_v .
\]
\[
\begin{array}{c|c|c|c|c}
 \text{type}&B&\Phi_B(u,v)&c_B&k_B\\\hline
 \text{node}&\{y=0\}&(u,v)&\tfrac12&1\\
 \text{node}&\{x=0\}&(-v,u)&\tfrac12&1\\
 II&\{y^2=x^3\}&\bigl(u^2,\ u^3+v/2u\bigr)&\tfrac15&4\\
 III&\{y=0\}&(u,v)&\tfrac13&2\\
 III&\{y=x^2\}&(u,\ u^2+v)&\tfrac13&2\\
 IV&\{y=0\}&(u,v)&\tfrac12&1\\
 IV&\{x=0\}&(-v,u)&\tfrac12&1\\
 IV&\{y=x\}&(u,\ u+v)&\tfrac12&1
\end{array}
\]
Moreover, on $T_{k_B}(\eta)$,
\begin{equation}\label{eq:branch-factor-bounds}
 \bigl|v\,\partial_v\log(g_B\circ\Phi_B)-1\bigr|\le2\eta,
 \qquad
 \bigl|v\,\partial_v\log(g_{B'}\circ\Phi_B)\bigr|\le2\eta
 \quad(B'\ne B),
\end{equation}
and each $g_{B'}\circ\Phi_B$ with $B'\ne B$ is nonvanishing.
The expression $v\,\partial_v\log(g_B\circ\Phi_B)$ in the first
estimate is interpreted by its removable extension at $v=0$.
\end{lemma}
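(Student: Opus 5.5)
The lemma is a finite list of explicit checks, and I would verify them row by row. The claims fall into four groups: the volume condition, the equivariance identity, the parametrisation and injectivity claims, and the estimates \eqref{eq:branch-factor-bounds}.

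For the volume condition I compute $d x\wedge dy$ after pulling back. For the linear rows $(u,v)$ and $(-v,u)$ this is immediate, and $(u,u^2+v)$ and $(u,u+v)$ are shears, so the determinant is $1$. For $II$ one has $dx=2u\,du$ and $dy=(3u^2-v/2u^2)\,du+(1/2u)\,dv$, so $dx\wedge dy=2u\cdot\frac1{2u}\,du\wedge dv=du\wedge dv$.

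For the equivariance identity I use the criterion that a map intertwining two linear flows intertwines their generators. It therefore suffices to check
\[
\Phi_B(e^{c_Bs}u,e^{(1-c_B)s}v)=(e^{a_ps}x,e^{b_ps}y),
\]
which is a weighted-homogeneity check.
\begin{itemize}
\item For $II$, $u$ has weight $\tfrac15$ and $v$ weight $\tfrac45$, so $u^2$ has weight $\tfrac25$ and both $u^3$ and $v/u$ have weight $\tfrac35$.
\item For $III$, $u$ has weight $\tfrac13$ and $v$ weight $\tfrac23$, matching $y=x^2$.
\item For the node and $IV$ the weights are all $\tfrac12$. Here $(-v,u)$ is compatible because both coordinates have equal weight.
\end{itemize}
Parametrisation at $v=0$ is also immediate: $u\mapsto(u^2,u^3)$ is the standard normalisation of the cusp, and the other branches are smooth graphs.

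Injectivity on $T_{k_B}(\eta)$ is automatic except for $II$, since the other maps are global linear maps or shears. For $II$, equality of first coordinates forces $u'=\pm u$. If $u'=-u$, equality of second coordinates gives $v+v'=-4u^4$. This contradicts $|v|,|v'|<\eta|u|^4\le\tfrac12|u|^4$.

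For the estimates I compute $g_{B'}\circ\Phi_B$ explicitly and bound $v\,\partial_v\log$ using $|v|<\eta|u|^{k_B}$.
\begin{itemize}
\item For $II$, $g_B\circ\Phi_B=(u^3+v/2u)^2-u^6=v\bigl(u^2+v/4u^2\bigr)$. Then $v\partial_v\log-1=\frac{v/4u^2}{u^2+v/4u^2}$, whose modulus is at most $\frac{\eta/4}{1-\eta/4}\le2\eta$. This also exhibits the removable extension at $v=0$.
\item For $III$ on $\{y=0\}$, the other factor is $v-u^2$. It is nonzero since $|v|<\eta|u|^2<|u|^2$, and $|v/(v-u^2)|\le\eta/(1-\eta)\le2\eta$ as $\eta\le\tfrac12$. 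The branch $\{y=x^2\}$ is symmetric, with other factor $u^2+v$.
\item For the node and $IV$ the other factors are $u$, $-v$, $u+v$, $u-v$, etc., composed suitably. Those independent of $v$ contribute $0$, and those of the form $u\pm v$ give $|v/(u\pm v)|\le\eta/(1-\eta)$. The nonvanishing follows from $|v|<|u|$.
\end{itemize}
The own-branch factor in every case other than $II$ is exactly $\pm v$, giving $v\partial_v\log=1$.

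None of these steps is deep. The only place needing care is the type $II$ row, where the singular chart $v/2u$ must be handled through the exponent $k_B=4$ to obtain both injectivity and the $2\eta$ bound.
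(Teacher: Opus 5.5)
Your proposal is correct and follows essentially the same route as the paper: the Jacobian and weight identities checked by substitution, the cusp injectivity via $u'=-u$, $v+v'=-4u^4$ contradicting $|v|,|v'|<\eta|u|^4$, and the estimates from the explicit factorisations ($g_B\circ\Phi_B=\pm v$, or $v\,(u^2+v/4u^2)$ for the cusp) together with $|v|/|u^j\pm v|\le\eta/(1-\eta)\le2\eta$. Deriving the equivariance identity from intertwining of the linear flows is only a repackaging of the paper's direct substitution.
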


\begin{proof}
The Jacobian determinants are one, and the weight identities follow by
substitution. Injectivity is immediate except for the cusp chart. There,
equality of the images of two distinct points forces
\[
 u'=-u,\qquad v+v'=-4u^4,
\]
contradicting $|v|,|v'|<\eta|u|^4$.

For the estimates, $g_B\circ\Phi_B=\pm v$, except in the cusp case,
where
\[
 (y^2-x^3)\circ\Phi_B=u^2v\left(1+\frac{v}{4u^4}\right).
\]
The remaining factors are, up to sign, $u$ or $u^j\pm v$, with $j=1,2$.
Their nonvanishing and the bounds follow from these expressions and
\[
 \frac{|v|}{|u^j\pm v|}\le\frac{\eta}{1-\eta}\le2\eta.
\]
\end{proof}

In the real coordinates \eqref{eq:real-cotangent-coordinates} attached to
$(u,v)$, \eqref{eq:linear-cotangent-form} and \Cref{lem:branch-coordinates}
give
\begin{equation}\label{eq:branch-cotangent-model}
 \Phi_B^*\omega=dp\wedge dq,\qquad
 \Phi_B^*\lambda_{a_p,b_p}=\lambda_{V_B},\qquad
 V_B=c_B\,q\,\partial_q .
\end{equation}
Thus near each branch the linear Liouville structure at $p$ is the
cotangent model of a radial vector field on the branch.

\begin{proposition}\label{prop:neighbourhood-weinstein}
For all sufficiently small $\epsilon>0$ there is a Weinstein structure
$(\lambda,\phi)$ on $W_\epsilon$ with $d\lambda=\omega$ such that:
\begin{enumerate}
\item $\lambda$ vanishes on $F_{\mathrm{red}}$ and on
each $D_i\cap W_\epsilon$, and $Z\mu>0$ on $W_\epsilon\setminus F$;
\item the zeros of $Z$ are the points of $\Sigma$, of index zero, the
points $x_i$, of index two, and $e_i-1$ points of index one on each
$C_i\setminus\Sigma$, where $e_i$ is the number of points of
$\widetilde C_i$ lying over $\Sigma$;
\item near $p\in\Sigma$, in the coordinates of
\Cref{lem:local-normal-forms}, $\lambda=\lambda_{a_p,b_p}$ and
$\phi=|x|^2+|y|^2$;
\item the skeleton is $F_{\mathrm{red}}$, the unstable manifold of $x_i$
is $D_i\cap W_\epsilon$, and all stable manifolds are contained in
$F_{\mathrm{red}}$.
\end{enumerate}
\end{proposition}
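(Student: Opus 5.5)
The plan is to build $\lambda$ by patching local models: the linear weighted-radial model at singular points, and the cotangent model $\lambda_V$ along the smooth part of each component.

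\emph{Step 1: local models at singular points.} Near each $p\in\Sigma$, use the coordinates of \Cref{lem:local-normal-forms} and take $\lambda=\lambda_{a_p,b_p}$ on a small ball $U_p$. Its Liouville field $Z_{a,b}$ generates $(x,y)\mapsto(e^{as}x,e^{bs}y)$, and $g_p$ is quasi-homogeneous of degree $d>0$. Hence $Z\log|g_p|=d>0$, so $Z_{a_p,b_p}$ is gradient-like for $\phi=|x|^2+|y|^2$ with a single index-zero zero at $p$. Since $\pi=u\cdot\prod g_B^{m_B}$ with $u$ a unit, $Z\mu=\sum m_Bd_B+Z\log|u|$, and the error term is $O(|x|+|y|)$. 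Shrinking $U_p$ therefore gives $Z\mu>0$ there. The form $\lambda_{a,b}$ vanishes on each branch, because the branches are $Z$-invariant holomorphic, hence Lagrangian, curves.

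\emph{Step 2: the cotangent model on components.} By \eqref{eq:branch-cotangent-model}, on each branch chart $T_{k_B}(\eta)$ the form $\lambda_{a_p,b_p}$ equals the cotangent model $\lambda_{V_B}$ of the radial field $V_B=c_Bq\partial_q$ on the normalisation. Next I identify a neighbourhood of $C_i\setminus\Sigma$ symplectically with a neighbourhood of the zero section in $T^*(C_i\setminus\Sigma)$, by the Weinstein neighbourhood theorem. I require this identification to restrict to the branch charts near $\Sigma$, and to the $(u,v)$-chart of \Cref{lem:local-normal-forms} near $x_i$, with $D_i$ as the cotangent fibre. On $\widetilde C_i\cong S^2$ (or a torus-free rational normalisation) I choose a gradient-like vector field $V_i$ with the following properties: it equals $V_B$ near each preimage of $\Sigma$, which are sources, so index $0$ zeros; it has a single sink at $x_i$ with $V_i=-\tfrac12 q\partial_q$-type behaviour; and it has $e_i-1$ saddles. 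This count is consistent with the Euler characteristic $2=e_i-(e_i-1)+1$. In the chart at $x_i$, I need the local form $\lambda=\Ree(\tfrac12 u\,dv-\tfrac12 v\,du)$ adjusted so that the $C_i$ direction contracts. Since $Z=E+L_V$ with $V$ attracting along $C_i$, the fibre direction expands and $x_i$ has index two. Set $\lambda=\lambda_{V_i}$ there. Then $\lambda$ vanishes on the zero section and on the fibre $D_i$ over the zero $x_i$. The dynamics is as follows. Along $F_{\mathrm{red}}$ we have $Z=V_i$, so the stable manifolds lie in $F_{\mathrm{red}}$. The Euler part $E$ expands the fibres, so the unstable manifold of $x_i$ is the fibre $D_i$, and the skeleton is $F_{\mathrm{red}}$.

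\emph{Step 3: the condition $Z\mu>0$, which is the main obstacle.} In the cotangent model, $\mu=m_i\log|v|+(\text{terms from other branches and a unit})$. Write $Z_V=E+L_V$. The term $E\log|v|=1$, since the $p$-coordinate is $v$. The term $L_V\log|v|$ is bounded by a derivative of $V$, which is not small in general. The fix is to scale: replace $V$ by $\delta V$ away from the singular regions, interpolating with cutoffs $\beta$ via \eqref{eq:cutoff-liouville}. The extra term $-\langle p,V\rangle\nabla\beta\cdot\partial_p$ is $O(|p|)$, so it is small in a thin enough neighbourhood. Multiplying $V$ by a positive function keeps its zeros and their indices, so gradient-likeness is preserved. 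In the transition annuli near $p$, use \eqref{eq:branch-factor-bounds} to control $v\partial_v\log g_{B'}$ by $2\eta$. Near the singular point itself, Step 1 applies instead. The delicate region is the transition zone of the branch charts, where $c_B$ is not small but the weighted computation already gives positivity; choosing $\eta$ small and $\delta$ uniform should close the estimates. With $Z\mu>0$, the field points outward along $\{|\pi|=\epsilon\}$ for small $\epsilon$. Finally, build $\phi$ as a Lyapunov function that agrees with $|x|^2+|y|^2$ near $\Sigma$ and with $|p|^2$ plus a Morse function along the components. Adding $C\mu$ near the boundary makes $\phi$ exhausting, which yields the Weinstein structure.
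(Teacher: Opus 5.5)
\textbf{Gap in Step 3.} Your architecture matches the paper's: the weighted model $\lambda_{a_p,b_p}$ at $\Sigma$, the model $\lambda_{-1/2,3/2}$ at $x_i$, relative Weinstein charts along the normalisations agreeing with the branch charts, a rescaled field $\beta V$ glued by \eqref{eq:cutoff-liouville}, and a Lyapunov function modified by $\mu$ near the boundary. However, the estimate you use to settle Step 3, which you single out as the main obstacle, is false.

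You claim the cutoff term $-\langle p,V\rangle\nabla\beta\cdot\partial_p$ is $O(|p|)$ and hence negligible in a thin neighbourhood. It is $O(|p|)$ as a vector field. But $\mu=m\log|v|+\cdots$ is singular along $F$, so $|\partial_p\mu|\approx m/|p|$. Its contribution to $Z\mu$ therefore has size about $m\,|V|\,|\nabla\beta|$, whatever the tube width. This is the same mechanism that gives $E\mu=m_i$, which you use yourself.

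Concretely, near $x_i$ take $V=-\tfrac12 q\,\partial_q$ and $\beta=\beta(|q|)$ decreasing from $1$ to $\delta$. Then
\[
 -\langle p,V\rangle\,\nabla\beta\cdot\partial_p\mu
 =\tfrac12 m_i\,\beta'(|q|)\,\frac{(p\cdot q)^2}{|q|\,|p|^2}
  +O\bigl(|p|\,|q|\,|\beta'|\bigr).
\]
This is negative and of order $m_i|q|\,|\beta'|$ however small $|p|$ is. Since $\beta$ must drop by $1-\delta$ and $|V|\sim|q|$ near a marked point, $|q|\,|\nabla\beta|$ is scale invariant. Shrinking $\eta$ or the transition annulus does not make it small. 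Your closing sentence, ``choosing $\eta$ small and $\delta$ uniform should close the estimates'', therefore does not close them.

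\textbf{The missing idea: a logarithmic collar.} The paper places the interpolation inside the disk where the source or sink model holds, on a collar $\mathcal A=\{Re^{-L}<|q|<R\}$ of large logarithmic length, with $|q|\,|\nabla\beta|\le C/L$. There
\[
 Z=\beta Z_Y+(1-\beta)E-\langle p,Y\rangle\nabla\beta\cdot\partial_p .
\]
\begin{itemize}
\item The first two terms give $Z\mu\ge c_0>0$. This uses weighted homogeneity for $Z_Y=Z_{a_p,b_p}$ on the branch charts, and $E\mu\ge m_B/2$ from \eqref{eq:branch-factor-bounds}.
\item The same bounds give $|p|\,|\partial_p\mu|\le C$, so the cutoff term costs at most $C/L$.
\item The constants $c_0$ and $C$ do not depend on $L$ or $\delta$.
\end{itemize}
This forces the order of choices: first $L$ large, then the cotangent charts, then $\delta$ and $\eta$ small, so that $Z\mu=m_i+O(\delta+\eta)$ over $A_i$.

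A sign analysis would also work with more care. At sources the cutoff term has a favourable sign. At the sink it is dominated by $\beta Z_V\mu+(1-\beta)E\mu\ge m_i$ once $|q|\,|\beta'|<2$. Either way, the control comes from bounding $|q|\,|\nabla\beta|$, not from $|p|$ being small.
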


The count in (2) comes from a vector field on each normalisation
$\widetilde C_i$ with $e_i$ sources at the preimages of $\Sigma$, one
sink at $x_i$, and $e_i-1$ saddles. These counts describe the auxiliary
handle decomposition used in the construction.

\begin{proof}
Near $p\in\Sigma$ use $\lambda_p=\lambda_{a_p,b_p}$, and near
$x_i$ use $\lambda_{x_i}=\lambda_{-1/2,3/2}$ in the coordinates
$(u,v)$ with $C_i=\{v=0\}$ and $D_i=\{u=0\}$. Writing
$\pi=w_p\prod_Bg_B^{m_B}$ with $w_p$ a unit gives
\[
 Z_p\mu=\sum_Bm_Bd_B+\Ree\frac{\xi_{a_p,b_p}w_p}{w_p}
 \ge\kappa_p>0
\]
after shrinking the charts. Similarly, $\pi=v^{m_i}w_i$ near $x_i$
gives $Z_{x_i}\mu\ge m_i$.

Each normalisation $\widetilde C_i$ is a sphere. On it, choose a
smooth function $f_i$ and a vector field $Y_i$, with $Y_if_i>0$
away from the zeros of $Y_i$, sources at the $e_i$ points above
$\Sigma$, a sink at $x_i$, and $e_i-1$ saddles. Use the local models
\[
 (f_i,Y_i)=
 \begin{cases}
 \bigl((|x|^2+|y|^2)\circ\nu_i,\ c_Bq\,\partial_q\bigr),
     &\text{near a source on branch }B,\\
 \bigl(c_i-|q|^2,\ -\tfrac12q\,\partial_q\bigr),
     &\text{near }x_i,\\
 \bigl(c_s-q_1^2+q_2^2,\ -q_1\partial_{q_1}+q_2\partial_{q_2}\bigr),
     &\text{near a saddle }s,
 \end{cases}
\]
with suitable constants $c_i,c_s$.

Choose $R>0$ so that the source and sink models hold on disjoint
disks $\{|q|<R\}$ around the marked points
$\nu_i^{-1}(\Sigma)\cup\{x_i\}$, identifying $x_i$ with its unique
lift to $\widetilde C_i$. Set
\[
 \mathcal A=\{Re^{-L}<|q|<R\},\qquad
 A_i=\widetilde C_i\setminus
 \bigcup_{\text{marked points}}\{|q|<R\}.
\]
Each collar $\mathcal A$ has logarithmic length $L$, while $A_i$
and $Y_i$ remain fixed as $L$ increases.

To ensure $Z\mu>0$ away from the prescribed local models, we
rescale $Y_i$. For $0<\delta<1$, choose smooth functions
$\beta_i:\widetilde C_i\to[\delta,1]$ and put
\[
 V_i=\beta_iY_i,\qquad
 \beta_i=
 \begin{cases}
 1&\text{near the marked points},\\
 \delta&\text{near }A_i,
 \end{cases}
 \qquad |q|\,|\nabla\beta_i|\le C/L\quad\text{on }\mathcal A.
\]
This preserves $V_if_i>0$ away from the zeros. Place the overlaps
with the marked-point neighbourhoods where $\beta_i=1$ for every $\delta$.

The relative Lagrangian neighbourhood theorem gives cotangent charts
$\Theta_i$
along the normalisations with small disks about the marked points
removed, agreeing with the branch charts
of \Cref{lem:branch-coordinates} and the charts at $x_i$ on their
collars. Use $\Theta_{i*}\lambda_{V_i}$ on the tubes and
$\lambda_p,\lambda_{x_i}$ near the marked points. These forms agree
on every overlap by the choice of $\beta_i$ and equations
\eqref{eq:branch-cotangent-model} and \eqref{eq:linear-cotangent-form},
defining a primitive $\lambda$ of $\omega$ near $F_{\mathrm{red}}$. Its restriction
vanishes on $F_{\mathrm{red}}$ and on the prescribed
transverse disks near $x_i$. Measure the tube width by $\eta$, using
$|v|<\eta|u|^{k_B}$ on the branch collars and $|p|<\eta$ over $A_i$.

We verify $Z\mu>0$ first over $A_i$, where $Z=E+\delta L_{Y_i}$.
A transverse holomorphic coordinate $\tau$ satisfies
$\tau\circ\Theta_i(q,p)=\ell_q(p)+O(|p|^2)$ with $\ell_q$ a real
linear isomorphism. Since $\pi=\tau^{m_i}\cdot(\text{unit})$, we have
$E\mu=m_i+O(|p|)$ and $L_{Y_i}\mu=O(1)$, uniformly over $A_i$.
Thus
\[
 Z\mu=m_i+O(\delta+\eta)>0
\]
for sufficiently small $\delta,\eta$ once the charts are fixed.

On each collar,
\eqref{eq:cutoff-liouville} gives
\[
 Z=\beta_i Z_{Y_i}+(1-\beta_i)E
   -\langle p,Y_i\rangle\nabla\beta_i\cdot\partial_p .
\]
The first two fields satisfy $Z_{Y_i}\mu\ge c>0$ and
$E\mu\ge m_B/2$ for small $\eta$, by
\eqref{eq:branch-factor-bounds}; at $x_i$ use
$\pi=v^{m_i}w_i$. The same bounds give
$|p|\,|\partial_p\mu|\le C$, so the last term contributes at most
$C/L$ in absolute value. Thus, for small $\eta$,
$Z\mu\ge c_0-C/L$, with $c_0,C$ independent of $L,\delta$.
Choose $L$ large, fix the
cotangent charts, and then choose $\delta,\eta$ successively small
so that both lower bounds are positive.

Extend the $f_i$ to a smooth function $f$ near $F_{\mathrm{red}}$,
using $f_i+|p|^2$ near the sinks and saddles and $|x|^2+|y|^2$ near
$\Sigma$. At a saddle the normal eigenvalues of $Z$ are $1\pm\delta>0$,
giving index one; the other models give index zero at $\Sigma$ and
index two at $x_i$.
The local models and the inequality $V_if_i>0$ on the compact
remainder of $F_{\mathrm{red}}$ give $Zf>0$ away from the zeros in
a neighbourhood of $F_{\mathrm{red}}$. Shrink $\epsilon$ so that $W_\epsilon$ lies there.
Near the boundary, set
$\phi=(1-\chi(\mu))f+\chi(\mu)(K+\mu)$, where $\chi$ increases
from $0$ to $1$ and $K+\mu>f$ on the transition region. Then
$Z\phi>0$ away from the zeros, $\phi$ is constant on the boundary,
and its local Morse models are unchanged.

Finally, $F_{\mathrm{red}}$ is compact and invariant, whereas every
forward trajectory off $F$ exits $W_\epsilon$: until it exits, it
lies in a compact annulus where $Z\mu$ has a positive minimum.
Thus the skeleton is $F_{\mathrm{red}}$, which contains every stable
manifold. For small $\epsilon$, $D_i\cap W_\epsilon$ lies in the
linear model at $x_i$ and is its unstable disk.
\end{proof}

\begin{remark}
Since $\mu=\log|\pi|$, the hypersurface
$\pi^{-1}(\{|t|=r\})$ is the level set $\mu^{-1}(\log r)$.
Thus $Z\mu=d\mu(Z)>0$ means that $Z$ is transverse to this
hypersurface and points outward from $W_r=\{|\pi|\le r\}$,
for every sufficiently small $r>0$.
\end{remark}

\begin{proof}[Proof of \Cref{thm:neighbourhood-surgery}]
Let $(\lambda,\phi)$ be as in \Cref{prop:neighbourhood-weinstein}. The
first assertions are parts (1), (2) and (4) of that proposition.

For the normal crossing types, \Cref{prop:neighbourhood-weinstein}
constructs the positive cotangent plumbing along $\Gamma_F$,
including self-plumbing for $I_1$
(see \cite[Proposition~7.3]{SmithThomas} for trees).

Choose disjoint balls $B_p=\{|x|^2+|y|^2\le\rho_p^2\}$ in the
node charts. Let $\Delta_{i,a}\subset\widetilde C_i$, $1\le a\le e_i$,
be the disk components of $\nu_i^{-1}(\bigcup_p B_p)$, and put
$K_{i,a}=\nu_i(\partial\Delta_{i,a})$. Each $\partial B_p$ contains
a standard Legendrian Hopf pair. Choose Morse data joining these
disks by $e_i-1$ bands into a disk $\Delta_i$ with complementary
disk containing $x_i$. Attach the corresponding Weinstein
$1$-handles to the balls.
The band sum $\Lambda_i=\nu_i(\partial\Delta_i)$ of the $K_{i,a}$
is the Legendrian attaching circle of a $2$-handle whose core is
the complementary disk, with its Liouville collar, and whose
cocore is $D_i\cap W_\epsilon$.
Let $G$ have the balls $B_p$ as vertices and the $1$-handles as edges.
Cancelling $0$-$1$ pairs along a spanning tree of $G$
\cite[Proposition~12.22]{CE} leaves the handle counts
$(h_0,h_1,h_2)=(1,b_1(\Gamma_F),k)$.
Bands through cancelled handles give Legendrian connected sums,
preserving the Hopf clasps.
Ordering the bands as in \cite[Section~3]{ELplumbing} gives
$L_{\Gamma_F}$. The $2$-handle charts remain fixed, so their
cocores complete to $\widehat D_i$.

For $II$, $III$ and $IV$, \Cref{prop:neighbourhood-weinstein} gives
one minimum $p$ and the index-two points $x_i$. Since
$\phi=|x|^2+|y|^2$ near $p$, a sufficiently small sublevel is the ball
\[
 B=\{\phi\le r^2\}=\{|x|^2+|y|^2\le r^2\}.
\]
By Weinstein's handle theory \cite{Weinstein91}
(see \cite[Section~11.4 and Proposition~12.12]{CE}), $W_\epsilon$ is obtained
from $(B,\lambda_p)$ by attaching Weinstein $2$-handles along the
Legendrian circles
\[
 \Lambda_i=W^s(x_i)\cap\partial B=C_i\cap\partial B,
\]
with cocores $W^u(x_i)=D_i\cap W_\epsilon$.

The path $\lambda_t=\lambda_{a_t,b_t}$ with
$(a_t,b_t)=(1-t)(a_p,b_p)+t(\frac12,\frac12)$ consists of Liouville
forms whose Liouville fields are transverse to the round sphere
$\partial B$. At $t=1$ it is the standard radial Liouville form of the
linear symplectic form $\omega$, so $(B,\lambda_1)$ is a Darboux ball.
By \cite[Lemma~12.7(i)]{CE}, this deformation extends across a
collar of $\partial B$ with no critical points, fixing the structure
beyond the collar up to positive scaling. Comparing the initial
and final collar holonomies \cite[Lemma~11.4]{CE} gives a contactomorphism
from $(\partial B,\ker\lambda_p)$ to
$(\partial B,\ker\lambda_1)\cong(S^3,\xi_{\mathrm{std}})$ isotopic to the
identity. Thus $W_\epsilon$ is Weinstein homotopic to the Legendrian
surgery on the image $\Lambda_F$ of $\bigsqcup_i\Lambda_i$, with the
same cocores since scaling leaves the Liouville field unchanged.
As a smooth oriented link $\Lambda_F$ is the link of the plane curve
singularity $\{g_p=0\}$, oriented by the complex
orientation of its branches: the positive torus link $T(2,3)$, $T(2,4)$
or $T(3,3)$.

The handle classes are $[C_i]$, so the Weinstein framing convention
\cite{Gompf98} gives
\[
 \operatorname{tb}(\Lambda_i)=C_i^2+1,\qquad
 \operatorname{lk}(\Lambda_i,\Lambda_j)=C_i\cdot C_j\quad(i\ne j).
\]
Since $F\cdot C_i=0$, the component self-intersections are $0$ in type
$II$ and $-2$ in types $III$, $IV$. Thus the links $T(2,3)$, $T(2,4)$
and $T(3,3)$ have component Thurston-Bennequin invariants $1$, $-1$
and $-1$, respectively.
The rainbow closures in \Cref{fig:rainbow-fronts} have these oriented
link types and component Thurston-Bennequin invariants. The Legendrian
classifications \cite[Section~4.1]{EtnyreHonda} and
\cite[Theorem~1.1(1)]{DET} therefore identify them with $\Lambda_F$.
In particular, all component rotation numbers vanish. For $III$ and
$IV$, the ordered classification \cite[Theorem~1.2(1)]{DET} allows any
prescribed correspondence between components.
\end{proof}

\begin{remark}\label{rem:neighbourhood-independence}
Let $\lambda_0,\lambda_1$ be primitives of $\omega$ on $W_\epsilon$,
vanishing on $F_{\mathrm{red}}$ and each $D_i\cap W_\epsilon$, with
Liouville fields pointing outward along $\partial W_\epsilon$.
Their completions are related by an exact symplectomorphism carrying
the completed planes to one another up to exact cylindrical Lagrangian
isotopy. Indeed, $\lambda_1-\lambda_0=dg$ since $W_\epsilon$ retracts
onto $F_{\mathrm{red}}$. The path
$\lambda_t=(1-t)\lambda_0+t\lambda_1$ retains the vanishing and
outward-pointing conditions, so
\cite[Lemma~11.6 and Proposition~11.8]{CE} give exact symplectic
identifications of the completions under which the completed disks
form an exact cylindrical Lagrangian isotopy.
\end{remark}

\section{Chekanov-Eliashberg algebras associated with Kodaira fibres}
\label{sec:algebras}

We compute the Chekanov-Eliashberg dg algebras with bulk parameters
and identify their degree-zero cohomology with $A_F(\boldsymbol q)$.
For the normal crossing types we use the plumbing calculation of
\cite{ELplumbing}; for $II$, $III$, $IV$, the rainbow-closure model of
\cite{CSrainbow}. The comparison with wrapped Floer cohomology and the
vanishing in negative degrees are established in
\Cref{sec:foundations,sec:surgery}.

\subsection{Legendrian dg algebras and basepoint variables}
\label{subsec:ce-conventions}

Let $\Lambda=\bigsqcup_{v}\Lambda_v$ be a Legendrian link in
$\#^k(S^1\times S^2,\xi_{\mathrm{std}})$, presented in Gompf's standard
form, with one basepoint on each component. We write
$\CE^*(\Lambda)$ for its fully noncommutative Chekanov-Eliashberg dg
algebra with cohomological grading, in the form given in
\cite[Section~2]{ELplumbing} following Ekholm and Ng. It is an
algebra over $S=\bigoplus_v\kk e_v$, freely generated over $S$ by the
Reeb chords of $\Lambda$ (the crossings of the resolved diagram), the
internal generators of the $1$-handles, and invertible basepoint
variables $t_v^{\pm1}$ with $e_vt_ve_v=t_v$ and $dt_v=0$. The
differential of a chord counts holomorphic disks, each weighted by the
sign and by the time-ordered product of the basepoint variables
encountered along its boundary. When $k=0$ the internal generators are
absent and $\CE^*(\Lambda)$ is the Chekanov-Eliashberg algebra of a link
in $(S^3,\xi_{\mathrm{std}})$ with basepoints \cite[Section~2.1]{CSrainbow}.
The components of $\Lambda$ carry their bounding spin structures, which
are those of the cocores; relative to the formulas of Ekholm and Ng
this amounts to the substitution $t_v\mapsto-t_v$
\cite[footnote~2]{ELplumbing}, already incorporated below. All
components of the links in \Cref{thm:neighbourhood-surgery} have
rotation number zero, so $\CE^*(\Lambda)$ is $\ZZ$-graded once a Maslov
potential is chosen on each component. We use the gradings of
\cite{ELplumbing,CSrainbow}. The explicit models below have generators
in degrees $0$ and $-1$. Their comparison with the holomorphic gradings
of the planes is made in the proof of \Cref{prop:bulk-cocores}.

Since $t_v$ is closed of degree zero, the two-sided ideal generated by
the elements $t_v-q_ve_v$ is a dg ideal. For
$\boldsymbol q\in(\kk^\times)^r$ we put
\begin{equation}\label{eq:ce-specialisation}
 \CE^*_{\boldsymbol q}(\Lambda)=\CE^*(\Lambda)/(t_v-q_ve_v:v).
\end{equation}
Its differential is obtained by substituting $q_ve_v$ for $t_v$;
the trivial bulk class corresponds to $\boldsymbol q=\boldsymbol1$.
The identification $q_v=\mathfrak b([C_v])$ is justified by the
capping argument in the proof of \Cref{prop:bulk-cocores}.
For plumbings, this bulk interpretation was proposed in
\cite[Remark~2]{ELplumbing}.

A stable tame isomorphism fixing the basepoint variables descends under
$t_v=q_ve_v$: both the changes of generators and the cancelling generator
pairs remain valid after substitution. For cycle plumbings, we also
need the reduction of the internal $1$-handle algebra described in the
proof of \Cref{thm:plumbing-ce}.

\subsection{Normal crossing types: the plumbing dg algebra}
\label{subsec:plumbing-dga}

Let $F$ be of normal crossing type and let $Q$ be the graph
$\Gamma_F$, so that $Q$ is $\widetilde A_{n-1}$, $\widetilde D_{n+4}$,
$\widetilde E_6$, $\widetilde E_7$ or $\widetilde E_8$. The graph for
$I_1$ has one vertex and one loop, and that for $I_2$ has two vertices
and two distinct edges. Orient the edges and order them, and write
$\overline Q$ for the double of $Q$: for each arrow $a$ of $Q$ it has
the reverse arrow $a^*$. For the cycles we use the orientation
$a_i:i\to i+1$ and the spanning tree $T$ obtained by deleting
$a_{n-1}$; for the trees $T=Q$. The conventions for each type are drawn
in \Cref{fig:affine-e-quivers,fig:cyclic-quivers,fig:affine-d-quivers}.
Set
\[
 S=\bigoplus_{i\in Q_0}\kk e_i,\qquad
 \mathscr L_Q=\kk\overline Q
 \big[(1+aa^*)^{-1},(1+a^*a)^{-1}:a\in Q_1\big].
\]
Here $1=\sum_i e_i$, and the inverses are algebraic universal
localisations. Put
\begin{equation}\label{eq:vertex-products}
\begin{gathered}
 A_i=\prod_{t(a)=i}^{\longrightarrow}(e_i+aa^*),\qquad
 B_i=\prod_{s(a)=i}^{\longrightarrow}(e_i+a^*a),\\
 \Lambda^{\boldsymbol q}(Q)=\mathscr L_Q/(A_i-q_iB_i:i\in Q_0).
\end{gathered}
\end{equation}
An empty product is $e_i$. Both products follow the chosen arrow order.
This is the multiplicative preprojective algebra of Crawley-Boevey and
Shaw with parameters $q_i$; up to isomorphism it does not depend on the
orientation or the order
\cite[Theorem~1.4]{CBShaw}. For the affine fibres set
$A_F(\boldsymbol q)=\Lambda^{\boldsymbol q}(Q)$.

The Chekanov-Eliashberg algebra of the link $L_Q$ of
\Cref{thm:neighbourhood-surgery}(1) was computed in
\cite[Theorem~17]{ELplumbing}. In the notation of
\Cref{subsec:ce-conventions}, the result is a quasi-isomorphism
between $\CE^*(L_Q)$ and the dg algebra $\mathcal L_Q$ over
$S\langle t_i^{\pm1}\rangle$ generated by
\[
 c_a,\ c_a^*\ (a\in Q_1),\qquad
 z_a^{\pm1},\ \zeta_a\ (a\in Q_1\setminus T),\qquad
 \tau_i\ (i\in Q_0),
\]
with $c_a=e_{t(a)}c_ae_{s(a)}$, $c_a^*=e_{s(a)}c_a^*e_{t(a)}$,
$z_a=e_{s(a)}z_ae_{s(a)}$, $\zeta_a=e_{s(a)}\zeta_ae_{s(a)}$ and
$\tau_i=e_i\tau_ie_i$, in degrees
$|c_a|=|c_a^*|=|z_a|=0$ and $|\zeta_a|=|\tau_i|=-1$, and with
differential
\begin{equation}\label{eq:el-differential}
\begin{gathered}
 d\zeta_a=e_{s(a)}+c_a^*c_a-z_a,\\
 d\tau_i=\prod_{t(a)=i}^{\longrightarrow}(e_i+c_ac_a^*)
  -t_i\prod_{s(a)=i,\ a\notin T}^{\longrightarrow}z_a
   \prod_{s(a)=i,\ a\in T}^{\longrightarrow}(e_i+c_a^*c_a).
\end{gathered}
\end{equation}
The generators $c_a,c_a^*$ are the two crossings between the components
of $L_Q$ attached to the ends of $a$; $\tau_i$ is the self-crossing of
the $i$th component; and $z_a,\zeta_a$ come from the $1$-handle of the
edge $a\notin T$, which is present only for the cycles.

\begin{definition}\label{def:plumbing-dga}
For $\boldsymbol q\in(\kk^\times)^{Q_0}$, let
$\mathcal B_Q^{\boldsymbol q}$ be the dg algebra obtained from
$\mathcal L_Q$ by setting $t_i=q_ie_i$ in
\Cref{eq:el-differential}.
\end{definition}

\begin{theorem}\label{thm:plumbing-ce}
For any normal crossing type $F$, any field $\kk$, and any
nonzero parameter tuple $\boldsymbol q$, the dg algebra
$\CE^*_{\boldsymbol q}(L_Q)$ is quasi-isomorphic to
$\mathcal B_Q^{\boldsymbol q}$.
\end{theorem}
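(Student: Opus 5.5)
The plan is to start from the quasi-isomorphism of \cite[Theorem~17]{ELplumbing}, $\CE^*(L_Q)\simeq\mathcal L_Q$, and check that it survives the specialisation \eqref{eq:ce-specialisation}. That proof is a finite sequence of stable tame isomorphisms: changes of generators and destabilisations that cancel pairs of generators $(x,y)$ with $dx=y+\cdots$. Each one is defined over $S\langle t_i^{\pm1}\rangle$ and fixes the basepoint variables. I would go through the construction of \cite{ELplumbing} and verify three things: the basepoints can be placed so that every intermediate algebra contains the $t_i$ as closed central-to-their-vertex generators; no change of generators involves inverting an expression built from $t_i$ other than $t_i^{\pm1}$ itself; and the cancelled pairs never include a $t_i$. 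Granting this, the remark at the end of \Cref{subsec:ce-conventions} applies. Substituting $t_i=q_ie_i$ sends a tame automorphism to a tame automorphism and a stabilisation to a stabilisation. Since stabilisations are quasi-isomorphisms over any field, we get a quasi-isomorphism $\CE^*_{\boldsymbol q}(L_Q)\simeq\mathcal L_Q/(t_i-q_ie_i)=\mathcal B_Q^{\boldsymbol q}$.

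For the trees, meaning $\widetilde D_{n+4}$ and $\widetilde E_{6,7,8}$, there are no $1$-handles. The link lies in $S^3$ and the comparison is just the tame-isomorphism argument above, so this case should be direct. Because the differential in \eqref{eq:el-differential} is unchanged in form apart from the scalars, I expect no issue over arbitrary $\kk$, including characteristic $2$. Signs come from the spin structures and are already built into the substitution $t_v\mapsto-t_v$ that the paper incorporates.

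The cycle types $I_n$ are the main obstacle. There the algebra of \cite{ELplumbing} contains the internal $1$-handle generators, and the model $\mathcal L_Q$ arises only after reducing the infinitely many internal generators of the $1$-handle (in the Ekholm--Ng presentation) to the finite set $z_a^{\pm1},\zeta_a$. I would first check that this reduction is a quasi-isomorphism relative to $S\langle t_i^{\pm1}\rangle$. It is a filtered or colimit argument: one shows that the sub-dg-algebra of internal generators is quasi-isomorphic to the localisation generated by $z_a^{\pm1},\zeta_a$ with $d\zeta_a=e+c_a^*c_a-z_a$. Then I would check that the quasi-isomorphism is compatible with the quotient by $t_i-q_ie_i$.

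The worry is that quotienting by a two-sided ideal need not preserve quasi-isomorphisms in general. To handle it, I would use semifreeness. Both sides are semifree over $S\langle t_i^{\pm1}\rangle$, with the $t_i$ appearing as free closed degree-zero generators. Tensoring over $\kk[t_i^{\pm1}]$ with $\kk$ along $t_i\mapsto q_i$ is computed by these semifree models and hence preserves quasi-isomorphisms, because a quasi-isomorphism of cofibrant (semifree) objects is a homotopy equivalence. If the $t_i$ are not literally free generators commuting with everything, I would instead arrange the basepoint so that $t_i$ sits as a free generator adjoined at a single spot of the component. I would then run the homotopy-equivalence argument, which is stable under base change along the generator map. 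This gives the theorem for all $\boldsymbol q$ and all $\kk$.
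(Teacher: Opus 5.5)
Your treatment of the trees matches the paper. So does your treatment of the changes of generators and destabilisations of \cite[Lemma~16 and Theorem~17]{ELplumbing}, which fix the basepoint variables and descend under $t_i=q_ie_i$. The gap is in the cycle case.

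You claim that both $\CE^*(L_Q)$ and $\mathcal L_Q$ are semifree over $S\langle t_i^{\pm1}\rangle$, and then use that a quasi-isomorphism between cofibrant objects survives the pushout along $t_i\mapsto q_ie_i$. This fails for $\mathcal L_Q$. It contains the invertible generators $z_a^{\pm1}$, so it is semifree only over $S\langle t_i^{\pm1}\rangle$ amalgamated with the Laurent algebras $\kk[z_a^{\pm1}]$. It is not cofibrant under $S\langle t_i^{\pm1}\rangle$: in a cell extension of $S\langle t_i^{\pm1}\rangle$ the only units in the $e_{s(a)}$ corner are scalar multiples of powers of $t_{s(a)}$, so the unit $z_a$ cannot factor through one. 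Your homotopy-equivalence argument therefore says nothing about $\mathcal L_Q/(t_i-q_ie_i)$. That is exactly where the problem you identified bites: quotients by two-sided ideals need not preserve quasi-isomorphisms.

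Two further remarks on this step. The $t_i$ are not central, so the specialisation is a pushout of noncommutative dg algebras along $S\langle t_i^{\pm1}\rangle\to S$, not a tensor product over $\kk[t_i^{\pm1}]$. Replacing $\kk[z^{\pm1}]$ by a cofibrant model does not help by itself, because you would still have to compare the specialisations of that model and of $\mathcal L_Q$.

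The missing idea is to reverse the order of the two operations. The reduction in \cite[Theorem~12 and Proposition~14]{ELplumbing}, from the internal two-strand algebra $I$ to $\kk[z^{\pm1}]$, does not involve the basepoint variables, so you can specialise first. After specialisation the $t_i$ are scalars, and $\CE^*_{\boldsymbol q}(L_Q)$ is a semifree extension of $I$ by the remaining chord generators. Pushing such an extension out along the quasi-isomorphism $I\to\kk[z^{\pm1}]$ is again a quasi-isomorphism over a field: attach the generators one at a time in action order and filter by the number of occurrences of the new generator. This is the paper's argument. After it, only the tame changes and destabilisations remain, and they descend as you describe.
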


\begin{proof}
For cycle plumbings, \cite[Theorem~12 and Proposition~14]{ELplumbing}
replace the internal two-strand algebra by its quasi-isomorphic Laurent
model $\kk[z^{\pm1}]$. This reduction does not involve the basepoint
variables. After specialising $t_i=q_ie_i$, the remaining chord
generators form a semifree dg extension of the internal algebra, so the
reduction remains a quasi-isomorphism. The subsequent changes of
generators and destabilisations in
\cite[Lemma~16 and Theorem~17]{ELplumbing} fix the basepoint variables
and give the displayed differential. For trees, no internal-algebra
reduction is needed.
\end{proof}

The dg algebra $\mathcal B_Q^{\boldsymbol q}$ is concentrated in
nonpositive degrees. A word of degree $-1$ contains exactly one of the
generators $\zeta_a,\tau_i$, so its differential is a two-sided multiple
of $d\zeta_a$ or $d\tau_i$. Hence
\[
 H^0(\mathcal B_Q^{\boldsymbol q})
 =S\langle c_a,c_a^*,z_a^{\pm1}\rangle/(d\zeta_a,\ d\tau_i),
 \qquad H^k(\mathcal B_Q^{\boldsymbol q})=0\quad(k>0).
\]

\begin{lemma}\label{lem:factors-invertible}
In $H^0(\mathcal B_Q^{\boldsymbol q})$ the elements
$e_{t(a)}+c_ac_a^*$ and $e_{s(a)}+c_a^*c_a$, $a\in Q_1$, are
invertible in their respective corners. Consequently
\[
 H^0(\mathcal B_Q^{\boldsymbol q})\cong\Lambda^{\boldsymbol q}(Q),
\]
by the map $c_a\mapsto a$, $c_a^*\mapsto a^*$, $z_a\mapsto e_{s(a)}+a^*a$.
\end{lemma}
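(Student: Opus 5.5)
The argument has two parts: first show that the displayed factors become invertible in $H^0=H^0(\mathcal B_Q^{\boldsymbol q})$, then compare the two presentations directly.

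\emph{Step 1: invertibility.} In $H^0$ the relation $d\zeta_a=0$ gives $z_a=e_{s(a)}+c_a^*c_a$ for each $a\notin T$. Since $z_a$ is invertible in $e_{s(a)}H^0e_{s(a)}$, so is $e_{s(a)}+c_a^*c_a$ for such $a$. For any element $x\in e_jH^0e_i$ and $y\in e_iH^0e_j$, the standard identity
\[
(e_j+xy)^{-1}=e_j-x(e_i+yx)^{-1}y
\]
shows that $e_{s(a)}+c_a^*c_a$ is invertible exactly when $e_{t(a)}+c_ac_a^*$ is. So it suffices to produce, for every arrow $a$, an inverse of one of the two factors.

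For the remaining arrows we use $d\tau_i=0$:
\[
\prod_{t(a)=i}(e_i+c_ac_a^*)=q_i\prod z_a\prod_{s(a)=i,\,a\in T}(e_i+c_a^*c_a).
\]
The right-hand side involves the invertible scalar $q_i$, the invertible $z_a$, and the tree factors. Fix a vertex $i$. Suppose every factor on one side of this equation except one is already known to be invertible. A product $PX$ is invertible with $P$ invertible, and likewise $XP$. Hence if $PXR$ equals an invertible element with $P,R$ invertible, then $X$ is invertible. This determines the missing factor.

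So we induct along $T$ by peeling leaves. At a leaf $i$ of $T$ there is exactly one tree edge $a$. If $a$ points into $i$, then $t(a)=i$ is the only incoming arrow in $T$. Non-tree arrows, which occur only for cycles, contribute factors that are invertible by the previous paragraph. The relation at $i$ then expresses $e_i+c_ac_a^*$ as a product of invertibles, since all its other factors are invertible. If instead $a$ points out of $i$, the relation gives $\text{(invertible)} = q_i\cdots(e_i+c_a^*c_a)$, so $e_i+c_a^*c_a$ is invertible.

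In either case, swapping via the identity above makes both factors of $a$ invertible. Remove the leaf $i$ and continue at the next leaf. At the final vertex every factor is already known.

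One detail needs care. For a non-tree arrow $a$, the factor $e_{t(a)}+c_ac_a^*$ appears at $t(a)$, and its inverse at $s(a)$ comes from $z_a$. The cycle case $I_1$, which has a loop and so $s(a)=t(a)$, fits the same pattern.

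\emph{Step 2: the isomorphism.} Define a map $S\langle c,c^*,z^{\pm1}\rangle\to\Lambda^{\boldsymbol q}(Q)$ by $c_a\mapsto a$, $c_a^*\mapsto a^*$, and $z_a\mapsto e_{s(a)}+a^*a$. The last assignment is invertible in $\mathscr L_Q$ by localisation. Under this map $d\zeta_a\mapsto0$ and $d\tau_i\mapsto A_i-q_iB_i$. The orderings match because the products in \eqref{eq:el-differential} and \eqref{eq:vertex-products} use the same arrow order, with the $z_a$ factors mapping to the $a\notin T$ part of $B_i$. We may need to choose the arrow order so that non-tree arrows come first among $s(a)=i$, as permitted by \cite[Theorem~1.4]{CBShaw}.

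Conversely, Step 1 shows that the universal localisation $\mathscr L_Q\to H^0$ sending $a\mapsto c_a$ and $a^*\mapsto c_a^*$ exists. It kills $A_i-q_iB_i$ after substituting $z_a=e_{s(a)}+c_a^*c_a$. The two maps are inverse on generators.

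The main obstacle is the induction in Step 1. It must be organised carefully so that at each vertex exactly one unknown factor remains, and the noncommutative ordering of the products must not obstruct cancellation. Cancellation works on both sides, so invertibility of the middle factor follows as long as all the others are invertible; I expect this to go through.
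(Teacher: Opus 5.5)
Your proposal is correct and follows the paper's proof. The non-tree factors are invertible via $d\zeta_a=0$, invertibility passes to the other endpoint by $(e_{t(a)}+c_ac_a^*)^{-1}=e_{t(a)}-c_a(e_{s(a)}+c_a^*c_a)^{-1}c_a^*$, the tree factors follow by peeling leaves of $T$ using the vertex relations, and eliminating the $z_a$ gives the presentation of $\Lambda^{\boldsymbol q}(Q)$. Your hedge about reordering arrows is unnecessary: for the cycles each vertex has exactly one outgoing arrow, and for the trees there are no non-tree arrows, so the products in $d\tau_i$ and in $B_i$ already coincide.
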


\begin{proof}
For $a\notin T$, the relation $d\zeta_a=0$ identifies
$e_{s(a)}+c_a^*c_a$ with the invertible element $z_a$. The identity
\[
 (e_{t(a)}+c_ac_a^*)^{-1}
 =e_{t(a)}-c_a(e_{s(a)}+c_a^*c_a)^{-1}c_a^*
\]
gives invertibility at the other endpoint. Now remove the edges of
$T$ successively at leaves. At each leaf, the vertex relation contains
just one factor whose invertibility is not yet known; all other
factors are invertible, so this factor is invertible as well. The
displayed identity, with the arrows exchanged if necessary, gives
invertibility at its other endpoint. This proves the assertion for
every edge. Eliminating the $z_a$ then gives the defining presentation
of $\Lambda^{\boldsymbol q}(Q)$.
\end{proof}

To compare with the dg multiplicative preprojective algebra, adjoin
$z_a^{\pm1},\zeta_a$ also for $a\in T$, with
\[
 d\zeta_a=e_{s(a)}+c_a^*c_a-z_a.
\]
This models derived localisation at the remaining factors. Since
their classes are already invertible in $H^0$, the localisation map
is a quasi-isomorphism \cite[Section~3]{BCL18}. The argument of
\cite[Remark~4.7]{KS} now applies to this model with localisation
variables for every arrow and gives
\[
 \mathcal B_Q^{\boldsymbol q}\simeq
 \Lambda^{\mathrm{dg},\boldsymbol q}(Q).
\]

In the presentations below, the idempotent $e_i$ corresponds to the
component $C_i$, hence to the plane $\widehat D_i$; the parameter $q_i$
is the value of $\mathfrak{b}$ on the class $\gamma_i$ of $C_i$.
The doubled quivers of each type are drawn with their presentations
below.

\subsection{Affine \texorpdfstring{$A$}{A} diagrams}

For $I_n$, orient the cycle by $a_i:i\to i+1$, with indices in
$\ZZ/n$. When $n=2$, the arrow $a_1$ and the formal reverse $a_0^*$
remain distinct. The conventions are drawn in \Cref{fig:cyclic-quivers}.
The relation at vertex $i$ is
\begin{equation}\label{eq:cycle-relations}
 e_i+a_{i-1}a_{i-1}^*=q_i(e_i+a_i^*a_i).
\end{equation}

\begin{figure}[htbp]
\centering
\scalebox{0.7}{%
\begin{tikzpicture}[
  x=1cm,y=1cm,
  cqvertex/.style={circle,draw,fill=black,inner sep=0pt,minimum size=3.5pt,
    outer sep=1pt},
  cqarrow/.style={-{Stealth[length=1.6mm,width=1.2mm]},line width=.45pt,shorten >=2pt,shorten <=2pt},
  cqlabel/.style={font=\small,fill=white,inner sep=1.5pt},
  cqtitle/.style={font=\small}
]
  \node[cqtitle] at (-8,1.4) {$I_1$};
  \node[cqvertex,label={[font=\small,inner sep=1pt,yshift=3pt]above:$0$}] (cqloop) at (-8,0) {};
  \path[cqarrow]
    (cqloop) edge[loop left,min distance=13mm]
      node[cqlabel,left] {$a_0$} (cqloop)
    (cqloop) edge[loop right,min distance=13mm]
      node[cqlabel,right] {$a_0^*$} (cqloop);

  \node[cqtitle] at (-2.5,1.4) {$I_2$};
  \node[cqvertex,label={[font=\small,inner sep=1pt,yshift=3pt]above:$0$}] (cqtwozero) at (-4.5,0) {};
  \node[cqvertex,label={[font=\small,inner sep=1pt,yshift=3pt]above:$1$}] (cqtwoone) at (-.5,0) {};
  \draw[cqarrow] (cqtwozero) to[bend left=35]
    node[cqlabel,above] {$a_0$} (cqtwoone);
  \draw[cqarrow] (cqtwoone) to[bend right=12]
    node[cqlabel,above] {$a_0^*$} (cqtwozero);
  \draw[cqarrow] (cqtwoone) to[bend left=35]
    node[cqlabel,below] {$a_1$} (cqtwozero);
  \draw[cqarrow] (cqtwozero) to[bend right=12]
    node[cqlabel,below] {$a_1^*$} (cqtwoone);

  \node[cqtitle] at (5.2,1.4) {$I_n$, $n\geq3$};
  \node[cqvertex,label={[font=\small,inner sep=1pt,yshift=3pt]above:$0$}] (cqzero) at (1.5,0) {};
  \node[cqvertex,label={[font=\small,inner sep=1pt,yshift=3pt]above:$1$}] (cqone) at (5.2,0) {};
  \node[cqvertex,label={[font=\small,inner sep=1pt,yshift=3pt]above:$n-1$}] (cqlast) at (8.9,0) {};
  \draw[cqarrow] (cqzero) to[bend left=13]
    node[cqlabel,above] {$a_0$} (cqone);
  \draw[cqarrow] (cqone) to[bend left=13]
    node[cqlabel,below] {$a_0^*$} (cqzero);
  \draw[cqarrow,densely dotted] (cqone) to[bend left=13]
    node[cqlabel,above] {$a_1,\ldots,a_{n-2}$} (cqlast);
  \draw[cqarrow,densely dotted] (cqlast) to[bend left=13]
    node[cqlabel,below] {$a_{n-2}^*,\ldots,a_1^*$} (cqone);
  \draw[cqarrow] (cqlast) to[bend left=55]
    node[cqlabel,below] {$a_{n-1}$} (cqzero);
  \draw[cqarrow] (cqzero) to[bend right=35]
    node[cqlabel,below] {$a_{n-1}^*$} (cqlast);
\end{tikzpicture}%
}
\caption{Doubled quivers for the cyclic fibres.}
\label{fig:cyclic-quivers}
\end{figure}

Write $x=\sum_i a_i$, $y=\sum_i a_i^*$, and $z=1+yx$. Put
\[
 D=\bigoplus_{i\in\ZZ/n}\kk[z^{\pm1}]e_i,\qquad
 \boldsymbol q=\sum_iq_ie_i,
 \qquad \sigma(e_i)=e_{i+1},\quad \sigma(z)=\boldsymbol qz.
\]
These formulas define an automorphism of $D$. Its iterates account
for the individual vertex parameters; for example,
$\sigma^n(z)=(\prod_iq_i)z$. Set $t=z-1$.

\begin{proposition}\label{prop:bulk-cycle}
The algebra $\Lambda^{\boldsymbol q}(\widetilde A_{n-1})$ has generators $D,x,y$ and relations
\begin{equation}\label{eq:bulk-gwa}
 xd=\sigma(d)x,\qquad yd=\sigma^{-1}(d)y\quad(d\in D),
 \qquad yx=t,\quad xy=\sigma(t).
\end{equation}
Set $U_p=x^p$ for $p\ge0$ and $U_p=y^{-p}$ for $p<0$.
Each corner $e_j\Lambda^{\boldsymbol q}(\widetilde A_{n-1})e_i$ has basis
\begin{equation}\label{eq:bulk-cycle-basis}
 \bigl\{U_pz^re_i:
 p,r\in\ZZ,\ p\equiv j-i\pmod n\bigr\}.
\end{equation}
\end{proposition}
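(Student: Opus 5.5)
The plan is to build mutually inverse algebra maps between $\Lambda^{\boldsymbol q}(\widetilde A_{n-1})$ and the generalised Weyl algebra $D(\sigma,t)$. This is the algebra generated over $D$ by $x,y$ subject to \eqref{eq:bulk-gwa}. The basis statement then follows from the standard structure theory of generalised Weyl algebras.

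First I will verify the relations inside $\Lambda^{\boldsymbol q}$, using the presentation \eqref{eq:vertex-products} with relations \eqref{eq:cycle-relations}. Since $x$ maps $e_i$ to $e_{i+1}$ and $y$ maps $e_i$ to $e_{i-1}$, we have $xe_i=e_{i+1}x$, $ye_i=e_{i-1}y$, and
\[
ze_i=e_i+a_i^*a_i,\qquad (1+xy)e_i=e_i+a_{i-1}a_{i-1}^*.
\]
The second factor is invertible by the universal localisation defining $\mathscr L_Q$. The first is also invertible there, so $z$ is invertible and $D\to\Lambda^{\boldsymbol q}$ is well defined. Relation \eqref{eq:cycle-relations} says exactly $1+xy=\boldsymbol qz$, that is, $xy=\boldsymbol qz-1=\sigma(t)$; and $yx=z-1=t$ holds by definition. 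For the twisting relations I compute $xz=x+xyx=(1+xy)x=\boldsymbol q z x=\sigma(z)x$, with $\boldsymbol q$ read through the idempotents. Similarly $yz=\sigma^{-1}(z)y$ follows by applying $y(\cdot)$ and using $yx=t$, $xy=\sigma(t)$. Together with the idempotent relations, this gives a homomorphism $D(\sigma,t)\to\Lambda^{\boldsymbol q}$.

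Conversely, I will send $a_i\mapsto xe_i$ and $a_i^*\mapsto e_iy$. Then $e_i+a_i^*a_i\mapsto ze_i$ and $e_{i+1}+a_ia_i^*\mapsto(1+xy)e_{i+1}=\sigma(z)e_{i+1}$, both of which are invertible in $D$. The universal property of the localisation therefore extends the map to $\mathscr L_Q$. The vertex relation maps to $xy=\sigma(t)$, so the map descends to $\Lambda^{\boldsymbol q}$. Both composites fix the generators, which proves the presentation.

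For the basis, I would use that $D(\sigma,t)$ is $\ZZ$-graded with $\deg x=1$, $\deg y=-1$ and $D$ in degree $0$. I will show that it is a free left $D$-module on the elements $U_p$, $p\in\ZZ$. Spanning is straightforward: push every element of $D$ to the right using the twisting relations, and reduce $yx$ and $xy$ to elements of $D$. For linear independence I will construct the usual faithful representation on $\bigoplus_{p}D\,U_p$, or apply Bergman's diamond lemma. The ambiguities $x(yx)=(xy)x$ resolve because $x\,t=\sigma(t)\,x$, and similarly for the other overlaps.

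Since $D$ is a product of Laurent rings $\kk[z^{\pm1}]e_i$, each corner has basis $z^re_i$. Moreover $U_pz^re_i$ lies in $e_{i+p}(\cdot)e_i$, which gives \eqref{eq:bulk-cycle-basis}. I expect the main care to lie in the independence step: checking that the diamond-lemma overlaps resolve with the twisted, idempotent-graded $\sigma$, including the cases $n=1,2$ where loops and parallel arrows occur. That check uses only the identity $\sigma(t)x=xt$.
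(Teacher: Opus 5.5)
Your proof is correct. The presentation half matches the paper's proof: sum the vertex relations to get $yx=z-1$, $xy=\boldsymbol qz-1$ and $xz=\boldsymbol qzx$, then check conversely that $a_i=xe_i$ and $a_i^*=e_iy=ye_{i+1}$ satisfy the vertex relations with invertible localisation factors.

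For the basis the routes differ. The paper does not argue directly. It cites Kaplan--Schedler's basis theorem \cite[Proposition~7.1]{KS} and translates conventions: their algebra is the opposite of ours with parameters $q_i^{-1}$. Their left $S$-basis $(1+aa^*)^ma^\ell$, $(1+aa^*)^m(a^*)^\ell$ then becomes the set of elements $U_pz^re_i$.

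You instead recognise the presentation as a generalised Weyl algebra over the commutative ring $D$ and reprove Bavula's freeness theorem. The faithful-module option is the cleaner of your two choices, because it avoids setting up the diamond lemma around $1=\sum_ie_i$. Concretely, let $D(\sigma,t)$ act on $\bigoplus_{p\in\ZZ}Dv_p$ with $x\cdot dv_p=\sigma(d)v_{p+1}$ for $p\ge0$ and $x\cdot dv_p=\sigma(d)\sigma(t)v_{p+1}$ for $p<0$, and symmetrically for $y$. Your route is self-contained and shows that only $\sigma\in\operatorname{Aut}(D)$ and the centrality of $t$ matter, with no dependence on the field or on the $q_i$. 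The paper's route is shorter but rests on an external result and a convention dictionary.

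One point to tidy. Your spanning argument pushes $D$ to the right, which gives right coefficients, matching the claimed basis $U_pz^re_i$. Your independence argument, however, is phrased for the left module $\bigoplus_pDU_p$. The two are reconciled by $U_pd=\sigma^p(d)U_p$: since $\sigma^p(z^re_i)$ is a nonzero scalar multiple of $z^re_{i+p}$, the left and right bases agree up to rescaling. You should state this explicitly.
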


\begin{proof}
Summing \Cref{eq:cycle-relations} gives $yx=z-1$,
$xy=\boldsymbol qz-1$, and
$xz=\boldsymbol qzx$. Together with the endpoints of the arrows,
these are \Cref{eq:bulk-gwa}. Conversely, in that presentation
$a_i=xe_i$ and $a_i^*=ye_{i+1}$ satisfy the vertex relations and
all localisation factors are invertible: their nontrivial corner
entries are $ze_i$ or $q_ize_i$.

The basis assertion is \cite[Proposition~7.1]{KS}. Kaplan and Schedler
concatenate paths from left to right, so our algebra is the opposite
of theirs with parameters $q_i^{-1}$, with $x$, $y$, $z$ corresponding
to their $a$, $a^*$, $1+aa^*$. Their left $S$-basis
$(1+aa^*)^ma^\ell$, $(1+aa^*)^m(a^*)^\ell$, valid over every field and
for every nonzero parameter tuple, is our set of elements $U_pz^re_i$.
Since $U_pz^re_i$ lies in $e_j\Lambda^{\boldsymbol q}(\widetilde A_{n-1})e_i$
with $j\equiv i+p$, this is \Cref{eq:bulk-cycle-basis}.
\end{proof}

For a single vertex,
\[
 \Lambda^q(\widetilde A_0)=
 \kk\langle x,y\rangle[(1+yx)^{-1}]/(xy-qyx-q+1).
\]

At $q=1$, this is the commutative algebra
$\kk[x,y,(1+xy)^{-1}]$.

\begin{proposition}\label{prop:cycle-algebra}
At $\boldsymbol q=\boldsymbol1$, give
\[
 R=\kk[x,y,z^{\pm1}]/(xy-z+1)
\]
the $\ZZ/n$-grading $\deg x=1$, $\deg y=-1$, and $\deg z=0$.
Then, over any field,
\[
 \Lambda^1(\widetilde A_{n-1})\cong
 \bigl\{(r_{ji})\in M_n(R):r_{ji}\in R_{j-i}\bigr\}.
\]
The isomorphism sends $e_i$ to $E_{ii}$, $a_i$ to $xE_{i+1,i}$,
and $a_i^*$ to $yE_{i,i+1}$, where $E_{ji}$ are the matrix units.
\end{proposition}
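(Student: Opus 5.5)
The plan is to build the map directly from the presentation of \Cref{prop:bulk-cycle} at $\boldsymbol q=\boldsymbol1$ and then check bijectivity corner by corner against the basis \eqref{eq:bulk-cycle-basis}. Write $\Gamma\subset M_n(R)$ for the graded matrix algebra on the right. Because the grading is multiplicative, $R_{k-j}R_{j-i}\subset R_{k-i}$, so $\Gamma$ is a subalgebra with unit $\sum_iE_{ii}$. At $\boldsymbol q=\boldsymbol1$ we have $\sigma(z)=z$ and $\sigma(e_i)=e_{i+1}$. Hence $\Lambda^{\boldsymbol1}(\widetilde A_{n-1})$ is generated by the idempotents $e_i$, by $z^{\pm1}$ (central in each corner), and by $x,y$, subject to
\[
 xe_i=e_{i+1}x,\qquad ye_{i+1}=e_iy,\qquad xz=zx,\qquad yz=zy,\qquad yx=xy=z-1.
\]

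First I will define the homomorphism $\Phi$ on generators:
\[
 e_i\mapsto E_{ii},\qquad a_i=xe_i\mapsto xE_{i+1,i},\qquad a_i^*=ye_{i+1}\mapsto yE_{i,i+1},\qquad z\mapsto \textstyle\sum_i zE_{ii}.
\]
Then $\Phi(x)=\sum_i xE_{i+1,i}$ and $\Phi(y)=\sum_i yE_{i,i+1}$.

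Next I will check the relations. The idempotent and endpoint relations are immediate. For the products, $\Phi(y)\Phi(x)=\sum_i yxE_{ii}$ and $\Phi(x)\Phi(y)=\sum_i xyE_{i+1,i+1}$, and both equal $\Phi(z)-1$ because $xy=z-1$ in $R$. Since $\Phi(z)$ is a diagonal matrix with entries the unit $z$, it is invertible and commutes with $\Phi(x)$ and $\Phi(y)$.

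I expect one subtlety here. The presentation of \Cref{prop:bulk-cycle} is as a quotient of the localised path algebra $\mathscr L_Q$, so $\Phi$ must also send the localised elements $e_i+a_{i-1}a_{i-1}^*$ and $e_i+a_i^*a_i$ to invertibles. Both map to $zE_{ii}$, which is invertible in the corner. Each graded entry also lies in the correct degree: $x$ has degree $1=(i+1)-i$ and $y$ has degree $-1$. So $\Phi$ is a well-defined algebra map into $\Gamma$.

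For bijectivity I will work corner by corner. By \eqref{eq:bulk-cycle-basis}, $e_j\Lambda^{\boldsymbol1}e_i$ has basis $U_pz^re_i$ with $p\equiv j-i\pmod n$. Under $\Phi$, the element $U_pz^re_i$ goes to $x^pz^rE_{ji}$ if $p\ge0$ and to $y^{-p}z^rE_{ji}$ if $p<0$. It therefore suffices to show that
\[
 \{x^pz^r,\ y^{p'}z^r:\ p\ge0,\ p'>0,\ r\in\ZZ\}
\]
is a $\kk$-basis of $R$, since restricting the exponents to a residue class mod $n$ then gives a basis of $R_{j-i}$.

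This basis claim is the main (mild) obstacle, and I will prove it over an arbitrary field. Since $z=1+xy$, we have $R\cong\kk[x,y][(1+xy)^{-1}]$. This ring is $\ZZ$-graded by $\deg x=1$, $\deg y=-1$, with degree-zero part $\kk[xy][(1+xy)^{-1}]=\kk[z^{\pm1}]$. Its degree-$p$ part is $x^p\kk[z^{\pm1}]$ for $p\ge0$ and $y^{-p}\kk[z^{\pm1}]$ for $p<0$. To see these pieces are free of rank one over $\kk[z^{\pm1}]$, note that $R$ is a domain: it is a localisation of the polynomial ring $\kk[x,y]$. So multiplication by $x^p$ or $y^{p'}$ is injective, and $\kk[z^{\pm1}]=\kk[t][t^{-1}\text{-shift}]$ with $t=xy$ algebraically independent. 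The $\ZZ/n$-grading is the reduction mod $n$ of this $\ZZ$-grading, which gives the required bases.

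Hence $\Phi$ carries a basis of each corner bijectively onto a basis of the corresponding entry space $R_{j-i}E_{ji}$. Since both algebras decompose as direct sums of their corners, $\Phi$ is an isomorphism.
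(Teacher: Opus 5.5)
Your proof is correct and follows the paper's argument: define the map on generators, check the vertex relations and that the localisation factors go to invertible diagonal matrices, then match the corner basis $U_pz^re_i$ of \Cref{prop:bulk-cycle} with the basis $V_pz^rE_{ji}$, $p\equiv j-i \pmod n$, of the graded matrix algebra. The only difference is how you get the basis $\{x^pz^r,\,y^{p'}z^r\}$ of $R$. You prove it directly from the $\ZZ$-grading of the domain $\kk[x,y][(1+xy)^{-1}]$, whereas the paper reads it off from the single-vertex case of \Cref{prop:bulk-cycle}.
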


\begin{proof}
The assignments satisfy the vertex relations and send every localisation
factor to an invertible diagonal matrix with entries $1$ or $z$.
They therefore define a homomorphism from
$\Lambda^1(\widetilde A_{n-1})$ to the displayed matrix algebra.

Set $V_p=x^p$ for $p\geq0$ and $V_p=y^{-p}$ for $p<0$.
By the single-vertex case of \Cref{prop:bulk-cycle}, the elements
$V_pz^r$, $p,r\in\ZZ$, form a basis of $R$, with degree $p$ modulo $n$.
The homomorphism sends the basis from \Cref{prop:bulk-cycle}
bijectively to
\[
 V_pz^rE_{ji},\qquad p\equiv j-i\pmod n.
\]
Hence it is an isomorphism.
\end{proof}

If $n$ is invertible in $\kk$ and $\kk$ contains a primitive $n$-th
root of unity $\zeta$, the algebra at $\boldsymbol q=\boldsymbol1$ also has the skew group
description
\[
 \Lambda^1(\widetilde A_{n-1})\cong R\rtimes\mu_n(\kk),
\]
where $\mu_n(\kk)=\langle\zeta\rangle$ acts on $x,y,z$ with weights
$1,-1,0$. Indeed, $g=\sum_i\zeta^ie_i$ satisfies
$gx=\zeta xg$, $gy=\zeta^{-1}yg$, and $g^n=1$. Conversely, in the
skew group algebra the formulas
\[
 e_i=\frac1n\sum_{r=0}^{n-1}\zeta^{-ir}g^r,\qquad
 a_i=xe_i,\qquad a_i^*=ye_{i+1}
\]
recover the quiver generators. These constructions are inverse.

Kaplan and Schedler prove that
$\Lambda^{\boldsymbol q}(\widetilde A_{n-1})$ is $2$-Calabi-Yau
and prime over any field and at every parameter tuple. At the
trivial bulk class it is a noncommutative crepant resolution of its
centre, whose spectrum is the affine surface $z^n+xy+xyz=0$
\cite[Theorem~1.2 and Section~6.2]{KS}. For $n>1$, this surface has
an $A_{n-1}$ singularity; for $n=1$ it is smooth.

\subsection{Affine \texorpdfstring{$D$}{D} diagrams}

For $I_n^*$ with $n>0$, label the chain by $c_0,\ldots,c_n$ and orient
its arrows as $b_k:c_{k-1}\to c_k$. Orient the four leaf arrows
$a_1,a_2$ towards $c_0$ and $a_3,a_4$ towards $c_n$, and order the
arrows as $a_1,a_2,b_1,\ldots,b_n,a_3,a_4$. Write
$u_i=a_ia_i^*$, using $1$ for the identity in the relevant vertex corner.
The arrow labels in \Cref{fig:affine-d-quivers} follow this convention.

\begin{figure}[htbp]
\centering
\begingroup
\newcommand{\Dquiveredge}[3]{%
  \draw[->,shorten >=2pt,shorten <=2pt] (#1) to[bend left=13] node[auto] {$#3$} (#2);
  \draw[->,shorten >=2pt,shorten <=2pt] (#2) to[bend left=13] node[auto] {$#3^*$} (#1);%
}
\scalebox{0.7}{%
\begin{tikzpicture}[
  >={Stealth[length=4pt]},
  every node/.style={font=\scriptsize},
  v/.style={circle,draw,fill=black,inner sep=0pt,minimum size=3.5pt,outer sep=1pt},
  every edge/.style={draw}]
\begin{scope}[xshift=-8cm]
  \node[v,label={[font=\scriptsize,inner sep=1pt,yshift=3pt]above:$c_0$}] (d0c) at (0,0) {};
  \node[v,label={[font=\scriptsize,inner sep=1pt,yshift=3pt]above:$\ell_1$}] (d0l1) at (-1.8,1.15) {};
  \node[v,label={[font=\scriptsize,inner sep=1pt,yshift=3pt]above:$\ell_2$}] (d0l2) at (-1.8,-1.15) {};
  \node[v,label={[font=\scriptsize,inner sep=1pt,yshift=3pt]above:$\ell_3$}] (d0l3) at (1.8,1.15) {};
  \node[v,label={[font=\scriptsize,inner sep=1pt,yshift=3pt]above:$\ell_4$}] (d0l4) at (1.8,-1.15) {};
  \Dquiveredge{d0l1}{d0c}{a_1}
  \Dquiveredge{d0l2}{d0c}{a_2}
  \Dquiveredge{d0l3}{d0c}{a_3}
  \Dquiveredge{d0l4}{d0c}{a_4}
  \node[font=\small] at (0,-1.95) {$I_0^*: \widetilde D_4$};
\end{scope}
\begin{scope}[xshift=-3.4cm]
  \node[v,label={[font=\scriptsize,inner sep=1pt,yshift=3pt]above:$c_0$}] (d1c0) at (0,0) {};
  \node[v,label={[font=\scriptsize,inner sep=1pt,yshift=3pt]above:$c_1$}] (d1c1) at (2.2,0) {};
  \node[v,label={[font=\scriptsize,inner sep=1pt,yshift=3pt]above:$\ell_1$}] (d1l1) at (-1.5,1.15) {};
  \node[v,label={[font=\scriptsize,inner sep=1pt,yshift=3pt]above:$\ell_2$}] (d1l2) at (-1.5,-1.15) {};
  \node[v,label={[font=\scriptsize,inner sep=1pt,yshift=3pt]above:$\ell_3$}] (d1l3) at (3.7,1.15) {};
  \node[v,label={[font=\scriptsize,inner sep=1pt,yshift=3pt]above:$\ell_4$}] (d1l4) at (3.7,-1.15) {};
  \Dquiveredge{d1l1}{d1c0}{a_1}
  \Dquiveredge{d1l2}{d1c0}{a_2}
  \Dquiveredge{d1c0}{d1c1}{b_1}
  \Dquiveredge{d1l3}{d1c1}{a_3}
  \Dquiveredge{d1l4}{d1c1}{a_4}
  \node[font=\small] at (1.1,-1.95) {$I_1^*: \widetilde D_5$};
\end{scope}
\begin{scope}[xshift=5.6cm]
  \node[v,label={[font=\scriptsize,inner sep=1pt,yshift=3pt]above:$c_0$}] (dnc0) at (-2,0) {};
  \node[v,label={[font=\scriptsize,inner sep=1pt,yshift=3pt]above:$c_n$}] (dncn) at (2,0) {};
  \node (dnmid) at (0,0) {$\cdots$};
  \node at (0,-.6) {$c_1,\ldots,c_{n-1}$};
  \node[v,label={[font=\scriptsize,inner sep=1pt,yshift=3pt]above:$\ell_1$}] (dnl1) at (-4,1.15) {};
  \node[v,label={[font=\scriptsize,inner sep=1pt,yshift=3pt]above:$\ell_2$}] (dnl2) at (-4,-1.15) {};
  \node[v,label={[font=\scriptsize,inner sep=1pt,yshift=3pt]above:$\ell_3$}] (dnl3) at (4,1.15) {};
  \node[v,label={[font=\scriptsize,inner sep=1pt,yshift=3pt]above:$\ell_4$}] (dnl4) at (4,-1.15) {};
  \Dquiveredge{dnl1}{dnc0}{a_1}
  \Dquiveredge{dnl2}{dnc0}{a_2}
  \Dquiveredge{dnc0}{dnmid}{b_1}
  \Dquiveredge{dnmid}{dncn}{b_n}
  \Dquiveredge{dnl3}{dncn}{a_3}
  \Dquiveredge{dnl4}{dncn}{a_4}
  \node[font=\small] at (0,-1.95)
    {$I_n^*: \widetilde D_{n+4}\quad(n\geq2)$};
\end{scope}
\end{tikzpicture}%
}
\endgroup
\caption{The doubled affine $D$ quivers.}
\label{fig:affine-d-quivers}
\end{figure}

Write $\lambda_i$ for the parameter at leaf $i$ and $\nu_k$ for
that at $c_k$.

\begin{proposition}\label{prop:d-algebra}
For every nonzero parameter tuple, the algebra
$\Lambda^{\boldsymbol q}(\widetilde D_{n+4})$ has the following
presentation. For $n>0$ its relations are
\begin{equation}\label{eq:affine-d-polynomial}
\begin{gathered}
 a_i^*a_i=(\lambda_i^{-1}-1)e_i\quad(1\leq i\leq4),\\
 (1+u_1)(1+u_2)=\nu_0(1+b_1^*b_1),\\
 1+b_kb_k^*=\nu_k(1+b_{k+1}^*b_{k+1})\quad(1\leq k<n),\\
 (1+b_nb_n^*)(1+u_3)(1+u_4)=\nu_n.
\end{gathered}
\end{equation}
For $n=0$ the last three relations are replaced by
\begin{equation}\label{eq:d4-polynomial}
 (1+u_1)(1+u_2)(1+u_3)(1+u_4)=\nu_0.
\end{equation}
\end{proposition}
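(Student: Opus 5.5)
We start from the defining presentation \eqref{eq:vertex-products} of $\Lambda^{\boldsymbol q}(Q)$ with the stated orientation and arrow order, and simplify it in two steps: first the leaf relations, then the localisation.

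\textbf{Leaf relations.} Each leaf $\ell_i$ is the source of exactly one arrow $a_i$ and the target of none. Its vertex relation therefore reads $e_i=\lambda_i(e_i+a_i^*a_i)$, that is,
\[
a_i^*a_i=(\lambda_i^{-1}-1)e_i .
\]
This element is a nonzero scalar multiple of $e_i$, so $e_i+a_i^*a_i=\lambda_i^{-1}e_i$ is invertible with no localisation needed.

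\textbf{The factors $1+u_i$.} The identity
\[
(e_{t(a)}+aa^*)^{-1}=e_{t(a)}-a(e_{s(a)}+a^*a)^{-1}a^*,
\]
used in the proof of \Cref{lem:factors-invertible}, shows that $1+u_i=e_{c}+a_ia_i^*$ is invertible at the central vertex. Explicitly, its inverse is $1-\lambda_i u_i$. This also follows directly, since $u_i^2=(\lambda_i^{-1}-1)u_i$ by the leaf relation.

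\textbf{Chain vertices.} Next we write out the relations in the chosen order.
\begin{itemize}
\item At $c_0$ the incoming arrows are $a_1,a_2$ and the outgoing arrow is $b_1$, which gives $(1+u_1)(1+u_2)=\nu_0(1+b_1^*b_1)$.
\item At an interior $c_k$ the incoming arrow is $b_k$ and the outgoing arrow is $b_{k+1}$, which gives $1+b_kb_k^*=\nu_k(1+b_{k+1}^*b_{k+1})$.
\item At $c_n$ the incoming arrows are $b_n,a_3,a_4$, in that order, and there are none outgoing, so $B=e$ and we get $(1+b_nb_n^*)(1+u_3)(1+u_4)=\nu_n$.
\item For $n=0$ all four leaf arrows point to $c_0$, which gives \eqref{eq:d4-polynomial}.
\end{itemize}
These relations, together with the leaf relations, are exactly \eqref{eq:affine-d-polynomial}.

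\textbf{Removing the localisation.} It remains to show that the universal localisation is redundant. Let $P$ be the quotient of $\kk\overline Q$ by the displayed polynomial relations. It suffices to prove that every factor $e+aa^*$ and $e+a^*a$ is already invertible in $P$. Then the canonical map $P\to\Lambda^{\boldsymbol q}$ has an inverse, by the universal property of localisation, since the relations coincide.

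We propagate invertibility along the chain, peeling leaves as in \Cref{lem:factors-invertible}.
\begin{itemize}
\item At $c_0$, the $1+u_1$ and $1+u_2$ are invertible and $\nu_0\ne0$, so $1+b_1^*b_1$ is invertible. The displayed identity then gives invertibility of $1+b_1b_1^*$ at $c_1$.
\item By induction using the interior relations, with $\nu_k\ne0$, all $1+b_k^*b_k$ and $1+b_kb_k^*$ are invertible.
\item The last relation is consistent with this: it exhibits $(1+b_nb_n^*)(1+u_3)(1+u_4)$ as the unit $\nu_n$.
\item The case $n=0$ needs nothing further.
\end{itemize}

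The main subtlety is the inverse formula in the identity above. It must be checked in $P$ with only two-sided inverses known at one endpoint. This is a purely formal computation: multiply out
\[
(e+aa^*)\bigl(e-a(e+a^*a)^{-1}a^*\bigr)
\]
and use $a^*a(e+a^*a)^{-1}=e-(e+a^*a)^{-1}$, and similarly on the other side. Everything else is bookkeeping of the orientation and order conventions.
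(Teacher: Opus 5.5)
Your proposal is correct and follows the paper's own route. The displayed relations are the vertex equations for the chosen orientation and arrow order, and the localisation is redundant because invertibility propagates from the leaves through the tree via $(e+aa^*)^{-1}=e-a(e+a^*a)^{-1}a^*$, exactly as in \Cref{lem:factors-invertible}. You only make this leaf-peeling explicit along the chain, including the closed form $(1+u_i)^{-1}=1-\lambda_iu_i$.
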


\begin{proof}
The displayed relations are the vertex equations for the indicated
orientation and order. Since the graph is a tree,
\Cref{lem:factors-invertible} shows that all localisation factors are
already invertible in this polynomial quotient, giving the claimed
presentation.
\end{proof}

\subsection{The affine stars}
\label{subsec:stars}\label{subsec:central-corner}

For $I_0^*$, $IV^*$, $III^*$ and $II^*$ the graph is a star with
arms of lengths $d_j-1$, where $(d_1,\ldots,d_s)$ is $(2,2,2,2)$,
$(3,3,3)$, $(2,4,4)$ or $(2,3,6)$. Label the central vertex by $0$ and
the vertices of the $j$th arm by $(j,p)$, $1\le p<d_j$, starting next to
the centre, orient the arrows towards the centre,
$c_{jp}:(j,p)\to(j,p-1)$ with $(j,0)=0$, and order them by $j$ and
then by $p$. Write $q=q_0$. The doubled quivers of $IV^*$, $III^*$ and
$II^*$ are drawn in \Cref{fig:affine-e-quivers}; that of $I_0^*$ is in
\Cref{fig:affine-d-quivers}.

\begin{figure}[htbp]
\centering
\begingroup
\tikzset{
  kodaira e vertex/.style={
    circle,draw,fill=black,inner sep=0pt,minimum size=3.5pt,
    outer sep=1pt
  },
  kodaira e arrow/.style={
    -{Stealth[length=1.5mm,width=1mm]},line width=.4pt,shorten >=2pt,shorten <=2pt
  }
}
\newcommand{\KodairaEEdge}[3]{%
  \path[kodaira e arrow]
    (#1) edge[bend left=14]
      node[auto,font=\scriptsize,inner sep=1pt] {$c_{#3}^{*}$} (#2);
  \path[kodaira e arrow]
    (#2) edge[bend left=14]
      node[auto,font=\scriptsize,inner sep=1pt] {$c_{#3}$} (#1);
}

\begin{tabular}{@{}c@{\hspace{8mm}}c@{}}
\scalebox{0.7}{%
\begin{tikzpicture}[x=1cm,y=1cm,baseline=(current bounding box.north)]
  \node[kodaira e vertex,label={[font=\small,inner sep=1pt,yshift=3pt]left:$0$}] (e0) at (0,0) {};
  \node[kodaira e vertex,label={[font=\scriptsize,inner sep=1pt,xshift=-2pt]left:$(1,1)$}] (e1) at (0,1.5) {};
  \node[kodaira e vertex,label={[font=\scriptsize,inner sep=1pt,yshift=3pt]above:$(1,2)$}] (e2) at (0,3) {};
  \node[kodaira e vertex,label={[font=\scriptsize,inner sep=1pt,yshift=3pt,xshift=-10pt]above:$(2,1)$}] (e3) at (-1.30,-.75) {};
  \node[kodaira e vertex,label={[font=\scriptsize,inner sep=1pt,yshift=3pt,xshift=-10pt]above:$(2,2)$}] (e4) at (-2.60,-1.5) {};
  \node[kodaira e vertex,label={[font=\scriptsize,inner sep=1pt,yshift=3pt,xshift=10pt]above:$(3,1)$}] (e5) at (1.30,-.75) {};
  \node[kodaira e vertex,label={[font=\scriptsize,inner sep=1pt,yshift=3pt,xshift=10pt]above:$(3,2)$}] (e6) at (2.60,-1.5) {};
  \KodairaEEdge{e0}{e1}{11}
  \KodairaEEdge{e1}{e2}{12}
  \KodairaEEdge{e0}{e3}{21}
  \KodairaEEdge{e3}{e4}{22}
  \KodairaEEdge{e0}{e5}{31}
  \KodairaEEdge{e5}{e6}{32}
  \node at (0,-2.1) {$\widetilde E_6\quad(IV^*)$};
\end{tikzpicture}%
}
&
\begin{tabular}[t]{@{}c@{}}
\scalebox{0.7}{%
\begin{tikzpicture}[x=1cm,y=1cm,baseline=(current bounding box.north)]
  \node[kodaira e vertex,label={[font=\small,inner sep=1pt,yshift=3pt,xshift=-10pt]above:$0$}] (e0) at (0,0) {};
  \node[kodaira e vertex,label={[font=\scriptsize,inner sep=1pt,yshift=3pt]above:$(2,1)$}] (e1) at (-1.45,0) {};
  \node[kodaira e vertex,label={[font=\scriptsize,inner sep=1pt,yshift=3pt]above:$(2,2)$}] (e2) at (-2.90,0) {};
  \node[kodaira e vertex,label={[font=\scriptsize,inner sep=1pt,yshift=3pt]above:$(2,3)$}] (e3) at (-4.35,0) {};
  \node[kodaira e vertex,label={[font=\scriptsize,inner sep=1pt,yshift=3pt]above:$(3,1)$}] (e4) at (1.45,0) {};
  \node[kodaira e vertex,label={[font=\scriptsize,inner sep=1pt,yshift=3pt]above:$(3,2)$}] (e5) at (2.90,0) {};
  \node[kodaira e vertex,label={[font=\scriptsize,inner sep=1pt,yshift=3pt]above:$(3,3)$}] (e6) at (4.35,0) {};
  \node[kodaira e vertex,label={[font=\scriptsize,inner sep=1pt,yshift=3pt]above:$(1,1)$}] (e7) at (0,1.5) {};
  \KodairaEEdge{e0}{e1}{21}
  \KodairaEEdge{e1}{e2}{22}
  \KodairaEEdge{e2}{e3}{23}
  \KodairaEEdge{e0}{e4}{31}
  \KodairaEEdge{e4}{e5}{32}
  \KodairaEEdge{e5}{e6}{33}
  \KodairaEEdge{e0}{e7}{11}
  \node at (0,-.85) {$\widetilde E_7\quad(III^*)$};
\end{tikzpicture}%
}
\\[6mm]
\scalebox{0.7}{%
\begin{tikzpicture}[x=1cm,y=1cm]
  \node[kodaira e vertex,label={[font=\small,inner sep=1pt,yshift=3pt,xshift=-10pt]above:$0$}] (e0) at (0,0) {};
  \node[kodaira e vertex,label={[font=\scriptsize,inner sep=1pt,yshift=3pt]above:$(3,1)$}] (e1) at (-1.40,0) {};
  \node[kodaira e vertex,label={[font=\scriptsize,inner sep=1pt,yshift=3pt]above:$(3,2)$}] (e2) at (-2.80,0) {};
  \node[kodaira e vertex,label={[font=\scriptsize,inner sep=1pt,yshift=3pt]above:$(3,3)$}] (e3) at (-4.20,0) {};
  \node[kodaira e vertex,label={[font=\scriptsize,inner sep=1pt,yshift=3pt]above:$(3,4)$}] (e4) at (-5.60,0) {};
  \node[kodaira e vertex,label={[font=\scriptsize,inner sep=1pt,yshift=3pt]above:$(3,5)$}] (e5) at (-7,0) {};
  \node[kodaira e vertex,label={[font=\scriptsize,inner sep=1pt,yshift=3pt]above:$(2,1)$}] (e6) at (1.40,0) {};
  \node[kodaira e vertex,label={[font=\scriptsize,inner sep=1pt,yshift=3pt]above:$(2,2)$}] (e7) at (2.80,0) {};
  \node[kodaira e vertex,label={[font=\scriptsize,inner sep=1pt,yshift=3pt]above:$(1,1)$}] (e8) at (0,1.5) {};
  \KodairaEEdge{e0}{e1}{31}
  \KodairaEEdge{e1}{e2}{32}
  \KodairaEEdge{e2}{e3}{33}
  \KodairaEEdge{e3}{e4}{34}
  \KodairaEEdge{e4}{e5}{35}
  \KodairaEEdge{e0}{e6}{21}
  \KodairaEEdge{e6}{e7}{22}
  \KodairaEEdge{e0}{e8}{11}
  \node at (-2.1,-.85) {$\widetilde E_8\quad(II^*)$};
\end{tikzpicture}%
}
\end{tabular}
\end{tabular}
\endgroup
\caption{The doubled affine quivers for $IV^*$, $III^*$ and $II^*$.}
\label{fig:affine-e-quivers}
\end{figure}

\begin{proposition}\label{prop:star-algebra}
For every nonzero parameter tuple, $\Lambda^{\boldsymbol q}(Q)$ is the
quotient of the path algebra $\kk\overline Q$ by the relations
\[
\begin{gathered}
 c_{j,d_j-1}^*c_{j,d_j-1}=(q_{(j,d_j-1)}^{-1}-1)\,e_{(j,d_j-1)},\\
 e_{(j,p)}+c_{j,p+1}c_{j,p+1}^*
 =q_{(j,p)}\bigl(e_{(j,p)}+c_{jp}^*c_{jp}\bigr)\quad(1\le p\le d_j-2),\\
 \prod_{j=1}^{s}\bigl(e_0+c_{j1}c_{j1}^*\bigr)=q\,e_0 .
\end{gathered}
\]
No localisation is needed. For $I_0^*$ this is
\Cref{eq:d4-polynomial} with $u_j=c_{j1}c_{j1}^*$.
\end{proposition}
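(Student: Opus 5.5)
The plan is to write out the vertex relations \eqref{eq:vertex-products} for the star orientation and then show that every localisation factor is already invertible in the polynomial quotient, as in \Cref{prop:d-algebra}.

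First the vertex equations. With all arrows $c_{jp}:(j,p)\to(j,p-1)$ oriented towards the centre, each arm vertex $(j,p)$ with $1\le p\le d_j-2$ is the target of exactly one arrow, $c_{j,p+1}$, and the source of exactly one, $c_{jp}$. The relation $A_{(j,p)}=q_{(j,p)}B_{(j,p)}$ is therefore the displayed middle equation.

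The terminal vertex $(j,d_j-1)$ is the target of no arrow, so $A=e_{(j,d_j-1)}$. It is the source of $c_{j,d_j-1}$ only. Hence
\[
 e_{(j,d_j-1)}=q_{(j,d_j-1)}\bigl(e_{(j,d_j-1)}+c^*_{j,d_j-1}c_{j,d_j-1}\bigr),
\]
which rearranges to the first line.

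The centre is the target of $c_{11},\ldots,c_{s1}$ and the source of nothing. With the order by $j$, its relation is
\[
 \prod_{j=1}^{s}(e_0+c_{j1}c_{j1}^*)=q\,e_0 .
\]
For $I_0^*$, where all arms have length one, this is exactly \Cref{eq:d4-polynomial}, and the first line reproduces the leaf relations of \Cref{eq:affine-d-polynomial}. So the relations of the proposition are precisely the vertex relations, imposed on the localised algebra $\mathscr L_Q$.

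It remains to show that no localisation is needed. Write $P$ for $\kk\overline Q$ modulo these relations. We must check that in $P$ every factor $e_{t(a)}+aa^*$ and $e_{s(a)}+a^*a$ is invertible; the universal property of localisation then makes the natural map $\mathscr L_Q/(\text{relations})\to P$ an isomorphism.

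We argue inward along each arm, the leaf-stripping argument of \Cref{lem:factors-invertible} carried out directly in $P$.

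At the terminal vertex, $e+c^*c=q_{(j,d_j-1)}^{-1}e$ is a nonzero scalar multiple of the idempotent, hence invertible. The identity
\[
 (e_{t}+cc^*)^{-1}=e_t-c(e_s+c^*c)^{-1}c^*
\]
and its mirror version then give invertibility of $e+cc^*$ at the other end of that arrow. At vertex $(j,p)$ the middle relation expresses $e+c_{jp}^*c_{jp}$ as $q_{(j,p)}^{-1}$ times the already invertible $e+c_{j,p+1}c_{j,p+1}^*$, so it is invertible too. Repeating the identity moves the conclusion one step further in.

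By induction on $p$ downward, every factor on every arm is invertible, including $e_0+c_{j1}c_{j1}^*$ at the centre. Since the centre is the source of no arrow, there is nothing further to check there.

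The one point needing care is that the inverse identity holds in any ring once one factor is invertible; this is a direct verification and involves no relation. Granting it, the argument is a finite induction, and I expect no real obstacle.
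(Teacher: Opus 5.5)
Your proof is correct, but it does not follow the paper's route. The paper's proof of this proposition is a one-line citation of Crawley-Boevey--Shaw \cite[Lemma~8.1]{CBShaw}, with the vertex and arrow labels matched.

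You instead argue directly, in two steps:
\begin{enumerate}
\item You read off the vertex relations from the inward orientation and the order by $j$.
\item You show that every localisation factor is already a unit in $\kk\overline Q$ modulo those relations. You do this by stripping leaves inward along each arm, using the tip relation, the middle relations, and the identity $(e_t+cc^*)^{-1}=e_t-c(e_s+c^*c)^{-1}c^*$.
\end{enumerate}
This is \Cref{lem:factors-invertible} specialised to the star. For a tree, $H^0(\mathcal B_Q^{\boldsymbol q})$ is literally $\kk\overline Q$ modulo the vertex relations, so that lemma already gives the statement. It is also the argument the paper itself uses for \Cref{prop:d-algebra}.

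Your version is self-contained and visibly valid over any field, using only $q_{(j,p)}\ne0$. It also shows that the central relation plays no role in invertibility: every factor, including $e_0+c_{j1}c_{j1}^*$, is forced to be a unit by the arm relations alone. The citation is shorter and ties the labelling conventions to Crawley-Boevey--Shaw's, which the paper relies on for the central-corner presentation \eqref{eq:star-central-corner} immediately afterwards.
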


\begin{proof}
This is \cite[Lemma~8.1]{CBShaw}, with the stated vertex and arrow labels.
\end{proof}

Put $A=\Lambda^{\boldsymbol q}(Q)$ and $H=e_0Ae_0$.
We identify $H$ with a generalised double affine Hecke algebra and
give a criterion for $e_0$ to be full, so that $A$ and $H$ are
Morita equivalent.

Set
\[
 B_{jp}=e_{(j,p)}+c_{jp}^*c_{jp},\qquad
 U_j=e_0+c_{j1}c_{j1}^*,\qquad
 \alpha_{ja}=\prod_{p=1}^a q_{(j,p)}^{-1},\quad \alpha_{j0}=1.
\]
The tip and interior relations are
\[
 B_{j,d_j-1}=q_{(j,d_j-1)}^{-1}e_{(j,d_j-1)},\qquad
 e_{(j,p)}+c_{j,p+1}c_{j,p+1}^*=q_{(j,p)}B_{jp},
\]
and the central relation is $U_1\cdots U_s=qe_0$, with this order
fixed. If $C=1+cc^*$ and $B=1+c^*c$, then
$(C-1)f(C)=cf(B)c^*$ for every polynomial $f$. Induction from
the tip therefore gives
\begin{equation}\label{eq:star-leg-polynomial}
 \prod_{a=p}^{d_j-1}
 \left(B_{jp}-(q_{(j,p)}\cdots q_{(j,a)})^{-1}e_{(j,p)}\right)=0,
 \qquad
 \prod_{a=0}^{d_j-1}(U_j-\alpha_{ja}e_0)=0.
\end{equation}

The central-corner calculation of Crawley-Boevey and Shaw
\cite[Appendix~1, Lemma~10.1]{EOR} gives
\begin{equation}\label{eq:star-central-corner}
 H\cong
 \frac{\kk\langle U_1^{\pm1},\ldots,U_s^{\pm1}\rangle}
 {\left(\prod_{a=0}^{d_j-1}(U_j-\alpha_{ja})\ (1\le j\le s),
           \ U_1\cdots U_s-q\right)}.
\end{equation}
Thus $H$ is a rank-one generalised double affine Hecke algebra.
The calculation is over $\kk$ and requires no splitting field.

\begin{lemma}\label{lem:star-fullness}
If no nonempty consecutive product $q_{(j,p)}\cdots q_{(j,a)}$ on an
arm equals $1$, then $e_0$ is full in $A$.
\end{lemma}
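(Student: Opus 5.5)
We must show $Ae_0A=A$, i.e. every vertex idempotent $e_{(j,p)}$ lies in the two-sided ideal $I=Ae_0A$. We argue arm by arm, inducting outward from the centre along each arm. The tool is the leg polynomial identity \eqref{eq:star-leg-polynomial} together with the elementary identity $(C-1)f(C)=cf(B)c^*$ with $C=1+cc^*$, $B=1+c^*c$ used just before.

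\textbf{The first vertex of an arm.} Work modulo $I$, in $\bar A=A/I$, where $e_0=0$; we show $e_{(j,p)}=0$ in $\bar A$ for all $p$. Since $c_{j1}$ and $c_{j1}^*$ pass through $e_0$, we have $c_{j1}=c_{j1}^*=0$ in $\bar A$, so $B_{j1}=e_{(j,1)}$ there. Substituting into the first identity of \eqref{eq:star-leg-polynomial} with $p=1$ gives
\[
 \prod_{a=1}^{d_j-1}\bigl(1-(q_{(j,1)}\cdots q_{(j,a)})^{-1}\bigr)e_{(j,1)}=0 .
\]
Each factor is a nonzero scalar by the hypothesis applied to the consecutive products starting at $p=1$. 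Hence $e_{(j,1)}=0$ in $\bar A$.

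\textbf{The inductive step.} Suppose $e_{(j,p-1)}=0$ in $\bar A$. Then $c_{jp}$ and $c_{jp}^*$ vanish, so $B_{jp}=e_{(j,p)}$. The identity \eqref{eq:star-leg-polynomial} at $p$ becomes
\[
 \prod_{a=p}^{d_j-1}\bigl(1-(q_{(j,p)}\cdots q_{(j,a)})^{-1}\bigr)e_{(j,p)}=0,
\]
and the hypothesis on the products $q_{(j,p)}\cdots q_{(j,a)}$ gives $e_{(j,p)}=0$. For the tip $p=d_j-1$, this is just the tip relation $B=q^{-1}e$ together with $q_{(j,d_j-1)}\neq1$. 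Thus all idempotents lie in $I$, so $1\in I$ and $e_0$ is full.

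\textbf{The main point to check.} The step needing care is the derivation of \eqref{eq:star-leg-polynomial} itself, which the paper states via induction from the tip. If I did not trust it, I would rederive it as follows. Assume $P_{p+1}(B_{j,p+1})=0$. Then the interior relation writes $e+c_{j,p+1}c_{j,p+1}^*=q_{(j,p)}B_{jp}$, and the identity $(C-1)f(C)=cf(B)c^*$ gives
\[
 (C-1)P_{p+1}(C)=c\,P_{p+1}(B)\,c^*=0,
\]
with $C=q_{(j,p)}B_{jp}$. Rescaling this polynomial produces the roots $(q_{(j,p)}\cdots q_{(j,a)})^{-1}$ for $a\ge p$.
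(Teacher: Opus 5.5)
Your proposal is correct and follows essentially the same route as the paper: pass to $A/Ae_0A$, note that the arrows adjacent to a vanishing idempotent die so that $B_{jp}=e_{(j,p)}$, and read off from \eqref{eq:star-leg-polynomial} that a nonzero scalar multiple of $e_{(j,p)}$ vanishes, inducting outward along each arm. Your rederivation of the leg polynomial via $(C-1)f(C)=cf(B)c^*$ with $C=q_{(j,p)}B_{jp}$ is also sound, and it makes explicit the induction from the tip that the paper only sketches.
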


\begin{proof}
This is \cite[Appendix~2, Proposition~11.2]{EOR} over $\C$; the
following argument works over any field.
In $A/Ae_0A$, the first arrow pair on each arm vanishes, so
$B_{j1}=e_{(j,1)}$. The first equation in
\Cref{eq:star-leg-polynomial} then gives a nonzero scalar multiple
of $e_{(j,1)}$ equal to zero. Repeat along the arm. Every vertex
idempotent vanishes in the quotient, proving $Ae_0A=A$.
\end{proof}

At $\mathfrak b_{\mathrm{grp}}$, the numbers $\alpha_{ja}$ run
through the $d_j$th roots of unity, and all consecutive products
in \Cref{lem:star-fullness} differ from $1$. Consequently,
\begin{equation}\label{eq:star-group-corner}
 H\cong
 \kk\langle U_1,\ldots,U_s\rangle/
 (U_j^{d_j}-1,\ U_1\cdots U_s-1)
 \cong\kk[\Gamma\rtimes G].
\end{equation}
The last presentation is that of the orbifold fundamental group
of $E/G$. The orbifold fundamental group is an extension of $G$
by $\Gamma=\pi_1(E)$, split by the fixed origin of $E$.
Hence $A$ is Morita equivalent to
$\kk[\Gamma]\rtimes G$ at these parameters.

\subsection{The three nontransverse fibres}
\label{subsec:nontransverse}

By \Cref{thm:neighbourhood-surgery}(2), the attaching links for
$II$, $III$, $IV$ are the rainbow closures of $\sigma_1^3$,
$\sigma_1^4$, $(\sigma_1\sigma_2)^3$, respectively. We use the model
$G(\beta)$ of \cite[Definition~2.2 and Lemma~2.3]{CSrainbow},
retaining one idempotent and one bulk parameter per component.

For $\beta=\sigma_{i_1}\cdots\sigma_{i_m}$, let
\[
 P_\beta=P_{i_1}(z_1)\cdots P_{i_m}(z_m),
\]
where $P_i(z)$ is the identity except for the block
$\bigl(\begin{smallmatrix}0&1\\1&z\end{smallmatrix}\bigr)$
in positions $i,i+1$. The model has differential
\begin{equation}\label{eq:rainbow-differential}
 dB=P_\beta^{-1}+DLU,
\end{equation}
where $L,U$ are lower and upper unitriangular matrices, $B$ has degree
$-1$, and the remaining generators have degree zero.

The comparison in \cite[Lemma~2.3]{CSrainbow} fixes the basepoint
variables. We use
\mbox{$D=-\operatorname{diag}(q_ie_i)$} for $III$, $IV$. For $II$, we
use $D=\operatorname{diag}(-q,1)$, retaining one basepoint as in
\cite[Section~4]{CSrainbow}.

By \cite[Lemma~2.4]{CSrainbow}, eliminating the entries of $L,U$ from
$P_\beta^{-1}+DLU=0$ gives the following presentations of
$H^0(G(\beta)_{\boldsymbol q})$. The crossing generators are labelled
alphabetically in braid order, as in \Cref{fig:nontransverse-quivers}.

\begin{figure}[htbp]
\centering
\scalebox{0.7}{%
\begin{tikzpicture}[
  x=1cm,y=1cm,
  nqvertex/.style={circle,draw,fill=black,inner sep=0pt,minimum size=3.5pt,
    outer sep=1pt},
  nqarrow/.style={-{Stealth[length=1.6mm,width=1.2mm]},line width=.45pt,shorten >=2pt,shorten <=2pt},
  nqlabel/.style={font=\small,fill=white,inner sep=1.5pt},
  nqtitle/.style={font=\small}
]
  \node[nqtitle] at (-7,1.65) {$Q_{II}$};
  \node[nqvertex,label={[font=\small,inner sep=1pt,yshift=3pt,xshift=-10pt]above:$1$}] (nqcusp) at (-7,0) {};
  \path[nqarrow]
    (nqcusp) edge[loop left,min distance=13mm]
      node[nqlabel,left] {$x$} (nqcusp)
    (nqcusp) edge[loop above,min distance=13mm]
      node[nqlabel,above] {$y$} (nqcusp)
    (nqcusp) edge[loop right,min distance=13mm]
      node[nqlabel,right] {$z$} (nqcusp);

  \node[nqtitle] at (-1.5,1.65) {$Q_{III}$};
  \node[nqvertex,label={[font=\small,inner sep=1pt,yshift=3pt]above:$1$}] (nqiiione) at (-3.5,0) {};
  \node[nqvertex,label={[font=\small,inner sep=1pt,yshift=3pt]above:$2$}] (nqiiitwo) at (.5,0) {};
  \draw[nqarrow] (nqiiione) to[bend left=35]
    node[nqlabel,above] {$a$} (nqiiitwo);
  \draw[nqarrow] (nqiiitwo) to[bend right=12]
    node[nqlabel,above] {$b$} (nqiiione);
  \draw[nqarrow] (nqiiione) to[bend right=12]
    node[nqlabel,below] {$c$} (nqiiitwo);
  \draw[nqarrow] (nqiiitwo) to[bend left=35]
    node[nqlabel,below] {$d$} (nqiiione);

  \node[nqtitle] at (5.2,1.65) {$Q_{IV}$};
  \node[nqvertex,label={[font=\small,inner sep=1pt,yshift=3pt]above:$1$}] (nqivone) at (2.5,0) {};
  \node[nqvertex,label={[font=\small,inner sep=1pt,yshift=3pt]above:$2$}] (nqivtwo) at (7.9,0) {};
  \node[nqvertex,label={[font=\small,inner sep=1pt,yshift=3pt]above:$3$}] (nqivthree) at (5.2,-2.4) {};
  \draw[nqarrow] (nqivone) to[bend left=11]
    node[nqlabel,above] {$a$} (nqivtwo);
  \draw[nqarrow] (nqivtwo) to[bend left=11]
    node[nqlabel,below] {$d$} (nqivone);
  \draw[nqarrow] (nqivone) to[bend right=13]
    node[nqlabel,left] {$b$} (nqivthree);
  \draw[nqarrow] (nqivthree) to[bend right=13]
    node[nqlabel,right] {$e$} (nqivone);
  \draw[nqarrow] (nqivtwo) to[bend left=13]
    node[nqlabel,right] {$c$} (nqivthree);
  \draw[nqarrow] (nqivthree) to[bend left=13]
    node[nqlabel,left] {$f$} (nqivtwo);
\end{tikzpicture}%
}
\caption{Quivers for the nontransverse fibres. The arrows are the
degree-zero generators of $A_{II}(q)$, $A_{III}(q_1,q_2)$, and
$A_{IV}(q_1,q_2,q_3)$, with
relations \eqref{eq:cusp-algebra}, \eqref{eq:III-algebra}, and
\eqref{eq:IV-algebra}. The cusp loops commute at $q=1$.}
\label{fig:nontransverse-quivers}
\end{figure}

For type $II$, the two diagonal relations give
\begin{equation}\label{eq:cusp-algebra}
 A_{II}(q)=\kk\langle x,y,z\rangle/
 (q+x+z+zyx,\;1+x+z+xyz).
\end{equation}
At $q=1$, this algebra is commutative and equals
$\kk[x,y,z]/(xyz+x+z+1)$ \cite[Section~6.1.5]{ELduality}.
For type $III$, with $a,c:1\to2$ and $b,d:2\to1$,
\begin{equation}\label{eq:III-algebra}
\begin{split}
 A_{III}(q_1,q_2)=\kk Q_{III}/\big(&
 (1-q_1)e_1+ba+da+dc+dcba,\\
 &(1-q_2^{-1})e_2+ab+ad+cd+abcd\big).
\end{split}
\end{equation}
For type $IV$, with $a:1\to2$, $b:1\to3$, $c:2\to3$, $d:2\to1$,
$e:3\to1$, $f:3\to2$, put $A=ec-d$, $B=fb-a-fca$ and $C=ca-b$. Then
\begin{equation}\label{eq:IV-algebra}
\begin{gathered}
 A_{IV}(q_1,q_2,q_3)=\kk Q_{IV}/(R_1^{\boldsymbol q},
                    R_2^{\boldsymbol q},R_3^{\boldsymbol q}),\\
 R_1^{\boldsymbol q}=(1-q_1)e_1+da+eb-eca,\\
 R_2^{\boldsymbol q}=(1-q_2)e_2+fc-Bq_1^{-1}A,\\
 R_3^{\boldsymbol q}=(1-q_3)e_3+Cq_1^{-1}e
       -(-c-Cq_1^{-1}A)q_2^{-1}(-f+Bq_1^{-1}e).
\end{gathered}
\end{equation}
Each $R_i^{\boldsymbol q}$ lies in $e_i\kk Q_{IV}e_i$, and the only
inverses used in the elimination are the scalars $q_i^{-1}$. At
$\boldsymbol q=\boldsymbol1$ the relations reduce to $da+eb-eca$,
$fc-BA$ and $Ce-(-c-CA)(-f+Be)$.

For each Kodaira type $F$, let $\mathcal B_F^{\boldsymbol q}$ denote
the dg algebra obtained above: the plumbing algebra
$\mathcal B_Q^{\boldsymbol q}$ of \Cref{def:plumbing-dga} for the normal
crossing types, and the rainbow model $G(\beta)$ of
\Cref{eq:rainbow-differential} specialised at
$D=\operatorname{diag}(-q,1)$ for $II$ and $D=-\operatorname{diag}(q_ie_i)$ for
$III$, $IV$. In all cases
\begin{equation}\label{eq:dg-model-summary}
 \CE^*_{\boldsymbol q}(\Lambda_F)\simeq\mathcal B_F^{\boldsymbol q},
 \qquad
 H^0(\mathcal B_F^{\boldsymbol q})=A_F(\boldsymbol q),
 \qquad
 H^k(\mathcal B_F^{\boldsymbol q})=0\quad(k>0).
\end{equation}
Here $A_F(\boldsymbol q)$ is the algebra listed in \Cref{tab:algebras}.
The dg algebra $\mathcal B_F^{\boldsymbol q}$ is concentrated in
nonpositive degrees.

\begin{remark}\label{rem:rainbow-specialisation}
For the rainbow closures, Casals-Simons establish vanishing with the
basepoint variables retained \cite[Main Theorem]{CSrainbow}. Here we
establish vanishing in negative degrees after every bulk specialisation
by combining the Floer degree bound of \Cref{sec:foundations} with the
graded surgery comparison of \Cref{sec:surgery}.
\end{remark}

\section{Wrapped Floer cohomology}
\label{sec:foundations}

We construct cofinal Floer complexes between the completed transverse
disks in degrees zero and one. A generic choice of the boundary
radius makes all Reeb chords nondegenerate, and interpolation
to a Hamiltonian depending on the base identifies their degrees with
Morse indices. \Cref{sec:surgery} combines this bound with the
nonpositive surgery algebra to prove vanishing outside degree zero.
Throughout, the disks carry the bulk trivialisations of
\Cref{subsec:bulk-data}; these change the coefficients of Floer
operations but not the chords or their degrees.

\subsection{Holomorphic gradings and relative periods}
\label{subsec:holomorphic-gradings}

We use $\iota_{X_H}\omega=-dH$. In coordinates with
$\omega=dp\wedge dq$, the time-one map of $H(p)$ is
$(q,p)\mapsto(q+\nabla H(p),p)$. Gradings follow
\cite[Section~2d]{SeidelGraded}: the degree of a chord from $L_i$ to
$L_j$ is the absolute index of $d\phi_H^1(TL_i)$ and $TL_j$, with
the first grading transported along the Hamiltonian flow. A small
Hamiltonian with a nondegenerate minimum on a Lagrangian gives a
degree-zero constant chord.

\begin{lemma}\label{lem:phase}
A complex symplectic surface $(X,I,\Omega)$ with symplectic form
$\Ree\Omega$ has a grading structure in which every holomorphic curve
has phase zero. In holomorphic coordinates $\Omega=dw\wedge dt$,
let $F_i,F_j$ be holomorphic functions and $L_i,L_j$ their graphs
$w=dF_i(t),w=dF_j(t)$, both graded by zero.
For a real Hamiltonian $H(t)$ and a nondegenerate chord
from $L_i$ to $L_j$ lying over $t=t_0$, the point $t_0$ is a
nondegenerate critical point of
\[
 H(t)+\Ree\bigl(F_j(t)-F_i(t)\bigr),
\]
and the degree of the chord is the Morse index of this function at $t_0$.
If $H$ is subharmonic, the degree is zero or one.
\end{lemma}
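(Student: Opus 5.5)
The plan has three parts: first fix the grading, then reduce the chord degree to the Hessian of a single real function, and finally use subharmonicity to bound the index.

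\emph{Grading.} A complex symplectic surface has trivial canonical bundle, trivialised by $\Omega$. Use the quadratic complex volume form $\Omega^{\otimes 2}$, viewed as a section of $(\Lambda^2_\C T^*X)^{\otimes2}$ for the complex structure $I$, which is compatible with $\Ree\Omega$ up to homotopy. The phase of a Lagrangian plane $P$ is then $\Omega(v_1,v_2)^2/|\Omega(v_1,v_2)|^2$ for a real basis of $P$. For a complex line, the basis $v,Iv$ gives $\Omega(v,Iv)=i\Omega(v,v)=0$. So we should work with the real volume form $dt\wedge d\bar{t}$-type normalisation: in coordinates $\Omega=dw\wedge dt$, take the complex volume form $\Theta=dw\wedge d\bar t$-free choice, namely $\theta=dt\wedge d\bar w$ restricted appropriately. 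Concretely, the linear map $(t,w)\mapsto(t,\bar w)$ identifies $(\C^2,\Ree(dw\wedge dt))$ with a standard symplectic $\C^2$, up to the sign conventions of \eqref{eq:real-cotangent-coordinates}. Under this map, holomorphic curves become totally real planes of constant phase, and so does the graph of $w=dF(t)$, since holomorphic graphs are holomorphic curves. We choose the phase constant to be $1$, so every holomorphic curve has phase zero. The constant phase function $0$ on each holomorphic curve is the holomorphic grading.

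\emph{Reduction to a Hessian.} For real $H(t)$ we have $\iota_{X_H}\omega=-dH$. The flow moves $w$ by a $t$-dependent translation, with $t$ fixed. In the real coordinates, $t$ plays the role of $p$ and $w$ the role of $q$ up to the sign conventions, and $\phi^1_H$ sends the graph of $dF_i$ to the graph of $dF_i+c\,\overline{\partial H}$-type term. A point of $\phi_H^1(L_i)\cap L_j$ over $t_0$ is exactly a critical point of $H+\Ree(F_j-F_i)$: the relevant identity is $\partial_t\Ree G=\tfrac12 G'$ for holomorphic $G$, and the flow adds $2\partial_{\bar t}H$ in $w$ up to conjugation. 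I will fix the constants by matching with the stated convention that the time-one map of $H(p)$ is $q\mapsto q+\nabla H(p)$. At such a point, both $d\phi^1_H(TL_i)$ and $TL_j$ are graphs over the $t$-plane, the first of the Hessian-shifted symmetric form. Their relative position is the graph of the real Hessian of $K=H+\Ree(F_j-F_i)$, viewed as a Lagrangian graph relative to the graph of a holomorphic quadratic, which has phase zero. Nondegeneracy of the chord is nondegeneracy of this Hessian.

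\emph{Computing the degree, and the main obstacle.} Deform the Hessian linearly to its value and transport gradings. The absolute index of the pair (graph of a symmetric form $S$, zero section) with phase-zero gradings at both ends is the number of negative eigenvalues of $S$, in Seidel's normalisation \cite[Section~2d]{SeidelGraded}, where a nondegenerate minimum gives degree $0$. This step is the main obstacle, because it requires checking that transporting the grading of $L_i$ along the Hamiltonian isotopy agrees with the zero grading of the intermediate graphs. I would handle it by working in the linear model, where the isotopy is through graphs $w=dF_i+s\,\partial H$; these are Lagrangian graphs over the $t$-plane whose phase stays in a contractible region, so the lifts are canonical and the count is the Morse index. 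Finally, $\Ree(F_j-F_i)$ is harmonic, so if $H$ is subharmonic then $\Delta K=\Delta H\ge0$. The trace of the Hessian is then nonnegative, so it has at most one negative eigenvalue, and the index is $0$ or $1$.
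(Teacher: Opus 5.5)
Your reduction to the Hessian of $K=H+\Ree(F_j-F_i)$, the nondegeneracy criterion, and the trace argument for subharmonic $H$ agree with the paper. Fixing gradings by continuous lifts over the contractible set of Lagrangian graphs over the $t$-plane is a workable substitute for the paper's shear.

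The gap is in the first step, which is the lemma's first assertion: you never construct a grading structure for $\omega=\Ree\Omega$.
\begin{itemize}
\item The form $\Omega^{\otimes2}$ cannot serve. We have $\omega(v,Iv)=\Ree\bigl(i\,\Omega(v,v)\bigr)=0$, so $I$ is not $\omega$-compatible. Moreover $\Omega$ vanishes on every $I$-complex line, and these are exactly the Lagrangian planes you need to grade.
\item Your replacement $dt\wedge d\bar w$ vanishes on the Lagrangian plane $\{w=0\}$. So it is not a complex volume form for any $\omega$-compatible almost complex structure. On the complex line $\{w=at\}$ it gives squared phase $-\bar a^2/|a|^2$, which varies with $a$, so holomorphic curves would not even have constant phase.
\item What remains is an identification of a single holomorphic Darboux chart with a linear model, and this depends on the chart. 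Nothing in the proposal explains why such chartwise choices glue to a grading structure on all of $X$. The lemma asserts this, and it is needed later when the structure is extended over $\widehat W_\epsilon$.
\end{itemize}

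The missing idea is hyperk\"ahler rotation, which is how the paper argues.
\begin{itemize}
\item Choose an $I$-Hermitian metric whose fundamental form satisfies $\alpha_I^2=(\Ree\Omega)^2$. Pointwise, $(\alpha_I,\Ree\Omega,\Im\Omega)$ is then a hyperk\"ahler triple.
\item This gives an almost complex structure compatible with $\Ree\Omega$, with complex volume form $\Psi=\alpha_I-i\Im\Omega$.
\item Since $\Omega$ vanishes on $I$-complex lines and $\alpha_I$ is positive on them, $\Psi$ restricts to a positive area form on every holomorphic curve. So every holomorphic curve has phase zero.
\item The normalised metrics form a contractible space, so this grading structure is global and canonical up to homotopy.
\end{itemize}
For the flat metric in a chart with $\Omega=dw\wedge dt$, this is the standard complex structure with holomorphic coordinates $p_k+iq_k$, where $q=(\Ree w,\Im w)$ and $p=(-\Ree t,\Im t)$. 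In that chart $\Psi=-d(p_1+iq_1)\wedge d(p_2+iq_2)$, which is presumably what your identification was aiming at.

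Once this is in place, you should also say explicitly why the holomorphic grading of $L_k$ equals your canonical lift on graphs. The reason is that the path $w=u\,dF_k(t)$, $0\le u\le1$, runs through holomorphic graphs, all of phase zero. Relatedly, your phrases ``phase-zero gradings at both ends'' and ``the zero grading of the intermediate graphs'' are inaccurate. The planes $\phi^1_{sH}(L_i)$ and the graph of $D^2K$ are not complex lines, so they do not have phase zero; they carry the transported grading, which is the canonical lift on graphs.
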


\begin{proof}
Choose an $I$-Hermitian metric whose fundamental form $\alpha_I$ satisfies
\[
 \alpha_I^2=(\Ree\Omega)^2.
\]
These forms determine an almost complex structure compatible with
$\Ree\Omega$, with complex volume form
\[
 \Psi=\alpha_I-i\Im\Omega.
\]
On every holomorphic curve, $\Psi$ restricts to the positive area form
$\alpha_I$, so the curve has phase zero. The space of normalised metrics
is contractible, making the grading structure independent of this choice
up to homotopy.

Put $q=(\Ree w,\Im w)$ and $p=(-\Ree t,\Im t)$, so that
$\omega=dp\wedge dq$, and put $G_k(p)=\Ree F_k(t(p))$ for $k=i,j$.
The functions $G_k$ are harmonic, and $L_k$ is the graph
$q=-\nabla G_k(p)$. The homotopy $w=u\,dF_k(t)$, $0\le u\le1$,
consists of holomorphic graphs, hence has phase zero throughout.
Thus the grading transported from $w=0$ agrees with the holomorphic
grading.
At a chord, the tangent planes to $\phi_H^1(L_i)$ and $L_j$ are
the graphs of $D^2H-D^2G_i$ and $-D^2G_j$. The linear symplectic shear
\[
 (\delta q,\delta p)\longmapsto
 (\delta q+D^2G_j\,\delta p,\delta p)
\]
subtracts the target graph, sending these planes to the graph of
$D^2(H+G_j-G_i)$ and the vertical plane. Transporting both gradings
along this shear, the graph-index rule
\cite[Section~2d, (iii)--(v)]{SeidelGraded} identifies the chord degree
with the Morse index of $H+G_j-G_i$.
If $H$ is subharmonic, then
$\Delta(H+G_j-G_i)=\Delta H\ge0$, so its Hessian cannot be
negative definite.
\end{proof}

Extend the grading structure of \Cref{lem:phase} to
$\widehat W_\epsilon$ by Liouville transport and to the surgery through a fixed
identification in \Cref{thm:neighbourhood-surgery}. For $I_n$, where
$H^1(M;\ZZ)=\ZZ$, this identification is part of our grading convention.
The unique grading on $\widehat D_i$ restricting to zero on
$D_i\cap W_\epsilon$ is its \textbf{holomorphic grading}.

Let $t$ be the base coordinate, also pulled back to $X$, and define
the fibrewise differential $\eta_t$ by $\Omega=\eta_t\wedge dt$.
On a simply connected sector of $\Delta^*$, each $D_i$ has $m_i$
holomorphic branches. For two such branches and a path $\gamma_t$
in the fibre from the first to the second, with its homotopy class
transported over the sector, their relative period is the holomorphic function
\[
 P(t)=\int_{\gamma_t}\eta_t.
\]
There are countably many such path classes.

Integrating $\eta_t$ on the universal covers of the fibres gives
a holomorphic coordinate $w$ with $\Omega=dw\wedge dt$.
For a chord of $H(t)$ in the class of $\gamma_t$, choose lifts of
the endpoint branches joined by the lifted chord. Their $w$-coordinates
differ by $P(t)$, so \Cref{lem:phase} identifies the degree of a
nondegenerate chord with the Morse index of
\[
 H(t)+\Ree A(t),\qquad A'(t)=P(t).
\]

Use the primitive of \Cref{prop:neighbourhood-weinstein}, for which
$Z\mu>0$ near $F_{\mathrm{red}}$, where $\mu=\log|t|$. For small
$\epsilon>0$, put $Y_\epsilon=\partial W_\epsilon=\{|t|=\epsilon\}$
and $\Gamma_i=D_i\cap Y_\epsilon$. Since $\lambda|_{D_i}=0$,
the field $Z$ is tangent to $D_i$, so $\Gamma_i$ is Legendrian
and its Liouville collar lies in the holomorphic disk.

\begin{lemma}\label{lem:generic-boundary}
For almost every sufficiently small $\epsilon>0$, all positive Reeb
chords between the Legendrians $\Gamma_i\subset Y_\epsilon$ are
nondegenerate.
\end{lemma}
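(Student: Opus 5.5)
The plan is to reduce Reeb chords on $Y_\epsilon$ to critical points of harmonic functions on the base, using \Cref{lem:phase} and the relative periods. Nondegeneracy then becomes the non-vanishing of the derivative of a holomorphic function. That fails only on a discrete set of radii for each of the countably many path classes.

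\emph{Step 1: chords depend only on the hypersurface.} Since $Y_\epsilon=\mu^{-1}(\log\epsilon)$, the Reeb line field of $\lambda|_{Y_\epsilon}$ is the characteristic line field $\ker d\lambda|_{Y_\epsilon}$. This is spanned by the Hamiltonian field $X_\mu$ of $\mu=\log|t|$. Thus positive Reeb chords from $\Gamma_i$ to $\Gamma_j$ are, after reparametrisation, trajectories $\phi^s_\mu(x)$ with $s>0$, $x\in\Gamma_i$ and $\phi^s_\mu(x)\in\Gamma_j$.

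Nondegeneracy is also parametrisation-independent. The flow-out of $\Gamma_i$ under the line field is an immersed surface in $Y_\epsilon$ containing the Reeb direction, and $\Gamma_j$ is transverse to the Reeb direction because it is Legendrian. So a chord is nondegenerate exactly when this surface meets $\Gamma_j$ transversally, and the condition does not depend on the choice of contact form. We may therefore work with $X_\mu$ and the holomorphic model of \Cref{lem:phase}. One point needs checking: that this flow-out transversality agrees with the linearised-flow definition of nondegeneracy used later for the Floer complexes. I expect this to be routine.

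\emph{Step 2: translation to the base.} Use the coordinate $w$ with $\Omega=dw\wedge dt$ on the universal covers of the fibres. Over a simply connected sector, the branches of $D_i$ and $D_j$ are holomorphic graphs $w=w_i(t)$ and $w=w_j(t)$, since $D_i$ is a holomorphic multisection transverse to the fibres away from $t=0$. The time-$s$ flow of the base function $\mu$ translates $w$ by $s$ times a multiple of $\partial_{\bar t}\log|t|$, a constant times $1/\bar t$, and fixes $t$. Fix a path class $\gamma$ with relative period $P(t)$, and put $f_s(t)=s\log t+A(t)$ with $A'=P$, suitably normalised. As in the discussion after \Cref{lem:phase}, a chord of length $s$ over $t_0$ in the class $\gamma$ is a critical point of the function $\Ree f_s$, which is harmonic.

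The critical point condition is $f_s'(t_0)=0$, that is, $R(t_0):=-t_0P(t_0)=s$ up to a fixed normalising constant. A critical point of $\Ree f$ for holomorphic $f$ is nondegenerate exactly when $f''(t_0)\ne0$. Substituting $s=-t_0P(t_0)$ into $f_s''(t_0)=-s/t_0^2+P'(t_0)$ shows that $f''(t_0)=0$ is equivalent to $R'(t_0)=0$. So degenerate chords in the class $\gamma$ lie over zeros of $R'$.

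\emph{Step 3: ruling out constant $R$.} If $R$ is nonconstant on the sector, the zeros of $R'$ are discrete. Their moduli then form a countable set of radii, and so does the corresponding set for the other sectors covering the circle and for the analytic continuations of $P$ under monodromy. The main obstacle is excluding $R\equiv c$ for some class, which would give a circle of degenerate chords on every $Y_\epsilon$.

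I would argue from growth as $t\to0$. $R\equiv c$ means $P(t)=-c/t$. But $P$ is a branch difference plus an integral combination of fibre periods. The branch difference is bounded, because the branches of $D_i$ converge to the points $x_i$. The fibre periods grow at most logarithmically near a Kodaira fibre: at most like $\log|t|$ for $I_n$ and $I_n^*$, and bounded in the other cases. Hence $tP(t)\to0$, so $c=0$ and $P\equiv0$. That gives $s=0$, a constant path at an intersection point rather than a positive chord, so it does not occur among positive chords. I expect the care to be needed in making this growth estimate uniform in the relevant sectors, in particular for $D_i$ with $m_i>1$, whose branches are permuted by monodromy.

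\emph{Step 4: conclusion.} There are countably many path classes, and each contributes a countable exceptional set of radii. The union is countable, hence of measure zero, so for almost every small $\epsilon$ all positive Reeb chords between the $\Gamma_i$ are nondegenerate.
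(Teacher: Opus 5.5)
Your Step 1 matches the paper, but Step 2 uses the wrong nondegeneracy criterion, and this is a genuine gap. The condition $f_s''(t_0)\neq0$ expresses transversality, in the four-manifold, of $\phi^s_\mu(\widehat D_i)$ and $\widehat D_j$ for a \emph{fixed} time $s$, with $t_0$ free in the base. A Reeb chord on $Y_\epsilon$ is instead a solution $(\theta,s)$ of $R(\epsilon e^{i\theta})=s$, with the radius fixed and the time free; this is exactly the flow-out transversality you set up in Step 1. The differential of $(\theta,s)\mapsto R(\epsilon e^{i\theta})-s$ has columns $it_0R'(t_0)$ and $-1$. So the chord is nondegenerate if and only if $\Ree\bigl(t_0R'(t_0)\bigr)\neq0$. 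Equivalently, $R'(t_0)\neq0$ and the circle $|t|=\epsilon$ crosses the curve $\{R\in\R_{>0}\}$ transversally at $t_0$.

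This fails on a real codimension-one set, not just at zeros of $R'$. For instance, take $R(t)=s_0+ik(t-\epsilon_0)$ with $k\in\R\setminus\{0\}$. The chord over $t_0=\epsilon_0$ is a birth--death point, degenerate although $R'(t_0)\neq0$ and your Hessian $f_s''(t_0)=-R'(t_0)/t_0$ is nonzero. Your exceptional radii, the moduli of zeros of $R'$, therefore miss every tangency radius, and the claim that nondegeneracy fails only on a discrete set of radii is unjustified. The missing step is the one the paper uses. Away from the isolated zeros of $R'$, the locus $\{R\in\R_{>0}\}$ is a smooth real curve, and Sard's theorem for the restriction of $|t|$ to it shows that almost every circle meets it transversally. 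This is why the lemma is stated for almost every $\epsilon$.

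Step 3 reaches a true conclusion, $tP(t)\to0$, but from false estimates. Near a component of multiplicity $m$, in coordinates with $\Omega=du\wedge dv$ and $\pi=v^m\cdot(\text{unit})$, the form $\eta_t$ is approximately $du/(mv^{m-1})$ with $|v|\approx|t|^{1/m}$. So neither branch differences nor fibre periods are bounded or logarithmic in general. In the quotient models $X_N$ the fibre periods are of order $|t|^{-(N-1)/N}$. For $I_0^*$ the two branches of $D_0$ have relative periods $(\ell-2w_0)/(2\sqrt t)$ with $\ell$ in the lattice. Since all exponents stay above $-1$, your growth argument can be repaired.

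The paper instead avoids asymptotics. Since $\Omega$ vanishes on the components generating $H_2(W_\epsilon)$, one has $\Omega=d\beta$ with $\beta$ exact on each disk. If $tP\equiv-c$, the cylinder $S$ swept from $D_i$ to $D_j$ over $N$ turns has $\int_S\Omega=-2\pi iNc$ but $\int_{\partial S}\beta=0$, a contradiction.
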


\begin{proof}
In the fibrewise holomorphic coordinates,
\begin{equation}\label{eq:boundary-reeb}
 R_{\lambda|_{Y_\epsilon}}=\frac{X_\mu}{Z\mu},\qquad
 X_\mu:\quad \dot w=-\frac1t,\quad \dot t=0,\qquad
 \iota_{X_\mu}\Omega=-\frac{dt}{t}.
\end{equation}
Fix a relative period $P$ on a simply connected sector and put
$B(t)=tP(t)$. The endpoint equation for a positive chord of
$X_\mu$-time $c$ is
\begin{equation}\label{eq:boundary-chord}
 B(\epsilon e^{i\theta})=-c,\qquad c>0.
\end{equation}
Its differential in $(\theta,c)$ is invertible precisely when
\begin{equation}\label{eq:boundary-transversality}
 \partial_\theta\Im B(\epsilon e^{i\theta})
 =\Ree(tB'(t))\ne0.
\end{equation}
This is transversality of the surface swept out from $\Gamma_i$
to $\Gamma_j$, hence nondegeneracy of the Reeb chord.

We first exclude $B\equiv-c<0$. Since $W_\epsilon$ retracts onto
$F_{\mathrm{red}}$, its second homology is generated by the component
classes. The form $\Omega$ restricts to zero on their normalisations,
so $\Omega=d\beta$ for a smooth complex-valued one-form $\beta$.
Its restriction to each holomorphic disk $D_i\cap W_\epsilon$ is exact.
The restrictions $\pi|_{D_i}$ and $\pi|_{D_j}$ have degrees $m_i$ and
$m_j$. Choose $N\ge1$ divisible by both, so the chosen branches return
to themselves after $N$ turns around $t=0$. Write them as
$\sigma_i(\theta),\sigma_j(\theta)$ over $t=\epsilon e^{i\theta}$,
with $\theta\in\R/(2\pi N\ZZ)$. If $B\equiv-c$, then
$\phi_{X_\mu}^c(\sigma_i(\theta))=\sigma_j(\theta)$ on the sector
where $P$ is defined. The flow is holomorphic on the regular locus and complete on
each compact fibre, so this equality holds for every $\theta$.
The cylinder
\[
 S(u,\theta)=\phi_{X_\mu}^u(\sigma_i(\theta)),\qquad 0\le u\le c,
\]
has boundary in the two disks. By \eqref{eq:boundary-reeb} and
Stokes' theorem,
\[
 -2\pi iNc=\int_S\Omega=\int_{\partial S}\beta=0,
\]
a contradiction.

For nonconstant $B$, remove the isolated zeros of $B'$. The locus
$\{B\in\R_{<0}\}$ is then a smooth real curve. Sard's theorem
applied to its radius function shows that almost every circle
meets it transversally, giving \eqref{eq:boundary-transversality}.
The radii of the removed points form a countable set; constant $B$
outside $\R_{<0}$ gives no positive chords. Taking the union of
these null exceptional sets over a countable sector cover and all
branch pairs and relative path classes proves the claim.

If $X$ is any positive multiple of the Reeb field and $\gamma$ has
endpoints $p_-,p_+$ and $X$-time $\tau$, this transversality says
\begin{equation}\label{eq:boundary-spanning}
 T_{p_+}Y_\epsilon
 =D\phi_X^\tau(T_{p_-}\Gamma_i)\oplus\R R_\alpha(p_+)
 \oplus T_{p_+}\Gamma_j,
 \qquad \alpha=\lambda|_{Y_\epsilon}.
\end{equation}
\end{proof}

Fix $\epsilon$ supplied by \Cref{lem:generic-boundary}, and write
$W=W_\epsilon$, $Y=Y_\epsilon$, $f=\mu-\log\epsilon$ and
$\alpha=\lambda|_Y$.
Let $r$ be the Liouville radial coordinate, normalised by $r=1$ on
$Y$, so that $\lambda=r\alpha$ on its collar and completed end.

\begin{proposition}\label{prop:boundary-indices}
Every realisation of a positive Reeb chord between the $\Gamma_i$
by a radial Hamiltonian $H=h(r)$ with $h''>0$ at the chord is
nondegenerate and has degree zero or one with the holomorphic gradings.
\end{proposition}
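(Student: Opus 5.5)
The plan is to reduce the radial Hamiltonian picture to the base-dependent picture of \Cref{lem:phase}, where chord degrees are Morse indices of a subharmonic function plus a harmonic one. On the collar and the completed end, $r=e^{\rho}$. Along $Y$ the Liouville field satisfies $Zf=Z\mu>0$, so near $Y$ the levels $\{r=\mathrm{const}\}$ and $\{f=\mathrm{const}\}$ are both transverse to $Z$. In the fibrewise holomorphic coordinates the Hamiltonian field of $f=\log|t|-\log\epsilon$ is $X_\mu$ of \eqref{eq:boundary-reeb}, which is parallel to the Reeb field on $Y$. A chord of $h(r)$ has the same endpoints and image as a Reeb chord of some action $c$. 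It is realised at the level $r_0$ where $h'(r_0)$ equals the action.

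First I check nondegeneracy. A chord of $X_H$ with $H=h(r)$ is nondegenerate exactly when the Reeb chord is transverse in the sense of \eqref{eq:boundary-spanning} and $h''(r_0)\ne0$. Linearising, the radial direction contributes a shear $\delta r\mapsto h''(r_0)\,\delta r\cdot R_\alpha$. Combined with \eqref{eq:boundary-spanning} and the fact that $\partial_r$ is tangent to the cylindrical $L_i,L_j$, this gives transversality of $d\phi^1_H(TL_i)$ and $TL_j$. \Cref{lem:generic-boundary} supplies the Reeb transversality, so nondegeneracy holds.

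For the degree I deform $H$ through Hamiltonians whose chord near this one stays nondegenerate, which keeps the index fixed, to a Hamiltonian $\tilde H=k(\mu)$ depending only on the base. Both $r$ and $e^{f}$ are defining coordinates for the same collar, and on the relevant short piece of collar their levels are graphs over one another along $Z$-trajectories. The homotopy is $H_s=h\bigl((1-s)r+s\,\psi(f)\bigr)$ with $\psi$ chosen so that $\psi(f)=r$ along $Y$ to first order. Each $H_s$ is a convex function of a function whose Hamiltonian field is a positive multiple of $X_\mu$ on the endpoint disks. Hence the chord persists, and its nondegeneracy reduces as above to \eqref{eq:boundary-spanning} and positivity of the second derivative. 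Since the $L_i$ are fixed, the Seidel index is invariant along the path.

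At the end, $\tilde H=k(\log|t|)$ with $k''>0$ and $k'>0$. Computing $\Delta k(\log|t|)=k''(\log|t|)/|t|^2\ge0$ shows $\tilde H$ is subharmonic in $t$. Over a simply connected sector, the transverse disks are holomorphic graphs $w=dF(t)$ in the coordinates with $\Omega=dw\wedge dt$. Their branches differ by the relative periods $P(t)$, and by the discussion after \Cref{lem:phase} the branches are holomorphically graded by zero. \Cref{lem:phase} then identifies the degree with the Morse index of $\tilde H+\Ree A$, where $A'=P$, and subharmonicity forces this index to be $0$ or $1$.

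The main obstacle is the homotopy step. I must make sure the interpolation between the Liouville radial coordinate $r$ and the base function $f$ creates no degeneration and does not move the chord off the cylindrical parts of the $L_i$. Near $Y$ these parts are the holomorphic disks, because the Liouville collar of $\Gamma_i$ lies in $D_i$. I would first try to arrange, by adjusting $h$ outside a neighbourhood of the chord level and using only the first-order agreement $dr\propto df$ along $Y$ modulo $\alpha$, that nondegeneracy along the path is again governed by \eqref{eq:boundary-spanning}.
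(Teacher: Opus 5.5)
Your overall plan is the paper's: prove radial nondegeneracy from \eqref{eq:boundary-spanning}, interpolate to a function of $t$, then apply subharmonicity and \Cref{lem:phase}. Your radial nondegeneracy argument is correct. The gap is in the homotopy $H_s=h\bigl((1-s)r+s\psi(f)\bigr)$, and it has two parts.

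\textbf{The chord moves.} You cannot have $\psi(f)=r$ to first order along $Y$. On $Y$ one has $df=(Z\mu)\,dr$, and $Z\mu$ is not constant on $Y$. So $d\rho_s=g_s\,dr$ with $g_s=(1-s)+s\psi'(0)Z\mu$ varying along $\gamma$, and the $X_{\rho_s}$-time of $\gamma$ changes with $s$. With the slope $h'(1)$ fixed, $\gamma$ ceases to be a time-one chord of $H_s$. Any continuation of it leaves $Y$ for level sets of $\rho_s$ on which \eqref{eq:boundary-spanning} is not available.

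\textbf{Nondegeneracy does not follow from positive curvature.} You claim nondegeneracy ``reduces as above to \eqref{eq:boundary-spanning} and positivity of the second derivative''. This holds only for radial Hamiltonians, where $[Z,R_\alpha]=0$ gives $D\phi^1(Z(p_-))=Z(p_+)+h''R_\alpha(p_+)$. For a non-radial function the linearised flow sends $Z(p_-)$ to $B_sZ(p_-)+(\text{curvature})\cdot g_s(p_-)X(p_+)$, with $B_sZ(p_-)\neq Z(p_+)$. Transversality to $T\widehat D_j$ then holds only once the curvature dominates this defect.

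This really fails at moderate curvature. At the base end the Hessian of $k(\log|t|)+\Ree A$ is $T+k''\,vv^{T}$, with $T$ traceless and $v=\nabla\log|t|$. Its determinant $\det T-k''\,v^{T}Tv$ vanishes at a finite $k''>0$ whenever $v^{T}Tv<0$, so the index can jump along your path. Changing $h$ away from the chord level cannot help, since only $h'$ and $h''$ at the chord enter.

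\textbf{The missing idea} is to let the slope depend on $s$ and to make the curvature large. The paper takes $f_s=(1-s)f+s(r-1)$, which vanishes on $Y$ with $df_s=g_s\,dr$ there. It then sets $H_s=c_sf_s+\frac\sigma2f_s^2$, with $c_s$ the $X_{f_s}$-time of $\gamma$. Since $\phi^1_{H_s}(x)=\phi_{f_s}^{c_s+\sigma f_s(x)}(x)$, $\gamma$ stays a fixed chord on $Y$ for every $s$. As $\sigma\to\infty$ the image of $T\widehat D_i$ tends to $\operatorname{span}\{B_se,X_{f_s}(p_+)\}$, the tangent plane of the swept surface. This is transverse to $T\widehat D_j$ by \eqref{eq:boundary-spanning}, uniformly in $s\in[0,1]$.

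Only then does your commuting-flows computation enter. It shows the radial degree is the same for every $\sigma>0$, hence for every profile with $h'(1)=c_1$ and $h''(1)>0$.

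Finally, a chord at a level $r_0\neq1$ must be brought to $Y$ by the inverse Liouville flow, replacing $h$ by $h(r_0r)/r_0$. Your proposal does not address this. Your interpolation cannot be applied directly at $r_0\neq1$, because $\{r=r_0\}$ is not a level set of $f$.
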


\begin{proof}
Fix a chord $\gamma$ from $p_-\in\Gamma_i$ to
$p_+\in\Gamma_j$. To apply \Cref{lem:phase}, we interpolate between
a Hamiltonian depending only on $t$ and a radial Hamiltonian. For
$0\le s\le1$, put
\[
 f_s=(1-s)f+s(r-1),\qquad g_s=(1-s)Zf+s>0.
\]
On $Y$, $f_s=0$ and $df_s=g_s\,dr$, so $X_{f_s}=g_sR_\alpha$.
Thus $\gamma$ is a flow line of $X_{f_s}$ of a positive time $c_s$
depending smoothly on $s$. For $\sigma>0$, fixed below, put
\[
 H_s=c_sf_s+\frac\sigma2 f_s^2.
\]
Since $f_s$ is constant along its Hamiltonian flow,
$\phi_{H_s}^1(x)=\phi_{f_s}^{\,c_s+\sigma f_s(x)}(x)$.
Hence $\gamma$, suitably parametrised, is a time-one chord of every
$H_s$; $H_0$ is a function of $t$ and $H_1$ is radial.

Let $B_s=D\phi_{f_s}^{c_s}$ at $p_-$ and choose
$0\ne e\in T_{p_-}\Gamma_i$. Since the disks are Liouville invariant,
\[
 T_{p_-}\widehat D_i=\R e\oplus\R Z(p_-),\qquad
 T_{p_+}\widehat D_j=T_{p_+}\Gamma_j\oplus\R Z(p_+).
\]
Differentiating the flow gives
\[
 D\phi_{H_s}^1(e)=B_se,\qquad
 D\phi_{H_s}^1(Z(p_-))
 =B_sZ(p_-)+\sigma g_s(p_-)X_{f_s}(p_+).
\]
As $\sigma\to\infty$, the image tangent plane converges to
$\operatorname{span}\{B_se,X_{f_s}(p_+)\}$, the tangent plane to
the swept surface. By \eqref{eq:boundary-spanning}, this plane is
transverse to $T_{p_+}\widehat D_j$. Continuity and compactness of
$[0,1]$ make the convergence and transversality uniform in $s$.
Consequently $\gamma$ is nondegenerate for every $H_s$ once
$\sigma$ is sufficiently large, and its degree is independent of $s$.

At $s=1$, the Reeb and Liouville flows commute, so
$B_1Z(p_-)=Z(p_+)$. The image tangent plane is then
\[
 \operatorname{span}\{B_1e,\ Z(p_+)+\sigma R_\alpha(p_+)\}.
\]
Equation \eqref{eq:boundary-spanning} makes this transverse to
$T_{p_+}\widehat D_j$ for every $\sigma>0$. The radial chord's degree
is therefore constant over all positive radial curvatures.

Since $f=\log|t|-\log\epsilon$ is harmonic,
\[
 \Delta H_0=\sigma|\nabla f|^2=\frac{\sigma}{|t|^2}>0.
\]
Thus \Cref{lem:phase}, applied to the lifted chord, gives degree
zero or one.
The interpolation and constancy over positive radial curvatures give
the assertion for every $h$ with $h'(1)=c_1$ and $h''(1)>0$.

Inverse Liouville flow carries a chord at $r=r_0>1$ to a chord at
$r=1$ of $g(r)=h(r_0r)/r_0$, preserving its degree.
Since $g''(1)=r_0h''(r_0)>0$, the same conclusion holds at every
end radius.
\end{proof}

\begin{remark}\label{rem:kodaira-degree-zero}
In the Kodaira setting, an explicit computation of the relative periods
shows that, for sufficiently small $\epsilon$, all the chords in
\Cref{prop:boundary-indices} have degree zero. We will not need this
stronger statement.
\end{remark}

\begin{proposition}\label{prop:nonnegative-wrapping}
Let $M$ be the completed surgery of \Cref{thm:neighbourhood-surgery},
and give its cocores $\widehat D_i$ the holomorphic gradings.
There are cofinal wrapping Hamiltonians whose Floer cochain complexes
between these cocores are supported in degrees zero and one.
For any field $\kk$, every $\mathfrak b\in H^2(M;\kk^\times)$,
and all $i,j$,
\[
 \HW^k_{\mathfrak b}(\widehat D_i,\widehat D_j;\kk)=0
 \qquad(k\notin\{0,1\}).
\]
\end{proposition}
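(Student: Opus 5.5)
We build the wrapped complexes with quadratic-at-infinity radial Hamiltonians on the completion $M=\widehat W$ and read off the generators from the earlier results. Fix $\epsilon$ as in \Cref{lem:generic-boundary}, so that all positive Reeb chords between the Legendrians $\Gamma_i\subset Y$ are nondegenerate. Take a family $H_n=h_n(r)$ with the following properties:
\begin{itemize}
\item $h_n$ is $C^2$-small on $W$, with $h_n'(r)\ge0$;
\item $h_n$ is convex ($h_n''>0$) on the region where $h_n'$ passes through the action values of chords;
\item $h_n$ is linear of slope $n$ outside a compact set, with $n$ avoiding the countable set of chord lengths.
\end{itemize}
This is the standard cofinal system of linear Hamiltonians. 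Alternatively one may use a single Hamiltonian quadratic at infinity; either way the wrapped Floer cohomology is the direct limit of $HF^*(\widehat D_i,\widehat D_j;H_n)$ under continuation maps.

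The time-one chords of $H_n$ split into two classes. The first class lies in $W$ and comes from the small interior part. Since $\widehat D_i\cap\widehat D_j=\emptyset$ for $i\ne j$, there are none for $i\ne j$. For $i=j$, arrange $H_n$ near $D_i\cap W$ to restrict to a function on the disk with a single nondegenerate minimum at $x_i$; this can be done via the radial model near $x_i$ and a small Morse perturbation along $D_i$ if needed. That minimum gives a single constant chord of degree zero by the convention of \Cref{subsec:holomorphic-gradings}. The second class consists of chords in the cylindrical end at radii where $h_n'(r)$ equals the length of a positive Reeb chord. By \Cref{prop:boundary-indices}, including its final paragraph extending the statement to every end radius, each such chord is nondegenerate and has degree zero or one, because $h_n''>0$ there.

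Hence each cochain complex $CF^*(\widehat D_i,\widehat D_j;H_n)$ is supported in degrees $\{0,1\}$. The bulk class $\mathfrak b$ only reweights the differentials and continuation maps by elements of $\kk^\times$, using the trivialisations of \Cref{subsec:bulk-data} on the contractible planes. It does not change the generators or their gradings. Each $HF^k$ therefore vanishes for $k\notin\{0,1\}$, and so does the colimit, since direct limits are exact degreewise. The gradings must be compatible with continuation maps, which is automatic because the holomorphic gradings of the planes are fixed once and for all and continuation maps have degree zero. Nonconstant $H_n$ need not be subharmonic globally; only the local statements at chords are used.

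The main obstacle is the interior part. The convexity requirement of \Cref{prop:boundary-indices} concerns only chords in the collar, but on the interpolation region near $r=1$ we must make sure no spurious chords arise. We must also make sure that the constant chord is the only interior generator. The plan is to take $h_n$ constant (up to a $C^2$-small Morse perturbation along each $D_i$) for $r\le1$. Since the $D_i$ are pairwise disjoint, and $\lambda|_{D_i}=0$ makes the Liouville flow tangent to $D_i$, the small perturbation produces only the minimum at $x_i$. We then start the convex growth just beyond $r=1$. Any chord at radius $r_0>1$ with $h_n''(r_0)>0$ is covered by the rescaling argument of \Cref{prop:boundary-indices}. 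Chords at radii where $h_n'$ is below the minimal chord length do not exist. Finally, cofinality holds because the slopes $n$ tend to infinity.
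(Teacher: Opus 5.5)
Your proof is correct and is essentially the paper's proof. You use cofinal radial Hamiltonians that are small on $W$ with a one-minimum Morse perturbation on each of the disjoint disks, control the end chords by \Cref{prop:boundary-indices} through $h''>0$ at chord radii, note that bulk weights change neither generators nor degrees, and pass to a degreewise direct limit. The one detail to phrase more carefully is the gluing at $Y$: as in the paper, the perturbation should match a radial profile of small positive slope below the Reeb spectrum near $Y$, rather than one that is exactly constant up to $r=1$, so that the restriction of the Hamiltonian to each disk has no degenerate critical points near its boundary.
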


\begin{proof}
Use \Cref{thm:neighbourhood-surgery} to work on $\widehat W$.
Compactness and \Cref{lem:generic-boundary} make the positive
Reeb-chord spectrum locally finite and bounded away from zero.
Choose increasing cofinal Hamiltonians as in
\cite[Appendix~B.1.1 and B.2]{ELduality}, fixed and sufficiently small
on $W$, with convex radial profiles $h_k(r)$ on the end, eventually
linear with slopes tending to infinity outside the spectrum.
Require $h_k''>0$ at every chord radius.

On each disk, choose the compact Morse perturbation with exactly one
critical point, a minimum, matching near $Y$ a common radial
Hamiltonian of positive slope below the spectrum. The disks are
disjoint and the radial collar is invariant, so the only compact
chords are these degree-zero minima. All end chords have degree
zero or one by \Cref{prop:boundary-indices}.

Bulk weights change neither generators nor degrees. Thus all these
complexes are supported in degrees zero and one for every
$\mathfrak b$. Taking the degreewise direct limit of Floer cohomology,
as in \Cref{subsec:bulk-data}, gives the asserted vanishing.
\end{proof}

\section{Surgery and generation}
\label{sec:surgery}

We now identify the algebras of \Cref{sec:algebras} with the
bulk-deformed wrapped endomorphisms of the cocores, using the surgery
formula of Bourgeois, Ekholm and Eliashberg \cite{BEE} in the form of
\cite{ELduality}, and we prove that the cocores generate the
bulk-deformed category, following \cite{CDRGG}. Both arguments are
proved in the literature for the ordinary wrapped Fukaya category.
These arguments extend to the bulk-deformed categories of
\Cref{subsec:bulk-data}: bulk weights respect gluing, so the algebraic
identities remain valid, and we check below that the comparison maps
retain invertible leading coefficients.

\subsection{Bulk-deformed branes and weights}
\label{subsec:bulk-data}

Let $M$ be the completion of a neighbourhood of $F$, identified with the
Legendrian surgery of \Cref{thm:neighbourhood-surgery}, and let $r$ be
the number of components. Since $M$ retracts onto $F_{\mathrm{red}}$,
\[
 H_2(M;\ZZ)=\bigoplus_{i=1}^r\ZZ\gamma_i,\qquad \gamma_i=[C_i],
 \qquad
 H_1(M;\ZZ)=
 \begin{cases}\ZZ,&F=I_n,\\0,&\text{otherwise},\end{cases}
\]
where $C_i$ carries its complex orientation. By the proof of
\Cref{thm:neighbourhood-surgery}, $\gamma_i$ is the class of the $i$th
handle: the core sphere for the plumbings, and the core capped in the
$0$-handle for $II$, $III$ and $IV$. Reversing the orientation of $C_i$
inverts $q_i$.

View the bulk class as a character and write its coordinates
\begin{equation}\label{eq:bulk-parameters}
 \mathfrak{b}:H_2(M;\ZZ)\longrightarrow\kk^\times,
 \qquad q_i=\mathfrak{b}(\gamma_i).
\end{equation}
The parameter space is the full
torus $(\kk^\times)^r$, including roots of unity and the trivial
character. Since $H_1(M;\ZZ)$ is free, the universal coefficient
theorem identifies these characters with $H^2(M;\kk^\times)$.

Without giving full details, we briefly indicate what
$\mathcal W_{\mathfrak b}(M)$ is. Choose a multiplicative singular
two-cocycle $\eta$ representing $\mathfrak b$. A bulk-deformed brane is
a properly embedded exact graded spin Lagrangian, conical outside a
compact set, equipped with a trivialisation of $\eta$ along it and a
finite-rank local system. Each chord carries the Hom space between the endpoint
fibres of the local systems, tensored with a one-dimensional bulk
coefficient line. After choosing a basis vector in each bulk
coefficient line, a polygon contributes a scalar bulk weight
multiplying the usual composition and parallel-transport maps. This weight is the product of the cocycle
values on a triangulation, corrected by the boundary trivialisations;
it is independent of subdivision and multiplicative under gluing.
Changes of trivialisation are absorbed by tensoring the local systems
with rank-one local systems, and changing the representative $\eta$
gives an equivalent category. We write $\mathcal W_{\mathfrak b}(M)$
for the split-closed triangulated envelope of the resulting wrapped
$A_\infty$ category. The contractible cocores admit the required
trivialisations, unique up to isomorphism.

We use the Hamiltonian model of wrapped Floer cohomology:
\[
 \HW^*_{\mathfrak b}(L_0,L_1;\kk)
 =\varinjlim_k \operatorname{HF}^*_{\mathfrak b}(L_0,L_1;H_k;\kk).
\]
Here $H_k$ is an increasing cofinal sequence of Hamiltonians that are
eventually linear in the Liouville radial coordinate, with slopes tending
to infinity outside the Reeb chord spectrum; the maps are monotone
continuation maps. The comparison with the geometric wrapped model is
\cite[Appendix~B.1.1, Lemmas~79--80]{ELduality}. Its bulk-deformed
version follows from the gluing rule for weights, as in
\Cref{lem:coefficient-principle}.

The trivialisation of $\eta$ is part of the brane data. For a closed
two-form, the analogous datum is a primitive of its restriction
\cite[Section~3.1, Remark~3.2]{SiegelTwist}. Here bulk deformation means
a multiplicative degree-two twist, rather than a series of operations
with arbitrary interior insertions.

For the cocores, a capped polygon of class $\beta$ carries the universal
weight $[\beta]\in\ZZ[H_2(M;\ZZ)]$. Evaluating
$[\beta]\mapsto\mathfrak b(\beta)$ gives its bulk weight. For cyclic
plumbings, choose reference paths recording chord winding, compatibly
with gluing. We cap the polygon boundary as a whole; individual
noncontractible chords need not admit capping disks.

Weights do not change the Floer equations, degrees, energy bounds,
or maximum principle. Exactness excludes nonconstant sphere bubbles and bubbles
on an embedded exact boundary component. Fixed asymptotic data give
finite zero-dimensional counts, so the coefficients are finite sums
in this group algebra. This is the bulk-deformed
version of \cite[Sections~3.1 and 4.3.2]{SiegelTwist}; its construction
uses only multiplication of weights, not that paper's particular
choice of an exponential coefficient field.

\subsection{Bulk weights in Floer-theoretic arguments}
\label{subsec:coefficient-principle}

Write $w(u)$ for the scalar bulk weight of a holomorphic curve $u$,
defined in \Cref{subsec:bulk-data}. Bulk-deformed operations count rigid
curves with their signed weights $\pm w(u)$.

\begin{lemma}\label{lem:coefficient-principle}
Let $\mathcal M$ be a compactified one-dimensional moduli space of
holomorphic curves with boundary on exact branes carrying bulk
trivialisations, in a Liouville manifold or cobordism. Suppose its
boundary consists of broken configurations of rigid curves. The bulk
weight is constant on each connected component and equals the product
of the weights of the pieces at every breaking. Consequently, the
signed boundary-count identities hold with bulk weights. The signed
scalar weight of each rigid curve is a unit.
\end{lemma}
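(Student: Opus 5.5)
The plan is to express the bulk weight through the multiplicative cocycle and then use only two facts: local constancy along a connected family, and multiplicativity under gluing. Fix a multiplicative singular cocycle $\eta$ representing $\mathfrak b$, together with the chosen trivialisations $\theta_L$ of $\eta|_L$ on each brane and the chosen basis vectors of the bulk coefficient lines at the chords. For a curve $u$ with boundary on the branes, first cap off the boundary using the fixed reference paths at the asymptotic chords, recording winding in the cyclic plumbing case. This produces a relative chain whose boundary lies on the branes. The weight $w(u)$ is then the value of $\eta$ on a triangulation of this chain, multiplied by the values of $\theta_L^{-1}$ on its boundary arcs.

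Independence of the triangulation comes from the cocycle condition together with $\delta\theta_L=\eta|_L$. It follows that $w(u)$ depends only on the relative homology class of the capped curve, with boundary data fixed, in $H_2(M,\bigcup L;\ZZ)$ modulo the reference paths. This class is locally constant in $u$ on a connected component of $\mathcal M$. Indeed, a path in the compactified moduli space sweeps out a relative $3$-chain between its endpoints, so the two capped chains are homologous.

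For breaking, consider a boundary point $(u_1,\dots,u_k)$ glued along chords $\gamma$. The capped chain of the glued curve is homologous to the sum of the capped chains of the pieces. At each interior chord the two reference caps appear with opposite orientations and cancel. The inserted basis vectors in the coefficient line at $\gamma$ also cancel, being a dual pair. Multiplicativity of $\eta$ on the concatenated triangulation then gives $w(u)=\prod_a w(u_a)$.

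The same argument covers breaking at Reeb chords in cobordisms. There the caps sit in the symplectisation ends, and the trivialisations are chosen translation invariant there. Exactness excludes sphere and disk bubbles, so there are no other boundary strata to consider.

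The identities then follow by the usual argument: the signed count of the boundary of $\mathcal M$ is zero. Multiplying each boundary term by the constant weight of its component gives the weighted identity, since that weight factorises as the product of the weights of the pieces. Finally, each weight is a product of values in $\kk^\times$, and the sign is $\pm1$, so the signed weight of each rigid curve is a unit.

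The main obstacle I expect is in the cyclic and noncontractible-chord cases. Individual chords there admit no capping disks, so the capping must be done for the polygon boundary as a whole. One must then check that the reference-path conventions glue consistently at breakings. I would handle this by fixing, for every chord, a path to a basepoint in the Lagrangian, and checking that the concatenation of caps is compatible. Any leftover discrepancy is a loop in $L$, which is contractible for the cocores, or else is absorbed into a rank-one local system.
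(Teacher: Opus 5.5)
Your proposal is correct and takes the paper's approach. The paper's proof rests on the same two facts you isolate: the cocycle identity and the boundary trivialisations make the weight invariant under deformation and multiplicative under gluing, so the signed weighted boundary count vanishes on each compact component, and each signed weight is a unit. Your capping argument, with the reference caps cancelling at interior chords and the noncontractible chords of the cyclic case handled by reference paths chosen compatibly with gluing as in \Cref{subsec:bulk-data}, spells out that one-line justification in more detail.
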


\begin{proof}
The cocycle identity and the boundary trivialisations make the weight
invariant under deformation and multiplicative under gluing. The signed
weighted boundary count therefore vanishes on each compact
one-dimensional component. Each signed scalar weight is a unit in the
coefficient ring.
\end{proof}

\subsection{The surgery comparison}
\label{subsec:surgery-comparison}

Let $F$ be one of the Kodaira types, $M$ the completion of the surgery
of \Cref{thm:neighbourhood-surgery}, $\Lambda_F$ its attaching link and
$\widehat D_i$ its cocores with their holomorphic gradings. The grading
of the cocores induces a Maslov potential on each component of
$\Lambda_F$, and hence a $\ZZ$-grading on $\CE^*_{\boldsymbol q}(\Lambda_F)$;
for the cycles it also fixes the trivialisation along the $1$-handle.
We call this the induced grading. The geometric gradings compared below
differ by an integer shift on each component, which changes the degree of a chord from $\Lambda_i$ to
$\Lambda_j$ by the difference of the shifts, and, for the cycles, by an
element $m\in H^1(M;\ZZ)=\ZZ$, which changes the degree of a word by $m$
times its winding number around the cycle.

\begin{proposition}\label{prop:bulk-cocores}
For every bulk class $\mathfrak b$ as in \Cref{eq:bulk-parameters},
there is an $A_\infty$ quasi-isomorphism respecting the idempotents
and gradings,
\begin{equation}\label{eq:wrapped-derived-mpa}
 \CW^*_{\mathfrak{b}}\Bigl(\bigoplus_i\widehat D_i,\bigoplus_i\widehat D_i\Bigr)
 \simeq\mathcal B_F^{\boldsymbol q},
\end{equation}
where the induced grading on $\mathcal B_F^{\boldsymbol q}$ agrees
with that of \Cref{sec:algebras}.

The projection $\mathcal B_F^{\boldsymbol q}\to A_F(\boldsymbol q)$
is a quasi-isomorphism. Thus $A_F(\boldsymbol q)$ is a minimal model
for the cocore endomorphism algebra, and
\begin{equation}\label{eq:bulk-hw}
 \HW_{\mathfrak{b}}^a(\widehat D_i,\widehat D_j;\kk)=
 \begin{cases}e_jA_F(\boldsymbol q)e_i,&a=0,\\0,&a\ne0.\end{cases}
\end{equation}
\end{proposition}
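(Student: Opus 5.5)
The plan has two parts: first establish the $A_\infty$ quasi-isomorphism \eqref{eq:wrapped-derived-mpa} with bulk weights and matched gradings, then deduce formality by a degree argument.

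\emph{Surgery quasi-isomorphism.} I will use the surgery formula of Bourgeois--Ekholm--Eliashberg, in the form of \cite{ELduality}. It gives an $A_\infty$ quasi-isomorphism between $\CW^*(\bigoplus_i\widehat D_i)$ and the Chekanov--Eliashberg algebra $\CE^*(\Lambda_F)$ with coefficients in $S$. The map counts holomorphic disks in the surgery cobordism with one positive puncture at a Reeb chord of the cocores and negative punctures at chords of $\Lambda_F$.

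To insert bulk weights I will use \Cref{lem:coefficient-principle}. Every moduli space entering the $A_\infty$ relations of the comparison map has boundary given by products of rigid pieces. Bulk weights are multiplicative under gluing, so the weighted map is again an $A_\infty$ morphism.

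Next I will identify the weights on the CE side. A disk in the symplectization of $\partial B$ with boundary on $\Lambda_F$ is capped by the cores of the handles and, for types $II$, $III$, $IV$, by a disk in the $0$-handle. This produces a class $\sum_v n_v\gamma_v$, where $n_v$ is the algebraic number of basepoint crossings on $\Lambda_v$. Hence its bulk weight is the evaluation $t_v\mapsto q_ve_v$. This is the capping argument that justifies \eqref{eq:ce-specialisation}. For the cycles, winding around the $1$-handle is recorded by the reference paths and contributes no $H_2$ class, consistent with $z_a$ being unspecialised.

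It then remains to see that the weighted map is still a quasi-isomorphism. In \cite{ELduality} the map is filtered by action, and its leading term, the count of trivial strips over each chord, is the identity. Bulk weights multiply that term by units, by \Cref{lem:coefficient-principle}, and rescaling the generators absorbs them. A filtered map whose associated graded map is an isomorphism is a quasi-isomorphism, since the action filtration is exhaustive and bounded below on each chord.

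\emph{Gradings.} The holomorphic grading on $\widehat D_i$ induces a Maslov potential on $\Lambda_i$, and I must show the induced grading agrees with the $0/{-1}$ grading of \Cref{sec:algebras}. Any two choices differ by component shifts $s_i$ and, for $I_n$, by a winding class $m$. I will check that the unit in $e_i\CW e_i$, which is the degree-zero minimum of \Cref{prop:nonnegative-wrapping}, corresponds to $e_i$. I will also check that the degree-$0$ generators $c_a,c_a^*$ (respectively the entries of $L,U$ and the $z_j$) are cycles representing nonzero classes. These classes are nonzero because their images in $A_F(\boldsymbol q)$ are nonzero by \Cref{prop:bulk-cycle,prop:d-algebra,prop:star-algebra} and the bases for the nontransverse types.

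Now suppose some $s_j-s_i\neq0$ for an edge $i\text{--}j$. Then either $c_a$ or $c_a^*$ would sit in negative Floer degree, or it would sit in positive degree with nonzero $H^{>1}$ after multiplication. Either case contradicts \Cref{prop:nonnegative-wrapping} together with the nonvanishing of $H^0$ products such as $c_a^*c_a$. Connectedness then forces all shifts to be equal, and they cancel. For the cycles, the invertible element $z$ and its powers $z^k$ are nonzero in $H^0$ for all $k$ by \Cref{prop:bulk-cycle}, and $z^k$ has degree $km$. The bound of \Cref{prop:nonnegative-wrapping} therefore forces $m=0$. I expect this grading match to be the main obstacle, especially pinning down the cycle case and the sign of shifts, so I would first try the argument via the powers $z^k$ and $(c_a^*c_a)^k$ described above.

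\emph{Formality.} By \eqref{eq:dg-model-summary}, $\mathcal B_F^{\boldsymbol q}$ is concentrated in nonpositive degrees, so its cohomology lives in degrees $\le0$. By \Cref{prop:nonnegative-wrapping}, wrapped cohomology lives in degrees $\{0,1\}$. The quasi-isomorphism of the first step therefore concentrates both in degree $0$, with $H^0=A_F(\boldsymbol q)$. For a dg algebra in nonpositive degrees, the truncation $\mathcal B\to H^0(\mathcal B)$ is a dg map, and it is a quasi-isomorphism when $H^{<0}=0$. This gives the minimal model $A_F(\boldsymbol q)$ and \eqref{eq:bulk-hw}, with corners read off through the idempotents using the right-to-left convention.
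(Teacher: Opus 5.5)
Your overall architecture matches the paper's: the surgery map with capped weights giving $t_v\mapsto q_ve_v$, grading shifts pinned down by the degree bound of \Cref{prop:nonnegative-wrapping}, and formality from a nonpositively graded $\mathcal B_F^{\boldsymbol q}$ having nonnegative cohomology. But two steps do not go through as written.

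\textbf{The cycle grading.} The element $z=1+yx$ satisfies $e_iz=e_i+a_i^*a_i$. It steps to vertex $i+1$ and back, so its winding number around the cycle is zero. In the dg model the same holds for $z_a$: $d\zeta_a=e_{s(a)}+c_a^*c_a-z_a$ is homogeneous, and the differential preserves homotopy classes of words, so $z_a$ lies in the class of the empty word. Hence $z^k$ and $(c_a^*c_a)^k$ have induced degree $0$ whatever $m$ is. They cannot detect $m$ for any $I_n$, and for $I_1$, which has no tree edges, your argument imposes no constraint at all.

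You need nonzero classes of nonzero winding. The paper uses $x^ne_0$ and $y^ne_0$: they are nonzero by the basis in \Cref{prop:bulk-cycle}, have winding $\pm1$, and so have induced degrees $\pm m$. Nonnegativity of both forces $m=0$. Equivalently, run your edge argument around the whole cycle with the winding terms included, since the windings of the arrows sum to $1$.

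Separately, the presentations in \Cref{prop:d-algebra,prop:star-algebra} do not show that each arrow is nonzero at arbitrary $\boldsymbol q$, and the paper gives no bases for $III$ and $IV$. Since the shifts depend only on the brane data and not on $\mathfrak b$, check nonvanishing at $\boldsymbol q=\boldsymbol1$, where $A_F(\boldsymbol1)\to\kk Q_F/J^2$ is onto.

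\textbf{Units in the weighted surgery map.} The leading term is not a count of trivial strips. A cocore chord below the action cutoff corresponds to a composable word of chords of $\Lambda_F$, and its diagonal coefficient counts disks in the surgery cobordism. \Cref{lem:coefficient-principle} only says that each rigid curve has unit weight. A signed sum of several curves with different capped classes need not be a unit, and energy cannot separate such curves because $\omega$ is exact. So ``rescaling absorbs the weights'' is unjustified.

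The paper obtains units in two steps. For one-letter words, the local disk is unique. For longer words, the weighted gluing identity $p_{u,v}d_{uv}=\pm d_ud_v$ gives units by induction on word length.

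Moreover, the chord--word correspondence holds only below a cutoff and for sufficiently thin handles, so there is no single global filtered isomorphism. You have to argue cutoff by cutoff and pass to the direct limit along the compatible cobordism maps, checking that these remain compatible with the weighted surgery maps.
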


\begin{proof}
By \cite[Theorem~2 and Theorem~83]{ELduality},
following \cite[Section~5.4 and Remark~5.9]{BEE}, there is an
$A_\infty$ quasi-isomorphism
$\Phi:\CW^*(\bigoplus_i\widehat D_i)\to\CE^*(\Lambda_F)$ for the
ordinary wrapped Floer complex, whose components count holomorphic
disks in the surgery cobordism with boundary on the cocores and on the
cores. It is defined for links in the boundary of any subcritical
Weinstein manifold. For the cyclic plumbings, we use the
fixed integer grading of \Cref{subsec:holomorphic-gradings} and
compatible basepoint paths as in
\cite[Appendix~A, Remark~72]{ELduality}, together with the coefficient
choices of \Cref{subsec:bulk-data}. The map preserves gradings when
$\CE^*(\Lambda_F)$ carries the induced grading. Equip every cocore with
its bulk trivialisation and count every disk with its weight. The
target differential counts disks in the symplectisation of the
boundary before surgery.

Choose reference arcs on each attaching circle cut at its basepoint,
with compatible reference paths and capping chains as in
\Cref{subsec:bulk-data}. Closing a disk $u$ through the handle cores,
a positive passage through the $i$th basepoint contributes the oriented
capped core class $\gamma_i=[C_i]$, and a negative passage contributes
$-\gamma_i$. Thus, if $n_i$ is the signed number of passages, then
\[
 [u_{\mathrm{cap}}]=\sum_i n_i[C_i],\qquad
 w(u)=\mathfrak b([u_{\mathrm{cap}}])=\prod_i q_i^{n_i}.
\]
Here the coefficient-line bases are chosen compatibly with the cappings;
changing chord cappings rescales these bases. The weight is therefore
exactly that obtained by substituting $t_i=q_ie_i$ in the basepoint
word. Hence the weighted target is $\CE^*_{\boldsymbol q}(\Lambda_F)$.
By \Cref{lem:coefficient-principle}, the weighted surgery map
$\Phi_{\mathfrak b}$ satisfies the $A_\infty$ relations.

We show that $\Phi_{\mathfrak b}$ is a quasi-isomorphism following
\cite[Theorem~83]{ELduality}, using the chord correspondence and disk
counts of \cite{EkholmCurves}. As noted in
\cite[Introduction]{EkholmCurves}, these results also apply without
the simple-connectivity assumption; see \cite[Section~7.3]{EkholmOancea}.
Choose an action cutoff $A>0$ outside the action set of $\Lambda_F$,
including the total actions of composable words of Reeb chords.
For sufficiently thin surgery handles, the cocore chords of action
less than $A$ correspond to such words of total action less than $A$
\cite[Theorem~1.2]{EkholmCurves}. Their actions differ by an error
tending to zero with the handle width; we choose the width so that
this error is smaller than the gap to $A$. The unweighted linear term
sends each chord to its corresponding word with coefficient $\pm1$,
plus terms of smaller action \cite[Theorem~7.5(a)]{EkholmCurves}.
Let $d_w$ denote this diagonal coefficient with weights. For a
one-letter word, the local disk is unique
\cite[Lemma~7.4]{EkholmCurves}; the deformation in
\cite[Lemma~7.6]{EkholmCurves} has a compact one-dimensional moduli
space with no broken ends, so it preserves the weighted count and
$d_w$ is a unit. For a concatenation $w=uv$, let $p_{u,v}$ be the
weighted two-input disk count of
\cite[Theorem~7.5(c)]{EkholmCurves}. By \Cref{lem:coefficient-principle},
the boundary-count identity in the proof of that theorem remains valid
with bulk weights and gives
\[
 p_{u,v}d_{uv}=\pm d_ud_v.
\]
Induction on word length makes the right-hand side a unit, hence
$d_{uv}$ is a unit. The constant generators have the local unit-disk
coefficients of \cite[Remark~82]{ELduality}, which are units as well.
Thus, for each cutoff and sufficiently thin handles,
$\Phi_{\mathfrak b,1}$ is triangular with invertible diagonal on the
finitely many generators below the cutoff, hence is an isomorphism
there. As the cutoff increases, shrink the handles and use the
compatible cobordism maps of \cite[Theorem~83]{ELduality}. These are
continuation quasi-isomorphisms, and their compatibility with the
surgery maps persists with bulk weights by
\Cref{lem:coefficient-principle}. Passing to the direct limit therefore
proves that $\Phi_{\mathfrak b}$ is a quasi-isomorphism.
The identification of the wrapped
complexes with their Hamiltonian versions is
\cite[Appendix~B.1.1, Lemmas~79--80]{ELduality}, which is again a
comparison of counts of rigid disks and continuation maps and holds
with weights.

Composing $\Phi_{\mathfrak b}$ with the comparison of
\Cref{thm:plumbing-ce} and \Cref{subsec:nontransverse} gives
\Cref{eq:wrapped-derived-mpa} with the induced grading on
$\mathcal B_F^{\boldsymbol q}$. The changes of generators and
destabilisations respect the endpoints and homotopy classes of words,
so they remain graded under shifts of the Maslov potentials and of the
ambient grading structure. The internal Laurent reduction is also
graded for an arbitrary degree of its Laurent generator
\cite[Proposition~14]{ELplumbing}. Thus this comparison does not require
agreement with the grading of \Cref{sec:algebras}. By
\Cref{prop:nonnegative-wrapping}, its cohomology vanishes in negative
induced degrees.

Regrading changes the degrees of the cohomology classes but preserves
the underlying cohomology vector spaces. Let $d$ denote the degree
in \Cref{sec:algebras}.
The induced degree of a homogeneous class in the $(j,i)$ corner is
\[
 d+s_j-s_i+m\nu,
\]
where $\nu$ is its winding number and the term $m\nu$ occurs only for
$I_n$. These grading differences depend on the geometric brane data,
not on the bulk weights. We determine them at
$\boldsymbol q=\boldsymbol1$.

For $I_n$, \Cref{prop:bulk-cycle} gives nonzero classes
$x^ne_0$ and $y^ne_0$ of standard degree zero and winding numbers
$1$ and $-1$. Their induced degrees are $m$ and $-m$, respectively,
so nonnegativity gives $m=0$. This includes $I_1$.

For every type except $II$, let $Q_F$ be its displayed quiver,
including both arrows over every edge in the affine cases, and let
$J$ be the arrow ideal. At $\boldsymbol q=\boldsymbol1$ there is a
surjection
\[
 A_F(\boldsymbol1)\longrightarrow\kk Q_F/J^2.
\]
For the affine types every localisation factor maps to the identity
and both vertex products map to $e_i$. For $III$ and $IV$, the
relations \eqref{eq:III-algebra} and \eqref{eq:IV-algebra} lie in
$J^2$ at these parameters. Thus every arrow survives in
$H^0(\mathcal B_F^{\boldsymbol1})$. If an edge joins $i$ and $j$,
arrows in the two directions have induced degrees $s_j-s_i$ and
$s_i-s_j$. Both are nonnegative, so $s_i=s_j$. The quivers are
connected, hence all shifts agree. For $II$ there is only one vertex.
A common shift does not change endomorphism degrees. The induced
grading therefore agrees with that of \Cref{sec:algebras} for every
bulk class.

For arbitrary $\boldsymbol q$, the algebra
$\mathcal B_F^{\boldsymbol q}$ is nonpositively graded, whereas its
cohomology is nonnegative by the surgery comparison and
\Cref{prop:nonnegative-wrapping}. Consequently
$H^k(\mathcal B_F^{\boldsymbol q})=0$ for $k\ne0$, and the canonical
projection
\[
 \mathcal B_F^{\boldsymbol q}\longrightarrow
 H^0(\mathcal B_F^{\boldsymbol q})=A_F(\boldsymbol q)
\]
is a dg algebra quasi-isomorphism. Hence $A_F(\boldsymbol q)$ is a
minimal model for the cocore endomorphism algebra. Restricting to the
corners gives \Cref{eq:bulk-hw}.
\end{proof}

\subsection{Generation in the bulk-deformed category}
\label{subsec:generation}

\Cref{prop:bulk-cocores} identifies the split-closed subcategory
generated by the bulk-deformed cocores with $\Perf(A_F(\boldsymbol q))$.
The corresponding statement for all bulk-deformed branes requires a
generation argument. We give the coefficient extension of the
geometric proof of \cite[Theorem~1.1 and Remark~1.3]{CDRGG}, which uses only the
ingredients of \Cref{lem:coefficient-principle}, together with a
single invertible triangle map at each step of the construction
of the augmentation.

\begin{proposition}\label{prop:bulk-generation}
For every Kodaira type $F$ and every bulk class $\mathfrak{b}$, the
cocores $\widehat D_i$ generate
$\mathcal W_{\mathfrak{b}}(M)$, with the bulk-deformed brane
convention of \Cref{subsec:bulk-data}.
\end{proposition}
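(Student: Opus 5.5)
The plan is to follow the geometric proof of cocore generation of \cite[Theorem~1.1 and Remark~1.3]{CDRGG} and to check at each step that only ingredients surviving bulk weights are used. That proof uses the Weinstein structure of \Cref{thm:neighbourhood-surgery}, whose index-two critical points are the $x_i$ with unstable manifolds $D_i\cap W_\epsilon$. For an arbitrary bulk-deformed brane $L$ it produces an exact triangle, or more precisely a twisted complex, in the wrapped category, expressing a wrapped pushoff of $L$ through iterated surgeries on copies of the cocores. Concretely, I would use the Liouville flow to push $L$ off the skeleton $F_{\mathrm{red}}$ away from the top cells. I would then isotope it into a neighbourhood of the union of the cocore disks together with the cylindrical end. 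The stop-removal and wrapping-exact-triangle input of \cite{CDRGG} then expresses $L$ as a twisted complex built from $\widehat D_i$. In dimension four only $2$-handles carry cocores. The index-zero and index-one pieces of the skeleton are displaceable to infinity, so the usual argument says the skeleton's top-dimensional strata suffice.

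The key steps, in order, are as follows.

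\textbf{Step 1.} Fix the brane data. A bulk trivialisation on $L$ and on each cocore, unique up to a rank-one local system on the contractible cocores, determines the coefficient lines. By \Cref{lem:coefficient-principle}, all $A_\infty$ operations and continuation maps are well defined with weights, as are the isomorphism between the Hamiltonian and geometric wrapped models \cite[Lemmas~79--80]{ELduality}.

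\textbf{Step 2.} Reproduce the CDRGG construction of the surgery exact triangle. When $L$ is wrapped past the cocore $\widehat D_i$ near an intersection point, the result is the Lagrangian surgery $L\#\widehat D_i$. There is a triangle $\widehat D_i\to L\to L'$ whose connecting map is defined by a count of small holomorphic triangles near the surgery point. With weights, this count is a single triangle whose weight is a unit, by \Cref{lem:coefficient-principle}. The triangle identity that makes the sequence exact is a one-dimensional moduli boundary count. It therefore holds with weights, and the map is again an isomorphism onto the cone.

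\textbf{Step 3.} Iterate Step 2 over the finitely many intersection points of a suitably wrapped $L$ with the cocores. The final surgered Lagrangian is displaceable from the skeleton to infinity and hence is zero in $\mathcal W_{\mathfrak b}(M)$. Vanishing of a cylindrical Lagrangian disjoint from a neighbourhood of the skeleton is proved by showing that its wrapped self-Floer complex is acyclic, because the unit factors through an arbitrarily large wrapping. This argument uses only continuation maps and energy, and so is insensitive to the weights. Consequently $L$ lies in the triangulated envelope of the $\widehat D_i$. Split-closure is then automatic, since every object of $\mathcal W_{\mathfrak b}(M)$ is a summand of a twisted complex on such branes.

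The main obstacle I expect is Step 2. The issue is the compatibility of the surgered brane's bulk trivialisation with the trivialisations on $L$ and $\widehat D_i$, and the confirmation that the connecting triangle count is a unit rather than a possibly cancelling sum. To handle this, I would construct the trivialisation on $L\#\widehat D_i$ by gluing the two trivialisations across the small surgery neck, which is contractible, so no cohomological obstruction arises. I would then choose the coefficient-line bases at the surgery chord so that the unique local triangle has weight $1$. Any residual ambiguity is a rank-one local system twist, which is absorbed into the brane as described in \Cref{subsec:bulk-data}. The remaining arguments of \cite{CDRGG} are identities among rigid counts, so they hold verbatim with weights.
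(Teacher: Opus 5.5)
Your overall strategy is the right one: surgery of $L$ with cocore copies, displacement of the surgered object off the skeleton, and bulk weights checked via \Cref{lem:coefficient-principle}. Steps 2--3 as written do not go through, however.

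First, the surgery points must be the transverse intersections $a_i$ of $L$ with the skeleton $F_{\mathrm{red}}$, not with the cocores. The Liouville flow cannot remove these intersections. At each $a_i$ one uses a Hamiltonian translate $D_i$ of a cocore passing through $a_i$, and only then does the surgered object miss $F_{\mathrm{red}}$. Also, in the unstopped category, wrapping $L$ past $\widehat D_i$ is a cylindrical isotopy and produces no triangle. Stop removal plays no role in \cite{CDRGG}.

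The more serious gap is that the translates $D_i$ in general meet $L$ at further points $b_{ij}^m$ and meet each other at points $c_{ij}^m$, and these cannot be removed. Hence $L\#_{a_i}D_i$, and a fortiori your iterated surgery, is an \emph{immersed} exact Lagrangian. It is not an object of $\mathcal W_{\mathfrak b}(M)$, and there is no embedded surgery triangle $D_i\to L\to L'$ to iterate.

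What is needed, and what the paper does, is to surger at all $a_i$ at once. The resulting immersion $\overline L$ must then be equipped with an augmentation of its Legendrian algebra, taken modulo order-reversing chords and using the ordering and action filtration of \cite[Section~9.1 and Lemma~9.4]{CDRGG}. This is exactly where bulk weights could obstruct, and your proposal never checks it. Set $\varepsilon(a_i)=\operatorname{id}$ and $\varepsilon(b_{ij}^m)=0$. The equation $\varepsilon(db_{ij}^m)=0$ then reads $u_{ij}^mP_{ij}^m\varepsilon(c_{ij}^m)+\varepsilon(R_{ij}^m)=0$. It is solvable for $\varepsilon(c_{ij}^m)$ because the signed weight $u_{ij}^m$ of the unique triangle with negative word $a_jc_{ij}^m$ is a unit and the parallel transport $P_{ij}^m$ is invertible. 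Applying $\varepsilon$ to $d^2b_{ij}^m=0$ then gives $\varepsilon(dc_{ij}^m)=0$.

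Only after this do the surgery comparison of \cite[Proposition~8.16]{CDRGG} and the displacement argument of \cite[Proposition~7.6]{CDRGG} apply. The surgery comparison is where your unit-weight local triangles enter, and the displacement argument is then applied to an immersion with augmentation, not an embedded Lagrangian. If you insist on an iterative structure, each step still needs a bounding cochain for an immersed surgery, and you face the same solvability question.

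Finally, objects of $\mathcal W_{\mathfrak b}(M)$ carry finite-rank local systems $E$, and your rank-one discussion does not cover them. The paper gives $D_i$ the constant local system $E_{a_i}$, which is why the parallel transports $P_{ij}^m$ appear above.
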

\begin{proof}
We use linear wrapping throughout this argument, so the cocycle is
transported by continuation rather than kept fixed under a Liouville
rescaling. All branes, isotopies, and cobordisms below carry the
corresponding transported boundary trivialisations, and all
identities between curve counts are used in their weighted form
(\Cref{lem:coefficient-principle}). Let $(L,E)$ be a brane with a
finite-rank local system. Make $L$ transverse to the skeleton and take
a Hamiltonian translate $D_i$ of a cocore at each intersection $a_i$.
Shift the grading of each $D_i$ so that its short Legendrian chord
$a_i$ has degree zero.
Equip $D_i$ with the constant local system $V_i=E_{a_i}$, and write
$\mathfrak D_i=(D_i,V_i)$. Transport the cocycle trivialisations
along these isotopies. The preparations in
\cite[Section~9.1 and Lemma~9.4]{CDRGG} use only the geometry,
actions, and regularity of the curves and thus are unaffected by
weights. They order the cocores so that the quotient of the
Legendrian algebra by order-reversing chords has generators
$a_i,b_{ij}^m,c_{ij}^m$ and an action filtration.
The chords $a_i$ and $b_{ij}^m$ run from $D_i$ to $L$, while
$c_{ij}^m$ runs from $D_i$ to $D_j$, for $i<j$.
The distinguished triangle contributing to $db_{ij}^m$ has negative
boundary word $a_jc_{ij}^m$.

We seek an augmentation $\varepsilon$ assigning to each chord a map
between its endpoint fibres. Disk words are evaluated by composing
these maps with parallel transport along the boundary arcs.
We use the triangular differential of \cite[Lemma~9.5]{CDRGG},
proceeding by decreasing $i$ and then increasing action. Set
$\varepsilon(a_i)=\operatorname{id}_{V_i}$ and
$\varepsilon(b_{ij}^m)=0$. Since $da_i=0$, their augmentation
equations reduce to
\[
 0=\varepsilon(db_{ij}^m)
   =u_{ij}^m P_{ij}^m\varepsilon(c_{ij}^m)+\varepsilon(R_{ij}^m),
\]
where $R_{ij}^m$ and $dc_{ij}^m$ involve only generators treated
earlier in this order. The coefficient $u_{ij}^m\in\kk^\times$ is
the signed weight of the unique triangle of \cite[Lemma~9.4]{CDRGG}.
Here $P_{ij}^m$ is parallel transport in $E$ along its boundary arc
from $a_j$ to the endpoint of $b_{ij}^m$ on $L$.
The triangular structure follows from endpoints and action
inequalities, which bulk weights do not change.

Since $P_{ij}^m$ is invertible, this determines
\[
 \varepsilon(c_{ij}^m)=-(u_{ij}^m)^{-1}(P_{ij}^m)^{-1}
 \varepsilon(R_{ij}^m)\in\operatorname{Hom}(V_i,V_j).
\]
Applying $\varepsilon$ to $d^2b_{ij}^m=0$, the previously verified
augmentation equations give $u_{ij}^m P_{ij}^m\varepsilon(dc_{ij}^m)=0$,
hence $\varepsilon(dc_{ij}^m)=0$. The construction respects degrees:
the $a_i$ have degree zero, the differential is homogeneous, and bulk
weights and parallel transport have degree zero. Setting all order-reversing
chords to zero completes the augmentation.

Perform surgery at the $a_i$, using the identity maps to glue the
local systems and choosing bulk trivialisations so that the local
scalar coefficients are $1$. These data extend across the surgery
handles, and the weighted cobordism maps transfer the
augmentation. The comparison of \cite[Proposition~8.16]{CDRGG}
applies with bulk weights: \Cref{lem:coefficient-principle} supplies
the required identities, and the unique local triangles of
\cite[Lemma~4.14 and Corollary~4.18]{CDRGG} contribute invertible
coefficients. Thus, naturally for every test brane $T$,
\[
 \CW_{\mathfrak{b}}(T,\overline L)
 \simeq\operatorname{hom}_{\mathcal W_{\mathfrak{b}}}(T,\mathfrak L),
\]
where $\overline L$ is the surgered immersion with its glued local system and transferred
augmentation and $\mathfrak L$ is a finite twisted complex on
$\mathfrak D_1,\ldots,\mathfrak D_k,(L,E)$, with $(L,E)$ its final term.

Since $\overline L$ misses the skeleton, its wrapped morphisms vanish
by the displacement and action argument of
\cite[Proposition~7.6]{CDRGG}, which is unaffected by bulk weights.
Hence $\mathfrak L$ is zero, expressing $(L,E)$ as a twisted complex of
the $\mathfrak D_i$. Each $\mathfrak D_i$ is a direct sum of
$\dim V_i$ copies of $D_i$, and continuation identifies $D_i$ with
the corresponding grading shift of its original cocore. All resulting
twisted complexes are finite.
\end{proof}

\begin{proof}[Proof of \Cref{thm:main}]
By \Cref{thm:neighbourhood-surgery}, the transverse holomorphic disks
complete to the cocores $\widehat D_i$.
\Cref{prop:bulk-cocores} computes their bulk-deformed wrapped cohomology
and gives the minimal model $A_F(\boldsymbol q)$ for their endomorphism
algebra. Since the cocores generate by \Cref{prop:bulk-generation}, the
Yoneda functor gives
\[
 \mathcal W_{\mathfrak b}(M)\simeq\Perf\bigl(A_F(\boldsymbol q)\bigr).
\]
\end{proof}

\section{Mirror symmetry and crepant resolutions}
\label{sec:star-consequences}

We prove \Cref{cor:unipotent-mirror,cor:orbifold-mirror} using
\Cref{thm:main} and \Cref{cor:affine-formality}.
We also prove \Cref{thm:higher-orbifold} and establish the algebraic side
of the Hilbert-scheme conjecture in \Cref{prop:hilbert-mirror-model}.

\subsection{The homological consequence of formality}

Kaplan-Schedler show that exactness of the bimodule complex of
Crawley-Boevey and Shaw implies the $2$-Calabi-Yau property, and establish
this exactness for quivers containing a cycle
\cite[Corollaries~3.19 and~3.20]{KS}. For affine $D$ and $E$ trees,
\Cref{cor:affine-formality} supplies the required exactness, giving
the following consequence.

\begin{lemma}\label{lem:star-cy}
Let $Q$ be an affine tree of type $D$ or $E$ and $A=\Lambda^{\boldsymbol q}(Q)$.
For any field and every nonzero parameter tuple, $A$ has global
dimension at most two and is bimodule $2$-Calabi-Yau. In
particular, for every finite-dimensional left or right $A$-module
$V$,
\begin{equation}\label{eq:star-finite-duality}
 \operatorname{RHom}_A(V,A)
 \simeq\operatorname{Hom}_{\kk}(V,\kk)[-2],
\end{equation}
with the corresponding opposite module structure.
\end{lemma}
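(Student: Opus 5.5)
The plan is to follow the route indicated just before the statement. Kaplan and Schedler show that exactness of the Crawley-Boevey--Shaw bimodule complex for $\Lambda^{\boldsymbol q}(Q)$ gives the $2$-Calabi-Yau property \cite[Corollaries~3.19 and~3.20]{KS}. That complex is
\[
 0\to A\otimes_S\kk Q_0\otimes_S A\to A\otimes_S\kk\overline Q_1\otimes_S A\to A\otimes_S A\to A\to0 .
\]
So the work is to supply this exactness for affine $D$ and $E$ trees, and then read off the global dimension and finite-dimensional duality.

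First, identify the complex with a quasi-isomorphism. The dg multiplicative preprojective algebra $\Lambda^{\mathrm{dg},\boldsymbol q}(Q)$ has the same generators as $A$ in degree $0$ and one generator $\tau_i$ in degree $-1$ at each vertex; for trees no localisation variables are needed, after the comparison at the end of \Cref{subsec:plumbing-dga}. Kaplan and Schedler's argument shows the following: the bimodule complex above is exact exactly when $\Lambda^{\mathrm{dg},\boldsymbol q}(Q)\to H^0$ is a quasi-isomorphism, that is, when the higher cohomology $H^{-k}$, $k\ge1$, vanishes. Their Remark~4.7 argument gives this, identifying $H^{-1}$ with the homology at the left-hand term of the resolution, and higher terms are handled by their spectral-sequence comparison.

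Second, invoke \Cref{cor:affine-formality}. It says $\mathcal B_Q^{\boldsymbol q}\to\Lambda^{\boldsymbol q}(Q)$ is a quasi-isomorphism over any field and for every $\boldsymbol q$. Combined with $\mathcal B_Q^{\boldsymbol q}\simeq\Lambda^{\mathrm{dg},\boldsymbol q}(Q)$, this gives the required vanishing, and hence exactness. The length-two projective bimodule resolution then gives global dimension at most two. Self-duality of the complex, $\operatorname{Hom}_{A^e}(P_\bullet,A^e)\cong P_\bullet[-2]$, is Kaplan-Schedler's theorem and yields the bimodule $2$-CY property. For \eqref{eq:star-finite-duality}, I would apply $-\otimes_A V$ to the inverse dualising isomorphism $A^!\simeq A[-2]$; this gives $\operatorname{RHom}_A(V,A)\simeq\operatorname{RHom}_{A^e}(A,A^e)\otimes_A V[\ldots]$. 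Then the standard $2$-CY Serre duality for finite-dimensional modules (Keller's lemma) gives the dual $V^*[-2]$, with the opposite module structure.

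The main obstacle is the first step. One has to check that the Kaplan-Schedler comparison between the dg algebra's cohomology and the homology of the bimodule complex holds for trees at arbitrary parameters and in any characteristic, because their own proof only treats quivers with a cycle. I would first verify that their Remark~4.7 argument uses the cycle only to obtain the vanishing of $H^{-1}$, and that every other step is formal in $\kk$ and $\boldsymbol q$.
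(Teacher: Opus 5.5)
Your architecture matches the paper's. Formality of the tree plumbing algebra (\Cref{cor:affine-formality}) makes the Crawley-Boevey--Shaw complex a bimodule resolution, and Kaplan--Schedler self-duality then gives the bimodule $2$-Calabi-Yau property, global dimension at most two, and the duality for finite-dimensional modules. The gap is in the middle step, which is the only one with real content. Crawley-Boevey--Shaw already give exactness away from $P_2$, so what must be shown is that $d_2$ is injective, and you do not show it. Instead you attribute an ``exact iff formal'' statement to Kaplan--Schedler's Remark~4.7 and an unspecified spectral-sequence comparison, and you plan to check that their cycle hypothesis is used only for $H^{-1}$. That is an unverified citation rather than an argument; the paper invokes that remark only to identify $\mathcal B_Q^{\boldsymbol q}$ with $\Lambda^{\mathrm{dg},\boldsymbol q}(Q)$. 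The proposed check is also aimed at the wrong place: what Kaplan--Schedler lack for trees is formality itself, and \Cref{cor:affine-formality} now supplies it.

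The missing idea is that the implication you need is formal: formality implies exactness, and the converse is never used. For a tree, $\mathcal B=\mathcal B_Q^{\boldsymbol q}$ of \Cref{def:plumbing-dga} is the semifree algebra $S\langle c_a,c_a^*,\tau_v\rangle$ with $d\tau_v=A_v-q_vB_v$ and no localisation variables. So there is no need to pass through $\Lambda^{\mathrm{dg},\boldsymbol q}(Q)$. A semifree $\mathcal B=T_S(V)$ has the standard bimodule resolution $\operatorname{Cone}(\mathcal B\otimes_SV\otimes_S\mathcal B\to\mathcal B\otimes_S\mathcal B)$, built from universal derivations. Since $\mathcal B\to A$ is a quasi-isomorphism, $A\otimes^{\mathbb L}_{\mathcal B}A\simeq A$. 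Applying $A\otimes_{\mathcal B}(-)\otimes_{\mathcal B}A$ to this resolution produces exactly $P_2\to P_1\to P_0$: the degree-zero arrows give $P_1$, the $\tau_v$ give $P_2$, and $d_2$ is the noncommutative derivative of $A_v-q_vB_v$. This works over any field and at every $\boldsymbol q$, with no appeal to the cycle case.

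There are two smaller points. First, Kaplan--Schedler's self-duality is stated for the normalised complex, which you should match by $\theta_v\mapsto\theta_vB_v^{-1}$, using the Leibniz rule and the relation. Second, your intermediate formula $\operatorname{RHom}_{A^e}(A,A^e)\otimes_AV$ has the wrong variance. The correct identity is $\operatorname{RHom}_A(V,A)\simeq V^\vee\otimes^{\mathbb L}_A\operatorname{RHom}_{A^e}(A,A^e)$ with $V^\vee=\operatorname{Hom}_\kk(V,\kk)$, although Keller's lemma with second argument $A$ does give the right conclusion.
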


\begin{proof}
Consider the bimodule complex of Crawley-Boevey and Shaw
\cite[Lemma~3.1]{CBShaw}, written using the relation generators
for $A_v-q_vB_v$:
\[
 P_2\xrightarrow{d_2}P_1\xrightarrow{d_1}P_0
 \xrightarrow{\mu}A\longrightarrow0,
\]
where
\[
 P_2=P_0=\bigoplus_{v\in Q_0}Ae_v\otimes_{\kk}e_vA,
 \qquad
 P_1=\bigoplus_{a\in\overline Q_1}
 Ae_{t(a)}\otimes_{\kk}e_{s(a)}A.
\]
Let $\eta_v$, $\eta_a$ and $\theta_v$ denote the standard generators
of $P_0$, $P_1$ and $P_2$, respectively. The maps are
\[
 \mu(\eta_v)=e_v,\qquad
 d_1(\eta_a)=a\eta_{s(a)}-\eta_{t(a)}a,
\]
and
\[
\begin{aligned}
 d_2(\theta_v)
 ={}&\sum_{t(a)=v}
 A_{v,<a}(\eta_a a^*+a\eta_{a^*})A_{v,>a}\\
 &-q_v\sum_{s(a)=v}
 B_{v,<a}(\eta_{a^*}a+a^*\eta_a)B_{v,>a}.
\end{aligned}
\]
Here the sums run over $a\in Q_1$, and $A_{v,<a}$ and $A_{v,>a}$
are the products before and after the factor indexed by $a$ in $A_v$,
and similarly for $B_v$; empty products are $e_v$.
Their normalisation is obtained by replacing $\theta_v$ with
$\theta_vB_v^{-1}$. Indeed, let $\partial$ denote formal
differentiation in the arrows. The Leibniz rule and the relation
$A_v-q_vB_v=0$ in $A$ give
\[
 \partial\bigl((A_v-q_vB_v)B_v^{-1}\bigr)
   =\partial(A_v-q_vB_v)B_v^{-1}.
\]
The complex is exact except possibly at $P_2$
\cite[Lemma~3.1]{CBShaw}. It remains to prove that $d_2$ is injective.

Let $\mathcal B=\mathcal B_Q^{\boldsymbol q}$ be the dg algebra of
\Cref{def:plumbing-dga}. By \Cref{lem:factors-invertible},
$H^0(\mathcal B)=A$, and \Cref{cor:affine-formality} makes the
projection $\mathcal B\to A$ a quasi-isomorphism. Thus
$A\otimes_{\mathcal B}^{\mathbb L}A\simeq A$.
Derived base change of the standard diagonal resolution of the
semifree dg algebra $\mathcal B$, constructed using universal
derivations, gives precisely the displayed complex: the degree-zero
arrows contribute $P_1$, and the degree-$-1$ generators $\tau_v$
contribute $P_2$, with the differential $d_2$ obtained by differentiating
$d\tau_v=A_v-q_vB_v$. Hence this complex is a projective bimodule
resolution of $A$.

Its self-duality \cite[Theorem~3.17 and Corollary~3.19]{KS} gives
\[
 \operatorname{RHom}_{A\otimes A^{\mathrm{op}}}
 (A,A\otimes A^{\mathrm{op}})\simeq A[-2].
\]
As a resolution of right $A$-modules, the complex splits; tensoring
with any left $A$-module therefore gives a projective resolution of
length at most two. The right-module argument is identical.
For a finite-dimensional left $A$-module $V$, put
$V^\vee=\operatorname{Hom}_{\kk}(V,\kk)$.
Applying $\operatorname{Hom}_A(-,A)$ to this resolution of $V$ gives
\[
\begin{aligned}
 \operatorname{RHom}_A(V,A)
 &\simeq V^\vee\otimes_A^{\mathbb L}
 \operatorname{RHom}_{A\otimes A^{\mathrm{op}}}
 (A,A\otimes A^{\mathrm{op}})\\
 &\simeq V^\vee[-2],
\end{aligned}
\]
which proves \Cref{eq:star-finite-duality}. The right-module case
follows by replacing $A$ with $A^{\mathrm{op}}$.
\end{proof}

\subsection{Noncommutative crepant resolutions}
\label{subsec:surface}

For a normal Gorenstein domain $R$, a noncommutative crepant
resolution (NCCR) is an algebra $\operatorname{End}_R(M)$, where $M$
is a finitely generated reflexive $R$-module and
$\operatorname{End}_R(M)$ is maximal Cohen-Macaulay over $R$ and has
finite global dimension
\cite[Definition~4.1 and Lemma~4.2]{VdBNCCR}. We construct such
resolutions for the surface algebras arising above.

Let $Q$ be $\widetilde D_n$ ($n\ge4$), $\widetilde E_6$, $\widetilde E_7$
or $\widetilde E_8$, put $A=\Lambda^{\boldsymbol1}(Q)$, and choose the
idempotent $e$ at an extending vertex.
Shaw's corner calculation \cite[Theorem~4.1.1]{ShawThesis}, also
recorded in \cite[Theorem~6.4]{KS}, identifies $R=eAe$ with
$\kk[x,y,z]/(f_Q)$ for the following polynomials:\footnote{In type
$D$, the term $-p_{n-4}(X)XZ$ in \cite[Theorem~4.1.1]{ShawThesis}
and \cite[Theorem~6.4]{KS} should be $+p_{n-4}(X)XZ$.
The sign error occurs in the passage from the second to the third line
of the display for $T$ in the proof of
\cite[Lemma~4.7.5, p.~95]{ShawThesis}, where the term
$a^*BABF^kBNCMa$ should carry a minus sign.
For example, in $\widetilde D_5$ over $\mathbb Q$, take the representation
\[
\begin{gathered}
\begin{tikzpicture}[
  >={Stealth[length=3pt,width=2.5pt]},
  every node/.style={font=\footnotesize},
  v/.style={inner sep=1pt,outer sep=1pt},
  arr/.style={->,line width=.35pt,shorten >=1.5pt,shorten <=1.5pt}]
 \node[v] (la) at (-2.6,.95) {$\mathbb Q$};
 \node[v] (lb) at (-2.6,-.95) {$\mathbb Q$};
 \node[v] (l) at (-1.1,0) {$\mathbb Q^2$};
 \node[v] (r) at (1.1,0) {$\mathbb Q^2$};
 \node[v] (rc) at (2.6,.95) {$\mathbb Q$};
 \node[v] (rd) at (2.6,-.95) {$\mathbb Q$};
 \foreach \u/\v/\lab in {la/l/a,lb/l/b,rc/r/c,rd/r/d,r/l/f}{
   \draw[arr] (\u) to[bend left=12] node[auto,inner sep=1pt] {$\lab$} (\v);
   \draw[arr] (\v) to[bend left=12] node[auto,inner sep=1pt] {$\lab^*$} (\u);
 }
\end{tikzpicture}\\[2pt]
 a=c=\binom10,\quad b=\binom01,\quad d=\binom11,\qquad
 a^*=(0\;1),\quad b^*=(1\;0),\quad c^*=(0\;-1),\quad d^*=(-1\;1),\\[2pt]
 f=I,\qquad f^*=\begin{pmatrix}0&-1\\-1&1\end{pmatrix}.
\end{gathered}
\]
Put $A=aa^*$, $B=bb^*$, $C=cc^*$, $D=dd^*$ and $F=ff^*=f^*f$.
The vertex relations hold since
$A^2=B^2=C^2=D^2=0$ and
\[
 (I+A)(I+B)(I+F)=I,\qquad (I+C)(I+D)=I+F.
\]
At the source of $a$, the corner
generators $X=a^*Ba$, $Y=a^*fCf^*a$ and $Z=a^*BfCf^*a$
act by $(X,Y,Z)=(1,0,1)$. Since $p_0=0$ and $p_1=-X$, the printed
relation $Z^2+X^2Z-XY^2-XYZ$ evaluates to $2$, whereas the corrected
relation evaluates to $0$.}
\[
\begin{array}{c|c|c}
 Q&f_Q&\text{weights of }(x,y,z)\\ \hline
 \widetilde D_n&xyz+xy^2-p_{n-5}x^2y-p_{n-4}xz-z^2&(2,n-2,n-1)\\
 \widetilde E_6&xyz+z^2+x^2z+y^3&(3,4,6)\\
 \widetilde E_7&xyz+z^2+y^3+x^3y&(4,6,9)\\
 \widetilde E_8&xyz+z^2+y^3+x^5&(6,10,15).
\end{array}
\]
Here $p_i=p_i(x)$ is defined by $p_{-1}=-1$, $p_0=0$ and
$p_{i+1}=x(p_{i-1}+p_i)$. The weights in the table are those of
the du Val initial forms.

\begin{lemma}\label{lem:shaw-surface}
Over any field, $R$ is a geometrically integral, normal, Gorenstein
surface whose only geometric singular point is the origin.
\end{lemma}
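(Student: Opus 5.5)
We prove the statement directly from the explicit hypersurface equations $f_Q$, treating each type separately but by the same method. Since $R=\kk[x,y,z]/(f_Q)$ is a hypersurface, it is Cohen--Macaulay and Gorenstein automatically. So it suffices to show three things: $f_Q$ is geometrically irreducible, the singular locus of $\operatorname{Spec}R\otimes\bar\kk$ is the single point $0$, and normality, which then follows from Serre's criterion. Indeed, a hypersurface is $S_2$, and isolated singularities in dimension two give $R_1$. We may therefore base change to $\bar\kk$ and work in arbitrary characteristic $p$ there.

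\textbf{Step 1: isolated singularity at the origin.} The weights listed in the table make the du Val part of $f_Q$ quasi-homogeneous of degree $2\deg z$. The term $xyz$ has strictly larger weight: for $\widetilde E_8$ it is $31>30$, and similarly in the other cases. So $f_Q$ is a higher-order deformation of the du Val initial form. To locate singular points I would solve $f=f_x=f_y=f_z=0$ explicitly, exploiting the low degree in $z$. For example, for $\widetilde E_8$ we have $f_z=xy+2z$ and $f_x=yz+5x^4$. In characteristic $\neq 2$, eliminate $z$ via $f_z=0$. In characteristic $2$, $f_z=0$ forces $xy=0$, and one checks each branch separately. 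The same case analysis is needed for $p=3,5$, where the exponents $3$ and $5$ interact with characteristic. Away from the origin, I expect these equations to have no common solutions, using that $f$ and its partials can be combined with weighted Euler-type identities plus the correction term. At the origin I would confirm that the singularity is du Val of the stated type. Artin's classification in small characteristic requires care here, but only the fact that the singularity is isolated is needed for the lemma.

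\textbf{Step 2: geometric irreducibility.} Because the singular locus is finite, $\operatorname{Spec}R_{\bar\kk}$ is reduced, so only irreducibility remains. The grading gives a clean route. The weighted initial form, the du Val equation, is irreducible, and its zero set is connected through $0$. A factorisation $f=gh$ with $g,h$ nonunits would have initial forms giving a factorisation of the initial form. That initial factorisation is impossible unless one of the factors has initial form a constant. Such a factor is a unit in the local ring at $0$, but it could still be a nonunit globally. So I would also rule out components missing the origin. If $f=gh$ with $g(0)\neq0$, then $V(g)\cap V(h)$ is a set of singular points of $V(f)$, and it is nonempty whenever both are positive-dimensional hypersurfaces in $\mathbb A^3$. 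By Step 1, the only possible such point is $0$, where $g(0)\ne0$, a contradiction.

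\textbf{Main obstacle.} The hardest part is Step 1 uniformly in all characteristics. This is most delicate for the $\widetilde D_n$ family, with the recursively defined $p_i(x)$ and arbitrary $n$, and for the small primes $2,3,5$. For $\widetilde D_n$ I would first try to exploit the factorisation-friendly form
\[
f=xy(z+y-p_{n-5}x)-z(p_{n-4}x+z).
\]
In the limiting case I would use the identity $p_{i+1}=x(p_i+p_{i-1})$ to control the common zeros of the partial derivatives. If the direct computation becomes unwieldy, a fallback is to use that $R=eAe$ is a domain, because it is a corner of the prime algebra $A$, so integrality over $\kk$ comes for free. Isolatedness could then alternatively be deduced from the finite global dimension of $A$ (\Cref{lem:star-cy}) together with Morita-type arguments on the locus where $AeA=A$.
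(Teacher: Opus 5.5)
Your reduction matches the paper's: pass to $\overline\kk$, note that a hypersurface is Gorenstein, and get normality from isolated singularities plus Serre's criterion. But neither substantive step is established, and Step~2 rests on a false claim.

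\textbf{Step 1.} ``I expect these equations to have no common solutions'' is exactly the content of the lemma, so Step~1 is not yet proved.

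For types $E$ your Euler idea does finish it. With $d$ the weighted degree of $f_Q-xyz$, one has the integral identity $\sum_i w_ix_i\partial_if_Q=d\,f_Q+xyz$. So every singular point has $xyz=0$, and the three cases $x=0$, $y=0$, $z=0$ are a short check in every characteristic.

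For type $D$ your premise fails. The polynomial $f_Q-xyz$ is not quasi-homogeneous for $n\ge6$, because $p_{n-5},p_{n-4}$ are not homogeneous in $x$. For example, for $\widetilde D_6$ with weights $(2,4,5)$, $f_Q=xyz+xy^2+x^3y+x^3z-z^2$ also contains $x^3z$ of weight $11$. For larger $n$ the Euler combination no longer reduces to a monomial.

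The missing ingredient is the identity
\[
 p_m^2-xp_mp_{m-1}-xp_{m-1}^2=(-x)^{m+1},\qquad m=n-4,
\]
which follows from the recurrence by induction: the left side is multiplied by $-x$ at each step. With it the argument goes as follows.
\begin{itemize}
\item In characteristic $\ne2$, put $w=z-x(y-p_m)/2$. Then $f_Q=-w^2+\tfrac x4E$ with $E=(x+4)y^2-2x(p_m+2p_{m-1})y+xp_m^2$. A singular point with $x\ne0$ forces $w=E=\partial_yE=0$, hence $\operatorname{disc}_yE=0$ there. This contradicts $\operatorname{disc}_yE=16(-1)^mx^{m+2}$.
\item In characteristic $2$, $\partial_zf_Q=x(y-p_m)$ and $\partial_yf_Q=x(z-xp_{m-1})$. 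These force $y=p_m$ and $z=xp_{m-1}$ when $x\ne0$, and then $f_Q=x^{m+2}\ne0$.
\item At $x=0$, the equations $f_Q=\partial_xf_Q=0$ give $y=z=0$.
\end{itemize}

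\textbf{Step 2.} Two positive-dimensional hypersurfaces in $\mathbb A^3$ need not meet; take $V(x)$ and $V(x-1)$. So you have not excluded $f_Q=gh$ with $g(0)\ne0$ and $(g,h)=(1)$, and the initial-form argument does not prove irreducibility.

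The clean route, which is the paper's, uses that $\pm f_Q$ is monic quadratic in $z$. Any nontrivial factorisation over $\overline\kk[x,y]$ must then be into two monic linear factors in $z$.
\begin{itemize}
\item In types $E$, setting $x=0$ would factor the irreducible $z^2+y^3$.
\item In type $D$, $-f_Q=z^2+x(p_{n-4}-y)z-xy(y-p_{n-5}x)$ is Eisenstein at the prime $x$.
\end{itemize}
Even within your own approach, once you use monicity in $z$, the only case left is $g=z+a$, $h=z+a+c$ with $c\in\overline\kk^\times$. Comparing coefficients at $x=0$ excludes it.

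\textbf{Fallbacks.} Your fallbacks do not close these gaps.
\begin{itemize}
\item Primality of $\Lambda^{\boldsymbol1}(Q)$ for the affine $D,E$ trees is not available from the cited results. The paper shows the corner is a domain only in \Cref{prop:affine-de-nccr}, by a substantial separate argument.
\item The Morita sketch via $\operatorname{gl.dim}A\le2$ would at best confine the singular locus to the finite support of $A/AeA$, not to the origin.
\end{itemize}
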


\begin{proof}
Work over $\overline\kk$. In type $D$, the monic quadratic $-f_Q$
in $z$ is Eisenstein at $x$. In types $E$, $f_Q$ is monic in $z$
and specialises at $x=0$ to the irreducible polynomial $z^2+y^3$;
a factorisation would specialise to a nontrivial monic factorisation.
Thus all these surfaces are geometrically integral.

In types $E$, write $g_Q=f_Q-xyz$. The weight of $xyz$ exceeds
that of $g_Q$ by one, so the weighted Euler identity forces $xyz=0$
at a singular point of $f_Q=0$. Substitution in the equation and
its three partial derivatives leaves only the origin, in every
characteristic.

In type $D$, put $m=n-4$. The recurrence gives
\begin{equation}\label{eq:shaw-cassini}
 p_m^2-xp_mp_{m-1}-xp_{m-1}^2=(-x)^{m+1}.
\end{equation}
If $\operatorname{char}\kk=2$, a singular point with $x\ne0$
satisfies $y=p_m$ and $z=xp_{m-1}$, by the partial derivatives
in $z$ and $y$. Substitution and \Cref{eq:shaw-cassini} give
$f_Q=x^{m+2}\ne0$, a contradiction. If $x=0$, the equations
$f_Q=\partial_xf_Q=0$ give $z=y=0$.

If $\operatorname{char}\kk\ne2$, set
$w=z-x(y-p_m)/2$. Then
\[
 f_Q=-w^2+\frac{x}{4}E,\qquad
 E=(x+4)y^2-2x(p_m+2p_{m-1})y+xp_m^2.
\]
By \Cref{eq:shaw-cassini},
\[
 \operatorname{disc}_y(E)=16(-1)^m x^{m+2}.
\]
A singular point with $x\ne0$ would satisfy $w=E=\partial_yE=0$,
contradicting this identity. If $x=0$, the equations
$f_Q=\partial_xf_Q=0$ again give $z=y=0$.

These integral hypersurfaces have only isolated singularities.
Serre's criterion gives normality, and hypersurfaces are Gorenstein.
\end{proof}

\begin{lemma}\label{lem:finite-quotient-corner}
Let $\kk$ be any field, let $Q$ be an affine Dynkin quiver of type $D$
or $E$, and set $A=\Lambda^{\boldsymbol q}(Q)$ for
$\boldsymbol q\in(\kk^\times)^{Q_0}$. Let $f$ be a sum of vertex
idempotents such that $A/AfA$ is finite-dimensional, and put $T=fAf$.
Then $Af$ and $fA$ are finitely generated as right and left
$T$-modules, respectively, and the natural maps give isomorphisms
\[
 A\cong\operatorname{End}_{T^{\mathrm{op}}}(Af),\qquad
 A^{\mathrm{op}}\cong\operatorname{End}_T(fA),\qquad
 Z(A)\cong Z(T).
\]
\end{lemma}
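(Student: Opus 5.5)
We reduce everything to a standard double-centraliser argument. The inputs are the finiteness of $A$ over its centre and the $2$-Calabi-Yau duality of \Cref{lem:star-cy}.

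\textbf{Step 1: finiteness.} Since $f=\sum_{v\in I}e_v$ is a sum of vertex idempotents and $A/AfA$ is finite-dimensional, we first show that $Af$ is finitely generated over $T$. The natural route is to know that $A$ is a finitely generated module over a central, finitely generated commutative subalgebra $Z_0$. For affine $D,E$ this follows from the explicit presentations (\Cref{prop:d-algebra,prop:star-algebra}). These show that $A$ is generated by finitely many arrows satisfying vertex relations of the type \eqref{eq:star-leg-polynomial}. We would produce central elements, for instance traces of cycles through an extending vertex, over which $A$ is finite. Failing that, one can argue with the filtration by path length and use the fact that the associated graded is a quotient of the classical preprojective algebra, which is finite over its centre. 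Granting this, $fAf=T$ is finite over $fZ_0$, and $Af$ is a finitely generated $Z_0$-module, hence a finitely generated right $T$-module. The same argument handles $fA$.

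\textbf{Step 2: the endomorphism isomorphism.} The natural map $\rho\colon A\to\operatorname{End}_{T^{\mathrm{op}}}(Af)$ is given by left multiplication.

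For injectivity, suppose $aAf=0$. Then $aAfA=0$, so the left ideal $K=\ker\rho$ annihilates $AfA$. Since $A/AfA$ is finite-dimensional, $K\cdot A\subseteq\operatorname{ann}(AfA)$ is a two-sided ideal $J$ with $JAfA=0$. We then use the fact that $A$ is prime and has no nonzero finite-dimensional one-sided ideals. This follows from \Cref{lem:star-cy}: a finite-dimensional submodule $V\subseteq A$ would give $\operatorname{Hom}_A(V,A)\neq0$, contradicting $\operatorname{RHom}_A(V,A)\simeq V^\vee[-2]$, which vanishes in degree $0$. The ideal $J$ embeds into $\operatorname{Hom}(A/AfA,A)$, which is zero by the same duality, so $J=0$.

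For surjectivity, we use the standard criterion that $\operatorname{End}_{T^{\mathrm{op}}}(Af)\cong A$ when $\operatorname{Ext}^i_A(A/AfA,A)=0$ for $i=0,1$. This is the depth-two condition on the finite-dimensional module $A/AfA$ (compare the argument of Iyama--Reiten and Van den Bergh). Concretely, apply $\operatorname{Hom}_A(-,A)$ to $0\to AfA\to A\to A/AfA\to0$. Together with the isomorphism $\operatorname{Hom}_A(AfA,A)\cong\operatorname{End}_{T^{\mathrm{op}}}(Af)$, obtained from $AfA\cong Af\otimes_TfA$ up to the usual adjunction, this gives $A\cong\operatorname{Hom}_A(AfA,A)$ exactly when $\operatorname{Ext}^{0,1}_A(A/AfA,A)=0$. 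By \Cref{eq:star-finite-duality} these Ext groups are concentrated in degree $2$, so they vanish in degrees $0$ and $1$. The opposite statement follows by the right-module version of \Cref{lem:star-cy}.

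\textbf{Step 3: the centres.} The map $Z(A)\to Z(T)$, $z\mapsto zf$, is injective: if $zf=0$ then $zAf=Azf=0$, so $z=0$ by the injectivity in Step 2. For surjectivity, take $t\in Z(T)$. Right multiplication by $t$ on $Af$ commutes with the right $T$-action. By Step 2 it is therefore left multiplication by a unique $a\in A$. Using the symmetric description via $fA$, the element $a$ also commutes with $A$, because $at'=t'a$ for every $t'$ coming from $A$ under the double centraliser. Hence $a\in Z(A)$ with $af=t$.

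The main obstacle is the identification $\operatorname{Hom}_A(AfA,A)\cong\operatorname{End}_{T^{\mathrm{op}}}(Af)$ and the precise Ext-vanishing bookkeeping in Step 2, together with securing the finiteness in Step 1 over an arbitrary field.
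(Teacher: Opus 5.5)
\textbf{The gap is in Step~1.} You derive finite generation of $Af$ and $fA$ from $A$ being finite over a central subalgebra $Z_0$. The lemma, however, is asserted for every $\boldsymbol q\in(\kk^\times)^{Q_0}$, and for non-classical parameters this finiteness is false.

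Take $q_{(j,p)}=\zeta_j$ and $q^N$ not a root of unity. Then $e_0$ is full by \Cref{lem:star-fullness}, and $e_0Ae_0\cong\mathcal A_{q^N}\rtimes\mu_N$ by \eqref{eq:quantum-corner}. Hence $Z(A)\cong Z(e_0Ae_0)=\kk$, while $A$ is infinite-dimensional, so no central subalgebra makes $A$ a finite module. Even at classical parameters, finiteness of $A$ over its centre is deduced \emph{from} this lemma in \Cref{prop:affine-de-nccr}, so it cannot be used as input.

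Your fallback does not repair this. For the path-length filtration, the leading forms of the multiplicative relations are not the preprojective relations: at the centre of a star the leading form is $\prod_j c_{j1}c_{j1}^*$. Moreover, central elements of an associated graded algebra need not lift.

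The missing idea is elementary and independent of the parameters. $A$ is a finitely generated $\kk$-algebra: it is a quotient of $\kk\overline Q$, with no localisation needed for trees. A one-sided ideal $I$ of finite codimension in a finitely generated algebra is finitely generated. To see this, lift a basis $1=v_1,\dots,v_m$ of $A/I$ and write $x_kv_j=\sum_l c_{kjl}v_l+r_{kj}$ for the algebra generators $x_k$; the span of the $v_j$ plus the left ideal generated by the $r_{kj}$ is a left ideal containing $1$, so the $r_{kj}$ generate $I$. Hence $I=AfA$ is finitely generated on both sides. Write its left generators as $\sum_l a_{kl}fb_{kl}$. Since $f\in I$, this gives $fA=fI=\sum_{k,l}T\,fb_{kl}$, and symmetrically $Af=If$ is a finitely generated right $T$-module. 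This is the paper's argument, phrased there as finite presentation of $A/AfA$.

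\textbf{Steps~2 and~3 are sound and follow the paper's route.} The paper obtains $\operatorname{Ext}^i_A(A/AfA,A)=0$ for $i=0,1$, on both sides, from \Cref{lem:star-cy}. It then invokes Buchweitz's double-centraliser criterion and centre comparison, which is what you sketch. Three details need fixing.
\begin{itemize}
\item Since $\operatorname{End}_{T^{\mathrm{op}}}(Af)$ concerns right $T$-modules, the exact sequence you apply $\operatorname{Hom}(-,A)$ to should be one of right $A$-modules.
\item The multiplication map $Af\otimes_TfA\to AfA$ need not be injective, so $AfA\cong Af\otimes_TfA$ is not available. Its kernel $K$ satisfies $Kf=0$, so $K$ is an $A/AfA$-module and $\operatorname{Hom}_{A^{\mathrm{op}}}(K,A)=0$. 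This still gives $\operatorname{Hom}_{A^{\mathrm{op}}}(AfA,A)\cong\operatorname{Hom}_{A^{\mathrm{op}}}(Af\otimes_TfA,A)\cong\operatorname{End}_{T^{\mathrm{op}}}(Af)$.
\item In Step~3, centrality of the lift $a$ of $t\in Z(T)$ follows directly. For all $b,x\in A$ we have $(ab-ba)xf=bxft-bxft=0$, so $ab=ba$ by injectivity of $\rho$. No appeal to $fA$ is needed.
\end{itemize}
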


\begin{proof}
Put $B=A/AfA$ and $I=AfA$.
By \Cref{lem:star-cy}, $\operatorname{Ext}_A^i(B,A)=0$ for $i=0,1$
on both sides. Since $A$ is finitely generated as a $\kk$-algebra and $B$ is
finite-dimensional, $B$ is finitely presented as both a left and a right
$A$-module. Thus $I$ is finitely
generated on both sides; expressing its generators as sums $afb$
shows that $Af$ and $fA$ are finite over $T$.
The two endomorphism isomorphisms follow from
\cite[Proposition~2.9]{BuchweitzMorita}, applied to $A$ and
$A^{\mathrm{op}}$. The centre identification follows from
\cite[Corollary~5.6]{BuchweitzMorita}.
\end{proof}

The following proposition extends the NCCR and Satake assertions of
\cite[Conjecture~1.4]{KS} from $\boldsymbol q=\boldsymbol1$ to all
classical parameters. Its homological input is \Cref{lem:star-cy},
which follows from \Cref{cor:affine-formality}.

\begin{proposition}\label{prop:affine-de-nccr}
Let $\kk$ be any field, let $Q$ be an affine Dynkin quiver of type $D$
or $E$, and let $\delta$ be its minimal positive imaginary root.
Suppose $\boldsymbol q\in(\kk^\times)^{Q_0}$ satisfies
$\boldsymbol q^\delta=\prod_iq_i^{\delta_i}=1$.
Set $A=\Lambda^{\boldsymbol q}(Q)$, let $e$ be an extending vertex
idempotent, and put $R=eAe$. Then $R$ is a normal Gorenstein affine
surface domain,
\[
 A\cong\operatorname{End}_R(Ae)
\]
is a noncommutative crepant resolution of $R$, and the Satake map
$Z(A)\longrightarrow R$, $z\longmapsto ez$, is an isomorphism.
\end{proposition}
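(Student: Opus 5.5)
The plan is to reduce everything to \Cref{lem:finite-quotient-corner} with $f=e$, after proving that $A/AeA$ is finite-dimensional when $\boldsymbol q^\delta=1$.

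\textbf{Step 1: finite-dimensionality of $A/AeA$.} The quotient $B=A/AeA$ is the multiplicative preprojective algebra of the finite Dynkin quiver $Q\setminus\{e\}$ with the restricted parameters. To handle it, I would invoke the finiteness of multiplicative preprojective algebras of Dynkin quivers, namely the Crawley-Boevey--Shaw finiteness results. I would reprove this over an arbitrary field by filtering by path length. The associated graded is a quotient of the additive preprojective algebra of a Dynkin quiver, which is finite-dimensional over any field, so $B$ is finite-dimensional for any parameters.

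\textbf{Step 2: the corner and the Satake map.} \Cref{lem:finite-quotient-corner} then gives $A\cong\operatorname{End}_{R^{\mathrm{op}}}(Ae)$ and $Z(A)\cong Z(R)$. It also shows that $Ae$ is finite over $R$.

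\textbf{Step 3: commutativity of $R$.} It remains to prove that $R$ is commutative, and this is where classicality $\boldsymbol q^\delta=1$ enters. I expect this to be the main obstacle. My first approach is a deformation argument. Commutativity is a closed condition on $\boldsymbol q$, once one shows that the family $e\Lambda^{\boldsymbol q}e$ is flat over the torus $\{\boldsymbol q^\delta=1\}$. Flatness would follow from the $2$-Calabi-Yau property (\Cref{lem:star-cy}) via Hilbert series of a length filtration, which are independent of $\boldsymbol q$. So it would suffice to check commutativity on a Zariski-dense set.

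\textbf{Step 4: the dense set.} On the dense set of generic classical parameters, the simple $A$-modules of dimension $\delta$ are parametrised by a surface by Crawley-Boevey--Shaw. I would then use \Cref{eq:star-finite-duality} to show that $R$ acts faithfully on the direct sum of the corners of these modules, each of which is one-dimensional since $\delta_e=1$. This would make $R$ a subring of a product of fields. The step I am least sure works over arbitrary $\kk$ is this faithfulness, namely the density of $\delta$-dimensional simples. As an alternative, I would instead try to adapt the central-corner computation of \Cref{eq:star-central-corner}: at the extending vertex, express $R$ as generated by three elements and verify directly that they commute modulo the relations when $\boldsymbol q^\delta=1$, extending Shaw's computation.

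\textbf{Step 5: properties of $R$.} Granting that $R$ is commutative, $Z(R)=R$, so the Satake map $Z(A)\to R$, $z\mapsto ez$, is an isomorphism. Hence $R$ is central in $A$ up to this identification, and $A$ is finite over $Z(A)=R$. This makes $R$ finitely generated and noetherian by Artin--Tate. Next, $R$ is a domain and normal because $A$ is prime of global dimension two and $\operatorname{End}_R(Ae)=A$ with $Ae$ reflexive. It is Gorenstein of dimension two because $A$ is $2$-Calabi-Yau over its centre: \Cref{lem:star-cy} gives $\operatorname{Hom}_R(A,R)\cong A$, so $A$ is maximal Cohen-Macaulay and, as a direct summand, so is $R$ with trivial canonical module.

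\textbf{Step 6: NCCR.} The NCCR property then follows from the definition recalled in \Cref{subsec:surface}. Indeed, $A=\operatorname{End}_R(Ae)$ has finite global dimension by \Cref{lem:star-cy} and is maximal Cohen-Macaulay.
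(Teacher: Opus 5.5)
Your outer framework matches the paper's: Shaw's finiteness theorem for the Dynkin quotient $A/AeA$, then \Cref{lem:finite-quotient-corner}, then finiteness of $A$ over $R$ with Artin--Tate, and \Cref{lem:star-cy} as homological input. \textbf{The gap is the heart of the proposition:} that $R=eAe$ is commutative and a domain at every classical parameter over every field. Steps 3--4 only sketch strategies, and each lacks its key input.

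\emph{Flatness.} You give no mechanism. The vertex relations are inhomogeneous, so the $2$-Calabi--Yau property does not by itself determine the Hilbert series of any filtration. Moreover \Cref{lem:star-cy} is a statement at a single parameter and compares nothing across the family. The paper gets flatness from formality instead. Over $S=\ZZ[q_i^{\pm1}]/(\boldsymbol q^\delta-1)$ the tree dg algebra is a bounded-above complex of free $S$-modules, and its residue-field fibres have cohomology only in degree zero by \Cref{cor:affine-formality}. The residue-field criterion for flat dimension then makes the universal algebra $S$-flat.

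\emph{Faithfulness in Step 4.} This is unsupported. It amounts to showing that the annihilators in $R$ of a family of $\delta$-dimensional simples intersect to zero. That would need $A$ (semi)prime and finite over its centre with a dense Azumaya locus, but you only obtain finiteness over the centre after commutativity. \eqref{eq:star-finite-duality} essentially only rules out maps from finite-dimensional modules into $A$.

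\emph{Where $\boldsymbol q^\delta=1$ is used.} The actual use of classicality is missing from your proposal. In types $E$ the paper obtains generic commutativity from \cite[Corollary~6.9]{EOR}, after rescaling the arm generators by $\kappa_j$ so that the quantum parameter becomes $1$; this is possible precisely because of classicality. In type $D$ it computes a PBW basis of the chain corner. There classicality makes both end loops twist the Laurent variable by the same involution $z\mapsto D_0/z$, so $h=z+D_0/z$ is central. Your alternative, checking that three generators commute, is not carried out.

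\emph{Integrality.} Even granting commutativity, you do not show $R$ is a domain. ``$A$ is prime'' is asserted without support, and embedding $R$ in a product of fields would only give reducedness. The paper proves the domain property separately. In type $D$ it embeds $R$ in a rank-one corner of $\operatorname{Mat}_2(L[w^{\pm1}])$ after localising over $\kk(h)$. In types $E$ it builds an integral filtered basis from Rains's reduction rules and identifies $\operatorname{gr}H$ with a matrix-valued graded algebra whose centre is a domain; this works in every characteristic.

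\emph{Step 5.} This is looser than needed. Normality comes from homological homogeneity together with Brown--Hajarnavis: every simple $V$ has $\operatorname{pd}V=2$, because $\operatorname{Ext}^2_A(V,A)\ne0$ by \eqref{eq:star-finite-duality} and $\operatorname{gl.dim}A\le2$. Gorensteinness comes from Pressland's theorem. The identity $\operatorname{Hom}_R(A,R)\cong A$ does not follow directly from \Cref{lem:star-cy}. Using reflexivity of $Ae$ to deduce normality of $R$ is circular.

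\emph{Step 1.} Your reproof of Dynkin finiteness fails as stated. At a branch vertex the top-order part of the vertex relation is a product $u_1u_2\cdots$, not the additive preprojective relation. The filtration by powers of the arrow ideal $J$ is not separated when some $q_i\ne1$, since then $e_i\in J^n$ for all $n$. Citing \cite[Theorem~3.1.1]{ShawThesis}, as the paper does, suffices there.
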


\begin{proof}
For every vertex sum used below, deleting its vertices leaves finite
Dynkin diagrams. Thus \cite[Theorem~3.1.1]{ShawThesis} gives the
finite-dimensional quotient required by \Cref{lem:finite-quotient-corner}.

We prove that $R$ is a commutative domain, beginning with
$Q=\widetilde D_n$. Put $m=n-4$, label the chain $v_0,\ldots,v_m$
and the two leaves at each end $a,b$ and $c,d$, with $e=e_a$.
We may orient the leaf arrows into the chain and its arrows
$f_j:v_j\to v_{j-1}$, ordered $a,b,f_1,\ldots,f_m,c,d$, by
\cite[Theorem~1.4]{CBShaw}. Write
\[
 \alpha=q_a^{-1},\quad\beta=q_b^{-1},\quad
 \gamma=q_c^{-1},\quad\eta=q_d^{-1},\quad
 \ell=q_{v_0},\quad\lambda=q_{v_m},\quad
 s=\prod_{i=1}^{m-1}q_{v_i},\quad
 t_j=\prod_{i=1}^{j-1}q_{v_i}.
\]
For $m=0$ take $s=\lambda=1$ and $\ell$ to be the central parameter.
The classical condition is $(\ell s\lambda)^2=\alpha\beta\gamma\eta$.
Set
\[
 D_0=\frac{\ell^2}{\alpha\beta},\qquad
 c_L=\frac{\ell(1+\beta)}{\beta},\qquad
 \nu=\frac{\gamma(1+\eta)}{s\lambda},\qquad
 \sigma(z)=D_0/z.
\]
Let $f=\sum e_{v_i}$. The corner $fAf$ has the chain arrows and
the four loops $U_j=1+jj^*$, satisfying
$U_j^2=(1+q_j^{-1})U_j-q_j^{-1}$ and the vertex product relations.
This presentation is exact: every module for it extends across a
leaf using $\operatorname{im}(U_j-1)$ as its space, inclusion as
its incoming map and $U_j-1$ as its reverse map. Applying this
construction to the regular module excludes further relations.
Set $z_{v_0}=\ell(U_aU_b)^{-1}$ and
$z_{v_j}=t_j^{-1}(1+f_j^*f_j)$ for $j\geq1$.
Eliminating $U_b,U_d$ gives a copy of $\kk[z^{\pm1}]$ at each
chain vertex, intertwined by its arrows, with
\[
 f_jf_j^*=t_jz_{v_{j-1}}-e_{v_{j-1}},\qquad
 f_j^*f_j=t_jz_{v_j}-e_{v_j}.
\]
The remaining loops $U=U_a$, $V=U_c$ satisfy
\[
\begin{aligned}
 U^2&=(1+\alpha)U-\alpha,&
 Uz&=\sigma(z)U+c_L-(1+\alpha)D_0/z,\\
 V^2&=(1+\gamma)V-\gamma,&
 Vz&=\sigma(z)V+(1+\gamma)z-\nu.
\end{aligned}
\]
These relations express each element uniquely as a finite sum of
paths containing none of $U^2,V^2,f_jf_j^*,f_j^*f_j$, with Laurent
coefficients on the left. To check the overlaps, write an endpoint
rule as $Wr=\sigma(r)W+\partial(r)$ and $W^2=aW-b$.
Here
\[
 \partial(r)=w(r-\sigma(r)),\qquad
 w=\begin{cases}
 \displaystyle\frac{c_Lz-(1+\alpha)D_0}{z^2-D_0},&W=U,\\[6pt]
 \displaystyle\frac{(1+\gamma)z^2-\nu z}{z^2-D_0},&W=V.
 \end{cases}
\]
These formulas are used in $\kk(z)$; $\partial$ preserves
$\kk[z^{\pm1}]$. In either case, $w+\sigma(w)=a$, whence
$\partial\sigma+\sigma\partial=a(\sigma-1)$ and
$\partial^2=a\partial$, resolving $W^2r$; $W^3$ follows from
the scalar quadratic relation. Both reductions of $f_jf_j^*f_j$
give $(t_jz_{v_{j-1}}-e_{v_{j-1}})f_j$; likewise for $f_j^*f_jf_j^*$.
Together with coefficient multiplication these are all overlaps.
The reductions terminate: the quadratic relations shorten paths,
and the remaining rules move Laurent coefficients to the left or
multiply adjacent coefficients.

It follows that $h=z+D_0/z$ is central in $fAf$, which is free over
$\kk[h]$. By \Cref{lem:finite-quotient-corner}, $h$ extends to $Z(A)$.
If $0\ne p\in\kk[h]$ and $p(h)x=0$, then $fAxAf=0$ by this freeness. The vanishing of
maps from $A/AfA$-modules to $A$, on both sides, gives $x=0$.
Thus $A$ embeds in its localisation over $K=\kk(h)$.
Let $L/K$ split $z^2-hz+D_0$; its roots are distinct in every
characteristic. Each chain arrow becomes invertible since
$\operatorname{Nm}(t_jz-1)=1-t_jh+t_j^2D_0\ne0$.
After collapsing the chain, the two spectral idempotents of $z$
give off-diagonal products
\[
 \rho_U=\frac{-\alpha h^2+(1+\alpha)c_Lh-c_L^2-(\alpha-1)^2D_0}
                   {h^2-4D_0},\qquad
 \rho_V=\frac{-\gamma h^2+(1+\gamma)\nu h-\nu^2-(\gamma-1)^2D_0}
                   {h^2-4D_0}.
\]
Both are nonzero. One off-diagonal pair supplies matrix units and
the other a free invertible loop, so the collapsed algebra is
$\operatorname{Mat}_2(L[w^{\pm1}])$.
Put $A_a=aa^*$ and $A_b=bb^*$. The leaf relation gives
\[
 X=a^*A_ba=\frac{\alpha\beta}{\ell}h-\alpha-\beta\ne0.
\]
The maps $a$ and $X^{-1}a^*A_b$ identify the extending leaf with
the rank-one idempotent $A_aA_b/X$; $A_a=U-1$ has rank one even
when $\alpha=1$, since its off-diagonal entries are nonzero.
Thus $R$ embeds in a rank-one corner of this matrix algebra and
is a commutative domain.

For types $E$, first work over
$S=\ZZ[q_i^{\pm1}]/(\boldsymbol q^\delta-1)$, a Laurent polynomial
ring since $\delta_e=1$. The tree dg algebra of
\Cref{def:plumbing-dga} is a bounded-above complex of free
$S$-modules. Its residue-field fibres have cohomology only in
degree zero by \Cref{cor:affine-formality}. The residue-field
criterion for flat dimension \cite[\S4, (4.1)]{CIMRigidity}
therefore makes its degree-zero cohomology $S$-flat, with all
other cohomology zero. By \Cref{lem:factors-invertible} this is
the universal multiplicative preprojective algebra. Hence its
extending corner is $S$-flat, and generic commutativity implies
commutativity in every fibre.

To apply \cite[Corollary~6.9]{EOR} at the generic point, use the
arm roots $\alpha_{ja}$ of \Cref{eq:star-central-corner} and choose
\[
 \kappa_j^{d_j}=(-1)^{d_j-1}\prod_a\alpha_{ja},\qquad
 \prod_j\kappa_j=q_0.
\]
Such choices exist after a root extension: if $N$ is the central
multiplicity, the classical condition gives
$(\prod_j\kappa_j)^N=q_0^N$, and a longest arm has $d_j=N$.
The rescaled generators $U_j/\kappa_j$ give the presentation of
\cite[\S5.1]{EOR} with quantum parameter $1$.
At generic parameters the extending tip is a
simple-root spectral idempotent on a longest arm, so its corner
is commutative by their theorem.

It remains to prove that this commutative corner is a domain,
including in positive characteristic. Let $H=e_0Ae_0$ and use the
filtration $F_{123}$ of \cite[\S6.1]{EOR}. Rains's reduction rules
\cite[\S8.1]{EOR}\footnote{E.~Rains, \emph{The Magma code for computations
in $H(t,q)$}, \url{https://math.mit.edu/~etingof/src.tar.gz},
cited as [Ra] in \cite{EOR}. We use the constructions of the reduction
rules in \texttt{alge6comp.note}, \texttt{alge7comp.note} and
\texttt{alge8comp.note}, before the subsequent coefficient-solving
computations.}
are monic over the ring generated over $\ZZ$ by the polynomial
coefficients and the inverses of their constant terms and the quantum
parameter. Repeated application of the reduction rules expresses every
element as a linear combination, over this coefficient ring, of words
to which no rule applies. These words are linearly independent over
its characteristic-zero fraction field by \cite[Theorem~6.1]{EOR},
and hence form an integral filtered basis, valid after every specialisation.
By \cite[Theorems~6.3 and~6.5]{EOR}, the graded dimensions are $l$
in degree zero and $nl^2$ in degree $n\geq1$,
where $l=3,4,6$ for $E_6,E_7,E_8$.

Over $\overline\kk$, make the same root normalisation and let
$C$ be the companion matrix of the
degree-zero polynomial in that presentation, with $Ce_i=e_{i+1}$
for $i<l$, and put $P=E_{11}$. We identify $\operatorname{gr}H$
with the following graded algebra \cite[Theorem~6.5]{EOR}:
\[
 B=\bigoplus_{n\geq0}B_nt^n,\qquad
 B_n=\{F(x)\in\operatorname{Mat}_l(\overline\kk[x]):
       \deg F\leq n,\ F(-1)=CF(0)C^{-1}\}.
\]
Here is a characteristic-free verification. The degree-zero and
degree-one generators map to $C$ and
$f_P=P+x(P-CPC^{-1})$. Cyclicity of the first row and column of
$C$ makes $C^iPC^j$ span all matrices, so the images generate
every $f_M=M+x(M-CMC^{-1})$. Put
$W=\operatorname{im}(1-\operatorname{Ad}_C)$ and $E=P-CPC^{-1}$.
Since $PCP=PC^2P=0$, we have $ECE=-CP$. As $W$ is stable under
multiplication by $\overline\kk[C]$ on both sides, products of
two, and hence of any $n\geq2$, elements of $W$ span all matrices.
Induction on the polynomial degree generates every $B_n$, using
$f_I=I\in B_1$ to raise graded degree.
The defining leading relations hold integrally, since they are
Laurent polynomial identities valid over the characteristic-zero
fraction field. Thus there is a surjection
$\operatorname{gr}H\to B$, and the matching graded dimensions
make it an isomorphism. The generator rescalings use only roots
of $-1$, which exist over $\overline\kk$ in every characteristic.
Inside $\operatorname{Mat}_l(\overline\kk(x))[t]$, the $f_M$ span
all matrices over $\overline\kk(x)$, because
$\det(1+x(1-\operatorname{Ad}_C))$ has constant term $1$.
Hence central homogeneous elements are scalar and $Z(B)$ is a
domain. The inclusion
$\operatorname{gr}Z(H)\subset Z(\operatorname{gr}H)$ proves that
$Z(H)$ is a domain. \Cref{lem:finite-quotient-corner} identifies
$R=Z(A)=Z(H)$; descent proves the claim over $\kk$.

By \Cref{lem:finite-quotient-corner}, $R=Z(A)$,
$A\cong\operatorname{End}_R(Ae)$, and $Ae,eA$ are finite over $R$.
Hence $AeA$ and $A/AeA$, and therefore $A$, are finite over $R$.
The Artin-Tate lemma makes $R$ affine, so both rings are noetherian.

Every simple $A$-module $V$ is finite-dimensional.
By \Cref{lem:star-cy}, $\operatorname{Ext}^2_A(V,A)\ne0$ and
$\operatorname{gl.dim}A\le2$, so $\operatorname{pd}_A V=2$.
By \cite[Theorems~2.5, 5.3 and~6.1]{BrownHajarnavis},
$R$ is a normal surface and $A$ is maximal Cohen-Macaulay over $R$.
By \cite[Theorem~4.1(i)]{PresslandInternal}, $R$ is Gorenstein.
The direct summand $Ae$ of $A$ is therefore
reflexive, so $A\cong\operatorname{End}_R(Ae)$ is an NCCR.
\end{proof}

For the following remark and \Cref{lem:shaw-du-val}, take
$\boldsymbol q=\boldsymbol1$.

\begin{remark}
For the four stars, the generalised DAHA $H=e_0Ae_0$ at the unipotent
point also has centre $R$: it is the endomorphism ring of the nonzero
reflexive $R$-module $e_0Ae$, so normality gives $Z(H)=R$.
However, $e_0$ is not full, since it annihilates every arm vertex-simple
module.
\end{remark}

\begin{lemma}\label{lem:shaw-du-val}
Over any field, the origin of $S_Q$ is a du Val singularity of
geometric type $D_n$, $E_6$, $E_7$ or $E_8$, respectively.
\end{lemma}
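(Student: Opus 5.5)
We show that the origin of $S_Q=\operatorname{Spec}\kk[x,y,z]/(f_Q)$ is a rational double point by exhibiting, after an analytic (formal) coordinate change over $\overline\kk$, one of the standard normal forms of $D_n$, $E_6$, $E_7$, $E_8$. By \Cref{lem:shaw-surface} the origin is an isolated singularity of a normal hypersurface, so it suffices to work in $\overline\kk[[x,y,z]]$ and identify the completed local ring. The key observation is that $f_Q$ is a sum of its du Val initial form, quasi-homogeneous for the weights in the table (e.g. $z^2+y^3+x^5$ with weights $(6,10,15)$ for $\widetilde E_8$), and the term $xyz$, which has weight exceeding the quasi-homogeneous degree by one. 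So $f_Q$ is a semi-quasihomogeneous perturbation of an isolated quasihomogeneous singularity, and the singularity type should be that of the initial form.

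First I treat types $E$. The initial forms $g_Q=z^2+x^2z+y^3$, $z^2+y^3+x^3y$ and $z^2+y^3+x^5$ are the standard $E_6,E_7,E_8$ normal forms in characteristic $0$ (after completing the square in $z$ for $E_6$, giving $z^2+y^3-x^4/4$). In characteristic zero the finite determinacy theorem for semi-quasihomogeneous functions, i.e. that the Milnor algebra of $g_Q$ has no monomial basis elements of weight above the degree, shows that adding the higher-weight term $xyz$ does not change the analytic type. Since the statement is for any field, I would instead use a characteristic-free route: resolve by explicit weighted blow-ups (or the usual sequence of point blow-ups), checking that the $xyz$ term always appears with higher weight and never affects the leading charts. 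The exceptional configuration is then a tree of $(-2)$-curves of the expected Dynkin type, and Artin's classification of rational double points (valid in all characteristics, where Artin's normal forms $E_r^0$ and so on appear in characteristics $2,3,5$) identifies the geometric type by its dual graph. Because the geometric type is defined by the dual graph of the minimal resolution, I only need the resolution graph, not a normal form.

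For type $D$, with $m=n-4$, I would use the decomposition already found in the proof of \Cref{lem:shaw-surface}. In characteristic $\ne2$ we have $f_Q=-w^2+\tfrac x4E$, and near the origin $x+4$ is a unit, so $E$ is a unit times $y'^2$ plus a term with discriminant $16(-1)^mx^{m+2}$. Completing the square in $y$ gives $-w^2+u\,x(y'^2\pm c\,x^{m+1})$ for a unit $u$ and a nonzero constant $c$, which is the $D_{m+4}=D_n$ form $w^2=x y'^2+x^{n-1}$ up to units absorbed by rescaling over $\overline\kk$. In characteristic $2$ I would again compute the resolution graph directly, using the Cassini identity \eqref{eq:shaw-cassini}.

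The main obstacle is the small characteristics, namely $2,3,5$ for $E_8$, $2,3$ for $E_6$ and $E_7$, and $2$ for $D$. There the normal form arguments fail and I would carry out the blow-up computation explicitly, verifying at each step that the strict transform has only the expected singular points, until the $(-2)$-tree is exhibited. Rationality, and hence du Val, then follows because the singularity is Gorenstein by \Cref{lem:shaw-surface} and its minimal resolution has only $(-2)$-curves forming an ADE tree, so it is crepant.
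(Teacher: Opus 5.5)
\textbf{Verdict: genuine gap.} The approach works only where the normal-form arguments already work; in characteristics $2,3,5$ your proposal stops at a plan, and the heuristic it rests on is false there.

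\textbf{Cases you do cover.} In type $D$ with characteristic $\ne2$, and in type $E$ in characteristic zero, you follow the paper. There is one slip in type $D$. Completing the square in $y$ gives
$E=(x+4)\bigl(y'^2+c\,x^{m+2}\bigr)$ with $c=4(-1)^{m+1}/(x+4)^2$ a unit. Hence $\frac x4E=\frac{x(x+4)}4\bigl(y'^2+c\,x^{m+2}\bigr)$ and $(x+4)f_Q=-W^2+xV^2+(-1)^{m+1}x^{n-1}$. With your exponent $x^{m+1}$ you would get $x^{n-2}$, which is type $D_{n-1}$; your final form is nevertheless the correct one. The determinacy argument is also not confined to characteristic zero. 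The paper runs it with the weighted Tjurina algebra of $g_Q$, whose top weights are $10<12$, $16<18$ and $28<30$. This works in every characteristic except $2,3$ for $E_6,E_7$, and $2,3,5$ for $E_8$.

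\textbf{The gap.} The remaining characteristics are exactly where the content of ``over any field'' lies: $2$ for type $D$, $2,3$ for $E_6,E_7$, and $2,3,5$ for $E_8$. For these you only announce that blow-ups will produce the expected $(-2)$-tree, on the grounds that the $xyz$ term has higher weight and ``never affects the leading charts''. That heuristic fails in these characteristics, for two reasons.
\begin{enumerate}
\item The $xyz$ term changes the analytic type. After swapping $x,y$, the initial forms are Artin's $E_r^0$. By contrast, $f_Q$ is $E_6^1,E_7^3,E_8^4$ in characteristic $2$, $E_6^1,E_7^1,E_8^2$ in characteristic $3$, and $E_8^1$ in characteristic $5$.
\item When $p$ divides a weight, the charts of the weighted blow-up are quotients by the non-reduced group scheme $\mu_w$. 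The strict transform is then not controlled by the initial form. For example, take $\widetilde E_8$ in characteristic $3$ and the chart $z\ne0$. On the $\mu_{15}$-cover the strict transform is $1+y'^3+x'^5+tx'y'$. It is singular at $(x',y',t)=(0,-1,0)$, and its quadratic part $-tx'$ there comes entirely from $xyz$; without that term it would be singular along a whole curve.
\end{enumerate}
So an honest case-by-case resolution would be needed, and none is supplied. The same applies to type $D$ in characteristic $2$, where you only say you would use the Cassini identity.

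\textbf{How the paper closes these cases.} It makes explicit formal coordinate changes into Artin's normal forms.
\begin{itemize}
\item Type $D$, characteristic $2$: substituting $y=Y+p_m$, $z=Z+xp_{m-1}$ and using the Cassini identity gives $Z^2+xYZ+xY^2+x^{n-2}$. Then $Z=W+x^{r-1}$ (for $n=2r$) or $Y=V+x^{r-1}$ (for $n=2r+1$) lands on Artin's list.
\item Type $E$, characteristic $2$: the three equations are already Artin's forms after swapping $x,y$.
\item $E_7$ and $E_8$, characteristic $3$: completing the square gives Artin's forms.
\item $E_6$ in characteristic $3$ and $E_8$ in characteristic $5$: each needs one further rational substitution.
\end{itemize}
Your closing step, that a Gorenstein singularity whose minimal resolution is an ADE tree of $(-2)$-curves is a rational double point, is correct once such a computation is in place.
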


\begin{proof}
Work over $\overline\kk$. In type $D$, suppose first that
$\operatorname{char}\kk\ne2$ and put $m=n-4$. In the completed local
ring at the origin, set
\[
 W=\sqrt{x+4}\left(z-\frac{x(y-p_m)}2\right),\qquad
 V=\frac{(x+4)y-x(p_m+2p_{m-1})}{2}.
\]
The square root is the formal power series with constant term $2$.
By \Cref{eq:shaw-cassini},
\[
 (x+4)f_Q=-W^2+xV^2+(-1)^{m+1}x^{n-1}.
\]
Since $x+4$ is a unit and the change of coordinates is invertible,
this is the du Val form of type $D_n$.

In types $E$, first suppose that the characteristic is different
from $2,3$ for $E_6,E_7$, and from $2,3,5$ for $E_8$.
The weighted initial form $g_Q$ is obtained by omitting $xyz$.
These are the corresponding du Val forms; for $E_6$, completing
the square gives $z'^2-x^4/4+y^3$.
The weighted Tjurina algebra
\[
 \kk[x,y,z]/(g_Q,\partial_xg_Q,\partial_yg_Q,\partial_zg_Q)
\]
has top nonzero weight $10<12$, $16<18$, or $28<30$,
respectively, strictly below the weight of $g_Q$.
At each successive weight, the error lies in the ideal generated
by $g_Q$ and its partial derivatives. Multiplication by a unit and a
coordinate change remove that weighted piece; the corrections have
strictly higher weight than the terms they modify, so iteration
converges in the completed local ring. Hence the origin has the asserted
du Val type.

In the remaining characteristics, we compare with Artin's normal
forms, listed in \cite[Section~3, pp.~15--17]{ArtinRDP}.
For type $D$ in characteristic two, put $m=n-4$ and substitute
$y=Y+p_m$, $z=Z+xp_{m-1}$. \Cref{eq:shaw-cassini} gives
\[
 f_Q=Z^2+xYZ+xY^2+x^{n-2}.
\]
For $n=2r$, set $Z=W+x^{r-1}$; for $n=2r+1$, set
$Y=V+x^{r-1}$. The resulting equations are the listed $D_n$
forms after interchanging the first two coordinates.
In characteristic two, the three $E$ equations are already the
listed forms after interchanging $x,y$. In characteristic three,
completing the square and interchanging $x,y$ gives the listed
$E_7$ and $E_8$ forms, up to rescaling coordinates by nonzero constants.

For $E_6$ in characteristic three, completing the square and then
setting $x=X+y$ gives
\[
 z'^2+y^3(1-y)-X^4-X^2y^2.
\]
The substitution
\[
 (y,X,z')=\left(\frac{u}{1+u},\frac{v}{1+u},
                    \frac{w}{(1+u)^2}\right)
\]
turns this, up to a unit, into $w^2+u^3-v^4-u^2v^2$.
Rescaling $u,w$ gives the listed $E_6$ form.
For $E_8$ in characteristic five, completing the square and setting
$y=U+3x^2$ gives
\[
 z'^2+U^3+x^5(1+x)+3Ux^4.
\]
The substitution
\[
 (x,U,z')=\left(\frac{v}{1-v},\frac{u}{(1-v)^2},
                    \frac{w}{(1-v)^3}\right)
\]
gives $w^2+u^3+v^5+3uv^4$ up to a unit. Rescaling coordinates
gives the listed $E_8$ form. All these changes are invertible in
the completed local ring, so the asserted geometric types follow.
\end{proof}

The following type $A$ model supplies the local charts for types
$III$ and $IV$. For $n=2,3$, give $S=\kk[r,s]$ the
$\ZZ/n$-grading with $\deg r=1$, $\deg s=-1$, and write $S_a$
for its graded pieces. Label the arrows of the doubled cycle by
$a_i:i\to i+1$ and $a_i^*:i+1\to i$, with indices in $\ZZ/n$.
The identification
\[
 \Pi(\widetilde A_{n-1})\cong
 \{(m_{ji})\in\operatorname{Mat}_n(S):m_{ji}\in S_{j-i}\}
\]
sends
\[
 e_i\longmapsto E_{ii},\qquad
 a_i\longmapsto rE_{i+1,i},\qquad
 a_i^*\longmapsto sE_{i,i+1}.
\]
Its centre is $S_0=\kk[r^n,s^n,rs]$, with
$Y=r^n$, $Z=s^n$ and $t=rs$.
After extending to an algebraic closure and completing at the origin,
this is the quotient construction for the group scheme $\mu_n$ acting
with weights $(1,-1)$ in \cite[Example~1.1]{VdBNCCR} in characteristic
zero and \cite[Theorems~1.5 and~1.7]{LiedtkeYasuda} in positive
characteristic. Away from the origin, the algebra is locally a matrix
algebra. The assertions descend to $\kk$ by faithful flatness.

\begin{lemma}\label{lem:cyclic-additive-order}
Over any field, the ordinary preprojective algebra
$\Pi(\widetilde A_{n-1})$, for $n=2,3$, is a noncommutative crepant
resolution of
\[
 R_n=\kk[Y,Z,t]/(YZ-t^n).
\]
For each vertex idempotent $e$, restriction and left multiplication give
isomorphisms
\[
\begin{gathered}
 Z\bigl(\Pi(\widetilde A_{n-1})\bigr)
 \xrightarrow{\sim}e\Pi(\widetilde A_{n-1})e\cong R_n,\\
 \Pi(\widetilde A_{n-1})\xrightarrow{\sim}
 \operatorname{End}_{R_n}\bigl(\Pi(\widetilde A_{n-1})e\bigr).
\end{gathered}
\]
\end{lemma}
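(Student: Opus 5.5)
The plan is to reduce every assertion to the explicit matrix model displayed just before the statement and to the invariant theory of $\mu_n$ acting on $S=\kk[r,s]$ with weights $(1,-1)$. Write $\Pi=\Pi(\widetilde A_{n-1})$ and $\Gamma=\{(m_{ji})\in\operatorname{Mat}_n(S):m_{ji}\in S_{j-i}\}$. First I will check that the assignment $e_i\mapsto E_{ii}$, $a_i\mapsto rE_{i+1,i}$, $a_i^*\mapsto sE_{i,i+1}$ is an isomorphism $\Pi\cong\Gamma$. The preprojective relation at vertex $i$ maps to $rs\,E_{ii}-sr\,E_{ii}=0$, so the map is well defined. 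For bijectivity I would argue exactly as in \Cref{prop:cycle-algebra}, replacing $z$ by the additive variable. The corner $e_j\Pi e_i$ has the basis of paths $U_p t^k e_i$ with $p\equiv j-i$ and $k\ge0$, where $U_p=x^p$ or $y^{-p}$ and $t=yx$. This is the standard normal form for the generalised Weyl algebra presentation $yx=t$, $xy=t$ with trivial twist. These basis elements map bijectively to the monomials $r^as^bE_{ji}$ with $a-b\equiv j-i\pmod n$.

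Next I will compute the centre and the corners. A central element of $\Gamma$ commutes with every $E_{ii}$, so it is diagonal. Commuting with $rE_{i+1,i}$ and $sE_{i,i+1}$ then forces all diagonal entries to be equal, since $S$ is a domain. Hence $Z(\Gamma)=S_0$. This ring is generated by $Y=r^n$, $Z=s^n$ and $t=rs$, subject to the single relation $YZ=t^n$. To see that there are no further relations, compare monomial bases: every monomial $r^as^b$ with $a\equiv b\pmod n$ is uniquely of the form $t^{\min(a,b)}Y^{c}$ or $t^{\min(a,b)}Z^{c}$. Thus $S_0\cong R_n$. The corner $E_{ii}\Gamma E_{ii}$ is also $S_0E_{ii}$, and the restriction map $z\mapsto ez$ is the identity on $S_0$, which gives the first displayed isomorphism.

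Then I will identify $\Pi e_i$ with the column $\bigoplus_j S_{j-i}$, a direct sum of all $n$ graded pieces $S_a$. Each $S_a$ is a rank-one reflexive $S_0$-module: it is the module of covariants, and it is isomorphic to a height-one ideal of the normal domain $R_n$, for example $(Y,t^a)$. Using $\operatorname{Hom}_{S_0}(S_a,S_b)=S_{b-a}$, the left-multiplication map $\Gamma\to\operatorname{End}_{R_n}(\bigoplus_a S_a)$ is an isomorphism. This is the step I expect to be the main obstacle in arbitrary characteristic, including $\operatorname{char}\kk\mid n$ where $\mu_n$ is not reduced. Over $\kk$, I would prove $\operatorname{Hom}_{S_0}(S_a,S_b)=S_{b-a}$ directly. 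Both sides are reflexive, the map is injective, and it is an isomorphism after inverting $Y$, where $S_a=S_0[Y^{-1}]\cdot r^a$ and everything becomes free of rank one. It is likewise an isomorphism after inverting $Z$. Since $(Y,Z)$ has height two in the normal surface $R_n$, the two sides agree by reflexivity. This avoids the cited group-scheme results, although I could instead quote \cite[Theorems~1.5 and~1.7]{LiedtkeYasuda} as the text suggests.

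Finally, I will verify the NCCR conditions. The ring $R_n$ is the $A_{n-1}$ surface singularity, which is normal and Gorenstein as an integral hypersurface with an isolated singularity. The algebra $\Gamma\cong\bigoplus_{a,b}S_{b-a}$ is a direct sum of the rank-one modules $S_c$. Each $S_c$ is maximal Cohen–Macaulay because it is a direct summand of the $S_0$-module $S=\kk[r,s]$, which is Cohen–Macaulay of dimension two and finite over $S_0$; the summand is the weight component. For finite global dimension, I would use that $\Pi(\widetilde A_{n-1})$ over any field has the standard length-two Koszul bimodule resolution, since the quiver is non-Dynkin, so its global dimension is $2$. Alternatively, \Cref{lem:star-cy}-type reasoning applies, or one can cite \cite[Corollary~3.20]{KS} for cycles.
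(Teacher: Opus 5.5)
Your proof is correct, but it reaches the NCCR and endomorphism assertions by a different route from the paper.

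The paper records the matrix model and its centre $S_0=\kk[r^n,s^n,rs]$, then argues by citation. After extending to $\overline{\kk}$ and completing at the origin, the algebra is the quotient construction for the group scheme $\mu_n$ acting with weights $(1,-1)$. The assertions then come from \cite[Example~1.1]{VdBNCCR} in characteristic zero and \cite[Theorems~1.5 and~1.7]{LiedtkeYasuda} in positive characteristic. Away from the origin the algebra is locally a matrix algebra, and faithfully flat descent returns to $\kk$.

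You instead prove the matrix model and the centre by hand, and replace the group-scheme input with a characteristic-free reflexivity argument:
\begin{itemize}
\item each $S_a$ is a graded summand of $\kk[r,s]$, hence maximal Cohen--Macaulay and reflexive over $S_0$;
\item the injection $S_{b-a}\to\operatorname{Hom}_{S_0}(S_a,S_b)$ is an isomorphism on $D(Y)\cup D(Z)$, whose complement is the height-two origin;
\item so the entrywise identification of $\Gamma$ with $\operatorname{End}_{S_0}(\Gamma e_i)$ holds.
\end{itemize}
This gives a self-contained proof that works over $\kk$ directly, with no completion, base change or descent. It also treats the non-reduced case $\operatorname{char}\kk\mid n$ on the same footing as the others. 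The paper's route is shorter and places the lemma inside the general theory of quotient singularities by linearly reductive group schemes.

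Two small corrections, neither of which affects the argument.
\begin{itemize}
\item The ideal isomorphic to $S_a$ is $(Z,t^a)\cong(Y,t^{n-a})$, obtained by multiplying by $s^a$ or $r^{n-a}$, not $(Y,t^a)$. This is harmless since every $S_a$ occurs.
\item Your fallback references for global dimension do not apply directly: \cite[Corollary~3.20]{KS} and \Cref{lem:star-cy} concern multiplicative preprojective algebras. A clean characteristic-free justification is that $\Gamma$-modules are the same as $\ZZ/n$-graded $\kk[r,s]$-modules, with $\Gamma e_i$ corresponding to a grading shift of $\kk[r,s]$. The graded Koszul resolution then gives global dimension two.
\end{itemize}
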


For $n=2,3$, we also use the doubled cycle with a central parameter $t$
and relations
\[
 a_j^*a_j=(t-\lambda_j)e_j,\qquad
 a_ja_j^*=(t-\lambda_j)e_{j+1},
\]
where $\lambda_1,\ldots,\lambda_n\in\kk$ and indices are read cyclically.
Cancelling adjacent reversed arrows gives a $\kk[t]$-basis of
idempotents and paths following either orientation: the overlapping
reductions on paths of length three agree. Thus each vertex
corner, and the centre, is
\[
 \kk[t,Y,Z]/\Bigl(YZ-\prod_{j=1}^n(t-\lambda_j)\Bigr),
\]
where $Y,Z$ are the two oriented cycles; the algebra is finite over
this ring. It is again an NCCR. Indeed, near $t=c$, invert the
factors with $\lambda_j\ne c$ and contract the corresponding
invertible arrow pairs, retaining the chosen vertex. If $c$ occurs
$m$ times, the resulting full corner is a localisation of
$\Pi(\widetilde A_{m-1})$
with parameter $t-c$; for $m=1$ this is $\kk[Y,Z]$.
The deleted vertices give repeated vertex-projective summands, so
\Cref{lem:cyclic-additive-order} also gives reflexivity and the
endomorphism presentation for the original algebra. Away from all
roots, it is a matrix algebra over a smooth ring. These local
descriptions give global dimension at most two and the NCCR
assertion, and show that a root of
multiplicity $m\ge2$ gives a split $A_{m-1}$ singularity.

\begin{proposition}\label{prop:nontransverse-centres}
For $F=II,III,IV$ and classical parameters $\boldsymbol q$, let
$A=A_F(\boldsymbol q)$ and let $R_F(\boldsymbol q)$ be the coordinate
ring of the corresponding cubic following \Cref{cor:unipotent-mirror}.
Over any field, this ring is normal and Gorenstein, and the
Satake and left-multiplication maps give
\[
 Z(A)\xrightarrow{\sim}e_1Ae_1=R_F(\boldsymbol q),\qquad
 A\cong\operatorname{End}_{R_F(\boldsymbol q)}(Ae_1).
\]
The algebra $A$ is a noncommutative crepant resolution. Its centre has precisely
the singularities described in the introduction. For type $II$,
$A=R_{II}$ is smooth.
\end{proposition}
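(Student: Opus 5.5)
I would handle the three types separately but along one template. First compute the corner $e_1Ae_1$ directly from the presentations \eqref{eq:cusp-algebra}, \eqref{eq:III-algebra}, \eqref{eq:IV-algebra}, imposing the classical condition throughout. The aim is to show that $e_1Ae_1$ is generated by three elements $x,y,z$ satisfying the cubic of \Cref{cor:unipotent-mirror}, and by nothing else.

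\textbf{Type $II$.} One vertex is all there is. Classicality forces $q=1$, and the two relations then become $1+x+z+zyx=0=1+x+z+xyz$. I would show directly that $x,y,z$ commute. From $zyx=xyz$ together with the relations, $x$ and $z$ are expressible through $y$-conjugates; at $q=1$ this algebra is exactly the commutative ring $\kk[x,y,z]/(xyz+x+z+1)$, as cited from \cite{ELduality}. Smoothness is a Jacobian check in every characteristic: at a singular point $yz=-1$, $xy=-1$ and $xz=0$, which is impossible. So $A=R_{II}$, and all the assertions are trivial.

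\textbf{Types $III$ and $IV$.} Here I would realise $A$ as a localisation, or a Zariski-local form, of the doubled cycle algebras with central parameter $t$ treated just before the statement. The plan has three steps.

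\begin{enumerate}
\item Change generators to exhibit a central element $t$ and arrow pairs satisfying $a_j^*a_j=(t-\lambda_j)e_j$. For $III$, take $u=a+c$ (or a suitable combination) paired with $b,d$. The values $\lambda_j$ are $1$ and $q$ for $III$, and $1,\alpha,\beta$ for $IV$. I expect this from the structure of the relations, which after substituting $B$ and $C$ become ``vertex product minus $q_i$'' forms.
\item Once $A$ is identified with the cycle algebra (possibly with $Y$ or $Z$ inverted, matching $z^{\pm1}$-type features), read off the corner $\kk[t,Y,Z]/(YZ-\prod(t-\lambda_j))$. An affine coordinate change matches this with the displayed cubics $xyz+\dots$. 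For instance, in type $III$ set $x=Y-1$, etc., so that $xyz+x+y-qz+1+q$ factors as $(\text{linear})(\text{linear})-(t-1)(t-q)$.
\item Quote the NCCR, reflexivity and endomorphism assertions from the paragraph preceding the proposition. These include the local analysis through \Cref{lem:cyclic-additive-order}, and they give $Z(A)\cong e_1Ae_1$ and $A\cong\operatorname{End}(Ae_1)$. The singularities then come out as stated: a root of multiplicity $m\ge2$ gives a split $A_{m-1}$ singularity, which yields $A_1$ for $III$ at $q=1$, and $A_1$ or $A_2$ for $IV$ according to how many of $1,\alpha,\beta$ coincide. Note that $\alpha=1$ iff $q_1=1$, $\beta=1$ iff $q_3=1$, and $\alpha=\beta$ iff $q_2=1$. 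Normality and the Gorenstein property follow because the ring is an integral hypersurface with isolated singularities, as in \Cref{lem:shaw-surface}.
\end{enumerate}

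\textbf{Main obstacle.} The hard part is step 1: finding the explicit isomorphism between the rainbow-closure algebras and cycle algebras, valid over any field and at all classical $\boldsymbol q$. If no global isomorphism exists (the cubic need not be globally of the form $YZ=\prod$), I would fall back on three checks:
\begin{itemize}
\item verify Gr\"obner/diamond-lemma reductions for \eqref{eq:III-algebra} and \eqref{eq:IV-algebra} directly, giving a basis over $e_1Ae_1$ and finiteness;
\item compute $e_1Ae_1$ by hand;
\item check the NCCR property locally: away from the singular point $(-1,-1,1)$ (or its deformations) show that $A$ is Azumaya, and near it identify the completion with the cycle model.
\end{itemize}
Throughout, the global dimension bound would come from \Cref{thm:main}'s formality and from the $2$-Calabi-Yau duality argument of \Cref{lem:star-cy}, adapted to the rainbow dg model.
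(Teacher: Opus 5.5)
\paragraph{Type $II$, and the failure of the main plan for $III$ and $IV$.}
Your type $II$ argument is the paper's: commutativity at $q=1$ is taken from the presentation, and the Jacobian check is the same. Your root values ($1,q$ and $1,\alpha,\beta$) and the resulting singularity count are also right. For $III$ and $IV$, however, the main plan fails. $A$ is not globally a doubled cycle algebra, and $R_F(\boldsymbol q)$ is not isomorphic to $\kk[t,Y,Z]/(YZ-\prod_j(t-\lambda_j))$. In particular, no coordinate change turns $xyz+x+y-qz+1+q$ into (linear)(linear)$-(t-1)(t-q)$.

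To see this for $III$, put $t=-x$. For $x\neq0$,
\[
xf=(xy-q)(xz+1)+(x+1)(x+q),
\]
so the fibres are conics degenerating exactly at $t=1,q$, as you expect. Over $x=0$, though, the fibre is the line $y-qz+1+q=0$. Over $\C$ with $q\neq1$ this gives Euler characteristic $3$, against $2$ for $YZ=(t-1)(t-q)$. The cycle model therefore only holds on $t\neq0$, that is, after inverting $1+u$, and this is how the paper uses it:
\begin{itemize}
\item It first shows that $u=ab+ba$ is central, using explicit identities expressing $q(abc-cba)$ and $dab-bad$ through $r_1,r_2$.
\item It then replaces $c$ by $c'=-a-(1+u)c$. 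The inverse substitution needs $(1+u)^{-1}$, and no scalar combination such as $a+c$ does this.
\item For $IV$, the central element is $L=L_1+L_2+L_3$, centrality uses $\alpha(d'-d'')=d'r_2+r_1d''+er_3h$, and the cycle chart is $L\neq0$.
\end{itemize}

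\paragraph{The missing chart and the gluing.}
Put $\rho(u)=u(u+1-q)$, respectively $\rho(L)=(1-L)(\alpha-L)(\beta-L)$. On $D(\rho)$ the arrows $a$ (respectively $a,h,e$) become invertible. Then $A[\rho^{-1}]$ is $\operatorname{Mat}_2$, respectively $\operatorname{Mat}_3$, over a smooth localisation of $R$. The two opens cover because $\rho(-1)=q\neq0$, respectively $\rho(0)=\alpha\beta\neq0$. The algebra $A$ injects into the product of the two localisations, and the corner identifications and central lifts agree on the overlap.

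Only this Zariski cover yields the global statements $Z(A)\cong e_1Ae_1\cong R_F(\boldsymbol q)$ and $A\cong\operatorname{End}_R(Ae_1)$. Finiteness, reflexivity, the maximal Cohen--Macaulay property and finite global dimension are then checked chart by chart.

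Your fallback points in this direction (Azumaya away from the singular point, cycle model near it). But identifying completions at singular points is purely local. It cannot identify $e_1Ae_1$ with the cubic or prove the Satake isomorphism, and the computation of $e_1Ae_1$ that would have to supply this is left unspecified. Formality of the rainbow dg model would indeed give $\operatorname{gl.dim}A\le2$ via the induced length-two bimodule resolution, with no duality needed. The remaining NCCR assertions still require the two charts.
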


\begin{proof}
For $II$, the presentation in \Cref{subsec:nontransverse} gives
$A=R_{II}$. A singular point would satisfy $xy=yz=-1$ and $xz=0$,
which is impossible over any field.

For $III$ and $IV$, we use two localisations: one is a deformed
doubled cycle as above, and the other is a matrix algebra over a
smooth surface.

For $III$, the arrows are $a,c:1\to2$ and $b,d:2\to1$.
Put $q=q_1=q_2^{-1}$, and write $r_1,r_2$ for the two relations
in \Cref{eq:III-algebra}. The identities
\[
\begin{aligned}
 q(abc-cba)&=r_2(a+c+cba)-(a+c+abc)r_1,\\
 dab-bad&=dr_2-r_1d-d(abc-cba)d
\end{aligned}
\]
give $abc=cba$ and $dab=bad$, so $u=ba+ab$ is central.
Put $v=dc$, $w=bc$ in the corner $e_1Ae_1$, and set
$\rho(u)=u(u+1-q)$. We compute this corner and the centre
after inverting $1+u$ and $\rho(u)$.

After inverting $1+u$, replace $c$ by $c'=-a-(1+u)c$.
The relations become
\[
 ba=ue_1,\quad ab=ue_2,\qquad
 dc'=(u+1-q)e_1,\quad c'd=(u+1-q)e_2.
\]
The inverse substitution $c=-(c'+a)/(1+u)$ identifies
$A[(1+u)^{-1}]$ with the doubled-cycle algebra having paired
products $u$ and $u+1-q$, localised at $1+u$.
The two cyclic paths at vertex $1$ are
\[
 Y=da=q-1-u-(1+u)v,\qquad Z=bc'=-u-(1+u)w.
\]
The path calculation above shows that the $e_1$ corner is generated
by $u,Y,Z,(1+u)^{-1}$ with relation $YZ=\rho(u)$, and restriction
$z\mapsto e_1ze_1$ identifies the centre with this corner.
Since
\[
 YZ-\rho(u)=(1+u)\bigl(uv+(u+1-q)w+(1+u)vw\bigr),
\]
both are isomorphic to $R[(1+u)^{-1}]$, where
\begin{equation}\label{eq:III-centre-chart}
 R=\kk[u,v,w]/\bigl(uv+(u+1-q)w+(1+u)vw\bigr).
\end{equation}

After inverting $\rho(u)$, the arrow $a$ has inverse $a^{-1}=u^{-1}b$.
Put $C=e_1A[\rho(u)^{-1}]e_1$ and retain $Y=da$.
The matrix units $e_1,e_2,a,a^{-1}$ give
$A[\rho(u)^{-1}]\cong\operatorname{Mat}_2(C)$, with arrow images
\[
 a\longmapsto E_{21},\qquad b\longmapsto uE_{12},\qquad
 c\longmapsto(w/u)E_{21},\qquad d\longmapsto YE_{12}.
\]
Thus $C$ is generated over $\kk[u,\rho(u)^{-1}]$ by $Y,w$ with relations
\[
 YZ=ZY=\rho(u),\qquad Z=-u-(1+u)w.
\]
Since $Y^{-1}=\rho(u)^{-1}Z$ commutes with $w$, so does $Y$.
The inverse substitutions $v=Yw/u$ and $w=uv/Y$ give
\[
 C\cong\kk[u,\rho(u)^{-1},v,Y^{-1}]\cong R[\rho(u)^{-1}],
 \qquad Y=q-1-u-(1+u)v.
\]
For the latter isomorphism, $YZ=\rho(u)$ and $Yw=uv$ also hold in
$R[\rho(u)^{-1}]$. Hence this chart is a matrix algebra over a
smooth surface.
The change of variables $(x,y,z)=(-1-u,-1-v,1+w)$ turns
\Cref{eq:III-centre-chart} into $xyz+x+y-qz+1+q=0$.

For $IV$, put $\alpha=q_1$, $\beta=q_1q_2=q_3^{-1}$ and
$h=c+bd$. The defining relations become
\begin{equation}\label{eq:IV-centre-relations}
\begin{split}
 r_1&=ebda-eha+eb+da+(1-\alpha)e_1=0,\\
 r_2&=adfh+aeh+fh-\beta ad+(\alpha-\beta)e_2=0,\\
 r_3&=hf+be+(1-\beta)e_3=0.
\end{split}
\end{equation}
Indeed, the braid matrix in \Cref{eq:rainbow-differential} is
\[
 P=\begin{pmatrix}
 e_1&d&e+df\\
 a&e_2+ad&ae+f+adf\\
 b&h&e_3+be+hf
 \end{pmatrix}.
\]
The original relations say that $P^{-1}=Q\mathsf L\mathsf U$,
where $Q=\operatorname{diag}(q_ie_i)$ and $\mathsf L,\mathsf U$
are lower and upper unitriangular. Equivalently,
$P=\mathsf U^{-1}\mathsf L^{-1}Q^{-1}$.
Eliminating from the bottom right gives pivots $\beta e_3$ and
$(\alpha/\beta)e_2$ precisely when $r_3=r_2=0$.
The remaining pivot has inverse $(P^{-1})_{11}=\alpha e_1+r_1$,
so it is $\alpha^{-1}e_1$ precisely when $r_1=0$.

Set $L_1=e_1+eb$, $L_2=\beta e_2-fh$, $L_3=e_3+be$, and put
$d'=L_1d-eh$, $d''=dL_2-eh$. The identity
\[
 \alpha(d'-d'')=d'r_2+r_1d''+er_3h
\]
gives $d'=d''$. Put $L=L_1+L_2+L_3$. The paired products are
\[
\begin{gathered}
 d'a=(\alpha-L)e_1,\qquad ad'=(\alpha-L)e_2,\\
 hf=(\beta-L)e_3,\qquad fh=(\beta-L)e_2,\\
 e(-b)=(1-L)e_1,\qquad(-b)e=(1-L)e_3.
\end{gathered}
\]
By associativity, these equalities and $L_1d=dL_2$ show that $L$
is central. After inverting $L$, the substitution
$d=L^{-1}(d'+eh)$ identifies $A[L^{-1}]$ with the deformed doubled
triangle having oriented cycle $a,h,e$ and the displayed paired
products. Put
\[
 \rho(L)=(1-L)(\alpha-L)(\beta-L),\qquad
 p=da,\qquad v=(\beta-L)e_1-dfb.
\]
In the $e_1$ corner,
\[
 Y=eha=Lp-(\alpha-L),\qquad
 Z=-d'fb=Lv-(\beta-L),\qquad YZ=\rho(L).
\]
The doubled-cycle calculation identifies the centre
of $A[L^{-1}]$ with its $e_1$ corner, and both are isomorphic to the
$L$-localisation of
\begin{equation}\label{eq:IV-centre-chart}
 R=\kk[L,p,v]/\bigl(Lpv-(\beta-L)p-(\alpha-L)v
                         +(\alpha-L)(\beta-L)\bigr).
\end{equation}
On $D(\rho(L))$ the arrows $a,h,e$ are invertible between vertices.
Using $a$ and $e$ to choose matrix units, the original arrows have images
\[
\begin{aligned}
 a&\longmapsto E_{21},& b&\longmapsto(L-1)E_{31},&
 c&\longmapsto(p+L-\alpha)E_{32},\\
 d&\longmapsto pE_{12},& e&\longmapsto E_{13},&
 f&\longmapsto((\beta-L)/Y)E_{23},
\end{aligned}
\]
where $Y=Lp-(\alpha-L)$. Every arrow is expressed in terms of
the central element $L$ and $p$, so the corner is commutative.
These substitutions and their inverse give
\[
 A[\rho(L)^{-1}]\cong\operatorname{Mat}_3
 \kk[L,\rho(L)^{-1},p,Y^{-1}]
 \cong\operatorname{Mat}_3 R[\rho(L)^{-1}],
\]
where $v=(\beta-L)(p-\alpha+L)/Y$.
Substituting $(x,y,z)=(-L,-1-p,1+v)$ in
\Cref{eq:IV-centre-chart} gives the stated cubic.

The two localisations cover, since $\rho(-1)=q\ne0$ for $III$
and $\rho(0)=\alpha\beta\ne0$ for $IV$.
The natural map from $A$ to the product
of these two localisations is injective. The corner identifications
agree on the overlap, as do the unique central lifts of their generators.
Thus restriction induces an isomorphism
\[
 Z(A)\xrightarrow{\sim}e_1Ae_1\cong R,\qquad z\longmapsto e_1ze_1.
\]
Both charts are integral and their overlap is nonempty.
The second chart is smooth. On the first, singularities correspond
to repeated roots of $\rho$: $0,q-1$ for $III$, and
$1,\alpha,\beta$ for $IV$. Their multiplicities give exactly the
stated split $A_1$ and $A_2$ singularities. All three coordinate
rings are hypersurfaces with at most isolated singularities, hence
are normal by Serre's criterion and Gorenstein.

By the doubled-cycle calculation and the matrix presentation,
$A$ is finite over $R$ and $Ae_1$ is reflexive. Left multiplication gives
\[
 A\longrightarrow\operatorname{End}_R(Ae_1),\qquad
 a\longmapsto(m\mapsto am).
\]
This is an isomorphism on both charts, hence globally. The maximal
Cohen-Macaulay condition and finite global dimension also follow
locally, so $A$ is a noncommutative crepant resolution.
\end{proof}

\begin{proof}[Proof of \Cref{cor:unipotent-mirror}]
Let $A$ be the algebra in the statement and put $R=Z(A)$ and $M=Ae$,
where $e$ is an extending vertex idempotent for affine $D,E$ and
$e=e_1$ for $II,III,IV$. By
\Cref{prop:affine-de-nccr,prop:nontransverse-centres},
$R$ is a normal Gorenstein surface domain and
$A=\operatorname{End}_R(M)$ is an NCCR, with the same assertions
after any field extension.
Applying \cite[Corollary~3.3]{DITV} to geometric henselian local rings
shows that the geometric singularities are rational double points.
By \cite[proof of Proposition~2.6]{IyamaWemyss} and
\cite[Theorem~4.3]{GustavsenIle}, the module blowup
$\widetilde S=\operatorname{Bl}_M(\operatorname{Spec}R)$ is
geometrically the minimal resolution. Compatibility with flat base
change \cite[Corollary~2.6(ii)]{GustavsenIle} gives a projective
crepant minimal resolution over $\kk$ with smooth source.

Van den Bergh's tilting construction \cite[Corollary~3.2.11]{VdBFlops}
and Morita equivalence of surface NCCRs
\cite[Theorem~1.5(1)]{IyamaWemyss} give
$\Perf(\widetilde S)\simeq\Perf(A)$.
Combining this with \Cref{thm:main} proves the corollary.
\end{proof}

\subsection{Hecke algebras and tensor powers}\label{sec:hecke-tensor}

For our inward-oriented arms, the vertex relations give
$\alpha_{ja}=\prod_{p=1}^{a}q_{(j,p)}^{-1}$, so we label the roots
at the group-algebra point by $\zeta_j^{-a}$.

We consider the specialisation of Etingof's formal orbifold Hecke
algebra for $E/G$ \cite[Definition~3.3 and Example~3.12]{EtingofOrbifold}
in which each defining polynomial has constant term $-1$.
We compare it with the completed central corner of our multiplicative
preprojective algebra. Put
\[
 R_\tau=\C[[\tau_{ja}:1\le j\le s,\ 1\le a<d_j]],
 \qquad \tau_{j0}=-\sum_{a=1}^{d_j-1}\tau_{ja}.
\]
The Hecke algebra is the $(\boldsymbol\tau)$-adic completion of
\[
 \frac{R_\tau\langle\gamma_1^{\pm1},\ldots,\gamma_s^{\pm1}\rangle}
 {\left(\prod_{a=0}^{d_j-1}
 (\gamma_j-\zeta_j^{-a}e^{\tau_{ja}})\ (1\le j\le s),
 \quad\gamma_1\cdots\gamma_s-1\right)}.
\]
The condition $\sum_a\tau_{ja}=0$ fixes the constant term of each
Hecke polynomial to $-1$. These root coordinates are related to the
coefficient parameters $\boldsymbol\hbar$ of
\cite[Definition~6.2]{HKTY} by the formal change of variables in
\cite[Lemma~6.3]{HKTY}.

Rescale the generators and set
\begin{equation}\label{eq:formal-parameter-dictionary}
 U_j=e^{-\tau_{j0}}\gamma_j,\qquad
 q_{(j,p)}(\boldsymbol\tau)
 =\zeta_j e^{\tau_{j,p-1}-\tau_{jp}},\qquad
 q(\boldsymbol\tau)=e^{-\sum_j\tau_{j0}}.
\end{equation}
The roots for $U_j$ are then
$\alpha_{ja}=\zeta_j^{-a}e^{\tau_{ja}-\tau_{j0}}$.
By \eqref{eq:star-central-corner}, the completed central corner is
\[
 \widehat{e_0\Lambda_{R_\tau}^{\boldsymbol q(\boldsymbol\tau)}(Q)e_0}
 \cong
 \left(
 \frac{R_\tau\langle U_1^{\pm1},\ldots,U_s^{\pm1}\rangle}
 {\left(\prod_{a=0}^{d_j-1}
 (U_j-\zeta_j^{-a}e^{\tau_{ja}-\tau_{j0}})\ (1\le j\le s),
 \quad U_1\cdots U_s-e^{-\sum_j\tau_{j0}}\right)}
 \right)^{\!\wedge}.
\]
Both completions are with respect to $(\boldsymbol\tau)$.
The substitution $\gamma_j=e^{\tau_{j0}}U_j$ identifies this algebra
with the completed Hecke algebra above.

For arbitrary nonzero multiplicative preprojective parameters, the
roots on each arm satisfy
\[
 \prod_{a=0}^{d_j-1}\alpha_{ja}
 =\prod_{p=1}^{d_j-1}q_{(j,p)}^{-(d_j-p)}.
\]
After extending the coefficient field if necessary, choose
\[
 \kappa_j^{d_j}=(-1)^{d_j-1}\prod_a\alpha_{ja},\qquad
 \widetilde q=\frac{q}{\prod_j\kappa_j}.
\]
Since $\sum_j(1-1/d_j)=2$, raising these equations to the powers
$N/d_j$ cancels the signs and gives the choice-independent identity
\begin{equation}\label{eq:normalised-parameter}
 \widetilde q^N
   =q^N\prod_{j=1}^s\prod_{p=1}^{d_j-1}
          q_{(j,p)}^{N(d_j-p)/d_j}.
\end{equation}
These exponents are the component multiplicities of $F$, so
\[
 \widetilde q^{\,N}=\mathfrak b([F]).
\]

For the formal family above, we may take
$\kappa_j=e^{-\tau_{j0}}$, giving $\widetilde q=1$.
Thus the family lies on the normalised $\widetilde q=1$ locus,
while its central parameter $q$ varies with the roots.

Keeping $q_{(j,p)}=\zeta_j$ and allowing arbitrary
$q\in\kk^\times$, \cite[Sections~2 and~4.1]{EOR} extend
\Cref{eq:star-group-corner} to
\begin{equation}\label{eq:quantum-corner}
 e_0\Lambda^{\boldsymbol q}(Q)e_0
 \cong\mathcal A_{q^N}\rtimes\mu_N,
\end{equation}
with the cyclic action described there. The presentation
isomorphisms are defined over $\ZZ[q^{\pm1}]$, so they apply
over $\kk$. Fullness of the central idempotent follows from
\Cref{lem:star-fullness}; \Cref{thm:main} then gives
\Cref{eq:intro-quantum-mirror}.

The corresponding higher-rank generalised double affine Hecke
algebra $H_n(\boldsymbol u,\tau)$ has parameters
$u_{ja},\tau\in\kk^\times$, $1\le j\le s$, $1\le a\le d_j$.
Its invertible generators $V_1,\ldots,V_s,S_1,\ldots,S_{n-1}$
satisfy the integral presentation
\cite[Definition~3.2.1]{EGO}
\begin{equation}\label{eq:higher-hecke}
\begin{gathered}
 (V_1\cdots V_s)J_n=1,\qquad
 J_n=S_1\cdots S_{n-2}S_{n-1}^2S_{n-2}\cdots S_1,\\
 S_aS_{a+1}S_a=S_{a+1}S_aS_{a+1}\quad(1\le a<n-1),\\
 [S_a,S_b]=0\quad(|a-b|>1),\qquad
 [V_j,S_a]=0\quad(2\le a<n),\\
 [V_j,S_1V_jS_1]=0,\qquad
 [V_i,S_1^{-1}V_jS_1]=0\quad(i<j),\\
 \prod_{a=1}^{d_j}(V_j-u_{ja})=0\quad(1\le j\le s),\qquad
 (S_a-\tau)(S_a+\tau^{-1})=0\quad(1\le a<n).
\end{gathered}
\end{equation}
Here brackets are commutators. For $n=2$, $J_2=S_1^2$; for
$n=1$, omit all $S_a$ and their relations and set $J_1=1$.
The parameter dictionary is
\[
 u_{ja}=\alpha_{j,a-1}\quad(j<s),\qquad
 u_{sa}=q^{-1}\alpha_{s,a-1}.
\]
Thus $V_j=U_j$ for $j<s$ and $V_s=q^{-1}U_s$ identify
$H_1(\boldsymbol u)$ with $e_0\Lambda^{\boldsymbol q}(Q)e_0$.

The quantum parameter in \cite[Section~5.1]{EGO} is
$\prod_{j,a}u_{ja}^{-N/d_j}=\widetilde q^N$.

With $\Lambda_n^{\boldsymbol q}$ as in the introduction, put
\[
 \mathcal B_{Q,n}^{\boldsymbol q}
   =(\mathcal B_Q^{\boldsymbol q})^{\otimes n}\rtimes S_n,
 \qquad p_n=e_0^{\otimes n}.
\]
The dg permutation action uses Koszul signs.

\begin{proposition}\label{prop:tensor-wreath}
Over any field and at every multiplicative parameter,
\[
 \mathcal B_{Q,n}^{\boldsymbol q}\simeq\Lambda_n^{\boldsymbol q},
 \qquad p_n\Lambda_n^{\boldsymbol q}p_n
       \cong H_n(\boldsymbol u,1).
\]
If no nonempty consecutive product of parameters on a leg is
one, $p_n$ is full. At the unipotent point it is not full.
\end{proposition}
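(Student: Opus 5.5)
The proof has three parts: formality, the Hecke corner, and fullness.

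\emph{Formality.} Corollary~\ref{cor:affine-formality} says the projection $\mathcal B_Q^{\boldsymbol q}\to\Lambda^{\boldsymbol q}(Q)$ is a quasi-isomorphism of dg algebras. Over a field the Künneth theorem makes $\pi^{\otimes n}\colon(\mathcal B_Q^{\boldsymbol q})^{\otimes n}\to\Lambda^{\boldsymbol q}(Q)^{\otimes n}$ a quasi-isomorphism. This map is $S_n$-equivariant for the Koszul-signed permutation actions; the target is concentrated in degree zero, so its signs are trivial. Both smash products are $(\,\cdot\,)^{\otimes n}\otimes\kk S_n$ as complexes, so the induced map of smash products is $\pi^{\otimes n}\otimes\mathrm{id}$. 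It is therefore a quasi-isomorphism in every characteristic, with no invertibility of $n!$ needed.

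\emph{The corner.} Since $p_n=e_0^{\otimes n}$ is $S_n$-invariant,
\[
p_n\Lambda_n^{\boldsymbol q}p_n=(H^{\otimes n})\rtimes S_n,\qquad H=e_0\Lambda^{\boldsymbol q}(Q)e_0,
\]
where $H$ is presented by \eqref{eq:star-central-corner}. I will compare this with $H_n(\boldsymbol u,1)$. At $\tau=1$ the Hecke relation becomes $(S_a-1)(S_a+1)=0$, so the $S_a$ generate a quotient of $\kk S_n$. Map $S_a$ to the transposition $(a\;a{+}1)$, map $V_j$ to $U_j^{(1)}$ for $j<s$, and map $V_s$ to $q^{-1}U_s^{(1)}$, where the superscript denotes the first tensor factor. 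The relations \eqref{eq:higher-hecke} then check directly:
\begin{itemize}
\item $S_1V_jS_1$ is $V_j$ placed in the second factor, so it commutes with $V_j$, and likewise $[V_i,S_1^{-1}V_jS_1]=0$;
\item $V_j$ commutes with $S_a$ for $a\ge2$;
\item $J_n=1$ when $\tau=1$, so the relation $(V_1\cdots V_s)J_n=1$ reduces to $U_1\cdots U_s=q$ once the parameter dictionary $u_{ja}=\alpha_{j,a-1}$, $u_{sa}=q^{-1}\alpha_{s,a-1}$ is inserted;
\item the polynomial relations on the $V_j$ become those of $H$.
\end{itemize}

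This gives a homomorphism $H_n(\boldsymbol u,1)\to H^{\otimes n}\rtimes S_n$. It is surjective because conjugating by permutations moves $U_j^{(1)}$ into every factor. For injectivity I will construct the inverse. Send $U_j^{(k)}$ to $wV_jw^{-1}$ for a permutation $w$ carrying $1$ to $k$, with the $q$-rescaling for $j=s$. I must check this is well defined and satisfies the relations. Elements in distinct factors commute, which follows from the $[V_i,S_1^{-1}V_jS_1]$ relations together with Coxeter conjugation. Elements in one factor satisfy the rank-one relations. The consistency of the first-factor relation under the stabiliser of $1$ uses $[V_j,S_a]=0$ for $a\ge2$. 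I expect this injectivity check to be the main obstacle. If necessary I will instead invoke the PBW-type basis of $H_n$ in \cite{EGO} and compare it with the tensor basis.

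\emph{Fullness.} If no nonempty consecutive product on a leg equals $1$, Lemma~\ref{lem:star-fullness} gives $\Lambda e_0\Lambda=\Lambda$ for $\Lambda=\Lambda^{\boldsymbol q}(Q)$. Then $\Lambda^{\otimes n}p_n\Lambda^{\otimes n}=\Lambda^{\otimes n}$, and hence the two-sided ideal of $\Lambda_n^{\boldsymbol q}$ generated by $p_n$ is everything. At the unipotent point, let $V$ be the vertex simple module at an arm vertex $v$. It is annihilated by $e_0$ and is a $\Lambda$-module because the arrows act by zero, so the vertex relations reduce to $e_v=e_v$. Then $V^{\otimes n}$ with the trivial $S_n$-action is a nonzero $\Lambda_n^{\boldsymbol 1}$-module killed by $p_n$. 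So $p_n$ is not full.
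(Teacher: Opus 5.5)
Your proposal is correct and follows the paper's proof essentially step for step. You tensor the formality quasi-isomorphism of \Cref{cor:affine-formality} and pass to the skew group algebra as a finite direct sum. You identify the $S_n$-invariant corner with $H^{\otimes n}\rtimes S_n$ and match it with $H_n(\boldsymbol u,1)$ by sending $V_j$ to the first tensor factor and $S_a$ to the adjacent transposition, with the inverse defined by conjugation. For fullness you tensor an expression $1=\sum a_\ell e_0b_\ell$, and for non-fullness at the unipotent point you use an arm vertex-simple module with its permutation action.

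The only difference is that you verify the $\tau=1$ identification by hand, checking that the inverse is well defined via the stabiliser of $1$ and that distinct tensor factors commute. The paper instead cites \cite[Remark~3.2.2]{EGO} for this step.
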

\begin{proof}
Tensor the quasi-isomorphism of \Cref{cor:affine-formality}
$n$ times over the field. It commutes with permutations with
Koszul signs. Taking a skew group algebra preserves this
quasi-isomorphism: its underlying complex is a finite direct
sum, so no exactness of invariants is needed. Since $p_n$ is
permutation-invariant, its corner is
$(e_0\Lambda^{\boldsymbol q}(Q)e_0)^{\otimes n}\rtimes S_n$.

At $\tau=1$, the standard identification
\cite[Remark~3.2.2]{EGO} gives
\[
 H_n(\boldsymbol u,1)\cong H_1(\boldsymbol u)^{\otimes n}\rtimes S_n.
\]
It sends $V_j$ to the first tensor factor and $S_a$ to the
adjacent transposition; its inverse places the rank-one
generators in each tensor position by conjugation. These maps
are defined over any field and require no division by $n!$.
Together with the rank-one identification above, this proves
the corner assertion.

Fullness follows by tensoring an expression
$1=\sum a_\ell e_0b_\ell$ supplied by \Cref{lem:star-fullness}.
At the unipotent point, a vertex-simple module supported on an
arm has a nonzero $n$th tensor power with permutation action.
The resulting $\Lambda_n^{\boldsymbol1}$-module is annihilated
by $p_n$, which therefore cannot be full.
\end{proof}

For general interaction parameter $\tau$, an ordinary multiplicative
analogue with corner $H_n(\boldsymbol u,\tau)$ was proposed in
\cite[Introduction]{EGO}. Constructing an interacting dg model and
computing the corresponding wrapped category require further work.

At the group-algebra point let $H_n^{\mathrm{grp}}$ be the
algebra defined by \Cref{eq:higher-hecke} with its polynomial
relations replaced by $V_j^{d_j}=1$ and $S_a^2=1$. This definition needs no roots of unity. The same identification gives
\[
 H_n^{\mathrm{grp}}\cong
 \kk[(\Gamma\rtimes G)^n\rtimes S_n]
 \cong\kk[\Gamma^n]\rtimes G_n.
\]

\subsection{Equivariant mirror symmetry}\label{sec:equivariant}

Let $N\in\{2,3,4,6\}$ and let $E=\C/\Lambda$ admit multiplication
by $\zeta_N=e^{2\pi i/N}$. Set
\[
 X_N=\Res((E\times\C_z)/\mu_N),\qquad
 (w,z)\longmapsto(\zeta_Nw,\zeta_N^{-1}z).
\]
The invariant function $z^N$ induces a proper map $t:X_N\to\C$,
and $dw\wedge dz$ extends to a nonvanishing holomorphic two-form
$\Omega$ on $X_N$. The elliptic curve is arbitrary for $N=2$,
has $j=1728$ for $N=4$, and has $j=0$ for $N=3,6$.
The stabilisers and central fibres are listed below.
\begin{equation}
\begin{array}{c|c|c|c}
 N&F&\text{stabiliser orders}&\text{singularities before resolution}\\ \hline
 2&I_0^*&(2,2,2,2)&4A_1\\
 3&IV^*&(3,3,3)&3A_2\\
 4&III^*&(2,4,4)&A_1+2A_3\\
 6&II^*&(2,3,6)&A_1+A_2+A_5.
\end{array}
\label{eq:geo-quotient-fibres}
\end{equation}
The stabilisers are computed from the kernels of $1-\zeta_N^k$.
The central component $E/\mu_N$ has multiplicity $N$, and an
order-$d$ stabiliser contributes an arm with multiplicities
\[
 \frac Nd,\frac{2N}d,\ldots,\frac{(d-1)N}d
\]
from the tip towards the centre, as shown in \Cref{fig:quotients}.

\begin{figure}[ht]
\centering
\begin{tikzpicture}[scale=.7,every node/.style={font=\small},
 v/.style={circle,draw,fill=white,inner sep=2pt,minimum size=15pt}]
\begin{scope}
\node[v](c)at(0,0){$2$};
\foreach \a/\b in {45/a,135/b,225/d,315/e}
 {\node[v](\b)at(\a:1.3){$1$};\draw(c)--(\b);}
\node at(0,-2){$I_0^*$};
\end{scope}
\begin{scope}[xshift=4.2cm]
\node[v](c)at(0,0){$3$};
\foreach \a/\b/\d in {90/a/b,210/d/e,330/f/g}
 {\node[v](\b)at(\a:1){$2$};\node[v](\d)at(\a:2){$1$};
 \draw(c)--(\b)--(\d);}
\node at(0,-2){$IV^*$};
\end{scope}
\begin{scope}[xshift=10.5cm]
\node[v](c)at(0,0){$4$};
\foreach \x/\m/\n in {-3/1/a,-2/2/b,-1/3/d,1/3/e,2/2/f,3/1/g}
 {\node[v](\n)at(\x,0){$\m$};}
\node[v](h)at(0,1.1){$2$};
\draw(a)--(b)--(d)--(c)--(e)--(f)--(g);
\draw(c)--(h);\node at(0,-2){$III^*$};
\end{scope}
\begin{scope}[xshift=18.95cm]
\node[v](c)at(0,0){$6$};
\foreach \x/\m/\n in {-5/1/a,-4/2/b,-3/3/d,-2/4/e,-1/5/f,1/4/g,2/2/h}
 {\node[v](\n)at(\x*.78,0){$\m$};}
\node[v](i)at(0,1.1){$3$};
\draw(a)--(b)--(d)--(e)--(f)--(c)--(g)--(h);
\draw(c)--(i);\node at(-1,-2){$II^*$};
\end{scope}
\end{tikzpicture}
\caption{The numbers are component multiplicities.}
\label{fig:quotients}
\end{figure}

\begin{proposition}\label{prop:quotients}
The surface $X_N$, with $\omega=\Ree\Omega$, admits a
complete Liouville structure exact symplectomorphic to the completed
affine plumbing of type $\widetilde D_4$, $\widetilde E_6$,
$\widetilde E_7$, $\widetilde E_8$ for $N=2,3,4,6$, respectively.
\end{proposition}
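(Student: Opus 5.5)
The plan is to reduce to \Cref{thm:neighbourhood-surgery} and \Cref{rem:neighbourhood-independence}: show that $X_N$ is the completion of a neighbourhood $W_\epsilon=\{|t|\le\epsilon\}$ of the central fibre of an elliptic fibration of the appropriate Kodaira type. We then construct a global Liouville form on $X_N$ whose restriction to $W_\epsilon$ satisfies the hypotheses of \Cref{prop:neighbourhood-weinstein} and whose Liouville field has no zeros outside $W_\epsilon$.

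\emph{Step 1: the fibration.} Check that $t:X_N\to\C$, induced by $z^N$, is a proper relatively minimal elliptic fibration with smooth total space. Over $t\ne0$ the map $E\times\C^*_z\to\C^*$, $(w,z)\mapsto z^N$, has fibre $\mu_N$ copies of $E$ permuted freely by $\mu_N$, so the quotient fibre is a single smooth copy of $E$. Over $t=0$ the fibre is $E/\mu_N$ together with the exceptional curves of the minimal resolution of the cyclic quotient singularities listed in \eqref{eq:geo-quotient-fibres}. These are of type $\frac1d(1,-1)$, so they are $A_{d-1}$ with $(-2)$-chains. Relative minimality follows from there being no $(-1)$-curves: the central curve has self-intersection $-2$ by $F\cdot C_0=0$ and the multiplicities of \Cref{fig:quotients}. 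The form $dw\wedge dz$ is $\mu_N$-invariant and extends without zeros across crepant $A$-type resolutions, giving $\Omega$. Kodaira's classification, together with the dual graphs and multiplicities, identifies $F$ as $I_0^*,IV^*,III^*,II^*$.

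\emph{Step 2: a global Liouville structure.} The scaling $z\mapsto e^{s}z$ on $E\times\C$ commutes with $\mu_N$, preserves the resolution (being toric on each $A$-chart), and scales $\Omega$ by $e^s$. Hence its generator $V$, the real part of a holomorphic vector field, satisfies $\mathcal L_V\Ree\Omega=\Ree\Omega$. Thus $\lambda_0=\iota_V\Ree\Omega$ is a Liouville form on $X_N$. It is complete, since $V$ lifts a complete flow, and its skeleton is the zero fibre $F_{\mathrm{red}}$. Moreover $V\log|t|=N>0$ away from $F$, so every sufficiently small $\{|t|\le\epsilon\}$ is a Liouville domain for $\lambda_0$ whose completion, via the flow of $V$, is all of $X_N$.

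\emph{Step 3: comparison.} On $W_\epsilon$, both $\lambda_0$ and the form $\lambda$ of \Cref{prop:neighbourhood-weinstein} are primitives of $\omega$ with outward Liouville fields along $\partial W_\epsilon$, and $\lambda-\lambda_0=dg$ because $W_\epsilon$ retracts to $F_{\mathrm{red}}$. The linear path between them keeps the outward condition: since $Z\mu>0$ and $V\mu=N>0$, every convex combination of the two Liouville fields also has positive derivative of $\mu$. Here the vanishing on the disks is not needed, because we only want an identification of completions. \cite[Lemma~11.6 and Proposition~11.8]{CE} then give an exact symplectomorphism between the completions. By \Cref{thm:neighbourhood-surgery}(1), the completion for $\lambda$ is the completed plumbing along $\Gamma_F$.

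The main obstacle is Step 1: verifying carefully that the resolved quotient is relatively minimal with the stated Kodaira fibre, and that $\Omega$ extends nonvanishingly. I would handle both through explicit toric charts of the $A_{d-1}$ resolutions, where the invariant coordinates are $w^d,z^d,wz$ locally.
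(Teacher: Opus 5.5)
Your proposal is correct and follows the paper's proof. The steps match: the dilation of $z$ gives a complete Liouville field with $V\log|t|=N$, properness of $t$ identifies $X_N$ with the completion of $W_\epsilon$, the convex path to the primitive of \Cref{prop:neighbourhood-weinstein} together with \cite[Lemma~11.6 and Proposition~11.8]{CE} identifies the completions, and \Cref{thm:neighbourhood-surgery}(1) finishes. Your Step 1 only spells out the relative minimality and Kodaira fibre type, which the paper records without proof in \eqref{eq:geo-quotient-fibres} and \Cref{fig:quotients} before the proposition.
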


\begin{proof}
The real dilation of $z$ lifts to a complete field $Z$ with
$\mathcal L_Z\Omega=\Omega$. Its primitive
$\lambda=\iota_Z\Ree\Omega$ vanishes on $F_{\mathrm{red}}$, and
$Z\log|t|=N>0$ away from $F$. Properness of $t$ identifies $X_N$
with the completion of $(W_\epsilon,\lambda)$. The convex path
joining $\lambda$ to the primitive of
\Cref{prop:neighbourhood-weinstein} has outward-pointing Liouville
fields along $\partial W_\epsilon$, so their completions are exact
symplectomorphic by \cite[Lemma~11.6 and Proposition~11.8]{CE}.
Now apply \Cref{thm:neighbourhood-surgery}(1).
\end{proof}

Let $\mathcal C$ be a split-closed pretriangulated category with
a coherent action of a finite group $G$ and a split-generator $F$.
If $|G|\in\kk^\times$, then
$\operatorname{Ind}F=\bigoplus_{g\in G}gF$ split-generates
$\mathcal C^G$. Indeed, induction expresses
$\operatorname{Ind}(\operatorname{Forget}E)$ in terms of
$\operatorname{Ind}F$, and the coinduction unit divided by $|G|$
splits the counit onto $E$. In particular, if $G$ acts on an algebra $A$, then
\begin{equation}\label{eq:equivariant-perfect}
 \Perf(A)^G\simeq\Perf(A\rtimes G).
\end{equation}
Here the induced free right module has endomorphism algebra
$A\rtimes G$. For commutative $A$, equivariant descent also
identifies these categories with $\Perf([\operatorname{Spec}A/G])$.

Before the torus quotients of \Cref{cor:orbifold-mirror} and
\Cref{thm:higher-orbifold} we work out the simplest equivariant example,
the reflection of the circle.
Let $\operatorname{char}\kk\ne2$ and
$B=\kk\langle u,v\rangle/(u^2-1,v^2-1)$.
For $x=uv$, we have $uxu=x^{-1}$; conversely $v=ux$ recovers
the two-involution presentation. Hence
\begin{equation}\label{eq:involution-algebra}
 B\cong\kk[x^{\pm1}]\rtimes\mu_2,\qquad
 \Perf(B)\simeq\Perf([\mathbb G_{m,\kk}/\mu_2]).
\end{equation}
Its centre is $\kk[x+x^{-1}]$: commuting with $x$ kills the
coefficient of $u$, and commuting with $u$ imposes inversion
invariance. The coarse quotient is an affine line; the stack
also records the stabilisers at $x=\pm1$.

The symplectic model is the reflection
$(\theta,p)\mapsto(-\theta,-p)$ on $(T^*S^1,p\,d\theta)$.
Use the vertical cotangent grading, zero equivariant background,
and trivial $\operatorname{Pin}^+$ structures in the flat frames.
On both a fibre and the zero section choose the reflection lift
$\epsilon$ with $\epsilon^2=1$. A fibre $F$ over a nonfixed
point $x$ gives the equivariant object $F\oplus\sigma F$.
For the flat metric and Hamiltonian $p^2/2$, every relative path
class has a unique geodesic, of index zero. Thus its path-space
Floer model has cohomology in degree zero
\cite{AbouzaidFibre,AbouzaidLoops}.

The equivariant algebra has basis $(g,[\gamma])$, where
$\gamma:gx\to x$, and multiplication
\[
 (g,[\gamma])(h,[\delta])=(gh,[\gamma\circ(g\delta)]).
\]
For a reflection-fixed point $x_0$ and a path $\eta:x_0\to x$,
the paths $\eta\circ(g\eta)^{-1}$ split the extension by
$\pi_1(S^1)$. This identifies the path group with $\ZZ\rtimes\mu_2$
and its algebra with $B$. Since the cohomology lies in degree zero,
the algebra is formal.
Fibre generation and the preceding induction argument give
\[
 \mathcal W_{\mu_2}(T^*S^1)\simeq\Perf(B)
       \simeq\Perf([\mathbb G_{m,\kk}/\mu_2]).
\]

For the torus cotangent categories, $\mathcal W_{G_n}$ denotes the
category of coherent $G_n$-equivariant objects in
$\mathcal W(T^*\mathbb{T}_n)$. By coherent equivariant objects we mean
homotopy equivariant objects, in the sense discussed in
\cite[Section~6.1]{AurouxSmith}. We use the canonical cotangent grading
and the equivariant relative spin background $\pi^*T\mathbb{T}_n$,
where $\pi:T^*\mathbb{T}_n\to\mathbb{T}_n$ is the projection.
For $n=1$, this gives the conventions for $\mathcal W_G(T^*\mathbb{T})$.

\begin{proof}[Proof of \Cref{cor:orbifold-mirror} and \Cref{thm:higher-orbifold}]
Give $\mathbb{T}_n$ a product of $G$-invariant flat metrics. Each factor
of $G_n=G^n\rtimes S_n$ preserves orientation, and swapping two
real two-dimensional blocks has sign $(-1)^4=1$.
The cotangent-fibre equivalence \cite{AbouzaidFibre,AbouzaidLoops}
and its coherent naturality for orientation-preserving base
diffeomorphisms \cite[Theorems~1.0.12 and~6.0.1]{OhTanaka}
therefore apply to this action. These constructions allow the
coefficient field $\kk$ with the stated brane data
\cite[Choice~1.2.1]{OhTanaka}.

The chosen relative spin background allows the $G_n$-action on the
cotangent fibre to lift to its relative spin structure compatibly
with the group law
\cite[Example~3.3]{HKTY}. At the
fixed origin, put $V=T_0\mathbb{T}_n$. The relative spin
representation is $V^*\oplus V\cong V\oplus V$, and the diagonal
$SO(V)$-action lifts to $\operatorname{Spin}(V\oplus V)$:
its map on fundamental groups is twice the usual map and hence
zero modulo two. This gives coherent lifts of the $G_n$-action,
including the permutations.

Projection to components gives an equivariant quasi-isomorphism
\[
 C_{-*}(\Omega_0\mathbb{T}_n;\kk)\longrightarrow\kk[\Gamma^n],
 \qquad g(X^\gamma)=X^{g\gamma},
\]
because each based-loop component is contractible. Since $N^n n!$
is invertible,
\Cref{eq:equivariant-perfect} applies: the induced fibre generates
and has endomorphism algebra $\kk[\Gamma^n]\rtimes G_n$.
Equivariant descent proves the torus-stack equivalence of
\Cref{thm:higher-orbifold}.

If the $\zeta_j$ belong to $\kk$, the group corner of
\Cref{eq:star-group-corner} is full by \Cref{lem:star-fullness}.
\Cref{prop:tensor-wreath} makes $p_n$ full with corner
$H_n^{\mathrm{grp}}$, proving the remaining equivalence.
For $n=1$, \Cref{thm:main} at $\mathfrak b_{\mathrm{grp}}$
also identifies the category of the smooth surface, which proves
\Cref{cor:orbifold-mirror}.
\end{proof}

\subsection{Hilbert schemes}\label{sec:hilbert-schemes}

Recall that $Y_n=\operatorname{Hilb}^n(M_Q)$, and that
\[
 h_n:Y_n\longrightarrow\operatorname{Sym}^n(M_Q)
       \longrightarrow\operatorname{Sym}^n(\C)
\]
is induced by $f:M_Q\to\C$. Let $\Omega_n$ be the induced
holomorphic symplectic form and $V_n$ the generator of positive
real cotangent scaling.

\begin{proposition}\label{prop:hilbert-liouville}
The form $\lambda_n=\iota_{V_n}\Ree\Omega_n$ makes $Y_n$ a
finite-type Liouville manifold of real dimension $4n$, with
skeleton the underlying set of $h_n^{-1}(0)$.
\end{proposition}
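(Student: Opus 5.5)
The plan follows the proof of \Cref{prop:quotients}. First build a complete Liouville field on $Y_n$ from the cotangent scaling, then identify its skeleton through the proper map $h_n$, and finally check finite type via a proper function that is plurisubharmonic-like at infinity.

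\textbf{Step 1: the field $V_n$.} Take the real dilation of the cotangent coordinate $v$ on $M_Q=\operatorname{Res}(T^*E/G)$. It generates a holomorphic $\R_{>0}$-action on $M_Q$, which lifts to a vector field $V$ with $\mathcal L_V\Omega=\Omega$. It also scales $f=v^N$ by weight $N$. This action is by holomorphic automorphisms, so it induces a holomorphic $\R_{>0}$-action on $Y_n=\operatorname{Hilb}^n(M_Q)$ by functoriality of the Hilbert scheme. Let $V_n$ be its generator. The form $\Omega_n$ is the form induced by $\Omega$ (via Beauville's construction, compatibly with $\operatorname{Sym}^n$ on the locus of reduced subschemes). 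Hence $\mathcal L_{V_n}\Omega_n=\Omega_n$ on a dense open set, and therefore everywhere. Cartan's formula then gives $d\lambda_n=\Ree\Omega_n$ for $\lambda_n=\iota_{V_n}\Ree\Omega_n$. So $V_n$ is the Liouville field of $\lambda_n$. It is complete, because it generates a global flow.

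\textbf{Step 2: equivariance of $h_n$ and the skeleton.} The map $h_n$ is equivariant when $\R_{>0}$ acts on $\operatorname{Sym}^n(\C)\cong\C^n$ by weights $(N,2N,\dots,nN)$ on the elementary symmetric coordinates $\sigma_k$. The skeleton consists of the points whose backward flow stays in a compact set.
\begin{itemize}
\item If $h_n(Z)\neq0$, then some $|\sigma_k(h_n(Z))|$ grows without bound under the forward flow, and only along the forward flow. Under the backward flow it tends to $0$.
\item The fibre $h_n^{-1}(0)$ is compact, since $h_n$ is proper, and it is invariant under the flow.
\end{itemize}
So the skeleton contains $h_n^{-1}(0)$. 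Conversely, take a point off $h_n^{-1}(0)$. Its forward flow leaves every compact set, so it is not in the core. Stated correctly, the skeleton is the set of points that do not escape under the forward flow. Forward escape holds exactly off $h_n^{-1}(0)$, because $\phi=\sum_k|\sigma_k\circ h_n|^{2/(kN)}$ is homogeneous of degree $2$ along the flow: $V_n\phi=2\phi$. Hence $\phi$ grows exponentially forward wherever it is nonzero.

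\textbf{Step 3: finite type.} This is the main obstacle. We need a convex boundary level set and finitely many critical points up to homotopy. Use the proper function $\phi$ from Step 2. Since $V_n\phi=2\phi>0$ on $\{\phi>0\}$, each level set $\{\phi=c\}$ with $c>0$ is transverse to $V_n$. It is a compact hypersurface, because $h_n$ and $\phi$ are proper. The one catch is that $\phi$ need not be smooth: the map $h_n$ passes through $\operatorname{Sym}^n$, and the fractional powers are not smooth. To fix this, I would replace the level set $\{\phi=c\}$ by a smooth hypersurface transverse to $V_n$. One way is to average a smooth proper function along the flow, e.g.\ $\int_0^1 \phi\circ\varphi^{s}\,ds$ after a smoothing of $\phi$ near its level set, or simply to note that $\phi^{-1}(c)$ is a topological section of the free flow on $Y_n\setminus h_n^{-1}(0)$. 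Either way, $Y_n\setminus h_n^{-1}(0)\cong\R\times\phi^{-1}(c)$ via the flow. Choose any smooth section $\Sigma$ of this free proper $\R$-action. Then $\{\text{backward flow from }\Sigma\}$ is a Liouville domain whose completion is $Y_n$. The real dimension is $4n$, since $Y_n$ is smooth of complex dimension $2n$.
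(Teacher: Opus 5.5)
Your argument is correct and is essentially the paper's proof: you lift the cotangent scaling to $Y_n$, get $d\lambda_n=\Ree\Omega_n$ from Cartan's formula, and use properness of $h_n$ and the weights $N,2N,\ldots,nN$ of the elementary symmetric coordinates to show three things: positive levels are transverse to $V_n$, $h_n^{-1}(0)$ is compact and invariant, and every other point escapes under the forward flow. The smoothing detour in Step 3 is unnecessary: the $\sigma_k\circ h_n$ are holomorphic on $Y_n$, and the non-smoothness came only from your fractional exponents, so the paper simply takes the smooth function $\rho_n=\sum_k|\sigma_k\circ h_n|^2$, which satisfies $V_n\rho_n=2N\sum_k k|\sigma_k\circ h_n|^2>0$ off $h_n^{-1}(0)$ without any need for homogeneity.
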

\begin{proof}
The Hilbert scheme of a smooth holomorphic symplectic surface
is smooth and holomorphic symplectic by the local construction in
\cite[Section~6, Proposition~5]{BeauvilleHilbert}. Cotangent scaling lifts
to it and has weight one on $\Omega_n$. Cartan's formula gives
$d\lambda_n=\Ree\Omega_n$, with complete Liouville vector field
$V_n$ induced by positive real scaling.
Hilbert-Chow is projective and $f$ is proper, so $h_n$ is
proper. If $b_1,\ldots,b_n$ are elementary symmetric coordinates
on its base, their weights are $N,2N,\ldots,nN$. Hence
\[
 \rho_n=\sum_{j=1}^n|b_j\circ h_n|^2,\qquad
 V_n\rho_n=2N\sum_{j=1}^nj|b_j\circ h_n|^2>0
       \quad\text{off }h_n^{-1}(0).
\]
Every positive sublevel is a compact Liouville domain. Outside
the zero fibre, each flow line crosses every positive level
exactly once; its forward trajectory escapes every compact set.
The zero fibre is compact and invariant. Thus the completion is
$Y_n$ and the skeleton is precisely that fibre.
\end{proof}

The Hilbert-Chow map introduces exceptional curves where points
collide. In \Cref{conj:hilbert-mirror}, we assign weight $-1$ to a
collision line and retain weight $1$ on classes inherited from
$M_Q$.

Near a configuration with one double point and all other points
distinct, Hilbert-Chow is locally an $A_1$ resolution times a smooth
factor. The normalised Hecke generator $\tau^{-1}S_a$ has roots
$1,-\tau^{-2}$. Comparing these with the local $A_1$ leg roots
$1,t_{\mathrm{coll}}^{-1}$ gives
\[
 t_{\mathrm{coll}}=-\tau^2.
\]
Thus the tensor-product case $\tau=1$ suggests collision weight
$-1$, whereas collision weight $1$ would give $\tau^2=-1$.
The same sign occurs at zero orbifold parameter in the
Hilbert-Chow crepant resolution correspondence
\cite[Theorem~3.11]{BryanGraber}.

Recall that the tautological rank-$n$ bundle $\mathcal V_n$ on $Y_n$
has fibre $H^0(Z,\mathcal O_Z)$ at a length-$n$ subscheme $Z$.
The collision class of \Cref{eq:collision-bulk} is
\[
 \mathfrak b_{\mathrm{coll}}(\gamma)
 =(-1)^{\langle c_1(\det\mathcal V_n),\gamma\rangle},
 \qquad \gamma\in H_2(Y_n;\ZZ).
\]
Its values can be computed directly. For $n\ge2$, fix $n-2$ distinct
points and a further point $x$. The length-two subschemes supported
at $x$ form a collision line $\ell=\PP(T_xM_Q)$. Along this line,
$\mathcal V_n$ is the sum of $n-2$ trivial summands and the rank-two
bundle fitting into the evaluation sequence
\[
 0\longrightarrow\mathcal O_\ell(1)
 \longrightarrow\mathcal V_2|_\ell
 \longrightarrow\mathcal O_\ell\longrightarrow0.
\]
Hence $\deg(\det\mathcal V_n|_\ell)=1$ and
$\mathfrak b_{\mathrm{coll}}([\ell])=-1$.

To represent a class coming from $H_2(M_Q;\ZZ)$, choose $n-1$ distinct
fixed points disjoint from a surface representing the given two-cycle
in $M_Q$, and let the remaining point move along that surface.
The tautological bundle is trivial along this cycle: a basis is given
by the functions that are $1$ at one labelled point and $0$ at the
others. Thus $c_1(\det\mathcal V_n)$ evaluates to zero, and the
collision weight is $1$.

For \Cref{conj:hilbert-mirror}, we use the relative spin background
\[
 \beta_n=c_1(\det\mathcal V_n)\bmod2\in H^2(Y_n;\ZZ/2).
\]
An oriented brane $L$ carries a grading, a local system, and a spin
structure on $TL\oplus(\det\mathcal V_n)_{\R}|_L$; the obstruction
condition is $w_2(TL)=\beta_n|_L$. For the grading, choose a compatible
almost hyperk\"ahler structure extending $\Omega_n$ and use the complex
volume form of the rotated almost complex structure compatible with
$\Ree\Omega_n$. Where comparison with background zero is defined,
the change of orientations contributes
$(-1)^{\langle\beta_n,\gamma\rangle}
=\mathfrak b_{\mathrm{coll}}(\gamma)$ for a compatibly capped class
$\gamma$ \cite[Section~2.2 and Lemma~A.5]{AbouzaidLoops}.
We therefore insert no additional scalar copy of this sign.

This background admits the compact Lagrangians expected from the
surface mirror. For a core sphere $C\cong\PP^1$, put
$L=\operatorname{Hilb}^n(C)\cong\PP^n$, which lies in the compact core.
The form $\Omega_n$ vanishes
on the dense locus of distinct points of $C$, hence on all of $L$,
so $L$ is Lagrangian. It is exact because $H^1(L;\R)=0$.
The universal divisor on $\PP^1\times\PP^n$ has bidegree $(n,1)$;
pushing forward its defining sequence gives
$\det\mathcal V_n|_L\cong\mathcal O(1-n)$. Thus, for the mod-two
hyperplane class $h$,
\[
 \beta_n|_L=(n-1)h=(n+1)h=w_2(T\PP^n).
\]
Consequently $L$ is relatively spin for $\beta_n$, including even $n$,
when it is not spin for background zero.

There is also a check on endomorphisms. When $n!\in\kk^\times$,
the $S_n$-equivariant external tensor power of the mirror of $C$ has
endomorphism cohomology
\[
 \bigl(H^*(S^2;\kk)^{\otimes n}\bigr)^{S_n}
 \cong\kk[z]/(z^{n+1}),\qquad |z|=2.
\]
Indeed, if $x_i$ is the degree-two class in the $i$th factor, then
$z=x_1+\cdots+x_n$ satisfies $z^j=j!e_j(x_1,\ldots,x_n)$, where
$e_j$ is the $j$th elementary symmetric polynomial. The classes
$e_0,\ldots,e_n$ form a basis of the invariant subspace.
This agrees with the self-Floer cohomology $H^*(\PP^n;\kk)$ of $L$.

We now identify the algebraic category on the right-hand side of
\Cref{conj:hilbert-mirror}.

\begin{proposition}\label{prop:hilbert-mirror-model}
Under the field hypotheses of \Cref{conj:hilbert-mirror},
\[
 \Perf(\operatorname{Hilb}^n\widetilde S_Q)
       \simeq\Perf(\Lambda_n^{\boldsymbol1}).
\]
\end{proposition}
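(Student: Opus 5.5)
The plan is to pass through the symmetric product of the surface and use the derived McKay correspondence of Bridgeland--King--Reid and Haiman. Put $\widetilde S=\widetilde S_Q$ over $\kk$ and $A=\Lambda^{\boldsymbol1}(Q)$. The argument has four steps, carried out in this order.

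\textbf{Step 1 (the surface case).} By \Cref{cor:unipotent-mirror} at $\boldsymbol q=\boldsymbol1$, there is an equivalence $\Perf(\widetilde S)\simeq\Perf(A)$. It is given by a tilting bundle $\mathcal T$ on $\widetilde S$ with $\operatorname{End}(\mathcal T)\cong A$. This is Van den Bergh's tilting object for the module blowup, whose endomorphism ring is $\operatorname{End}_R(Ae)\cong A$.

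\textbf{Step 2 (external tensor powers).} The box product $\mathcal T^{\boxtimes n}$ on $\widetilde S^n$ is a tilting object.
\begin{itemize}
\item Its higher self-Ext groups vanish by the Künneth formula over a field.
\item It generates, because products of generators generate $D_{\mathrm{qc}}$ of a product of quasi-projective schemes.
\item Its endomorphism algebra is $A^{\otimes n}$.
\end{itemize}
The bundle $\mathcal T^{\boxtimes n}$ carries the canonical $S_n$-linearisation permuting the factors, with no Koszul signs since $\mathcal T$ is a sheaf. The same induction argument used for \Cref{eq:equivariant-perfect} then gives
\[
 \Perf_{S_n}(\widetilde S^n)\simeq\Perf\bigl(A^{\otimes n}\rtimes S_n\bigr)=\Perf(\Lambda_n^{\boldsymbol1}).
\]
This requires $n!\in\kk^\times$: invariants are exact, so $\operatorname{Ind}$ of $\mathcal T^{\boxtimes n}$ split-generates the equivariant category, and its endomorphism ring is the skew group algebra.

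\textbf{Step 3 (Hilbert scheme versus orbifold).} The next step is the Bridgeland--King--Reid--Haiman equivalence
\[
 D^b(\operatorname{Hilb}^n\widetilde S)\simeq D^b_{S_n}(\widetilde S^n),
\]
given by the Fourier--Mukai transform along the isospectral Hilbert scheme. It restricts to perfect complexes because both sides are smooth. Over $\C$ this is Haiman's theorem (Cohen--Macaulayness of the isospectral Hilbert scheme) combined with BKR.

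This step is the main obstacle: the proposition is stated over an arbitrary field with $n!$ invertible, not over $\C$. I would first try to invoke the version of BKR that works over any field in which the group order is invertible; the BKR argument is characteristic-free in that setting. The remaining input is Haiman's $n!$-result for an arbitrary smooth surface. Haiman's proof reduces to the local case of $\mathbb A^2$ by étale-local arguments. For $\mathbb A^2$ it is proved over $\C$, but it holds over $\ZZ[1/n!]$ by flatness and semicontinuity applied to the integral isospectral scheme. If that descent is not available in the literature, I would restrict to characteristic zero, or appeal to Krug's characteristic-free treatment.

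\textbf{Step 4 (assembly).} Composing Steps 3 and 2 gives the proposition. I would also check that no dg subtleties arise: both sides are smooth and the equivalences are Fourier--Mukai functors or tilting functors, so they lift to dg enhancements.
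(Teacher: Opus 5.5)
Your Steps 1, 2 and 4 are the paper's own argument. The paper uses a tilting bundle $\mathcal T$ on $\widetilde S_Q$ with endomorphism algebra $\Lambda^{\boldsymbol1}(Q)$, coming from Van den Bergh's construction together with the Iyama--Wemyss Morita equivalence. It then takes the box power $\mathcal T^{\boxtimes n}$ with its permutation linearisation, and applies the equivariant-generation argument to get $\Perf([\widetilde S_Q^{\,n}/S_n])\simeq\Perf(\Lambda_n^{\boldsymbol1})$ when $n!\in\kk^\times$. Step 3 is also the paper's route, through the Bridgeland--King--Reid--Haiman equivalence.

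The genuine gap is the one you flag yourself, and your fallbacks do not close it.

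\begin{itemize}
\item Restricting to characteristic zero proves less than the proposition, whose hypothesis $n!\in\kk^\times$ allows $\operatorname{char}\kk=p>n$.
\item The flatness/semicontinuity sketch over $\ZZ[1/n!]$ is too weak, even granting flatness of the isospectral Hilbert scheme. Openness of the Cohen--Macaulay locus in a flat family, starting from the characteristic-zero fibre, gives Cohen--Macaulayness only for all but finitely many primes, not for every $p>n$.
\end{itemize}

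What is needed is a precise derived McKay statement for Hilbert schemes of smooth quasiprojective surfaces in characteristic zero or $>n$. The paper imports this from \cite[Theorem~3.11 and Corollary~3.12]{Groechenig}, noting that $\widetilde S_Q$ is smooth and quasiprojective because it is projective over the affine surface $S_Q$.

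You also leave unaddressed the fact that $\kk$ need not be algebraically closed. The paper handles this by observing that the universal-kernel construction is defined over $\kk$. Being an equivalence can therefore be checked after the faithfully flat extension to $\overline\kk$.
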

\begin{proof}
Put $Y=\widetilde S_Q$ and $A=\Lambda^{\boldsymbol1}(Q)$.
The tilting construction and Morita equivalence used in the proof
of \Cref{cor:unipotent-mirror} give a tilting bundle $\mathcal T$
on $Y$ with endomorphism algebra $A$. Its external power
$\mathcal T^{\boxtimes n}$ is a tilting generator on $Y^n$,
with endomorphism algebra $A^{\otimes n}$ and the natural
permutation linearisation. Since $n!\in\kk^\times$, the
equivariant-generation argument above gives
\[
 \Perf([Y^n/S_n])
 \simeq\Perf(A^{\otimes n}\rtimes S_n)
 =\Perf(\Lambda_n^{\boldsymbol1}).
\]
The surface $Y$ is smooth and quasiprojective, being projective
over the affine surface $S_Q$, and $\operatorname{char}\kk$ is
zero or greater than $n$. The Hilbert-scheme derived McKay
correspondence \cite[Theorem~3.11 and Corollary~3.12]{Groechenig}
therefore gives
\[
 \Perf(\operatorname{Hilb}^nY)\simeq\Perf([Y^n/S_n]).
\]
The universal-kernel construction is defined over $\kk$, and the
equivalence can be checked after faithfully flat extension to
$\overline\kk$.
Here smoothness and tameness allow passage to perfect categories.
Combining the equivalences proves the proposition.
\end{proof}

This identifies the algebraic mirror. A Floer comparison across
the Hilbert-Chow exceptional locus with the specified collision
background remains the assertion of \Cref{conj:hilbert-mirror}.
\printbibliography
\enlargethispage{3\baselineskip}

\vspace{\fill} 
\noindent
\begin{tabular}{@{}l}
  \textit{Department of Mathematics,} \\
  \textit{Imperial College,} \\
  \textit{London, UK}
\end{tabular}

\end{document}